\title{Liouville-type Theorems for Stable Solutions of the H\'{e}non--Lane--Emden System \thanks{This work is supported by  National Key R\&D Program of China (Grant 2023YFA1010001) and NSFC (12171265 and 12271184)}}
\author{Long-Han Huang \and Wenming Zou}
\date{}
\numberwithin{equation}{section}
\newcommand{\xqedhere}[2]{\rlap{\hbox to#1{\hfil\llap{\ensuremath{#2}}}}} 
\newcommand{\NN}{\ensuremath{\mathbb{N}}}
\newcommand{\RR}{\ensuremath{\mathbb{R}}}
\newcommand{\md}{\mathrm{\,d}}
\newcommand{\ra}{\rightarrow}
\newcommand{\oo}{\infty}
\newcommand{\f}{\forall\,}
\newcommand{\ol}{\overline}
\newcommand{\supp}{\operatorname{supp\,}}
\newcommand{\sub}{\subseteq}
\newcommand{\Sub}{\Subset}
\newcommand{\tl}{\tilde}
\newcommand{\pt}{\partial}
\newcommand\blfootnote[1]{%
	\begingroup
	\renewcommand\thefootnote{}\footnote{#1}%
	\addtocounter{footnote}{-1}%
	\endgroup
}
\theoremstyle{plain}
\newtheorem{conj}{Conjecture}
\begin{document}

\maketitle

\blfootnote{This is the accepted version of the following article: Long-Han Huang and Wenming Zou, Liouville-type theorems for stable solutions of the Hénon–Lane–Emden system, \textit{Journal of the London Mathematical Society}, which has been published in final form at \url{https://doi.org/10.1112/jlms.70412}.}

\vskip0.6in
\begin{abstract}
	\rm	We investigate the H\'{e}non--Lane--Emden system defined by
\begin{equation*}
	\left\{
	\begin{aligned}
		- \Delta u=|x|^a |v|^{p-1}v  \\
		- \Delta v=|x|^b |u|^{q-1}u
	\end{aligned}
	\quad \mbox{in}\ \  \RR^N \!\setminus\! \{0\}.
	\right.
\end{equation*}
We begin by establishing a general Liouville-type theorem for the subcritical case. Then we prove that the H\'{e}non--Lane--Emden conjecture is valid for solutions stable outside a compact set, provided that  \(0 < \min\,\{p, q\} < 1\), or \(0 \leq a - b \leq (N-2)(p - q)\), or \(N \leq \frac{2(p+q+2)}{pq-1} + 10\). Additional Liouville-type theorems for the subcritical case are also obtained. Furthermore, we address the supercritical case. To our knowledge, these results constitute the first Liouville-type theorems for this class of solutions in the H\'{e}non--Lane--Emden system. As a by-product, several existing results in the literature are refined.

\keywords{Nonlinear elliptic weighted system, H\'{e}non--Lane--Emden system, Liouville-type theorems, Stability outside a compact set}

\vskip0.1in
\noindent{\bf 2010 Mathematics Subject Classification:} 35B09, 35B40, 35B33

\end{abstract}

\section{Introduction}

\noindent In this paper, we examine Liouville-type theorems, specifically the nonexistence of positive solutions, for the following H\'{e}non--Lane--Emden system:
\begin{equation}\label{HLE}
	\left\{
	\begin{aligned}
		- \Delta u=|x|^a |v|^{p-1}v  \\
		- \Delta v=|x|^b |u|^{q-1}u
	\end{aligned}
	\quad \mbox{in}\ \  \RR^N \!\setminus\! \{0\},
	\right.
\end{equation}
	where $N \in \NN_+,p,q>0,a,b \in \RR.$ This system, \eqref{HLE}, serves as a natural generalization of the classical Lane--Emden system:
\begin{equation}\label{LE}
	\left\{
	\begin{aligned}
		- \Delta u=|v|^{p-1}v  \\
		- \Delta v=|u|^{q-1}u
	\end{aligned}
	\quad \mbox{in}\ \  \RR^N,
	\right.
\end{equation}
	where $N \in \NN_+,p,q>0.$ Over the past three decades, this system has attracted considerable attention from mathematicians. A notable conjecture regarding the nonexistence of positive solutions for \eqref{LE} is as follows.

\begin{conj}[Lane--Emden conjecture]\label{A}
	Let $ N \in \NN_+$ and $p,q>0.$ If $(p,q) $ is subcritical, i.e.,
	\begin{equation*}
		\dfrac{N}{p+1}+\dfrac{N}{q+1}>N-2,
	\end{equation*}
	then \eqref{LE} admits no positive solution in $\left( C^2(\RR^N) \right)^2$.
\end{conj}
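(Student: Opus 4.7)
The plan is to attack Conjecture A by combining the Mitidieri--Pohozaev test function technique with a bootstrap/feedback argument in the spirit of Serrin--Zou and Souplet. I would argue by contradiction: assume $(u,v) \in (C^2(\RR^N))^2$ is a positive solution under the subcritical condition $\frac{N}{p+1}+\frac{N}{q+1} > N-2$, and derive integral estimates that force $(u,v) \equiv 0$.

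First I would record the basic integral inequalities. Fix a cutoff $\varphi_R(x) = \eta(|x|/R)^m$ with $\eta \in C_c^\infty([0,2))$, $\eta \equiv 1$ on $[0,1]$, and $m$ large. Multiplying the first equation of \eqref{LE} by $u^{\alpha}\varphi_R$ and the second by $v^{\beta}\varphi_R$ for carefully chosen exponents $\alpha,\beta \geq 0$, integrating by parts, and applying H\"older's inequality, one obtains coupled bounds of the form
\[
\int_{B_R} v^{p+1}\varphi_R + \int_{B_R} u^{q+1}\varphi_R \;\leq\; C\, R^{\,N-\theta(p,q)},
\]
where $\theta(p,q)$ is an explicit exponent coming from the scaling. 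Optimising over $(\alpha,\beta)$ and using the subcritical hypothesis, one aims to show $\theta(p,q) > 0$ and conclude as $R \to \infty$. This strategy already settles a non-trivial sub-range of $(p,q)$ (the Mitidieri--Pohozaev/Souplet range).

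To cover the remaining range up to the critical hyperbola, the next step would be to establish sharp pointwise decay estimates for $(u,v)$ via the Pol\'a\v{c}ik--Quittner--Souplet doubling lemma, then feed these into the Pohozaev identity applied on $B_R$. The doubling argument rescales at points of large concentration, producing an entire limit profile; ruling it out reduces to an easier Liouville theorem on a half-space or on $\RR^N$ with smaller exponents, which one can bootstrap inductively. Combined with the Pohozaev identity and integration by parts, the boundary terms are then controlled by the decay estimates and one extracts a contradictory sign condition.

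The principal obstacle is closing the gap in high dimensions. For radial solutions Mitidieri's ODE analysis settles the conjecture, and for $N \leq 4$ Souplet's decay estimates and Serrin--Zou's refined Pohozaev analysis do the job, but for general $N \geq 5$ ruling out the half-space Liouville alternative in the doubling step remains a famous open problem. Consequently I would not expect this strategy to resolve the full conjecture in arbitrary dimensions; a realistic outcome is to settle it either under additional structural hypotheses (such as the stability outside a compact set used in this paper), or under arithmetic conditions on $p,q,a,b$ that keep $\theta(p,q)$ strictly positive.
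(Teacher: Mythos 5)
There is a genuine gap here, but it is one you yourself acknowledge: the statement is Conjecture \ref{A}, which the paper does not prove and which remains open for $N\ge 5$; your text is a programme of attack, not a proof. The first stage (Mitidieri--Pohozaev test functions, i.e.\ multiplying by powers of $u,v$ times a cutoff and optimising exponents) only yields nonexistence of supersolutions in the smaller range $\max\{\alpha,\beta\}\ge N-2$, well short of the critical hyperbola. The second stage is where the argument cannot currently be closed: to exploit the Rellich--Pohozaev identity on $B_R$ one must control the boundary terms $\int_{S_R}\bigl(u^{q+1}+v^{p+1}+|Du|\,v+u|Dv|\bigr)$ along a sequence $R\to\infty$, and the known route (Souplet) does this via $L^k$ estimates on spheres obtained from Sobolev embedding on $S^{N-1}$ together with the basic averages $\overline{u}(R)\lesssim R^{-\alpha}$, $\overline{v}(R)\lesssim R^{-\beta}$; the exponents only match in the whole subcritical range when $N\le 4$. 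Likewise, the doubling-lemma rescaling you invoke reduces matters to a Liouville theorem for the limit profile (on $\RR^N$ or a half-space) that is itself equivalent to, or harder than, the conjecture in the missing range, so it is circular rather than a bootstrap. In short, no step in your outline supplies the new ingredient needed for $N\ge 5$.

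For comparison, the paper does not attempt Conjecture \ref{A} either: it proves Liouville theorems under the \emph{additional} hypothesis of stability outside a compact set (Theorems \ref{1.4} and \ref{1.6}), precisely because stability furnishes the missing energy bound $\int_{B_{R_4}\setminus B_{R_1}} u_R^{q+1}+v_R^{p+1}\le C_0$ on rescaled annuli, which then feeds into the Rellich--Pokhozhaev inequality to control the boundary terms that are out of reach in the unconditional setting. Your closing assessment --- that a realistic outcome is a theorem under structural hypotheses such as stability --- is exactly the route the paper takes; but as a proof of the conjecture as stated, the proposal does not succeed and could not, given the current state of the art.
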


Up to now, P. Souplet {\cite[Thm.\,1 and Thm.\,2]{zbMATH05563881}} has achieved the most significant progress on Conjecture \ref{A}, providing a complete resolution for $N \le 4$ and substantial partial results for $N\ge 5$. However, the conjecture remains open in higher dimensions. For further partial results, see {\cite[p.\,1411]{zbMATH05563881}}.

	A similar conjecture applies to the H\'{e}non--Lane--Emden system, as outlined below.
	
\begin{conj}[H\'{e}non--Lane--Emden conjecture]\label{B}
	Let $ N \ge 2,p,q>0$ and $a,b \in \RR.$ If $(p,q,a,b) $ is subcritical, i.e.
	\begin{equation*}
		\dfrac{N+a}{p+1}+\dfrac{N+b}{q+1}>N-2,
	\end{equation*}
	then \eqref{HLE} admits no positive solutions in $\left(C^2(\mathbb{R}^N \!\setminus\! \{0\}) \cap C(\mathbb{R}^N)\right)^2$.
\end{conj}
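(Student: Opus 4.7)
The plan is to follow the Serrin--Zou--Souplet two-step paradigm, adapted to the Hénon weights: first reduce to radially symmetric solutions via a symmetry argument in integral form, and then rule out positive radial solutions through a weighted Pohozaev identity combined with a doubling rescaling at infinity.

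\emph{Step 1: integral reformulation and radial symmetry.} Since $u,v$ are positive and superharmonic on $\RR^N\!\setminus\!\{0\}$ and continuous at $0$, a Brezis--Lions type argument together with the subcritical growth condition should yield the Newtonian representation
\[
u(x)=c_N\!\int_{\RR^N}\!\frac{|y|^a v(y)^p}{|x-y|^{N-2}}\md y,\qquad v(x)=c_N\!\int_{\RR^N}\!\frac{|y|^b u(y)^q}{|x-y|^{N-2}}\md y,
\]
and in particular $u(x),v(x)\to 0$ as $|x|\to\infty$. Because the weights $|y|^a,|y|^b$ are radial, the integral method of moving spheres (Chen--Li--Ou) applies to this coupled system and forces $(u,v)$ to be radially symmetric and nonincreasing about the origin.

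\emph{Step 2: weighted Pohozaev identity.} For such radial $(u,v)$ one derives on the ball $B_R$ the Pohozaev--Mitidieri identity
\begin{align*}
	&\Bigl(\tfrac{N+a}{p+1}+\tfrac{N+b}{q+1}-(N-2)\Bigr)\!\int_{B_R}\!\!\bigl(|x|^a v^{p+1}+|x|^b u^{q+1}\bigr)\md x \\
	&\qquad =R\cdot\mathcal{B}(u,v;\pt B_R),
\end{align*}
obtained by testing the first equation with $x\cdot\nabla v+\tfrac{N+a}{p+1}v$, the second with $x\cdot\nabla u+\tfrac{N+b}{q+1}u$, and integrating. In the subcritical regime the prefactor on the left is \emph{strictly positive}. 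It therefore suffices to extract $R_k\to\infty$ along which $R_k\,\mathcal{B}(u,v;\pt B_{R_k})\to 0$, since the integrand on the left is nonnegative and positive on an open set whenever $(u,v)\not\equiv(0,0)$.

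\emph{Step 3: boundary control, and the main obstacle.} To produce such a sequence I would combine the Poláčik--Quittner--Souplet doubling lemma with a Serrin--Zou integrability bootstrap on the dyadic annuli $\{R\le|x|\le 2R\}$: the doubling lemma upgrades any hypothetical non-decay of $\mathcal{B}$ on a radius sequence to a nontrivial entire bounded positive solution of a limiting (possibly singular) system on $\RR^N$, contradicting the decay from Step~1. This is precisely where the genuine difficulty concentrates, and it is the same obstruction that keeps the classical Lane--Emden conjecture open for $N\ge 5$: the feedback argument closes unconditionally only under additional structural hypotheses (low dimension, a favorable range of $a-b$ relative to $p-q$, or stability outside a compact set, each of which the paper does establish in later sections). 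An unconditional proof of Conjecture~\ref{B} in its full generality thus appears to require either a weighted analogue of Souplet's pointwise comparison inequality relating $u^{q+1}$ and $v^{p+1}$, or a genuinely new monotonicity formula adapted to the Hénon weights $|x|^a,|x|^b$; either tool, once available, would plug directly into the framework of Steps~1--2.
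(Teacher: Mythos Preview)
The statement you are attempting to prove is labeled \emph{Conjecture} in the paper for good reason: it is open, and the paper does not claim to prove it in full generality. What the paper does establish are partial results (Theorems~\ref{1.4}, \ref{1.6}, \ref{1.9}, \ref{1.10}) under extra hypotheses such as stability outside a compact set, the structural condition $0\le a-b\le (N-2)(p-q)$, or low dimension. So there is no ``paper's own proof'' to compare against, and your proposal should be read as an outline toward an open problem rather than a proof.

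That said, your outline has a genuine gap already at Step~1, well before the difficulty you flag in Step~3. You assert that the integral method of moving spheres forces any positive solution to be radially symmetric about the origin. This is not known and is essentially equivalent to the conjecture itself: the paper explicitly records (citing Bidaut-V\'eron and Giacomini) that the radial case of Conjecture~\ref{B} is already settled for $N\ge 3$, $pq>1$, $a,b>-2$. If your Step~1 were correct, the conjecture would follow immediately from that result and Step~2--3 would be unnecessary. The Chen--Li--Ou moving-sphere machinery does not apply out of the box here because the Kelvin transform does not interact well with the weights $|x|^a,|x|^b$ unless $a,b$ satisfy quite restrictive sign or monotonicity conditions; for general $a,b\in\RR$ (and in particular for $a,b>0$, where the weights are increasing) the reflection inequalities that drive moving spheres simply fail. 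Your Brezis--Lions representation also presupposes $N\ge 3$ and integrability/decay that you have not established, whereas the conjecture is stated for $N\ge 2$.

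Your Step~3 is honest about the remaining obstruction, but note that the comparison inequality you mention as a desideratum is exactly Lemma~\ref{2.3} of the paper, and it is available only under the structural hypothesis \eqref{eq1.3}; the paper's entire strategy in the subcritical case is to combine this comparison with annular energy estimates and a Rellich--Pokhozhaev inequality (Lemma~\ref{2.13}), bypassing any symmetry reduction. In short: the decisive missing ingredient is not the boundary control in Step~3 but the unjustified radial-symmetry claim in Step~1.
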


	It is important to note that the conjecture is false for $N=1$ (cf. {\cite[Prop.\,A.1 and Rem.\,A.2]{zbMATH05998062}}). Prior to discussing Conjecture \ref{B}, we introduce several important parameters. For $pq \neq 1,$ we define
\begin{align*}
	\alpha=\dfrac{2(p+1)}{pq-1},\quad \beta=\dfrac{2(q+1)}{pq-1},\quad  \tilde{\alpha}=\dfrac{2(p+1)+a+pb}{pq-1} , \quad
	\tilde{\beta}=\dfrac{2(q+1)+b+qa}{pq-1}.
\end{align*}
	Be warned that the definitions of $\alpha$ and $\beta$ here are different from some other literature. These parameters represent scaling exponents due to the fact that: For every $R>0,$ if $(u,v)$ is a solution of \eqref{HLE}, then $(u_R,v_R) := (R^{\tilde{\alpha}} u(R\cdot), R^{\tilde{\beta}} v(R\cdot))$ is also a solution of \eqref{HLE}.

\vskip0.1in

Now we outline the progress made toward proving Conjecture \ref{B}. First, for radial solutions, M.-F. Bidaut-Veron and H. Giacomini established the conjecture in \cite[Thm.\,1.4]{zbMATH05907486}. Specifically, they showed that if \( N \geq 3 \), \( pq > 1 \), and \( a, b > -2 \), then \eqref{HLE} admits no radial positive solutions if and only if \( (p, q, a, b) \) is subcritical. Additionally, it is known that \eqref{HLE} does not admit positive supersolutions for \( N \geq 2 \) if any of the following conditions is satisfied:
	\[
	pq \leq 1, \quad \min \{a, b\} \leq -2, \quad \text{or} \quad \max \{\tilde{\alpha}, \tilde{\beta}\} \geq N - 2.
	\]
	This result has been verified by Phan \cite[Thm.\,A]{zbMATH06121773} and by Li--Zhang \cite[Thm.\,3.1]{zbMATH06973892} for \( N \geq 3 \), as well as by Cheng--Li--Zhang \cite[Thm.\,3.1]{zbMATH07522895} for \( N \geq 2 \). It is worth pointing out that this result also holds when \eqref{HLE} is considered on an exterior domain, as detailed in \cite[Sec.\,6]{zbMATH05992595}. Furthermore, this result implies Conjecture \ref{B} for \( N = 2 \), which follows from the observation that
	\[
	\tilde{\alpha} = \frac{2(p + 1) + a + pb}{pq - 1} > \frac{2(p + 1) - 2 - 2p}{pq - 1} = 0,
	\]
	under the conditions \( pq > 1 \) and \( \min \{a, b\} > -2 \).

\vskip0.13in
	
	The problem has been resolved for $N=3$ by K. Li and Z. Zhang \cite[Thm.\,1.1]{zbMATH06973892}. Notably, it had previously been confirmed for $N=3$ under additional assumptions, such as the boundedness of solutions, as shown by Phan \cite[Thm.\,1.1]{zbMATH06121773} and Fazly--Ghoussoub \cite[Thm.\,1]{zbMATH06265786}. Recently, H. Li \cite[Thm.\,1]{zbMATH07599863} extended the validity of the H\'{e}non--Lane--Emden conjecture for \( N = 4 \) when \( p, q < \frac{4}{3} \) and for \( N = 5 \) when \( p, q < \frac{10}{9} \), employing a strategy similar to that of \cite{zbMATH06121773}, \cite{zbMATH06265786}, and \cite{zbMATH06973892}.  By synthesizing the results from \cite{zbMATH06121773} which demonstrates that \eqref{HLE} admits no positive solutions if \( (p, q, a, b) \) is subcritical and if \eqref{LE} admits no positive solution  and from \cite[Thm.\,A]{zbMATH00711288}, one can encompass the findings of \cite{zbMATH07599863}.

\vskip0.12in
	On the other hand, in the past decade, the Liouville-type theorems for stable solutions of \eqref{LE} and other related systems have garnered significant attention from experts. Let us first recall the notion of stability, which is motivated by {\cite{zbMATH02190057}}, {\cite{zbMATH06203801}} and {\cite{dupaignegherguhajlaoui2025}}. Here and throughout, we set
	\[
	B_R = \left\{ x \in \mathbb{R}^N : |x| < R \right\}, \quad R > 0,
	\]
	and we define \((u, v) \in \left(C^2(\mathbb{R}^N \setminus \{0\}) \cap C(\mathbb{R}^N)\right)^2\) to be a solution of \eqref{HLE} if it satisfies \eqref{HLE} pointwise in \(\mathbb{R}^N \!\setminus\! \{0\}\).

\vskip0.1in
\begin{definition}\label{1.3}
	Let $N \in \NN_+,\Omega \sub \RR^N$ be an open set and $(f,g) \in \left( C^1(\Omega \times (0,+\oo)) \right)^{2} .$ A positive solution $(u,v) \in \left( C^2(\Omega) \right)^2$ of
	\begin{align}\label{eq1.4}
		\left\{\begin{aligned}
			-\Delta u &= f(x,v(x)) \\
			-\Delta v &= g(x,u(x))
		\end{aligned}
		\quad \mbox{in}\ \  \Omega
		\right.\end{align}
	is called stable (resp. stable in the weak sense) if there exist $0< \xi,\zeta \in  C^2(\Omega )  \, (\mbox{resp. } H_{\rm{loc}}^{2}(\Omega) \cap C(\Omega) ) $ such that
	\begin{align*}\left\{\begin{aligned}
			-\Delta \xi &\ge D_v f(x,v(x))\zeta \\
			-\Delta \zeta &\ge D_u g(x,u(x))\xi
		\end{aligned}
		\quad \mbox{a.e. in}\ \  \Omega.
		\right.\end{align*}
	A positive solution of \eqref{HLE} is called stable outside a compact set $ K \sub \RR^N$ (resp.\ stable outside a compact set $ K \sub \RR^N$ in the weak sense) if it is stable in every open set $ \Omega \Subset \RR^N \!\setminus\! K$ (resp. if there exist $R_{2} > R_{1} >0$ such that it is stable in $B_{R_{2}R} \!\setminus\! \ol{B_{R_{1}R}}$ in the weak sense for every sufficiently large $R>0$).
\end{definition}
	
	The initial Liouville-type theorem for stable solutions of the complete Lane--Emden system was likely established by Cowan {\cite[Thm.\,1]{zbMATH06203801}}, who proved that \eqref{LE} admits no stable positive solution when \( N \leq 10 \) and \( p, q \geq 2 \). This result was subsequently refined by Hajlaoui, Harrabi, and Mtiri {\cite[Thm.\,1.1]{zbMATH06672765}}, who demonstrated that \eqref{LE} admits no stable positive solution if \( N \leq 10 \) and \( p, q > \frac{4}{3} \), and that \eqref{LE} admits no bounded stable positive solution when \( N \leq 6 \) and \( p \geq q > 1 \). Recently, Dupaigne, Ghergu, and Hajlaoui {\cite[Thm.\,1.1]{dupaignegherguhajlaoui2025}} achieved the strongest result to date, proving that if \( N \leq 10 \) and \( p \geq q \geq 1 \) with \( pq \neq 1 \), then \eqref{LE} admits no stable positive solution, regardless of boundedness.

\vskip0.1in	
	Regarding the Liouville-type theorem for solutions of the Lane--Emden system that are stable outside a compact set, Mtiri and Ye \cite[Thm.\,1.1]{zbMATH07020411} confirmed that Conjecture \ref{A} is valid for such solutions. Moreover, Dupaigne, Ghergu, and Hajlaoui \cite[Thm.\,1.1]{dupaignegherguhajlaoui2025} further showed that if \( N \leq 10 \), \( p \geq q \geq 1 \) with \( pq \neq 1 \), and
	\begin{align}\label{eq1.5}
		\frac{N}{p+1} + \frac{N}{q+1} \neq N - 2,
	\end{align}
	then \eqref{LE} admits no positive solution in $\left( C^2(\mathbb{R}^N) \right)^{2}$ that is stable outside a compact set.

	We emphasize that the above restriction \eqref{eq1.5} is crucial. Indeed, P.-L. Lions {\cite[]{zbMATH04155283}} proved that if $N \ge 3,$ $ p,q>0$, and $\frac{N}{p+1} + \frac{N}{q+1} = N-2,$ then \eqref{LE} has a ground state, which is unique up to scalings and translations. By construction, such a solution is positive, radial and stable outside a compact set; see also {\cite[Thm.\,1]{zbMATH00936724}}.

\vskip0.1in

	Motivated by the preceding results, {\bf  two questions arise.} First, for solutions that are stable outside a compact set, does Conjecture \ref{B} hold? Second, what occurs if the quadruple \((p, q, a, b)\) is supercritical? To the best of our knowledge, these questions have not yet been explored. This gap in the literature may be attributed to the difficulties posed by the singularity of both the system and its solutions at the origin.

\vskip0.1in

	In this paper, we address these challenges by employing estimates of solutions on annular regions, thereby avoiding singularities at the origin. This approach enables us to establish new Liouville-type theorems for the system \eqref{HLE}. As a consequence, several of the aforementioned Liouville-type theorems will also be refined.

\subsection{Main results}\label{sec1.1}

	Our first main result establishes that the H\'{e}non--Lane--Emden conjecture is essentially valid for solutions that are stable outside a compact set. It should be stressed that solutions of \eqref{HLE} are always considered in $\left(C^2(\mathbb{R}^N \!\setminus\! \{0\}) \cap C(\mathbb{R}^N)\right)^2$. Parts (i) and (iii) of the following theorem address the cases of large and small values of $N$, respectively. Moreover, part (i) yields a sharper version of \cite[Thm.\,1.1]{zbMATH07020411}.
	
\begin{theorem}\label{1.4}
	Let $ N \ge 3, p,q>0, pq>1 $ and $a,b>-2.$ Assume $ (p,q,a,b)$ is subcritical. \\
	(i) If either $\min\,\{p,q\} < 1$ or
	\begin{align}\label{eq1.3}
		0 \le a - b \le (N - 2)(p - q),
	\end{align}
	then \eqref{HLE} admits no positive solution stable outside a compact set in the weak sense. \\
	(ii) The system \eqref{HLE} admits no positive solution $(u,v)$ satisfying
	\begin{align}\label{eq1.7}
		\begin{aligned}
			u=O(|x|^{-\tilde{\alpha}}),\quad v=O(|x|^{-\tilde{\beta}}), \quad  |x| \ra +\oo.
		\end{aligned}
	\end{align}
	(iii) If \eqref{LE} admits no bounded positive solution in $\left( C^{\oo}(\RR^N) \right)^{2}$ that is stable in every bounded open subset of $\RR^N$, then \eqref{HLE} admits no positive solution stable outside a compact set. In particular, the requirement is fulfilled if $(p,q)$ is subcritical, or if $N < 2\alpha z_{0} + 2, p \ge q>\frac{1}{3}$ and $ p> \frac{q+1}{3q-1}$, where $z_{0}$ is the largest root of the polynomial
	\[ G(x) = x^4-\frac{4 pq(q+1)}{p+1} x^2+\frac{2 pq(q+1)(p+q+2)}{(p+1)^2}x - \frac{ pq(q+1)^2}{(p+1)^2}. \]
\end{theorem}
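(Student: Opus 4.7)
The overall plan carries out the standard strategy for stable-solution Liouville theorems --- integral estimate, then decay, then Pohozaev or blow-down --- with one essential modification: every integral estimate is performed on annular regions $A_R = B_{2R}\!\setminus\! B_R$ so as to avoid the singularity of the H\'{e}non weights at the origin. For part (ii), the decay hypothesis together with interior elliptic regularity on rescaled unit annuli upgrades to the matching gradient bounds $|\nabla u|=O(|x|^{-\tilde\alpha-1})$ and $|\nabla v|=O(|x|^{-\tilde\beta-1})$. I then apply the Pohozaev--Rellich identity for the system on $B_R\!\setminus\! B_\epsilon$, which, after sorting the boundary contributions, yields
\[
\left(\tfrac{N+a}{p+1}+\tfrac{N+b}{q+1}-(N-2)\right)\!\!\int_{B_R\setminus B_\epsilon}\!|x|^a v^{p+1}\,dx=\mathcal{B}_R-\mathcal{B}_\epsilon.
\]
The outer boundary term $\mathcal{B}_R$ tends to $0$ as $R\to\infty$ by strict subcriticality combined with the decay and gradient bounds, while $\mathcal{B}_\epsilon\to 0$ as $\epsilon\to 0^+$ by the continuity of $(u,v)$ at the origin. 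Strict positivity of the prefactor then forces $(u,v)\equiv 0$.

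For part (i), the plan is to derive the decay hypothesis of (ii) from weak stability outside a compact set, and then invoke (ii). Testing the stability inequalities against suitable powers of $(u,v)$ and the stable pair $(\xi,\zeta)$, multiplied by an annular cutoff $\eta$ supported in $B_{4R}\!\setminus\! B_{R/2}$, then substituting the equations of \eqref{HLE} and applying H\"older's inequality, produces an annular integral bound of the form
\[
\int_{A_R}\!\bigl(|x|^a v^{p+1}+|x|^b u^{q+1}\bigr)\,dx\;\le\; C R^{\,N-\tilde\alpha(q+1)-2-b}
\]
for all sufficiently large $R$. Scaling $A_R$ to the unit annulus and applying interior $L^\infty$ estimates to the rescaled solutions then promotes this integral bound to the pointwise decay required by (ii). The two sufficient hypotheses enter precisely to close the H\"older chain: when $\min\{p,q\}<1$, say $p<1$, a Young-type convexity replaces the problematic factor $v^{p}$ by a linear-in-$v$ combination and bypasses the weight asymmetry; whereas when $0\le a-b\le (N-2)(p-q)$, the asymmetry of the weights is offset exactly by that of the exponents and the chain closes directly.

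For part (iii), the stronger (full) stability outside a compact set lets the annular machinery of (i) run without any of its auxiliary hypotheses, yielding $u(x)\le C|x|^{-\tilde\alpha}$ and $v(x)\le C|x|^{-\tilde\beta}$ near infinity for every positive solution stable outside a compact set. A blow-down combined with a doubling argument localized at a point far from the origin --- where the weight $|x|^{a}$ becomes a near-constant multiplier that can be absorbed by a rescaling of the unknowns --- then converts such a solution into a bounded, smooth, positive solution of the \emph{unweighted} Lane--Emden system \eqref{LE} that is stable on every bounded open subset of $\RR^N$. The hypothesis of (iii) yields the contradiction, and the ``in particular'' clause follows from Mtiri--Ye when $(p,q)$ is subcritical and from the sharper Hajlaoui--Harrabi--Mtiri / Dupaigne--Ghergu--Hajlaoui criteria under the $z_0$ dimensional bound. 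The principal obstacle is the annular H\"older chain in (i): the H\'{e}non weights break the symmetric bookkeeping available in the unweighted Lane--Emden setting, and the two sufficient hypotheses $\min\{p,q\}<1$ and $0\le a-b\le (N-2)(p-q)$ are engineered precisely to salvage it; a secondary difficulty is the careful identification of the limit in (iii) as a bounded stable solution of the pure system \eqref{LE} rather than of \eqref{HLE} itself.
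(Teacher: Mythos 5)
The central gap is in your part (i): you claim that the annular integral bound obtained from stability can be ``promoted'' to the pointwise decay $u=O(|x|^{-\tilde\alpha})$, $v=O(|x|^{-\tilde\beta})$ by rescaling $A_R$ to a unit annulus and applying interior $L^\infty$ estimates. That step does not work: an $L^{q+1}\times L^{p+1}$ bound for $(u_R,v_R)$ on a fixed annulus does not bootstrap to an $L^\infty$ bound for solutions of the rescaled system in the whole subcritical range (the exponents $p,q$ can greatly exceed what elliptic regularity with $L^{q+1}$, $L^{p+1}$ data permits; if such a promotion were generally available, the H\'enon--Lane--Emden conjecture itself would essentially follow from the known energy estimates). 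The paper deliberately avoids pointwise decay here: Theorem \ref{1.6} shows that the scale-invariant annular energy bound \eqref{eq1.6} alone forces $u\equiv v\equiv 0$, by running the Rellich--Pokhozhaev inequality up to a well-chosen sphere (Lemma \ref{2.30} selects the sphere; Lemmas \ref{2.1} and \ref{2.2} control the boundary gradient terms in $L^{1+1/p}$ and $L^{1+1/q}$), together with the scaling identity $F_R(r)=F_1(Rr)R^{\tilde\alpha+\tilde\beta-N+2}$. Moreover, the annular energy bound itself is not a routine H\"older chain: for $\min\{p,q\}<1$ the paper passes through a fourth-order scalar reduction \`a la Mtiri--Ye (Lemmas \ref{3.14}--\ref{3.15}), and under \eqref{eq1.3} it tests the stability inequality \eqref{eq3.36} with $u^A\eta$ and $\bigl(|x|^{(a-b)/(p+1)}v\bigr)^B\eta$, uses the comparison inequality \eqref{eq2.2} (this is exactly where \eqref{eq1.3} enters, via Lemma \ref{2.3}), and needs $G(A)>0$ for an admissible $A\ge\frac{q+1}{2}$, which is available because \eqref{eq1.3} forces $p\ge q\ge 1$. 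Your ``Young-type convexity'' and ``the chain closes directly'' sketches supply neither argument, and the exponent $N-\tilde\alpha(q+1)-2-b$ you display is not even the scale-invariant one ($N-2-\tilde\alpha-\tilde\beta$).

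Part (iii) is logically inverted: the decay cannot come from ``the annular machinery of (i) without its auxiliary hypotheses'' --- those hypotheses are essential there, and that machinery never yields pointwise bounds. In the paper the decay is precisely what the Liouville hypothesis for \eqref{LE} buys: one argues by contradiction with the doubling lemma at points $|x_k|\to\infty$ (Propositions \ref{3.24}, \ref{3.25} and \ref{3.5}), where the rescaled weights become constants and stability passes to the limit (this requires producing genuine positive solutions of the linearized system by the sub/supersolution method and renormalizing them, a step you gloss over); the nontrivial bounded stable limit solution of \eqref{LE} contradicts the hypothesis, which yields the universal decay estimate, and only then does part (ii) conclude. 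As written, a blow-down of a solution you have already asserted to decay would give the zero limit, not a positive bounded stable solution of \eqref{LE}, so no contradiction arises. Your part (ii) is essentially sound and a legitimate variant of the paper's route (the paper deduces (ii) from Theorem \ref{1.6}), except that the inner boundary terms on $S_\epsilon$ involve $Du$, $Dv$, so ``continuity of $(u,v)$ at the origin'' is insufficient; you need, e.g., $|Du|,|Dv|\le C|x|^{-1}$ near $0$ from interior estimates, or simply invoke the identity on the full ball as in Lemma \ref{2.12}.
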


\begin{remark}\label{1.5}
	If $N \ge 2,p,q>0,a,b \in \RR,$ then the nonexistence of positive (super) solutions of \eqref{HLE} implies the nonexistence of nonnegative nontrivial (super) solutions of \eqref{HLE}. Indeed, one can prove that if $(u,v)$ is a nonnegative nontrivial supersolution of \eqref{HLE}, then $u,v>0$ in $\RR^N.$ This is a easy corollary of the usual strong maximum principle and the maximum principle in punctured balls (see e.g. {\cite[Thm.\,1]{zbMATH03118705}}).
\end{remark}
	
	Theorem \ref{1.4} serves as an application of the following general result, which indicates that Liouville-type theorems for system \eqref{HLE} can be derived directly using energy estimates on annuli. Additionally, this result also shows that, to establish Liouville-type theorems for solutions of system \eqref{HLE} that are stable outside a compact set, it is unnecessary to first prove Liouville-type theorems for solutions that are stable in \(\mathbb{R}^N\). This approach differs from those utilized in \cite{zbMATH06203801}, \cite{zbMATH06672765}, \cite{zbMATH07020411}, and \cite{dupaignegherguhajlaoui2025}.

\begin{theorem}\label{1.6}
	Let $ N \ge 3, p,q>0, pq>1 $ and $a,b>-2.$ Assume $ (p,q,a,b)$ is subcritical and $(u,v)$ is a nonnegative solution of \eqref{HLE}. If there exist $R_{4}>R_{1}>0$ and $C_{0}>0$ such that
	\begin{align}\label{eq1.6}
		\begin{aligned}
			\int_{B_{R_{4}} \setminus B_{R_{1}}}  u_R^{q+1} +  v_R^{p+1} \le C_{0} , \quad \f R \gg 1,	
		\end{aligned}
	\end{align}
	then $u \equiv v \equiv 0$ in $\RR^N.$
\end{theorem}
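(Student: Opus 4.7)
My plan is to upgrade the integral control \eqref{eq1.6} on the rescaled family $\{(u_R,v_R)\}_{R\gg 1}$ to uniform pointwise decay of $(u,v)$ at infinity, and then to close the argument through the Pohozaev--Rellich identity on large annuli, where strict subcriticality will force every boundary contribution to vanish.

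The first step is to promote integrability to pointwise control. For every $R\gg 1$ the rescaled pair $(u_R,v_R)=(R^{\tilde\alpha}u(R\cdot),R^{\tilde\beta}v(R\cdot))$ solves \eqref{HLE} on $\mathbb{R}^N\!\setminus\!\{0\}$, and the weights $|x|^a,|x|^b$ are smooth on the fixed closed annulus $\overline{B_{R_4}\setminus B_{R_1}}$. A finite Brezis--Kato / Moser bootstrap applied alternately to $\Delta u_R=-|x|^av_R^p$ and $\Delta v_R=-|x|^bu_R^q$ promotes \eqref{eq1.6} to a uniform $C^{2,\gamma}$ bound on every compact $K\Subset B_{R_4}\setminus B_{R_1}$. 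Translating back through the scaling $u_R(x)=R^{\tilde\alpha}u(Rx)$, $v_R(x)=R^{\tilde\beta}v(Rx)$ and covering each sufficiently distant point of $\mathbb{R}^N$ by such a rescaled annulus yields
\[
u(y)\le C|y|^{-\tilde\alpha},\ \ v(y)\le C|y|^{-\tilde\beta},\ \ |\nabla u(y)|\le C|y|^{-\tilde\alpha-1},\ \ |\nabla v(y)|\le C|y|^{-\tilde\beta-1},\quad |y|\gg 1.
\]

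Next I would run a Pohozaev--Rellich computation on $B_R\!\setminus\! B_r$ for $0<r<R$. Multiplying $-\Delta u=|x|^av^p$ by $x\cdot\nabla v+\tfrac{N-2}{2}v$ and $-\Delta v=|x|^bu^q$ by $x\cdot\nabla u+\tfrac{N-2}{2}u$, summing, and using $\operatorname{div}(|x|^ax)=(N+a)|x|^a$ together with Green's identity $\int|x|^av^{p+1}=\int\nabla u\cdot\nabla v=\int|x|^bu^{q+1}$ (modulo boundary corrections) gives
\[
\Bigl(\tfrac{N+a}{p+1}+\tfrac{N+b}{q+1}-(N-2)\Bigr)\!\!\int_{B_R\setminus B_r}\!|x|^av^{p+1}\,\mathrm dx=\mathcal B_R(u,v)-\mathcal B_r(u,v),
\]
where $\mathcal B_\rho$ collects $\partial B_\rho$-integrals of $\rho|\nabla u||\nabla v|$, $u|\nabla v|$, $v|\nabla u|$, $\rho|x|^av^{p+1}$ and $\rho|x|^bu^{q+1}$. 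Inserting the decay of Step~1 together with the scaling identities $\tilde\beta(p+1)=\tilde\alpha+\tilde\beta+2+a$ and $\tilde\alpha(q+1)=\tilde\alpha+\tilde\beta+2+b$ (both immediate from the invariance of \eqref{HLE} under $(u,v)\mapsto(u_R,v_R)$), each of these integrands on $\partial B_R$ is dominated by a multiple of $R^{N-\tilde\alpha-\tilde\beta-2}$, which tends to $0$ because subcriticality reads precisely $\tilde\alpha+\tilde\beta>N-2$. The inner term $\mathcal B_r\to 0$ as $r\to 0^+$ thanks to $(u,v)\in C(\mathbb{R}^N)$, local integrability of $|x|^a,|x|^b$ (since $a,b>-2$ and $N\ge 3$), and averaged trace/gradient estimates on shrinking spheres.

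Passing to the limits $R\to\infty$, $r\to 0^+$, strict positivity of the subcriticality coefficient forces $\int_{\mathbb{R}^N}|x|^av^{p+1}=0$, so $v\equiv 0$ on $\mathbb{R}^N$ by continuity and nonnegativity; the first equation of \eqref{HLE} then turns $u$ into a nonnegative bounded harmonic function on $\mathbb{R}^N\!\setminus\!\{0\}$ vanishing at infinity, hence $u\equiv 0$ too by the standard removable-singularity argument. The main obstacle is the boundary analysis of Step~3: every one of the several integrands in $\mathcal B_R$ must be shown to decay at the common rate $R^{N-\tilde\alpha-\tilde\beta-2}$, a consistency that hinges on the precise algebraic relations between $\tilde\alpha,\tilde\beta,a,b,p,q$ and on the strict subcritical inequality --- in keeping with P.-L.~Lions' critical ground state, which shows that equality genuinely cannot be allowed.
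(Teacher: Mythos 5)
There is a genuine gap, and it sits exactly at your Step 1. The claim that \eqref{eq1.6} can be promoted by an alternating Brezis--Kato/Moser bootstrap to uniform $C^{2,\gamma}$ bounds on the annulus (hence to $u=O(|x|^{-\tilde{\alpha}})$, $v=O(|x|^{-\tilde{\beta}})$) is not justified and is false in the generality of the theorem. On the fixed annulus the weights $|x|^a,|x|^b$ are bounded above and below, so the bootstrap gain condition is the \emph{unweighted} one: starting from $u_R\in L^{q+1}$, $v_R\in L^{p+1}$, the step $v_R^p\in L^{\frac{p+1}{p}}\Rightarrow u_R\in W^{2,\frac{p+1}{p}}\hookrightarrow L^{s}$ improves integrability only when $\frac{pq-1}{(p+1)(q+1)}<\frac{2}{N}$, i.e.\ $\alpha+\beta>N-2$. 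The theorem, however, only assumes $\tilde{\alpha}+\tilde{\beta}>N-2$, which is strictly weaker when $a,b>0$. For instance $N=10$, $p=q=5$, $a=b=8$ is admissible here ($\tilde{\alpha}+\tilde{\beta}>8$) while $\alpha+\beta=1$; then $u_R,v_R\in L^{6}$ gives $\Delta u_R\in L^{6/5}$ and $W^{2,6/5}\hookrightarrow L^{30/19}$ in dimension $10$, so the iteration stalls at $L^{6}$ and never reaches $L^\infty$. Even when $\alpha+\beta>N-2$, mere boundedness of the scale-invariant norm in \eqref{eq1.6} is an $\varepsilon$-regularity-type hypothesis that usually requires smallness to yield pointwise bounds. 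In effect your Step 1 asserts that \eqref{eq1.6} implies the decay \eqref{eq1.7}; in the paper the implication goes the other way (Theorem \ref{1.4}\,(ii) is deduced \emph{from} Theorem \ref{1.6}), and obtaining decay from energy bounds is precisely the obstruction the proof is designed to avoid. Your inner-boundary analysis ($r\to0^+$) is also only sketched — gradients need not be controlled near the origin for merely continuous solutions — but that issue is secondary and is what the inequality form of the Rellich--Pokhozhaev estimate (Lemma \ref{2.13}) is there to bypass.

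Your Steps 2--3 (Pohozaev on large spheres, boundary terms decaying like $R^{N-\tilde{\alpha}-\tilde{\beta}-2}$, subcriticality forcing the weighted energy to vanish) are in the same spirit as the paper, but the paper executes them without any pointwise information: by the pigeonhole argument of Lemma \ref{2.30} one picks a good radius $\tilde{R}\in(R_2,R_3)$ on which the sphere integrals of $u_R^{q+1}$, $v_R^{p+1}$ are controlled by \eqref{eq1.6}, the sphere integrals of $|Du_R|$, $|Dv_R|$ in $L^{1+\frac1p}$, $L^{1+\frac1q}$ are controlled via the interior estimate of Lemma \ref{2.1}\,(ii) together with the universal averaged bounds of Lemma \ref{2.2}, and then \eqref{eq2.7} applied to $(u_R,v_R)$ on $B_{\tilde{R}}$ plus the scaling identity $F_R(r)=F_1(Rr)R^{\tilde{\alpha}+\tilde{\beta}-N+2}$ and $\tilde{\alpha}+\tilde{\beta}>N-2$ give $F_1(R_1R)\to0$. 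If you replace your Step 1 by this selection-of-a-good-sphere argument (estimating boundary integrals in integral norms rather than pointwise), the rest of your plan closes; as written, it does not.
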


	To evaluate the effectiveness of Theorem \ref{1.4}\,(iii), we provide the following estimate for the lower bound of $z_{0}$.

\begin{proposition}\label{1.7}
	Let $p, q > 0$, and let $z_0$ be as defined in Theorem \ref{1.4}\,(iii). \\
	(i) If $p > \frac{q+1}{3q-1}$ and $ q>\frac{1}{3},$ then $G<0$ in $ \big[ \frac{q+1}{2},z_{0} \big), pq>1$ and $z_{0}>\frac{q+1}{2} + \frac{1}{\alpha}.$ \\
	(ii) If $ p,q \ge 1$ and $pq \ne 1,$ then $p > \frac{q+1}{3q-1},q>\frac{1}{3}$ and $z_{0}>\frac{q+1}{2} + \frac{3}{\alpha}.$
\end{proposition}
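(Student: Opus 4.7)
The plan is to reparametrize via the scaling exponents $\alpha,\beta>0$ (available once $pq>1$) using the identities $p=(\alpha+2)/\beta$ and $q=(\beta+2)/\alpha$. Writing $S=\alpha+\beta$, $R=\alpha\beta$, and $U=(\alpha+2)(\beta+2)=R+2S+4$, one checks $(p+1)(q+1)=(S+2)^2/R$, $pq=U/R$, $p+q+2=S(S+2)/R$, while the test points become $(q+1)/2+k/\alpha=(S+2+2k)/(2\alpha)$. The substitution $y=2x/(q+1)$ turns $G$ into
\[
	\Psi(y) := 16\alpha^4\,G\!\Bigl(\tfrac{q+1}{2}y\Bigr) = (S+2)^4 y^4 - 16U(S+2)^2 y^2 + 16US(S+2)y - 16UR,
\]
and a direct evaluation at $y=(S+2+2k)/(S+2)$ yields the key identity
\[
	\Psi\!\Bigl(\tfrac{S+2+2k}{S+2}\Bigr) = (S+2k+2)^4 - 16U\bigl[R + 2(k+1)S + 4(k+1)^2\bigr],
\]
which drives both lower bounds on $z_0$ (the case $k=1$ for (i), $k=3$ for (ii)).

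For part (i), I would first compute in closed form $G((q+1)/2) = \tfrac{(q+1)^2}{16(p+1)^2}(p+q+1-3pq)(p+q+1+5pq)$ and $G'((q+1)/2) = \tfrac{q+1}{2(p+1)^2}\bigl[(p+q+1-pq)^2 - 4pq(2pq-1)\bigr]$. The hypothesis (equivalent, given $q>1/3$, to $3pq>p+q+1$) makes the first negative, and combined with $(q-1)^2\ge 0$ forces $pq>1$. For the second, treating the bracket as a quadratic in $p+q$ with roots $(pq-1)\pm 2\sqrt{pq(2pq-1)}$ shows it is negative whenever $pq\ge 1$ and $3pq>p+q+1$. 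Since $G$ is a real quartic with positive leading coefficient, it has either two or four real roots; the inequalities $G'(0)=b>0$ and $G'((q+1)/2)<0$ produce a critical point of $G$ in $(0,(q+1)/2)$, which in the four-root case must be the interior local maximum, and sign analysis then locates $(q+1)/2$ in the rightmost interval of negativity, giving $G<0$ on $[(q+1)/2, z_0)$. For $z_0>(q+1)/2+1/\alpha$, I apply the boxed identity with $k=1$: the hypothesis rewrites in the new variables as $(\alpha-\beta)^2<4S+12$, i.e., $R>(S^2-4S-12)/4$, and setting $T=R+3S+10$ reduces the claim $(S+4)^4<16U(R+4S+16)$ to $16T^2>(S+4)^4+16(S+6)^2$. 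This splits at $S=6$: for $S\le 6$ the bound $R>0$ gives $T>3S+10$ and the residual inequality $(S+4)(S^3+12S^2-80S-192)<0$ on $[0,6]$ is elementary; for $S>6$ the sharper $R$-bound gives $T\ge(S^2+8S+28)/4$, and the identity $(S^2+8S+28)^2-(S+4)^4-16(S+6)^2=8(S^2-6)>0$ closes the argument.

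Part (ii) opens with the observation that $p,q\ge 1$ with $pq\ne 1$ forces $pq>1$, $p>(q+1)/(3q-1)$, and $q>1/3$ (each a one-line check), so (i) applies. The stronger hypothesis translates to $|\alpha-\beta|\le 2$, i.e., $R\ge(S^2-4)/4$, and the improved bound $z_0>(q+1)/2+3/\alpha$ is the $k=3$ instance of the boxed identity: $(S+8)^4<16U(R+8S+64)$. With $T_3=R+5S+36$ the inequality becomes $16T_3^2>(S+8)^4+16(3S+32)^2$, and the analogous two-regime split works: for $S\le 3$ the crude $R>0$ bound suffices since $-S^4-32S^3-128S^2+640S+256>0$ on $[0,3]$, while for larger $S$ the sharper bound yields $(S^2+20S+140)^2-(S+8)^4-16(3S+32)^2=8(S^3+19S^2+60S-110)>0$, the cubic being increasing past its unique real root near $S\approx 1.27$. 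The main obstacle throughout is the matching of $R$-bounds to $S$-regimes in the final verifications; once the $(\alpha,\beta)$-parametrization is in place, every residual step reduces to checking the sign of a polynomial in $S$ of degree at most four, decidable by direct factoring.
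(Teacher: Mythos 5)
Your reparametrization in $S=\alpha+\beta$, $R=\alpha\beta$, $U=R+2S+4$ is sound and genuinely different from the paper's proof, which instead writes $G\big(\frac{q+1}{2}+\frac{k}{\alpha}\big)$ as an explicit quartic $g_q(p)$ (resp.\ $h_q(p)$) in $p$ and chases the signs of its successive $p$-derivatives down to the boundary value $p=\frac{q+1}{3q-1}$ (resp.\ $p=1$). I checked your key identity $16\alpha^4 G\big(\frac{q+1}{2}y\big)=(S+2)^4y^4-16U(S+2)^2y^2+16US(S+2)y-16UR$ and its evaluation $(S+2k+2)^4-16U\big[R+2(k+1)S+4(k+1)^2\big]$; these are correct, your closed forms for $G\big(\frac{q+1}{2}\big)$ and $G'\big(\frac{q+1}{2}\big)$ agree with the paper's, and all of part (i) goes through: the completion $16U(R+4S+16)=16(R+3S+10)^2-16(S+6)^2$, the crude-regime reduction to $(S+4)(S^3+12S^2-80S-192)\le 0$ on $(0,6]$, and the sharp-regime identity $(S^2+8S+28)^2-(S+4)^4-16(S+6)^2=8(S^2-6)$ are all verified. (Your quartic-root-structure argument for $G<0$ on $\big[\frac{q+1}{2},z_0\big)$ is sketchier than the paper's, which simply notes that $G''$ changes sign at most once on the half-line so $G'$ has exactly one zero beyond $\frac{q+1}{2}$, but it can be made rigorous.)

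Part (ii), however, contains a genuine arithmetic error that breaks the chain as written. For the $k=3$ target $(S+8)^4<16U(R+8S+64)=16(R+2S+4)(R+8S+64)$ the correct completion of the square has midpoint $R+5S+34$ and half-difference $3S+30$, i.e.\ $16U(R+8S+64)=16(R+5S+34)^2-16(3S+30)^2$. Your choices $T_3=R+5S+36$ and $(3S+32)^2$ satisfy $16T_3^2-16(3S+32)^2=16(R+2S+4)(R+8S+68)$, so the inequality $16T_3^2>(S+8)^4+16(3S+32)^2$ that you verify is equivalent to $(S+8)^4<16U(R+8S+68)$, which is strictly weaker than what is needed and does not imply it. The slip also shifts your regime split: with the corrected constants, the crude bound $R>0$ gives $(S+8)^4+16(3S+30)^2\le 16(5S+34)^2$ only when $S^2+24S-64\le 0$, i.e.\ $S\le -12+\sqrt{208}\approx 2.42$ (not $S\le 3$), while the sharp bound $R\ge (S^2-4)/4$ gives $(S^2+20S+132)^2-(S+8)^4-16(3S+30)^2=8(S^3+17S^2+44S-134)$, whose cubic factor has its unique positive root near $1.75$ and is positive for $S\ge 2$. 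So the argument is repairable—use the corrected completion and split at, say, $S=2$—but as written part (ii) does not establish $z_0>\frac{q+1}{2}+\frac{3}{\alpha}$.
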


	Proposition \ref{1.7} provides a refinement of the result of Dupaigne, Ghergu, and Hajlaoui {\cite[Thm.\,1.1 and Thm.\,1.7]{dupaignegherguhajlaoui2025}} concerning solutions that are stable in $\RR^N$, as illustrated by the following result. This is evident upon examining the original proof, where it is noted that the relation \( P = \alpha^{4} G\left(\frac{\cdot}{\alpha}\right) \) holds, with \( P \) as defined in {\cite[Thm.\,6]{dupaignegherguhajlaoui2025}}.

\begin{theorem}\label{1.8}
	Assume $N \in \NN_+, N<2\alpha z_{0}+2, p \ge q>\frac{1}{3}$ and $p > \frac{q+1}{3q-1} $, where $x_0$ is as defined in Theorem \ref{1.4}\,(iii). Then \eqref{LE} admits no positive solution in $\left( C^2(\RR^N) \right)^2$ that is stable in every bounded open subset of $\RR^N.$
\end{theorem}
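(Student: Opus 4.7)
The plan is to deduce the statement directly from the main nonexistence criterion established by Dupaigne, Ghergu and Hajlaoui in \cite[Thm.~6]{dupaignegherguhajlaoui2025} for positive $C^2$ solutions of \eqref{LE} that are stable on every bounded open subset of $\RR^N$. Their criterion is formulated in terms of a degree-four polynomial $P=P_{p,q}$ whose coefficients are explicit rational functions of $p$ and $q$: if $pq>1$ and $N<2y_0+2$, where $y_0$ is the largest real root of $P$, then no such solution exists. My task is to reparametrize that condition in terms of the polynomial $G$ from Theorem~\ref{1.4}(iii).

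The first and only substantive step would be to verify, by direct coefficient-matching, the algebraic identity
\begin{equation*}
P(y) \;=\; \alpha^{4}\, G\!\left(\frac{y}{\alpha}\right),
\end{equation*}
already indicated in the paragraph preceding the theorem. After the substitution $y=\alpha x$, this is a short bookkeeping check at the quartic, quadratic, linear and constant coefficients (the cubic coefficient vanishes on both sides by inspection of $G$). The identity is equivalent to the statement that $y_0 = \alpha z_0$, so that the DGH threshold $N<2y_0+2$ translates exactly into the hypothesis $N<2\alpha z_0+2$ of Theorem~\ref{1.8}.

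It then remains to check that the hypotheses of \cite[Thm.~6]{dupaignegherguhajlaoui2025} are implied by ours. Proposition~\ref{1.7}(i) guarantees that under $p\ge q>\tfrac{1}{3}$ and $p>\tfrac{q+1}{3q-1}$ one has $pq>1$ and that $z_0$ is a genuine real root of $G$ strictly greater than $(q+1)/2+1/\alpha$; in particular $z_0>0$, so $y_0=\alpha z_0>0$ is indeed the largest real root of $P$ and the strict inequality $N<2\alpha z_0+2$ is meaningful. Combining these observations, the conclusion follows directly from \cite[Thm.~6]{dupaignegherguhajlaoui2025}. The main obstacle is purely algebraic — the coefficient verification of the identity $P=\alpha^{4}G(\cdot/\alpha)$ — and no new analytic input is required beyond that of \cite{dupaignegherguhajlaoui2025}; the payoff is that the explicit factorization through $G$ makes the quantitative improvement over previous dimensional thresholds transparent via Proposition~\ref{1.7}.
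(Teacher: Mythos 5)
Your proposal is correct and follows essentially the same route as the paper: Theorem \ref{1.8} is obtained there precisely by invoking \cite[Thm.\,6]{dupaignegherguhajlaoui2025} (via its proof) together with the identity $P=\alpha^{4}G\left(\frac{\cdot}{\alpha}\right)$, so that the threshold $N<2y_{0}+2$ becomes $N<2\alpha z_{0}+2$, with Proposition \ref{1.7}\,(i) supplying $pq>1$ and the negativity of $G$ on $\left[\frac{q+1}{2},z_{0}\right)$ under the weakened hypotheses $p\ge q>\frac{1}{3}$, $p>\frac{q+1}{3q-1}$. The only caveat is that the cited theorem may be stated under $p\ge q\ge 1$, so, as the paper indicates, one should confirm by examining its proof that only $pq>1$ and the negativity of $P$ on the relevant interval are used --- which is exactly the information Proposition \ref{1.7}\,(i) provides.
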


	In alignment with Theorem \ref{1.6}, we also establish the following result. Recall that a function \( w \in C(\mathbb{R}^N) \) is referred to as polynomially bounded if there exist \( K, L > 0 \) such that
	\[
	|w(x)| \leq K |x|^L, \quad \f |x| \gg 1.
	\]
	
\begin{theorem}\label{1.9}
	Let $N \ge 3,p \ge q >0, pq>1$ and $a,b >-2.$ Assume $(p,q,a,b)$ is subcritical, and \eqref{eq1.3} holds provided $N \ge 4$. Additionally, suppose $(u,v)$ is a nonnegative solution of \eqref{HLE} that either is polynomially bounded or satisfies $|x|^{b}u^{q+1} + |x|^{a}v^{p+1} \in L^{1}(\RR^N)$. If there exist $R_{4}>R_{1}>0,s>0$ and $C_{0}>0$ such that $N-s\beta<1$ and
	\begin{equation}\label{eq1.8}
		\int_{B_{R_{4}} \setminus B_{R_{1}}} v_{R}^s \le C_{0}, \quad  \f R \gg 1,
	\end{equation}
	then $u \equiv v \equiv 0$ in $\RR^N.$ In particular, condition \eqref{eq1.8} holds if $s=p$ and $\alpha>N-3$.
\end{theorem}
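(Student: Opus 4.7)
The plan is to invoke Theorem~\ref{1.6}, which reduces matters to verifying the scale-invariant energy bound $\int_{B_{R_4'}\setminus B_{R_1'}}(u_R^{q+1}+v_R^{p+1})\le C$ for every $R\gg 1$ on a possibly shrunken annulus $B_{R_4'}\setminus B_{R_1'}\Subset B_{R_4}\setminus B_{R_1}$. Since \eqref{eq1.8} supplies only an $L^s$-bound on $v_R$, the central task is to upgrade this integrability using the coupled structure of \eqref{HLE}.

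To carry out the upgrade, I would fix a smooth cutoff $\varphi\ge 0$ supported in $B_{R_4}\setminus B_{R_1}$ and equal to $1$ on $B_{R_4'}\setminus B_{R_1'}$, and deploy a Souplet-type test-function scheme: multiply $-\Delta u=|x|^a v^p$ by $v^\theta\varphi^m$ and $-\Delta v=|x|^b u^q$ by $u^\mu\varphi^m$ for suitably chosen exponents $\theta,\mu,m$, integrate by parts, and combine via Young's inequality to absorb the resulting cross gradient terms. The output is an inequality of the form
\[
\int\varphi^m\bigl(|x|^a v^{p+1}+|x|^b u^{q+1}\bigr)\le C\int v^s\bigl(|\nabla\varphi|^2\varphi^{m-2}+\varphi^{m-1}|\Delta\varphi|\bigr),
\]
which, after the change of variables $x\mapsto x/R$, is controlled by $\int v_R^s\le C_0$; the role of the condition $N-s\beta<1$ is precisely to keep the rescaled cutoff remainders bounded as $R\to\infty$. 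The hypothesis $p\ge q$ together with \eqref{eq1.3} (when $N\ge 4$) enters at the Young's inequality step, balancing exponents so that the cross gradient terms can indeed be absorbed. The polynomial growth or $L^1$-hypothesis justifies the integration by parts and ensures finiteness of all intermediate integrals.

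For the \emph{in particular} assertion, I would first verify by direct algebra that $\alpha>N-3\iff N-p\beta<1$, so that the main statement applies with $s=p$. To produce \eqref{eq1.8} itself for $s=p$, test $-\Delta u=|x|^a v^p$ against $|x|^{-a}\varphi$ for $\varphi$ a cutoff supported in an annulus around $|x|\sim R$. Using $\Delta|x|^{-a}=a(a+2-N)|x|^{-a-2}$ together with the scaling relation $p\tilde\beta=\tilde\alpha+2+a$, two integrations by parts give, after rescaling,
\[
\int_{B_{R_3}\setminus B_{R_2}}v_R^p\le C\int_{B_{R_4}\setminus B_{R_1}}u_R+(\text{cutoff remainders of the same scaling order}).
\]
The companion test of $-\Delta v=|x|^b u^q$ against $|x|^{-b}\psi$, combined with $q\tilde\alpha=\tilde\beta+2+b$, gives $\int u_R^q\le C\int v_R$. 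Chaining these two inequalities by Jensen and $pq>1$, with a priori finiteness supplied by the polynomial growth or $L^1$-hypothesis, yields the required uniform bound on $\int v_R^p$.

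The main obstacle is the first step: choosing $\theta,\mu,m$ so that Young's inequality absorbs the cross gradient terms simultaneously, tracking the weighted factors $|x|^a,|x|^b$ correctly, and ensuring the exponents close at $L^{q+1}$ and $L^{p+1}$. The combinatorics parallel (and build upon) those of Theorem~\ref{1.6}, with the hypothesis $p\ge q$ (and \eqref{eq1.3} when $N\ge 4$) playing the exponent-balancing role, and $N-s\beta<1$ controlling the scaling of cutoff remainders in the rescaled picture.
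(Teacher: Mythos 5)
There is a genuine gap at the heart of your plan: the claimed test-function inequality
\[
\int\varphi^m\bigl(|x|^a v^{p+1}+|x|^b u^{q+1}\bigr)\le C\int v^s\bigl(|\nabla\varphi|^2\varphi^{m-2}+\varphi^{m-1}|\Delta\varphi|\bigr)
\]
cannot be produced by multiplying the two equations by powers of $u,v$ times cutoffs and absorbing via Young. Such a purely local scheme only relates quantities of matching scaling degree, and here the two sides scale differently (the left side scales like $R^{N-2-\tilde\alpha-\tilde\beta}$ on an annulus $|x|\sim R$, the right side like $R^{N-2-s\tilde\beta}$); since the allowed $s$ can be arbitrarily small subject to $N-s\beta<1$, no algebraic manipulation of the equations can climb from an $L^s$ bound on $v$ to $L^{q+1}$ control of $u$. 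The actual mechanism in the paper is analytic, not algebraic: from \eqref{eq1.8} and $\Delta u_R=-|x|^a v_R^p$ one gets $W^{2,s/p}$ control of $u_R$ (Lemma \ref{2.20}, Lemma \ref{2.2}), then Sobolev embedding on a well-chosen sphere $S_{\tilde R}$ of dimension $N-1$ (Lemma \ref{2.4}) and interpolation (Lemma \ref{3.20}); the hypothesis $N-s\beta<1$ is exactly the condition $\frac{N-1}{\alpha}<\bigl(\frac{s}{p}\bigr)_2^*$ that makes this interpolation close, not a condition on cutoff remainders. Moreover $p\ge q$ and \eqref{eq1.3} enter through the pointwise comparison \eqref{eq2.2} of Lemma \ref{2.3} (needed to convert control of $u$ into control of the $v^{p+1}$ part of the Pohozaev bound), not through exponent balancing in Young's inequality.

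A second structural problem: even after the upgrade, one does not obtain the absolute bound required by Theorem \ref{1.6}; one only obtains a sublinear feedback bound of the form $\int_{S_{\tilde R}}u_R^{q+1}\le C\,F_R(R_4)^{\sigma}+C$ with $\sigma<1$. Closing the loop then requires the refined Theorem \ref{3.2} together with the doubling-type condition \eqref{eq3.75} along a sequence $r_i\to+\infty$, and it is precisely here that the polynomial-boundedness or $L^1$ hypothesis is essentially used (Remark \ref{3.26}) --- not merely to ``justify integration by parts,'' as your proposal suggests. Your treatment of the ``in particular'' statement is closer to the mark: $\alpha>N-3$ is indeed equivalent to $N-p\beta<1$, and \eqref{eq1.8} with $s=p$ follows from the scale-invariant bound $\int_{B_R}|x|^a v^p\le CR^{N-2-\tilde\alpha}$ (Lemma \ref{2.2}\,(i)); but your proposed chaining $\int v_R^p\lesssim\int u_R$, $\int u_R^q\lesssim\int v_R$ via Jensen needs $p,q\ge1$ to run in the stated direction, whereas the paper's Lemma \ref{2.2} handles $q<1$ by a Moser-type iteration (and, for this case, the paper in fact bypasses \eqref{eq1.8} altogether via Theorem \ref{3.27}).
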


	 It is worth pointing out that Theorem \ref{1.9} generalizes the results of Phan {\cite[Thm.\,1.1]{zbMATH06121773}}, Ze--Huang {\cite[Thm.\,1.1]{zbMATH07024017}}, and Souplet {\cite[Thm.\,1 and Thm.\,2]{zbMATH05563881}} (note Rem.\,1.1\,(a) therein). Our proof, however, adopts a different approach from the previous ones.
	
	 We now state our Liouville-type theorems for system \eqref{HLE} in the supercritical case. Notably, part (iii) strengthens the results of \cite[Thm.\,1.1 and Thm.\,1.7]{dupaignegherguhajlaoui2025} concerning solutions that are stable outside a compact set, when combined with Theorem \ref{1.4}\,(i) and Proposition \ref{1.7}.

\begin{theorem}\label{1.10}
	Let $N\ge 3, p\ge q>0, pq>1$ and $a,b>-2$. Assume $(p,q,a,b)$ is supercritical, i.e.
	\begin{equation*}
		\dfrac{N+a}{p+1}+\dfrac{N+b}{q+1} < N-2,
	\end{equation*}
	and \eqref{eq1.3} is satisfied. \\
		(i) Then \eqref{HLE} admits no positive solution $(u,v)$ satisfying $u \in L^{\frac{N}{\tilde{\alpha}}}(\RR^N)$ and
		\[ u=O(|x|^{-\tilde{\alpha}}),\quad |x| \ra +\oo.\]
		(ii) If \eqref{LE} admits no bounded positive solution in $\left( C^{\oo}(\RR^N) \right)^{2}$, then \eqref{HLE} admits no positive solution $(u,v)$ satisfying $u \in L^{\frac{N}{\tilde{\alpha}}}(\RR^N)$. Particularly, the requirement is satisfied if \((p, q)\) is subcritical, and either \(N \leq 4\) or \(\alpha > N - 3\). \\
		(iii) If \eqref{LE} admits no bounded positive solution in $\left( C^{\oo}(\RR^N) \right)^{2}$ that is stable in every bounded open subset of $\RR^N$, then \eqref{HLE} admits no positive solution $(u,v)$ that is both stable outside a compact set and satisfies $u \in L^{\frac{N}{\tilde{\alpha}}}(\RR^N)$. In particular, the requirement is satisfied if $(p,q)$ is subcritical, or if $N < 2\alpha z_{0} + 2, p \ge q>\frac{1}{3}$ and $ p> \frac{q+1}{3q-1}$, where $z_{0}$ is as defined in Theorem \ref{1.4}\,(iii).
\end{theorem}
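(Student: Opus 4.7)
The plan is to establish part (i) directly by a Pohozaev-type identity on spherical annuli, then to deduce (ii) and (iii) from (i) via a doubling/blow-up argument that sends the H\'enon system to the unweighted Lane--Emden system \eqref{LE}. For part (i), I would first upgrade $u = O(|x|^{-\tilde\alpha})$ to the matching decay $v = O(|x|^{-\tilde\beta})$, $|\nabla u| = O(|x|^{-\tilde\alpha - 1})$, $|\nabla v| = O(|x|^{-\tilde\beta - 1})$ by representing $v$ through the Newtonian kernel (using the tailor-made identity $b - q\tilde\alpha = -\tilde\beta - 2$; here $\tilde\beta < N-2$ follows from supercriticality together with $\tilde\alpha>0$), then invoking $C^{1,\gamma}$ estimates on dyadic shells. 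Next, I would apply the Pohozaev identity for \eqref{HLE} on $B_R \setminus B_r$ and combine it with the integration-by-parts relations $\int \nabla u \cdot \nabla v = \int |x|^a v^{p+1} + \text{bdry} = \int |x|^b u^{q+1} + \text{bdry}$; using the algebraic identity $1 - \tfrac{1}{p+1} - \tfrac{1}{q+1} = \tfrac{pq-1}{(p+1)(q+1)}$, everything reduces to
\[ \Bigl[(N-2) - \tfrac{N+a}{p+1} - \tfrac{N+b}{q+1}\Bigr] \int_{B_R \setminus B_r} \nabla u \cdot \nabla v \, dx = \mathcal{B}(R) - \mathcal{B}(r), \]
where $\mathcal{B}(\rho)$ denotes the standard surface terms on $\partial B_\rho$. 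In the supercritical regime the bracketed coefficient is strictly positive, so once $\mathcal{B}(R_k), \mathcal{B}(r_k) \to 0$ along well-chosen sequences $R_k \to \infty$ and $r_k \downarrow 0$, the bulk must vanish, forcing $v \equiv 0$ and hence $u \equiv 0$.

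The boundary control is the technical heart of part (i). Pointwise decay alone is insufficient in the supercritical regime: the surface integrals on $\partial B_R$ scale like $R^{N-2-\tilde\alpha-\tilde\beta}$, which grows. I would exploit $u \in L^{N/\tilde\alpha}(\RR^N)$ through a Fubini argument on $g(\rho) := \int_{\partial B_\rho} u^{N/\tilde\alpha}\,dS$, which is integrable on $(0, \infty)$ and thus furnishes sequences $R_k \to \infty$ and $r_k \downarrow 0$ along which $R_k g(R_k), r_k g(r_k) \to 0$. Combining H\"older's inequality with the pointwise decay then kills the boundary terms along these radii, with local regularity at the isolated singularity handling the inner boundary.

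For part (ii), I argue by contradiction: if $(u,v)$ is a positive solution with $u \in L^{N/\tilde\alpha}$ but $|x|^{\tilde\alpha} u(x)$ fails to tend to zero along some $|x_k| \to \infty$, a Pol\'a\v{c}ik--Quittner--Souplet doubling lemma produces scales $\lambda_k \downarrow 0$ centered at such $x_k$, and I would rescale via
\[ U_k(y) = \lambda_k^{\alpha}\, |x_k|^{(a+bp)/(pq-1)}\, u(x_k + \lambda_k y), \qquad V_k(y) = \lambda_k^{\beta}\, |x_k|^{(b+aq)/(pq-1)}\, v(x_k + \lambda_k y), \]
the exponents being chosen so that the limiting system for $(U_k, V_k)$ reduces to \eqref{LE} (since $|x_k|/\lambda_k \to \infty$, the rescaled H\'enon weight $|x_k + \lambda_k y|^a / |x_k|^a$ tends to $1$ uniformly on compact sets). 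The doubling normalization keeps $U_k$ bounded with $U_k(0) = 1$, so standard elliptic compactness produces a bounded positive $C^\infty$ solution of \eqref{LE}, contradicting the hypothesis; hence $u = O(|x|^{-\tilde\alpha})$ and part (i) applies. Part (iii) runs the same pipeline, with the observation that stability outside a compact set is inherited by the blow-up: for large $k$ the center $x_k$ lies outside the fixed stability set, and the stabilizing supersolutions $(\xi,\zeta)$, rescaled with the same exponents as $(u,v)$, deliver in the limit a positive Lane--Emden solution stable in every bounded open subset of $\RR^N$, again contradicting the hypothesis. The ``in particular'' clauses of (ii) and (iii) then follow from Theorem~\ref{1.4}\,(iii), Theorem~\ref{1.8}, and Proposition~\ref{1.7}.

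The principal obstacle I anticipate is the boundary control in part (i): since the Pohozaev boundary quantities grow pointwise in $R$ in the supercritical case, convergence must be extracted from the scale-invariant integrability along cleverly chosen radii rather than from pointwise decay alone. A secondary technicality is identifying the correct exponents in the blow-up for (ii)--(iii) so that the two H\'enon weights $|x|^a$ and $|x|^b$ are absorbed simultaneously into the rescaling, since the single normalization $U_k(0) = 1$ from doubling does not, by itself, dispose of both weights.
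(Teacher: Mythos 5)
The decisive step of part (i) — killing the Pohozaev boundary terms — does not work as proposed, and this is a genuine gap. With only the rates $u=O(|x|^{-\tilde\alpha})$, $v=O(|x|^{-\tilde\beta})$, $|Du|=O(|x|^{-\tilde\alpha-1})$, $|Dv|=O(|x|^{-\tilde\beta-1})$, every surface term in Lemma \ref{2.12} is of size $R^{N-2-\tilde\alpha-\tilde\beta}$, which grows in the supercritical regime, and your Fubini selection of good radii from $u\in L^{N/\tilde\alpha}$ cannot repair this: on a good radius you only gain an unquantified factor $o(1)$ on the terms involving $u$ itself (indeed $R^{b+1}\int_{S_R}u^{q+1}\lesssim o(1)\,R^{N-2-\tilde\alpha-\tilde\beta}$, since $\tilde\alpha(q+1)=\tilde\alpha+\tilde\beta+b+2$, and $o(1)$ times a polynomially growing quantity need not vanish), and you gain nothing at all on $R^{a+1}\int_{S_R}v^{p+1}$, $\int_{S_R}(|Du|\,v+u\,|Dv|)$ and $R\int_{S_R}|Du||Dv|$, which the $L^{N/\tilde\alpha}$ information on $u$ does not see. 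Choosing radii good for several annulus averages simultaneously does not help either, because in the supercritical case the annulus energies themselves scale like $R^{N-2-\tilde\alpha-\tilde\beta}$. The missing idea (this is Theorem \ref{4.4} of the paper) is a decay bootstrap: first, $u\in L^{N/\tilde\alpha}$ together with Harnack's inequality upgrades $u=O(|x|^{-\tilde\alpha})$ to $u=o(|x|^{-\tilde\alpha})$; then, using the comparison inequality \eqref{eq2.2} of Lemma \ref{2.3} — which is exactly where hypothesis \eqref{eq1.3} enters, and which your part (i) never uses — one gets $-\Delta u\le \varepsilon^2|x|^{-2}u$ outside a large ball, and a maximum-principle comparison with the homogeneous solutions $|x|^{-m_1},|x|^{-m_2}$ of the Hardy equation yields $u=O(|x|^{-N+2+\varepsilon})$, hence $v=o(|x|^{-\tilde\beta})$, $|Dv|=o(|x|^{-\tilde\beta-1})$, $|Du|=O(|x|^{-N+1+\varepsilon})$. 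Only with these improved rates do all boundary terms in Lemma \ref{2.12} tend to zero (using $q>\frac{b+2}{N-2}$ and \eqref{eq2.2} to dominate the $v^{p+1}$ term), after which the two choices $c_1=\frac{N+b}{q+1}$ and $c_1=N-2-\frac{N+a}{p+1}$ give the contradiction. Incidentally, your preliminary step deriving $v=O(|x|^{-\tilde\beta})$ from the Newtonian representation also needs an argument that the harmonic remainder vanishes; in the paper this decay is immediate from \eqref{eq2.2}.

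Parts (ii) and (iii) of your plan are essentially the paper's route (a doubling/rescaling argument reducing to the Lane--Emden system, as in Propositions \ref{3.25} and \ref{3.5}, followed by part (i)), so no objection in principle; but note two points. The blow-up must be performed where $|x|^{\tilde\alpha}u$ is \emph{unbounded} (not merely ``fails to tend to zero''), since part (i) only requires the $O$-bound; and for (iii) the passage of stability to the limit is not automatic from ``rescaling the supersolutions'': one must first replace them by exact positive solutions of the linearized system on large balls (Proposition \ref{3.24}), normalize them (e.g. by their maximum on $B_R$), and use elliptic compactness, as in Step 3 of the proof of Proposition \ref{3.25}; otherwise the rescaled pair may degenerate to $(0,0)$ or blow up. These are repairable, but the gap in part (i) is structural: without the bootstrap to almost-$(N-2)$ decay, the supercritical Pohozaev argument cannot be closed.
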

\begin{remark}
	Building on the result established by Bidaut-Veron and Giacomini {\cite[Thm.\,1.4]{zbMATH05907486}}, the restriction \(u \in L^{\frac{N}{\tilde{\alpha}}}(\mathbb{R}^N)\) in Theorem \ref{1.10}\,(ii) is indispensable. For further conditions under which Theorem \ref{1.10}\,(ii) is applicable, see {\cite[pp. 1411--1412]{zbMATH05563881}}.
\end{remark}

\subsection{Structure of the paper}

	In Section \ref{sec.2}, we introduce the notations and conventions that will be used throughout the paper. Section \ref{sec.3} addresses the subcritical case, starting with the proof of Theorem \ref{1.6} and then applying it to derive Theorem \ref{1.4}. In the course of this discussion, we also prove Proposition \ref{1.7}, after which we move on to prove Theorem \ref{1.9}. In Section \ref{sec.4}, we turn our attention to the supercritical case, concluding with the proof of Theorem \ref{1.10}. Finally, Appendix \ref{AA} includes several basic inequalities used in this work.

\section{Notations and Conventions}\label{sec.2}
	For $p,q>0,a,b \in \RR,$ the pair $(p,q)$ is called subcritical (supercritical) if
	\[ \frac{N}{p+1} + \frac{N}{q+1} > (<)\, N-2,\]
	and the quadruple $(p,q,a,b)$ is called subcritical (supercritical) if
	\[\dfrac{N+a}{p+1}+\dfrac{N+b}{q+1} > (<)\, N-2.\]
	It is important to note that if $pq>1$, then $(p,q,a,b)$ is subcritical (supercritical) iff
	\begin{align}\label{eq2.1}
		\begin{aligned}
			\tilde{\alpha} + \tilde{\beta} > (<)\, N-2.
		\end{aligned}
	\end{align}
	If $pq \neq 1,$ then let us define
\begin{align*}
	\alpha=\dfrac{2(p+1)}{pq-1},\quad \beta=\dfrac{2(q+1)}{pq-1},\quad  \tilde{\alpha}=\dfrac{2(p+1)+a+pb}{pq-1} , \quad
	\tilde{\beta}=\dfrac{2(q+1)+b+qa}{pq-1}.
\end{align*}
	It is easily seen that
\begin{equation*}
	p\beta = \alpha +2,\quad
	q\alpha = \beta +2,\quad
	p\tilde{\beta} = \tilde{\alpha} + a + 2,\quad
	q\tilde{\alpha} = \tilde{\beta} + b + 2.
\end{equation*}

	We say that $(u,v) \in \left( C^2(\RR^N \!\setminus\! \{0\}) \cap C(\RR^N) \right)^2$ is a solution of $\eqref{HLE}$ if it satisfies \eqref{HLE} pointwise in $\RR^N \!\setminus\! \{0\}.$  For $R>0$ and $x \in \RR^N$, we write $(u_R(x),v_R(x))=(R^{\tilde{\alpha}} u(Rx), R^{\tilde{\beta}} v(Rx)) .$ A function $w \in C(\RR^N)$ is called polynomially bounded, if there exist $K,L >0$ such that
	\[|w(x)| \le K|x|^L, \quad \f |x| \gg 1.\]

	Let us write
	\[B_R=\left\{x \in \mathbb{R}^N: |x|<R\right\}, \quad  S_R=\left\{x \in \mathbb{R}^N: |x|=R\right\}, \quad R>0.\]
	For $w \in C(\RR^N \!\setminus\! \{0\}) ,(R,\theta) \in (0,+\oo) \times S_{1} $ and $k \in [1,+\oo],$ we shall write
	\begin{align*}
		\begin{aligned}
		 	w(R,\theta) &= w(R\theta), \quad \ol{w}(R) = \dfrac{1}{|S_1|}\int_{S_1}w(R,\cdot) = \fint_{S_R} w , \\
		 \|w(R)\|_{k} &= \|w(R,\cdot)\|_{L^{k}(S_1)} = R^{-\frac{N-1}{k}} \|w\|_{L^k(S_R)},
		\end{aligned}
	\end{align*}
	where $|S_1|$ denotes the $(N-1)$-dimensional Hausdorff measure of $S_1$.

	Let $M$ be a $C^{\oo}$ Riemannian manifold with or without boundary. The Sobolev space on $M$ will be denoted by $W^{j,k}(M),$ where $j \in \NN_+$ and $ k \in [1,+\oo] ;$ see {\cite[Def.\,2.3]{zbMATH01179494}} for the definition of Sobolev spaces. For $1 \le i \le j$ and $w \in W^{j,k}(M),$ we denote by $D_M^{\,i} w$ the $i$-th covariant derivative of $w.$

\section{The subcritical case}\label{sec.3}
	
\subsection{Proof of Theorem \ref{1.6}}

	We begin by introducing two integral estimates that are crucial for the proof of Theorem \ref{1.6}. Firstly, we present the following lemma, which shows that the integral of a function over an annulus can control its integral over a corresponding sphere.

\begin{lemma}\label{2.30}
	Let $N \in \NN_+$ and $R_{3}>R_{2}>0.$ Assume $m \in \NN_+, k_i \in  [1,+\oo)$ and $ w_i \in C(\RR^N \!\setminus\! \{0\}),$ where $i=1,2,\cdots,m.$ Then for every $R>0,$ there exists $\tilde{R} \in (R_{2}R,R_{3}R)$ such that
	\[ \|w_i(\tilde{R})\|_{k_i}^{k_i}  \le  R_{2}^{-N} R_{3} (R_{3}-R_{2})^{-1} (m+1) R^{-N} \|w_i\|_{L^{k_i}( B_{R_{3}R} \setminus B_{R_{2}R} )}^{k_i} , \quad \f i.\]
\end{lemma}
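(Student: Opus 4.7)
The plan is to prove the lemma by a pigeonhole/Chebyshev-type argument on the radial variable. First I would rewrite the annular integral via polar coordinates: for each $i$,
\[ \int_{B_{R_{3}R}\setminus B_{R_{2}R}} |w_i|^{k_i} \;=\; \int_{R_{2}R}^{R_{3}R} r^{N-1}\,\|w_i(r)\|_{k_i}^{k_i}\,dr. \]
Since $r^{N-1}\ge (R_{2}R)^{N-1}$ on the interval $(R_{2}R,R_{3}R)$, this immediately gives
\[ \int_{R_{2}R}^{R_{3}R} \|w_i(r)\|_{k_i}^{k_i}\,dr \;\le\; (R_{2}R)^{1-N}\,\|w_i\|_{L^{k_i}(B_{R_{3}R}\setminus B_{R_{2}R})}^{k_i}. \]

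Next I would set $C_{0}:=R_{2}^{-N}R_{3}(R_{3}-R_{2})^{-1}(m+1)$ and, for each $i$, define the exceptional set of radii
\[ E_i \;:=\; \Bigl\{\,r\in(R_{2}R,R_{3}R)\;:\;\|w_i(r)\|_{k_i}^{k_i} \;>\; C_{0}R^{-N}\,\|w_i\|_{L^{k_i}(B_{R_{3}R}\setminus B_{R_{2}R})}^{k_i}\,\Bigr\}. \]
(If $w_i\equiv 0$ on the annulus, then $E_i=\emptyset$ by continuity, so I may assume the weights are positive.) Chebyshev's inequality applied to the previous display yields
\[ |E_i| \;\le\; \frac{(R_{2}R)^{1-N}R^{N}}{C_{0}} \;=\; \frac{R\,R_{2}(R_{3}-R_{2})}{R_{3}(m+1)}. \]

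The final step is a measure count. Since there are $m$ such bad sets,
\[ \Bigl|\bigcup_{i=1}^{m} E_i\Bigr| \;\le\; \frac{m}{m+1}\cdot\frac{R_{2}}{R_{3}}\cdot(R_{3}-R_{2})R \;<\; (R_{3}-R_{2})R, \]
which is strictly less than the Lebesgue measure of $(R_{2}R,R_{3}R)$. Consequently the complement is nonempty, and any $\tilde R$ lying in $(R_{2}R,R_{3}R)\setminus\bigcup_i E_i$ satisfies the claimed inequality for every $i$ simultaneously.

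There is no real obstacle here; the only mildly delicate point is choosing the constant $C_{0}$ so that the two losses — the factor $R_{2}/R_{3}<1$ coming from replacing $r^{N-1}$ by its lower bound, and the factor $m/(m+1)<1$ from the union bound — combine to give a strict inequality, leaving room for the pigeonhole step. The stated constant $R_{2}^{-N}R_{3}(R_{3}-R_{2})^{-1}(m+1)$ is precisely what balances these two losses.
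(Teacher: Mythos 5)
Your argument is correct and is essentially the paper's proof: both are Chebyshev/pigeonhole arguments on the radial variable with exceptional sets $E_i$ whose total measure is forced below $(R_3-R_2)R$ by the factor $m/(m+1)$. The only cosmetic difference is that you absorb the Jacobian bound $r^{N-1}\ge (R_2R)^{N-1}$ up front and use a fixed threshold $C_0R^{-N}$, whereas the paper uses the $r$-dependent threshold $r^{-N}$ and inserts $\tilde R^{-N}\le (R_2R)^{-N}$ at the end; the constants agree.
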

\begin{proof}
	For every $i=1,2,\cdots,m,$ define
	\[E_i= \left\{r \in (R_{2}R,R_{3}R): \|w_i(r)\|_{k_i}^{k_i}  >  R_{3} (R_{3}-R_{2})^{-1} (m+1) r^{-N} \|w_i\|_{L^{k_i}(B_{R_{3}R} \setminus B_{R_{2}R} )}^{k_i} \right\}.\]
	Then by the coarea formula, we have
	\[\begin{aligned}
		\|w_i\|_{L^{k_i}(B_{R_{3}R} \setminus B_{R_{2}R})}^{k_i}
		&= \int_{R_{2}R}^{R_{3}R} \|w_i(r)\|_{{k_i}}^{k_i} r^{N-1} \md r \\
		& \ge \int_{E_i} \|w_i(r)\|_{{k_i}}^{k_i} r^{N-1} \md r\\
		&\ge \int_{E_i} R_{3} (R_{3}-R_{2})^{-1} (m+1) r^{-N} \|w_i\|_{L^{k_i}(B_{RR_{3}} \setminus B_{RR_{2}} )}^{k_i} r^{N-1} \md r\\
		&\ge  (R_{3}-R_{2})^{-1} (m+1) R^{-1} |E_i| \|w_i\|_{L^{k_i}(B_{R_{3}R} \setminus B_{R_{2}R} )}^{k_i} .
	\end{aligned}\]
	Hence $ |E_i| \le (R_{3}-R_{2}) (m+1)^{-1} R $ and
	\[ \left|\bigcup\limits_{i=1}^m  E_i\right| \le \sum\limits_{i=1}^{m} |E_i| \le  m (m+1)^{-1} (R_{3}-R_{2}) R < (R_{3}-R_{2}) R = |(R_{2}R,R_{3}R)|.\]
	Consequently, one can find $\tilde{R} \in (R_{2}R,R_{3}R) \setminus  \bigcup_{i=1}^m  E_i .$
	This $\tilde{R}$ is exactly what we need.
\end{proof}

	Conversely, if $(u,v)$ is a nonnegative solution of \eqref{HLE}, then we can use its integral over a sphere to control its integral over the corresponding ball.

\begin{lemma}[Rellich--Pokhozhaev type inequality, see for example  {\cite{zbMATH06973892}}]\label{2.13}
	Suppose that $N \ge 3,
	p,q>0,$ $pq>1,a,b>-2 $
	and $(u, v)$ is a nonnegative solution of \eqref{HLE}. Then for every $R>0$ and $c_1,c_2 \in \mathbb{R}$ with $c_1+c_2=N-2,$ there holds
	\[
	\begin{aligned}
		& \left(\dfrac{N+b}{q+1}-c_1\right) \int_{B_R}|x|^b u^{q+1}
		+\left(\dfrac{N+a}{p+1}-c_2\right) \int_{B_R}|x|^a v^{p+1} \\
		\leq &\, \dfrac{R^{b+1}}{q+1} \int_{S_R} u^{q+1}
		+\dfrac{R^{a+1}}{p+1} \int_{S_R} v^{p+1}
		+\frac{c_1-c_2}{2} \int_{S_R}\left(\dfrac{\partial v}{\partial \nu}u-\dfrac{\partial u}{\partial \nu} v\right) .
	\end{aligned}
	\]
\end{lemma}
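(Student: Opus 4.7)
My plan is a Rellich--Pokho\v{z}aev--type integration by parts: pair each equation with a first-order multiplier depending on the other unknown, integrate over $B_{R}$, sum, and discard one non-positive boundary residual. Concretely, I would multiply $-\Delta u=|x|^{a}v^{p}$ by $\phi_{1}:=x\cdot\nabla v+c_{2}v$, multiply $-\Delta v=|x|^{b}u^{q}$ by $\phi_{2}:=x\cdot\nabla u+c_{1}u$, and integrate both on the punctured ball $B_{R}\setminus B_{\varepsilon}$. The limit $\varepsilon\downarrow 0$ is harmless: since $u,v\in C(\mathbb{R}^{N})$, $a,b>-2$, and standard elliptic regularity at a weak singularity controls $\nabla u,\nabla v$ near the origin, the inner-boundary contributions on $S_{\varepsilon}$ vanish.

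For the source sides I would use $|x|^{a}v^{p}(x\cdot\nabla v)=(p+1)^{-1}|x|^{a}x\cdot\nabla v^{p+1}$ together with $\operatorname{div}(|x|^{a}x)=(N+a)|x|^{a}$ and the divergence theorem to obtain
\[
\int_{B_{R}}|x|^{a}v^{p}(x\cdot\nabla v)=-\tfrac{N+a}{p+1}\!\int_{B_{R}}\!|x|^{a}v^{p+1}+\tfrac{R^{a+1}}{p+1}\!\int_{S_{R}}\!v^{p+1},
\]
and the analogous identity for the second equation. On the left-hand sides, Green's formula together with the algebraic cross-term identity
\[
\nabla u\cdot\nabla(x\cdot\nabla v)+\nabla v\cdot\nabla(x\cdot\nabla u)=2\,\nabla u\cdot\nabla v+x\cdot\nabla(\nabla u\cdot\nabla v),
\]
and one further integration by parts on $x\cdot\nabla(\nabla u\cdot\nabla v)$ collapses the cross contribution to $(2-N)\int_{B_{R}}\nabla u\cdot\nabla v+R\int_{S_{R}}\nabla u\cdot\nabla v$, while the zeroth-order multipliers $c_{2}v,c_{1}u$ add $(c_{1}+c_{2})\int_{B_{R}}\nabla u\cdot\nabla v$. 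Because $c_{1}+c_{2}=N-2$, the bulk $\nabla u\cdot\nabla v$ integrals cancel identically, and collecting the remaining terms yields the Pohozaev-type equality
\begin{align*}
&\Bigl(\tfrac{N+b}{q+1}-c_{1}\Bigr)\!\int_{B_{R}}\!|x|^{b}u^{q+1}+\Bigl(\tfrac{N+a}{p+1}-c_{2}\Bigr)\!\int_{B_{R}}\!|x|^{a}v^{p+1}\\
&\quad=\tfrac{R^{b+1}}{q+1}\!\int_{S_{R}}\!u^{q+1}+\tfrac{R^{a+1}}{p+1}\!\int_{S_{R}}\!v^{p+1}+c_{2}\!\int_{S_{R}}\!v\,\tfrac{\partial u}{\partial \nu}+c_{1}\!\int_{S_{R}}\!u\,\tfrac{\partial v}{\partial \nu}\\
&\qquad+R\!\int_{S_{R}}\!\Bigl(\tfrac{\partial u}{\partial \nu}\tfrac{\partial v}{\partial \nu}-\nabla_{T}u\cdot\nabla_{T}v\Bigr),
\end{align*}
where $\nabla_{T}$ denotes the tangential gradient on $S_{R}$.

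To finish I would use $c_{1}+c_{2}=N-2$ to split
\[
c_{2}\,v\,\tfrac{\partial u}{\partial \nu}+c_{1}\,u\,\tfrac{\partial v}{\partial \nu}=\tfrac{N-2}{2}\,\tfrac{\partial(uv)}{\partial \nu}+\tfrac{c_{1}-c_{2}}{2}\Bigl(u\,\tfrac{\partial v}{\partial \nu}-v\,\tfrac{\partial u}{\partial \nu}\Bigr);
\]
the antisymmetric piece matches exactly the last term of the claim, so the inequality "$\le$" reduces to showing that the residual
\[
\mathcal{E}(R):=R\!\int_{S_{R}}\!\Bigl(\tfrac{\partial u}{\partial \nu}\tfrac{\partial v}{\partial \nu}-\nabla_{T}u\cdot\nabla_{T}v\Bigr)+\tfrac{N-2}{2}\!\int_{S_{R}}\!\tfrac{\partial(uv)}{\partial \nu}
\]
is non-positive. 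This is the only delicate point and I expect it to be the main obstacle: unlike the scalar Pohozaev identity, $\mathcal{E}(R)$ involves a mixed quadratic form in $(\partial_{\nu}u,\partial_{\nu}v,\nabla_{T}u,\nabla_{T}v)$ with no \emph{a priori} sign, and one must use structural properties of the system. My first approach would be to integrate by parts on $S^{N-1}$ to replace $\int_{S_{R}}\nabla_{T}u\cdot\nabla_{T}v$ by $-R^{-2}\int_{S_{R}}u\,\Delta_{S^{N-1}}v$, substitute the polar decomposition $\Delta=\partial_{r}^{2}+\tfrac{N-1}{r}\partial_{r}+\tfrac{1}{r^{2}}\Delta_{S^{N-1}}$ together with the equations $-\Delta u=|x|^{a}v^{p}$, $-\Delta v=|x|^{b}u^{q}$, and then invoke $u,v\ge 0$ and $a,b>-2$ to close the sign. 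Should this pointwise-in-$R$ argument fail, the fallback is the explicit manipulation in \cite{zbMATH06973892}, which realizes $\mathcal{E}(R)$ as the surface flux of a manifestly non-positive bulk vector field.
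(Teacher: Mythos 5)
Your integration by parts is correct and reproduces exactly the identity that the paper records as Lemma \ref{2.12} (quoted from Phan), and your symmetric/antisymmetric split of the boundary terms is the right reduction: since $2u_\nu v_\nu-Du\cdot Dv=u_\nu v_\nu-\nabla_Tu\cdot\nabla_Tv$, Lemma \ref{2.13} is \emph{equivalent} (given the identity) to the single statement $\mathcal{E}(R)\le 0$ for every $R>0$, where
\[
\mathcal{E}(R)=R\int_{S_R}\bigl(\partial_\nu u\,\partial_\nu v-\nabla_Tu\cdot\nabla_Tv\bigr)+\frac{N-2}{2}\int_{S_R}\partial_\nu(uv).
\]
Note that the paper itself offers no proof of this lemma (it cites Li--Zhang), so the only mathematical content you must supply beyond Lemma \ref{2.12} is precisely this sign, and that is where your proposal has a genuine gap: you flag it as the delicate point but neither of your two suggestions can close it. Your ``first approach'' fails already algebraically: replacing $-R\int_{S_R}\nabla_Tu\cdot\nabla_Tv$ by $R\int_{S_R}u\bigl(-R^{-2}\Delta_{S^{N-1}}v\bigr)$ and using $-R^{-2}\Delta_{S^{N-1}}v=|x|^bu^q+v_{rr}+\tfrac{N-1}{R}v_r$ on $S_R$ produces the \emph{positive} term $R^{b+1}\int_{S_R}u^{q+1}$ plus the unsigned term $R\int_{S_R}u\,v_{rr}$, so nothing is gained. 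More fundamentally, no argument using only the equation and positivity on or near $S_R$ can work, because $\mathcal{E}(R)$ depends only on the first-order data of $(u,v)$ on $S_R$, and one can exhibit functions that are positive and superharmonic in a neighbourhood of $S_R$ (indeed solving an equation with nonnegative right-hand side there) for which $\mathcal{E}(R)>0$: start from $u=v=e^{\lambda x_1}$ with $\lambda$ large, which has $\mathcal{E}(R)>0$, and add $-c(|x|-R)^2$ with $c$ large, which restores superharmonicity near $S_R$ without changing the trace data. The inequality is a \emph{global} fact about functions that are positive and superharmonic on all of $\mathbb{R}^N$.

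Your fallback is also not accurate as described: writing $\mathcal{E}(R)$ as a bulk integral gives
\[
\mathcal{E}(R)=\int_{B_R}\Bigl[\bigl(x\cdot\nabla u+\tfrac{N-2}{2}u\bigr)\Delta v+\bigl(x\cdot\nabla v+\tfrac{N-2}{2}v\bigr)\Delta u\Bigr],
\]
and the integrand is \emph{not} pointwise nonpositive, since $x\cdot\nabla u+\tfrac{N-2}{2}u$ changes sign even for $u(x)=|x-y_0|^{2-N}$ (it equals $(N-2)|x-y_0|^{-N}\tfrac{|y_0|^2-|x|^2}{2}$); there is no manifestly nonpositive vector field to integrate. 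One argument that does close the sign is the Riesz--Liouville decomposition: since $u,v\ge0$ are superharmonic on $\mathbb{R}^N$, $N\ge3$, write $u=c_u+\Gamma*\mu$, $v=c_v+\Gamma*\nu$ with constants $c_u,c_v\ge0$ and $\mu=-\Delta u\ge0$, $\nu=-\Delta v\ge0$. By bilinearity and the bulk formula, $\mathcal{E}(R)=\tfrac{N-2}{2}\bigl(c_u\int_{B_R}\Delta v+c_v\int_{B_R}\Delta u\bigr)+\iint K_R(y,z)\,d\mu(y)\,d\nu(z)$, where, with $c_N>0$,
\[
K_R(y,z)=-c_N(N-2)\,|y-z|^{-N}\Bigl[\tfrac{|z|^2-|y|^2}{2}\,\mathbf{1}_{B_R}(y)+\tfrac{|y|^2-|z|^2}{2}\,\mathbf{1}_{B_R}(z)\Bigr];
\]
this kernel vanishes when $y,z$ lie on the same side of $S_R$ and is negative when exactly one of them lies in $B_R$, so $\mathcal{E}(R)\le0$ (after the $\varepsilon$-removal of the origin you describe and a routine Fubini justification, using $a,b>-2$). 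Until a step of this global type is supplied --- or the inequality is simply imported from the cited literature, as the paper does --- your proof is incomplete at its only decisive point.
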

\begin{remark}
	Under the assumptions of Lemma \ref{2.13}, if moreover $(p,q,a,b)$ is subcritical, then one can find $c_1,c_2 \in \mathbb{R}$ with $c_1+c_2=N-2$ such that
	\[\dfrac{N+b}{q+1}-c_1>0,\quad \dfrac{N+a}{p+1}-c_2>0.\]
	Thus it is easy to see that there exists $C=C(N,p,q,a,b)>0$ such that
	\begin{equation}\label{eq2.7}
		\int_{B_R}|x|^b u^{q+1} + |x|^a v^{p+1}
		\le C\int_{S_R} R^{b+1}u^{q+1} + R^{a+1}v^{p+1} + |Du|v + u|Dv|.
	\end{equation}
\end{remark}

	We are now in a position to prove Theorem \ref{1.6}.
	
\begin{proof}[\bf{Proof of Theorem \ref{1.6}.}]
	Fix $R_{2},R_{3}>0$ such that $R_{4}>R_{3}>R_{2}>R_{1}.$ Then for every sufficiently large $R>0$, by Lemma \ref{2.30}, there exists $\tilde{R} \in (R_{2},R_{3})$ such that
\begin{align}\label{eq3.62}
	\begin{aligned}
		\int_{S_{\tilde{R}}}  u_R^{q+1} \le C \int_{B_{R_{3}} \setminus B_{R_{2}}} u_R^{q+1} \le C  , \quad \int_{S_{\tilde{R}}}  v_R^{p+1} \le C \int_{B_{R_{3}} \setminus B_{R_{2}}} v_R^{p+1} \le C ,
	\end{aligned}
\end{align}
	and
\begin{align}\label{eq3.63}
	\begin{aligned}
		\|Du_R\|_{L^{1+\frac{1}{p}}(S_{\tilde{R}}) } \le C \|Du_R\|_{L^{1+\frac{1}{p}}(B_{R_{3}} \setminus B_{R_{2}})}, \quad
		\|Dv_R\|_{L^{1+\frac{1}{q}}(S_{\tilde{R}}) } \le C \|Dv_R\|_{L^{1+\frac{1}{q}}(B_{R_{3}} \setminus B_{R_{2}})},
	\end{aligned}
\end{align}
	where $C=C(N,p,q,a,b,R_{1},R_{4},C_0)>0.$ Combining \eqref{eq3.63}, Lemma \ref{2.1}\,(ii) and Lemma \ref{2.2}\,(ii), we must have
\begin{align}\label{eq3.64}
	\begin{aligned}
		\|Du_R\|_{L^{1+\frac{1}{p}}(S_{\tilde{R}}) } &\le
		C \|D u_R\|_{L^{1+\frac{1}{p}}(B_{R_{3}} \setminus B_{R_{2}})} \\
		&\le
		C \left(
		\|\Delta u_R\|_{L^{1+\frac{1}{p}} (B_{R_{4}} \setminus B_{R_{1}})}
		+\|u_R\|_{L^1(B_{R_{4}} \setminus B_{R_{1}})}
		\right)  \\
		&\le C \left( \| |x|^a v_R^{p} \|_{L^{1+\frac{1}{p}} (B_{R_{4}} \setminus B_{R_{1}})} + 1 \right) \\
		&\le C .
	\end{aligned}
\end{align}
	Likewise, it is easily seen that
	\begin{align}\label{eq3.65}
		\begin{aligned}
			\|Dv_R\|_{L^{1+\frac{1}{q}}(S_{\tilde{R}})} \le C.
		\end{aligned}
	\end{align}
	Using H\"{o}lder's inequality and \eqref{eq3.62}, \eqref{eq3.64} and \eqref{eq3.65}, we deduce that
\begin{align}\label{eq3.66}
	\begin{aligned}
		\int_{S_{\tilde{R}}} |Du_R|v_R  &\le  \|Du_R\|_{L^{1+\frac{1}{p}}(S_{\tilde{R}})} \|v_R\|_{L^{p+1}(S_{\tilde{R}})} \le  C,
	\end{aligned}
\end{align}
	and
\begin{align}\label{eq3.67}
	\begin{aligned}
		\int_{S_{\tilde{R}}} u_R|Dv_R| &\le  \|u_R\|_{L^{q+1}(S_{\tilde{R}})} \|Dv_R\|_{L^{1+\frac{1}{q}}(S_{\tilde{R}})}  \le  C.
	\end{aligned}
\end{align}
	Now from \eqref{eq2.7}, \eqref{eq3.62}, \eqref{eq3.66} and \eqref{eq3.67}, we see that
\begin{align}\label{eq3.68}
	\begin{aligned}
		F_R(R_{1})&= \int_{B_{R_{1}}} |x|^b u_R^{q+1} + |x|^a v_R^{p+1} \\
		&\le \int_{B_{\tilde{R}}}|x|^b u_R^{q+1} + |x|^a v_R^{p+1} \\
		&\le C \int_{S_{\tilde{R}}} u_R^{q+1} + v_R^{p+1}  + |Du_R|v_R + u_R|Dv_R| \\
		& \le C .
	\end{aligned}
\end{align}
	It is easy to check that
	\begin{align}\label{eq3.69}
		\begin{aligned}
			F_R(r) = F_1(Rr) R^{\tilde{\alpha}+\tilde{\beta}-N+2}, \quad \f r>0.
		\end{aligned}
	\end{align}
	From this and \eqref{eq3.68}, it follows that
	\[R^{\tilde{\alpha}+\tilde{\beta}-N+2} F_1(R_{1}R) = F_R(R_{1}) \le C, \quad \f R \gg 1,\]
	which leads to $F_1(R_{1}R) \ra 0$ as $R \ra +\oo$ by \eqref{eq2.1}. This completes the proof.	
\end{proof}

\subsection{Applications of Theorem \ref{1.6}}

\subsubsection{Proof of Theorem \ref{1.4}\,(i): The case $0<\min\,\{p,q\}<1$}

	Following the ideas of Mtiri--Ye {\cite{zbMATH07020411}}, we consider a more general system.

\begin{definition}\label{3.9}
	Let $N \in \NN_+, 0 < q < 1 < q^{-1} < p$ and $a,b \in \RR$. Assume $ \Omega \sub \RR^N$ is a bounded open set with $0 \not\in \ol{\Omega}.$ We say that $v \in W_{\rm{loc}}^{2,\frac{1}{q} + 1}(\Omega) \cap L_{\rm{loc}}^{p+1}(\Omega)$ is a local weak supersolution of
	\begin{align}\label{eq3.25}
		\begin{aligned}
			\Delta\left( |x|^{-\frac{b}{q}} |\Delta v|^{\frac{1}{q}-1}\Delta v \right) =  |x|^a |v|^{p-1} v \quad \operatorname{in}\  \Omega,
		\end{aligned}
	\end{align}
	if
	\begin{align}\label{eq3.26}
		\begin{aligned}
			\int_{\Omega} |x|^{-\frac{b}{q}} |\Delta v|^{\frac{1}{q}-1} \Delta v \Delta \eta \le  \int_{\Omega} |x|^a |v|^{p-1} v \eta, \quad \f \eta \in C_{c}^{2}(\Omega).
		\end{aligned}
	\end{align}
	It is said to be stable if
	\begin{align}\label{eq3.27}
		\begin{aligned}
			\frac{1}{q} \int_{\Omega} |x|^{-\frac{b}{q}} |\Delta v|^{\frac{1}{q}-1}  |\Delta \eta|^{2} \ge p \int_{\Omega} |x|^a |v|^{p-1} \eta^{2}, \quad \f \eta \in C_{c}^{2}(\Omega).
		\end{aligned}
	\end{align}
\end{definition}

\begin{remark}
	It is not difficult to check that if $v \in W_{\rm{loc}}^{2,\frac{1}{q} + 1}(\Omega) \cap L_{\rm{loc}}^{p+1}(\Omega) $ is a local weak supersolution of \eqref{eq3.25} that is stable, then for every $\eta \in W^{2,\frac{1}{q} + 1}(\Omega) \cap L^{p+1}(\Omega)$ with $\supp (\eta) \sub \Omega,$ both \eqref{eq3.26} and \eqref{eq3.27} still hold. Indeed, this can be easily seen by a standard mollified argument and H\"{o}lder's inequality.
\end{remark}

	Next, we establish a connection between the notion of stability for systems and that for scalar equations.

\begin{lemma}[See {\cite[Proof of Lem.\,2.1]{zbMATH07020411}} or {\cite[Lem.\,2.1]{dupaignegherguhajlaoui2025}}]\label{3.10}
	Let $0<\zeta \in W_{\rm{loc}}^{2,k}(\Omega) \cap C(\Omega) $ for some $k \in [1,+\oo]$ and $\eta \in W_{\rm{loc}}^{2,\oo}(\Omega).$ Then $\frac{\eta^{2}}{\zeta} \in W_{\rm{loc}}^{2,k}(\Omega). $ If in addition that $-\Delta \zeta \ge 0$ a.e. in $\Omega$, then
	\[ \Delta \zeta \Delta \left( \frac{\eta^{2}}{\zeta} \right) \le |\Delta \eta|^{2} \quad \mbox{a.e. in}\ \  \Omega.\]
\end{lemma}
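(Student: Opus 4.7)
The plan is to prove the inequality by a pointwise algebraic manipulation, relying on the fact that $\zeta$ is continuous and strictly positive—hence bounded below by a positive constant on every compact subset of $\Omega$—to make sense of all quotients by $\zeta$ and to secure the Sobolev regularity of $\eta^{2}/\zeta$. For the regularity claim, on each compact $K \Sub \Omega$ one has $\zeta \ge c > 0$, so $1/\zeta$ is bounded; formal differentiation gives
\[
\nabla\!\left(\frac{\eta^2}{\zeta}\right) = \frac{2\eta\nabla\eta}{\zeta} - \frac{\eta^2\nabla\zeta}{\zeta^2},
\]
and the second derivative of $\eta^2/\zeta$ is a sum of products of locally bounded factors (from $\eta \in W^{2,\infty}_{\rm{loc}}$ and powers of $1/\zeta$) with $\nabla\zeta$, the Hessian of $\zeta$, or $\nabla\zeta\otimes\nabla\zeta$, each of which lies in $L^k_{\rm{loc}}$ by the assumed Sobolev regularity of $\zeta$ together with Sobolev embedding; thus $\eta^2/\zeta \in W^{2,k}_{\rm{loc}}$.

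For the inequality, I would exploit the identity $\eta^2 = w\zeta$ with $w := \eta^2/\zeta$. Applying $\Delta$ and the product rule gives
\[
2|\nabla\eta|^2 + 2\eta\Delta\eta \;=\; \zeta\Delta w + 2\,\nabla w\cdot\nabla\zeta + w\Delta\zeta.
\]
Setting $A := -\Delta\zeta/\zeta \ge 0$ (using $\zeta > 0$ and $-\Delta\zeta \ge 0$ a.e.) and multiplying by $-A$, after substituting $w\zeta = \eta^2$, yields
\[
\Delta\zeta\,\Delta w \;=\; -2A|\nabla\eta|^2 + 2A\,\nabla w\cdot\nabla\zeta - 2A\eta\Delta\eta - A^2\eta^2.
\]
Substituting the explicit formula for $\nabla w$ collapses the first two terms into $-2A\bigl|\nabla\eta - (\eta/\zeta)\nabla\zeta\bigr|^2 \le 0$, while the elementary identity $(\Delta\eta + A\eta)^2 \ge 0$ rearranges to $-2A\eta\Delta\eta - A^2\eta^2 = (\Delta\eta)^2 - (\Delta\eta + A\eta)^2 \le (\Delta\eta)^2$. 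Adding these two bounds yields $\Delta\zeta\,\Delta w \le |\Delta\eta|^2$, as required.

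The main technical obstacle is justifying the above pointwise a.e. identities when $\zeta$ is only weakly twice differentiable. Once the regularity $\eta^2/\zeta \in W^{2,k}_{\rm{loc}}$ has been established, the chain and product rules in Sobolev spaces—applicable because $1/\zeta$ is locally bounded—ensure that the weak derivatives of $\eta^2/\zeta$ coincide a.e. with the formal expressions above, so each algebraic identity holds pointwise a.e. As a backup, one may mollify $\zeta$ to smooth $\zeta_n$; by continuity of $\zeta$ and uniform convergence $\zeta_n \to \zeta$ on compacts, the approximations stay bounded below by $c/2$ on $K$ eventually, and convolution preserves the inequality $-\Delta \zeta_n \ge 0$, so one may perform the manipulations smoothly and pass to the limit a.e.
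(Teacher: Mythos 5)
Your algebraic argument for the inequality is correct and is essentially the standard one (the paper itself gives no proof, only a citation; the cited proofs also expand $\Delta(\eta^{2}/\zeta)$, multiply by $-\Delta\zeta/\zeta\ge 0$, and complete squares). Indeed, writing $w=\eta^{2}/\zeta$ and $A=-\Delta\zeta/\zeta\ge 0$, your identity
\[
\Delta\zeta\,\Delta w=-2A|\nabla\eta|^{2}+2A\,\nabla w\cdot\nabla\zeta-2A\eta\Delta\eta-A^{2}\eta^{2}
\]
is correct, the first two terms collapse to $-2A\bigl|\nabla\eta-\tfrac{\eta}{\zeta}\nabla\zeta\bigr|^{2}\le 0$, and $-2A\eta\Delta\eta-A^{2}\eta^{2}=(\Delta\eta)^{2}-(\Delta\eta+A\eta)^{2}\le(\Delta\eta)^{2}$, which gives the claim.

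The one genuine gap is in your justification of the regularity statement $\eta^{2}/\zeta\in W^{2,k}_{\rm loc}(\Omega)$ (and hence of the validity of the formal product rule a.e.). The second derivatives of $\eta^{2}/\zeta$ contain the term $\eta^{2}\,\nabla\zeta\otimes\nabla\zeta/\zeta^{3}$, so you need $|\nabla\zeta|^{2}\in L^{k}_{\rm loc}$, i.e. $\nabla\zeta\in L^{2k}_{\rm loc}$. This does \emph{not} follow from the Sobolev embedding you invoke: $W^{1,k}_{\rm loc}\hookrightarrow L^{Nk/(N-k)}_{\rm loc}$, and $Nk/(N-k)\ge 2k$ only when $k\ge N/2$; for $1\le k<N/2$ the embedding is too weak (and this is exactly the regime relevant to the paper's application, where $k=2$ and $N\ge 5$ is allowed). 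The missing ingredient is an interpolation (Gagliardo--Nirenberg) inequality exploiting that $\zeta$ is continuous, hence locally bounded: on balls $B\Subset\Omega$,
\[
\|\nabla\zeta\|_{L^{2k}(B)}^{2}\le C\bigl(\|\zeta\|_{L^{\infty}(B')}\|D^{2}\zeta\|_{L^{k}(B')}+\|\zeta\|_{L^{\infty}(B')}^{2}\bigr),
\]
which gives $\nabla\zeta\in L^{2k}_{\rm loc}$ for every $k\in[1,+\infty]$ and repairs the regularity claim. The same estimate (applied to $\zeta_{n}$, whose $L^{\infty}$ norms are locally uniformly bounded and whose Hessians converge in $L^{k}_{\rm loc}$) is also what you need in your mollification backup, both to identify the weak derivatives of $\eta^{2}/\zeta$ with the formal expressions and to pass to the limit in the term $\nabla\zeta_{n}\otimes\nabla\zeta_{n}$; with that fix the rest of your argument goes through.
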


\begin{lemma}\label{3.14}
	Under the hypotheses of Definition \ref{3.9}, if moreover $(u,v) \in \left( C^{2}(\Omega) \right)^{2}$ is a positive solution of \eqref{HLE} in $\Omega$ that is stable in the weak sense, then $v \in C^{2}(\Omega)$ is a stable local weak supersolution of \eqref{eq3.25}.
\end{lemma}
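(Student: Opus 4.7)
The plan is to split the proof into (a) verifying that $v$ is a local weak supersolution of \eqref{eq3.25}—in fact with equality—and (b) deriving the scalar stability \eqref{eq3.27} from the system stability of $(u,v)$ by a Picone-type argument that combines both inequalities of Definition \ref{1.3}.

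For (a), since $(u,v) \in (C^2(\Omega))^2$ is positive with $0 \notin \overline{\Omega}$, the second equation of \eqref{HLE} yields $-\Delta v = |x|^b u^q > 0$ throughout $\Omega$. Hence $|\Delta v| = |x|^b u^q$ and a direct computation gives
\[ |x|^{-b/q}|\Delta v|^{1/q-1}\Delta v = -|x|^{-b/q}(-\Delta v)^{1/q} = -u. \]
Taking the Laplacian and invoking the first equation of \eqref{HLE} gives $\Delta(|x|^{-b/q}|\Delta v|^{1/q-1}\Delta v) = -\Delta u = |x|^a v^p$ pointwise in $\Omega$. The required integral identity \eqref{eq3.26} then follows by classical integration by parts against any $\eta \in C_c^2(\Omega)$, and the regularity $v \in W^{2,1/q+1}_{\mathrm{loc}}(\Omega) \cap L^{p+1}_{\mathrm{loc}}(\Omega)$ is automatic from $v \in C^2(\Omega)$.

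For (b), the system stability furnishes positive $\xi,\zeta \in H^{2}_{\mathrm{loc}}(\Omega) \cap C(\Omega)$ satisfying $-\Delta\xi \geq p|x|^a v^{p-1}\zeta$ and $-\Delta\zeta \geq q|x|^b u^{q-1}\xi$ almost everywhere. Fix $\eta \in C_c^2(\Omega)$. By Lemma \ref{3.10} applied to $\zeta$, the function $\eta^2/\zeta$ lies in $H^{2}_{\mathrm{loc}}(\Omega)$ with compact support in $\Omega$. Multiplying the first stability inequality by $\eta^2/\zeta \geq 0$, integrating, and integrating by parts (justified because $\xi \in H^2_{\mathrm{loc}}$ while $\eta^2/\zeta \in H_c^2$) yields
\[ p\int_{\Omega}|x|^a v^{p-1}\eta^2 \leq \int_{\Omega}(-\Delta\xi)(\eta^2/\zeta) = -\int_{\Omega}\xi\,\Delta(\eta^2/\zeta). \]
The pointwise bound in Lemma \ref{3.10} reads $\Delta\zeta\cdot\Delta(\eta^2/\zeta) \leq |\Delta\eta|^2$; since $-\Delta\zeta \geq q|x|^b u^{q-1}\xi > 0$, dividing flips this into $-\Delta(\eta^2/\zeta) \leq |\Delta\eta|^2/(-\Delta\zeta)$. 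Multiplying by $\xi > 0$, integrating, and then invoking the second system-stability inequality to bound $\xi/(-\Delta\zeta) \leq (q|x|^b u^{q-1})^{-1}$ produces
\[ p\int_{\Omega}|x|^a v^{p-1}\eta^2 \leq \frac{1}{q}\int_{\Omega}|x|^{-b}u^{1-q}|\Delta\eta|^2. \]
The elementary identity $|x|^{-b/q}|\Delta v|^{1/q-1} = (|x|^b u^q)^{1/q-1}\,|x|^{-b/q} = |x|^{-b}u^{1-q}$ rewrites the right-hand side into the exact form of \eqref{eq3.27}.

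The principal technical subtlety I anticipate is the rigorous handling of the integration-by-parts step and the sign manipulation of Lemma \ref{3.10}, given that $\xi$ and $\zeta$ are only $H^{2}_{\mathrm{loc}}$ rather than $C^2$. Both points are handled cleanly by the compact support of $\eta^2/\zeta$ guaranteed by Lemma \ref{3.10}, and by the strict a.e. positivity of $-\Delta\zeta$ inherited from $q|x|^b u^{q-1}\xi > 0$, which legitimizes division by $\Delta\zeta$ on a full-measure set.
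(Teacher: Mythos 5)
Your proposal is correct and follows essentially the same route as the paper: test the first stability inequality against $\eta^2/\zeta$, integrate by parts, and combine Lemma \ref{3.10} with the second stability inequality to replace $\xi/(-\Delta\zeta)$ by $\left(q|x|^{b}u^{q-1}\right)^{-1}$, finishing with the identity $|x|^{-b}u^{1-q}=|x|^{-\frac{b}{q}}|\Delta v|^{\frac{1}{q}-1}$. The only cosmetic difference is that you divide pointwise by $-\Delta\zeta>0$ (legitimate, since the second stability inequality and the positivity of $\xi$ and $u$ give $-\Delta\zeta>0$ a.e.), whereas the paper restricts the integral to the set where $\Delta\bigl(\eta^{2}/\zeta\bigr)<0$; the two bookkeeping devices are equivalent.
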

\begin{proof}
	It is easily seen that $v \in C^{2}(\Omega)$ is a local weak supersolution of \eqref{eq3.25}. Now since $(u,v)$ is stable in the weak sense, there exist $0< (\xi,\zeta) \in \left( H_{\rm{loc}}^{2}(\Omega) \cap C(\Omega) \right)^{2} $ such that
	\begin{align*}
		\begin{aligned}\left\{\begin{aligned}
				-\Delta \xi &\ge p |x|^{a} v^{p-1} \zeta \\
				-\Delta \zeta &\ge q |x|^{b} u^{q-1} \xi
			\end{aligned}
			\quad \mbox{a.e. in}\ \  \Omega.
			\right.\end{aligned}\end{align*}
	Hence for every $\eta \in C_{c}^{2}(\Omega), $ by Lemma \ref{3.10} and the divergence theorem, we have
	\begin{align*}
		\begin{aligned}
			p \int_{\Omega} |x|^a |v|^{p-1} \eta^{2}	&\le   - \int_{\Omega} \frac{ \eta^{2}}{\zeta} \Delta \xi  \\
			&= -\int_{\Omega} \xi \Delta\left( \frac{ \eta^{2}}{\zeta} \right) \\
			&\le -\int_{\left\{ \Delta\big( \frac{\eta^{2}}{\zeta} \big) < 0 \right\}} \xi \Delta\left( \frac{ \eta^{2}}{\zeta} \right) \\
			&\le \frac{1}{q} \int_{\left\{ \Delta\big( \frac{\eta^{2}}{\zeta} \big) < 0 \right\}} |x|^{-b} u^{1-q} \Delta \zeta \Delta \left( \frac{ \eta^{2} }{\zeta} \right) \\
			&\le \frac{1}{q} \int_{\left\{ \Delta\big( \frac{\eta^{2}}{\zeta} \big) < 0 \right\}} |x|^{-b} u^{1-q} \left| \Delta  \eta \right|^{2}  \\
			&\le \frac{1}{q} \int_{\Omega} |x|^{-\frac{b}{q}} |\Delta v|^{\frac{1}{q}-1}  |\Delta \eta|^{2},
		\end{aligned}
	\end{align*}
	which implies $v$ is stable.
\end{proof}

	 We subsequently derive an important energy estimate for solutions of \eqref{eq3.25}, which implies \eqref{eq1.6} for solutions of \eqref{HLE}. This require the following interpolation inequality. Note that it does not actually require $v$ to satisfy \eqref{eq3.26} or \eqref{eq3.27}.

\begin{lemma}[See {\cite[Lem.\,2.3]{zbMATH07020411}}]\label{3.13}
	Let $N \in \NN_+$ and $\Omega \sub \RR^N$ be an open set. Assume $k \ge \frac{m}{2} >1$ and $\varepsilon>0.$ If $v \in W_{\rm{loc}}^{2,m}(\Omega)$ and $\psi \in C_{c}^{2}(\Omega) $ such that $0 \le \psi \le 1,$ then
	\[ \int_{\Omega} |D v|^{m} |D\psi|^{m} \psi^{4k-m} \le \varepsilon \int_{\Omega} |\Delta v|^{m} \psi^{4k} + C \int_{\Omega} |v|^{m} \left( |D\psi|^{2m} + |D^2\psi|^{m} \right) \psi^{4k-2m} ,\]
	where $C=C(N,m,k,\varepsilon)>0.$
\end{lemma}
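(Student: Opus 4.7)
The plan is to establish this Caccioppoli-type interpolation via integration by parts followed by weighted Young's inequalities, absorbing the residual terms into the left-hand side. By mollification it suffices to treat $v \in C^{\infty}$ on $\supp(\psi)$. Setting $f := |D\psi|^m \psi^{4k-m}$ and $I := \int_\Omega |Dv|^m f$, I would rewrite $|Dv|^m = Dv \cdot (|Dv|^{m-2} Dv)$ and integrate by parts once to move a derivative onto $v$, producing $I = I_1 + I_2 + I_3$ with
\[
I_1 := -\int v |Dv|^{m-2} \Delta v\, f, \quad I_2 := -(m-2)\int v |Dv|^{m-4} D^2 v(Dv,Dv)\, f, \quad I_3 := -\int v |Dv|^{m-2} Dv \cdot Df,
\]
where $I_2$ vanishes when $m=2$.

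For $I_1$, I would apply Young's inequality with exponents $(m, m, m/(m-2))$ to the product $|v|\cdot|\Delta v|\cdot|Dv|^{m-2}$, splitting $f = f_1 f_2 f_3$ so that $f_1^m = |D\psi|^{2m}\psi^{4k-2m}$, $f_2^m = \psi^{4k}$ and $f_3^{m/(m-2)} = f$; a direct exponent check (requiring $k \ge m/2$ so that $f_1$ is well defined) shows such a splitting exists. Tuning the Young constants by scaling then bounds $|I_1|$ by $\varepsilon \int |\Delta v|^m \psi^{4k} + C_\varepsilon \int v^m |D\psi|^{2m}\psi^{4k-2m} + \delta\, I$ for any $\varepsilon, \delta > 0$. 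The term $I_3$ is treated analogously: expanding $Df$ via the product rule produces summands of magnitude at most $|D\psi|^{m-1}|D^2\psi|\psi^{4k-m}$ and $|D\psi|^{m+1}\psi^{4k-m-1}$, and Young's inequality with exponents $(m, m/(m-1))$ — combined with the bound $\psi \le 1$ to upgrade $\psi^{4k-m}$ to $\psi^{4k-2m}$ — bounds $|I_3|$ by $C \int v^m\bigl(|D\psi|^{2m} + |D^2\psi|^m\bigr)\psi^{4k-2m} + \delta\, I$.

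The main obstacle is $I_2$, which only arises for $m > 2$ and contains the mixed second derivative $D^2 v(Dv, Dv)$. Two routes seem viable. One is to use the identity $m\, |Dv|^{m-2} D^2 v\cdot Dv = D|Dv|^m$, rewrite $|Dv|^{m-4} D^2 v(Dv,Dv) = \tfrac{1}{m}\, |Dv|^{-2}\, Dv \cdot D|Dv|^m$, regularize $|Dv|^{-2}$ as $(|Dv|^2 + \eta)^{-1}$, and integrate by parts once more to transfer the derivative off $|Dv|^m$; the resulting contributions are of the same two types as $I_1$ and $I_3$, handled identically, and $\eta \to 0^+$ recovers the estimate. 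A cleaner alternative bypasses the pointwise $D^2 v$ issue by invoking the Gagliardo--Nirenberg interpolation $\|Du\|_{L^m}^2 \le C\|u\|_{L^m}\|D^2 u\|_{L^m}$ together with the Calder\'on--Zygmund bound $\|D^2 u\|_{L^m} \le C\|\Delta u\|_{L^m}$ for compactly supported $u \in W^{2,m}$, applied to $u := v\, \psi^{4k/m}$ and unfolded via the product rule (noting $4k/m \ge 2$ so $u \in W^{2,m}_c$). Summing the three bounds, choosing $\delta$ small enough to absorb the $\delta\, I$ contributions into the left-hand side, and extending by density from $C^\infty$ to $W^{2,m}_{\mathrm{loc}}(\Omega)$ completes the proof.
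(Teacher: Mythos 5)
The paper gives no proof of this lemma (it is quoted from Mtiri--Ye, Lem.\,2.3), so your proposal can only be measured against the standard argument. Your setup is fine: the mollification, the decomposition $I=I_1+I_2+I_3$, and the weighted Young estimates for $I_1$ and $I_3$ (including the exponent bookkeeping that uses $k\ge m/2$ and $\psi\le1$) are all correct. The gap is the term $I_2$, which is always present here since $m>2$. Your route 1 is circular: when you integrate by parts in $\int v\,\frac{Dv\cdot D(|Dv|^m)}{|Dv|^2+\eta}\,f$, the derivative also falls on the factor $(|Dv|^2+\eta)^{-1}$ and regenerates $+2\int \frac{v\,f\,|Dv|^m\,D^2v(Dv,Dv)}{(|Dv|^2+\eta)^2}$, i.e.\ (as $\eta\to0^+$) twice the very term you are trying to estimate, in addition to a term $-\int|Dv|^m\frac{|Dv|^2}{|Dv|^2+\eta}f\to -I$. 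Carrying the computation out gives $(m-2)J=-I+I_1+I_3$ for $J:=\int v|Dv|^{m-4}D^2v(Dv,Dv)f$, which is literally the identity $I=I_1+I_2+I_3$ again: the second integration by parts undoes the first and yields no new information. Route 2 as written also does not close: Gagliardo--Nirenberg applied to $u=v\psi^{4k/m}$ controls $\|D(v\psi^{4k/m})\|_{L^m}$, which neither dominates the weighted left-hand side $\int|Dv|^m|D\psi|^m\psi^{4k-m}$ with a constant independent of $\psi$ (the weight $|D\psi|^m$ is large precisely where $\psi$ is small), nor avoids producing the term $C\int|v|^m\psi^{4k}$, which is not an admissible right-hand term: the lemma allows $|v|^m$ only against derivative weights of $\psi$, and this is essential in the applications where $|D\psi|\sim R^{-1}$.

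The repair uses your route-2 ingredients, but differently. Bound $|I_2|\le(m-2)\int|v|\,|Dv|^{m-2}|D^2v|\,f$ and split the weight as $f=\bigl(|D\psi|^2\psi^{\frac{4k-2m}{m}}\bigr)\bigl(|D\psi|^{m-2}\psi^{\frac{(4k-m)(m-2)}{m}}\bigr)\psi^{\frac{4k}{m}}$; three-factor Young with exponents $m,\ \frac{m}{m-2},\ m$ then gives $|I_2|\le C_\delta\int|v|^m|D\psi|^{2m}\psi^{4k-2m}+\delta I+\delta\int|D^2v|^m\psi^{4k}$. The only genuinely new estimate needed is $\int|D^2v|^m\psi^{4k}\le C(N,m,k)\bigl(\int|\Delta v|^m\psi^{4k}+I+\int|v|^m(|D\psi|^{2m}+|D^2\psi|^m)\psi^{4k-2m}\bigr)$, which follows from the Calder\'on--Zygmund inequality applied to the compactly supported $u=v\psi^{4k/m}\in W^{2,m}$ (this is where $4k/m\ge2$ enters), unfolded by the product rule and $\psi\le1$; no Gagliardo--Nirenberg is needed. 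Since $I$ enters this bound only multiplied by $\delta$, choosing $\delta$ small (and then $\delta\lesssim\varepsilon$ for the $\Delta v$-term) absorbs all $I$-contributions into the left-hand side and yields the stated inequality; the passage from smooth $v$ to $W^{2,m}_{\mathrm{loc}}$ by mollification is then as you describe.
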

	
\begin{lemma}\label{3.15}
	Under the hypotheses of Definition \ref{3.9}, if in addition that $v \in W_{\rm{loc}}^{2,\frac{1}{q} + 1}(\Omega) \cap L_{\rm{loc}}^{p+1}(\Omega)$ is a stable local weak supersolution of \eqref{eq3.25} and
	\begin{align}\label{eq3.35}
		\begin{aligned}
			k \ge \max\left\{ \frac{1}{q}+1, \frac{(p+1)(q+1)}{2(pq-1)} \right\},
		\end{aligned}
	\end{align}
	then for every $\psi \in C_{c}^{2}(\Omega)$ such that $0\le \psi \le 1,$ we have
	\[ \int_{\Omega} |x|^{-\frac{b}{q}} |\Delta v|^{\frac{1}{q}+1} \psi^{4k} + \int_{\Omega} |x|^{a} |v|^{p+1} \psi^{4k} \le C\int_{\Omega} \left( |D\psi|^{2\left( \frac{1}{q}+1 \right)} + |\Delta \psi|^{\frac{1}{q}+1} + |D^2\psi|^{\frac{1}{q}+1} \right)^{\frac{(p+1)q}{pq-1}}, \]
	where $C=C(N,q,b,k,\inf_{ \Omega}|\cdot|,\sup_{\Omega} |\cdot|)>0.$
\end{lemma}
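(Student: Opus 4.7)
The plan is to combine the weak supersolution identity \eqref{eq3.26} and the stability inequality \eqref{eq3.27} via a bootstrap argument in the spirit of Mtiri--Ye \cite{zbMATH07020411}. Throughout, write $I := \int_\Omega |x|^{-b/q}|\Delta v|^{1/q+1}\psi^{4k}$ and $J := \int_\Omega |x|^a |v|^{p+1}\psi^{4k}$ for the two quantities to be estimated, and note that both $|x|^{-b/q}$ and $|x|^a$ are bounded above and below by positive constants depending only on $\inf_\Omega|\cdot|$ and $\sup_\Omega|\cdot|$, so these weights merely enter the final constant.

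The first step is to test \eqref{eq3.26} with $\eta = v\psi^{4k}$, which is admissible by the remark following Definition \ref{3.9}. Expanding $\Delta(v\psi^{4k})$ and rearranging yields $I \le J + C_k\,\mathcal{E}_1$, where $\mathcal{E}_1$ collects integrals of the schematic form $\int|\Delta v|^{1/q}|Dv||D\psi|\psi^{4k-1}$ and $\int|\Delta v|^{1/q}|v|(|D\psi|^2 + |\Delta\psi|\psi)\psi^{4k-2}$. Next, test \eqref{eq3.27} with $\eta = v\psi^{2k}$, expand $\Delta(v\psi^{2k})$, and apply the elementary inequality $(a_1+a_2+a_3+a_4)^2 \le (1+\varepsilon)a_1^2 + C_\varepsilon(a_2^2+a_3^2+a_4^2)$ with $a_1 = \psi^{2k}\Delta v$ to obtain $pJ \le \frac{1+\varepsilon}{q}I + C_{\varepsilon,k}\,\mathcal{E}_2$, where $\mathcal{E}_2$ is a sum of integrals of the schematic form $\int|\Delta v|^{1/q-1}|Dv|^2|D\psi|^2\psi^{4k-2}$ and $\int|\Delta v|^{1/q-1}v^2(|D\psi|^4+\psi^2|D^2\psi|^2)\psi^{4k-4}$. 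Since $pq>1$, choose $\varepsilon$ small enough that $\lambda := p-(1+\varepsilon)/q > 0$; substituting the first bound into the second gives $\lambda J \le C(\mathcal{E}_1+\mathcal{E}_2)$, and the first bound then yields a matching estimate for $I$.

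It remains to bound $\mathcal{E}_1$ and $\mathcal{E}_2$ by the right-hand side of the lemma. For each summand, I would apply Young's inequality with exponent $\frac{1/q+1}{\alpha}$, where $\alpha\in\{1/q,\,1/q-1\}$ is the power of $|\Delta v|$ appearing, to peel off a small multiple of $|\Delta v|^{1/q+1}\psi^{4k}$ (absorbed into $I$) and leave integrals of the form $\int|Dv|^{1/q+1}(\text{cutoff})$ and $\int|v|^{1/q+1}(\text{cutoff})$. The $|Dv|$-integrals are reduced to $|\Delta v|$- and $|v|$-integrals by Lemma \ref{3.13} with $m=1/q+1$ (valid because the first requirement in \eqref{eq3.35} forces $k\ge m/2$, with room to spare), returning the $|\Delta v|$-part to $I$. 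A final application of Young's inequality with dual exponents $\frac{p+1}{1/q+1}$ and $\frac{(p+1)q}{pq-1}$ then splits every remaining $\int|v|^{1/q+1}(\text{cutoff})$ into $\delta J$ (absorbed) and exactly $C\int\bigl(|D\psi|^{2(1/q+1)}+|\Delta\psi|^{1/q+1}+|D^2\psi|^{1/q+1}\bigr)^{(p+1)q/(pq-1)}$; the second requirement in \eqref{eq3.35} is precisely what is needed to ensure that the residual power of $\psi$ multiplying the cutoff terms is nonnegative, so that $\psi\le 1$ disposes of it.

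The principal obstacle is this exponent bookkeeping: tracking the powers $\psi^{4k-j}$ through the repeated applications of Young's inequality and Lemma \ref{3.13}, verifying that both constraints in \eqref{eq3.35} are used (one to legalize Lemma \ref{3.13}, the other to force nonnegativity of the residual $\psi$-exponent), and confirming that the cutoff-only terms reassemble into exactly $\bigl(|D\psi|^{2m}+|\Delta\psi|^m+|D^2\psi|^m\bigr)^{(p+1)q/(pq-1)}$ with $m=1/q+1$.
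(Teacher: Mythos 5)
Your proposal is correct and follows essentially the same route as the paper: test the stability inequality \eqref{eq3.27} with $v\psi^{2k}$ and the supersolution inequality \eqref{eq3.26} with $v\psi^{4k}$, absorb the $|\Delta v|$-parts of the error terms via Young's inequality and Lemma \ref{3.13} (using the first condition in \eqref{eq3.35}), close the system of the two resulting estimates using $pq>1$, and finish with a H\"older/Young split of the remaining $\int |v|^{m}(\text{cutoff})\psi^{4k-2m}$ term, where the second condition in \eqref{eq3.35} ensures the $\psi$-exponents permit absorption into $\int |x|^{a}|v|^{p+1}\psi^{4k}$, yielding exactly the exponent $\frac{(p+1)q}{pq-1}$. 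The only (inessential) difference is the order in which you substitute the two inequalities versus absorbing the error terms; fixing the elementary-inequality $\varepsilon$ first and the Young parameters afterwards makes the constants close exactly as in the paper's Step 3.
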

\begin{proof} We divide the proof into 3 steps.

\noindent \emph{Step 1. Estimate of $\int_{\Omega} |x|^{a} |v|^{p+1} \psi^{4k}.$} To simplify notation, we write $m = \frac{1}{q} + 1.$ Taking $\eta = v \psi^{2k} $ in \eqref{eq3.27}, then for every $\varepsilon>0,$ we have
	\begin{align}\label{eq3.28}
		\begin{aligned}
			pq \int_{\Omega} |x|^{a} |v|^{p+1} \psi^{4k} &\le \int_{\Omega} |x|^{-\frac{b}{q}} |\Delta v|^{m-2} |\Delta (v\psi^{2k})|^{2} \\
			&= \int_{\Omega} |x|^{-\frac{b}{q}} |\Delta v|^{m-2} \left| \psi^{2k}\Delta v + 2 DvD(\psi^{2k}) + v\Delta(\psi^{2k}) \right|^{2} \\
			&\le  (1+\varepsilon) \int_{\Omega} |x|^{-\frac{b}{q}} |\Delta v|^{m} \psi^{4k} + C_{1} \varepsilon^{-1} K,
		\end{aligned}
	\end{align}
	where $C_{1} = C_{1}(q,b,k,\inf_{ \Omega}|\cdot|,\sup_{\Omega} |\cdot|)>0$ and
	\begin{align}\label{}
		\begin{aligned}
			K = \int_{\Omega} \left[ v^{2} |\Delta v|^{m-2} |\Delta (\psi^{2k})|^{2} + |Dv|^{2} |\Delta v|^{m-2} |D (\psi^{2k})|^{2} \right].
		\end{aligned}
	\end{align}
	By Young's inequality and Lemma \ref{3.13}, we conclude that
	\begin{align}\label{}
		\begin{aligned}
			\int_{\Omega} v^{2}|\Delta v|^{m-2} |\Delta (\psi^{2k})|^{2} &\le C_{2} \int_{\Omega}  v^{2} |\Delta v|^{m-2} \left( |D\psi|^{4} + |\Delta\psi|^{2} \right) \psi^{4k-4} \\
			&\le \varepsilon^{2} \int_{\Omega} |x|^{-\frac{b}{q}} |\Delta v|^{m} \psi^{4k} + C_{2} \int_{\Omega} v^{m} \left( |D\psi|^{4} + |\Delta\psi|^{2} \right)^{\frac{m}{2}} \psi^{4k-2m} \\
			&\le \varepsilon^{2} \int_{\Omega} |x|^{-\frac{b}{q}} |\Delta v|^{m} \psi^{4k} + C_{2} \int_{\Omega} v^{m} \left( |D\psi|^{2m} + |\Delta\psi|^{m} \right) \psi^{4k-2m} , \\
		\end{aligned}
	\end{align}
	and
	\begin{align}\label{eq3.29}
		\begin{aligned}
			\int_{\Omega} |Dv|^{2} |\Delta v|^{m-2} |D (\psi^{2k})|^{2}	&=  4k^{2} \int_{\Omega} |Dv|^{2} |\Delta v|^{m-2} |D \psi|^{2} \psi^{4k-2}  \\
			&\le \varepsilon^{2} \int_{\Omega} |x|^{-\frac{b}{q}} |\Delta v|^{m} \psi^{4k} + C_{1} \varepsilon^{-m+2} \int_{\Omega} |Dv|^{m} |D\psi|^{m} \psi^{4k-m} \\
			&\le \varepsilon^{2} \int_{\Omega} |x|^{-\frac{b}{q}} |\Delta v|^{m} \psi^{4k} + C_{2} \int_{\Omega} |v|^{m} \left( |D\psi|^{2m} + |D^2\psi|^{m} \right) \psi^{4k-2m}
		\end{aligned}
	\end{align}
	where $C_{2}=C_{2}(N,q,b,k,\inf_{ \Omega}|\cdot|,\sup_{\Omega} |\cdot| ,\varepsilon)>0.$ Combining \eqref{eq3.28}--\eqref{eq3.29}, we see that
	\begin{align}\label{eq3.30}
		\begin{aligned}
			pq \int_{\Omega} |x|^{a} |v|^{p+1} \psi^{4k}
			\le
			(1+C_{1}\varepsilon) \int_{\Omega} |x|^{-\frac{b}{q}} |\Delta v|^{m} \psi^{4k} + C_{2} L,
		\end{aligned}
	\end{align}
	where \[L = \int_{\Omega} |v|^{m} \left( |D\psi|^{2m} +|\Delta \psi|^{m} + |D^2\psi|^{m} \right) \psi^{4k-2m}.\]
	\emph{Step 2. Estimate of $\int_{\Omega} |x|^{-\frac{b}{q}} |\Delta v|^{m} \psi^{4k}.$} Taking $\eta = v\psi^{4k}$ in \eqref{eq3.26}, we get
	\[ \int_{\Omega} |x|^{-\frac{b}{q}} |\Delta v|^{m} \psi^{4k} \le
	\int_{\Omega} \left[ |x|^{a} |v|^{p+1} \psi^{4k} + C_{1} |v| |\Delta v|^{m-1} |\Delta (\psi^{4k})| + C_{1} |Dv| |\Delta v|^{m-1} |D(\psi^{4k})| \right]. \]
	Following a similar approach as in Step 1, by applying Young's inequality and Lemma \ref{3.13}, it is straightforward to deduce that
	\begin{align}\label{eq3.32}
		\begin{aligned}
			(1-C_{1}\varepsilon) \int_{\Omega} |x|^{-\frac{b}{q}} |\Delta v|^{m} \psi^{4k}	\le \int_{\Omega}  |x|^{a} |v|^{p+1} \psi^{4k} + C_{2} L .
		\end{aligned}
	\end{align}
	\emph{Step 3.} Choosing \(\varepsilon > 0\) sufficiently small, multiplying \eqref{eq3.32} by \(\frac{1 + 2C_{1}\varepsilon}{1 - C_{1}\varepsilon}\) on both sides, and adding the result to \eqref{eq3.30}, we obtain
	\begin{align}\label{eq3.33}
		\begin{aligned}
			C_{1}\varepsilon  \int_{\Omega} |x|^{-\frac{b}{q}} |\Delta v|^{m} \psi^{4k} 	
			+ \left( pq-\frac{1+2C_{1}\varepsilon}{1-C_{1}\varepsilon}  \right) \int_{\Omega} |x|^{a} |v|^{p+1} \psi^{4k}
			\le C_{2} L,
		\end{aligned}
	\end{align}
	where
	\begin{align}\label{eq3.34}
		\begin{aligned}
			L &= \int_{\Omega} |v|^{m} \left( |D\psi|^{2m} +|\Delta \psi|^{m} + |D^2\psi|^{m} \right) \psi^{4k-2m} \\
			&\le  C_{1} \left[ \int_{\Omega} |x|^{a} |v|^{p+1} \psi^{\frac{(p+1)(4k-2m)}{m}} \right]^{\frac{m}{p+1}} \left[ \int_{\Omega} \left( |D\psi|^{2m} +|\Delta \psi|^{m} + |D^2\psi|^{m} \right)^{\frac{p+1}{p+1-m}} \right]^{\frac{p+1-m}{p+1}} \\
			&\le C_{1} \left( \int_{\Omega} |x|^{a} |v|^{p+1} \psi^{4k} \right)^{\frac{m}{p+1}} \left[ \int_{\Omega} \left( |D\psi|^{2m} +|\Delta \psi|^{m} + |D^2\psi|^{m} \right)^{\frac{p+1}{p+1-m}} \right]^{\frac{p+1-m}{p+1}}
		\end{aligned}
	\end{align}
	due to H\"{o}lder's inequality, and \eqref{eq3.35}, i.e., $ 4k \le \frac{(p+1)(4k-2m)}{m}.$ Recall that $pq>1,$ hence fixing $\varepsilon>0$ sufficiently small and combining \eqref{eq3.33} with \eqref{eq3.34}, we get the desired conclusion.
\end{proof}

	Now we are ready to establish Theorem \ref{1.4}\,(i) for the case where $0 < q < 1 < p$.

\begin{theorem}\label{3.21}
	Let $ N \ge 3,
	0<q<1< q^{-1} <p $ and $a,b>-2 $. If $(p,q,a,b)$ is subcritical, then \eqref{HLE} admits no positive solution stable outside a compact set in the weak sense.
\end{theorem}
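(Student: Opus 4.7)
The plan is to reduce to Theorem \ref{1.6} by verifying the annular energy estimate \eqref{eq1.6}, using the stability hypothesis via Lemmas \ref{3.14}--\ref{3.15}.

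Assume, toward a contradiction, that $(u, v)$ is a positive solution of \eqref{HLE} stable outside a compact set in the weak sense. By Definition \ref{1.3}, there exist $R_2^\ast > R_1^\ast > 0$ such that $(u, v)$ is stable in the weak sense on $B_{R_2^\ast R} \setminus \ol{B_{R_1^\ast R}}$ for every sufficiently large $R > 0$. Since both \eqref{HLE} and the stability inequalities of Definition \ref{1.3} are invariant under the natural scaling---the positive barriers $\xi, \zeta$ can be rescaled by exponents $s, t$ whose sum is dictated by the consistency relation $(p-1)\tilde{\beta} + (q-1)\tilde{\alpha} = a + b + 4$, which is immediate from $p\tilde\beta = \tilde\alpha + a + 2$ and $q\tilde\alpha = \tilde\beta + b + 2$---the pair $(u_R, v_R)$ is a positive solution of \eqref{HLE} on the fixed annulus $A := B_{R_2^\ast} \setminus \ol{B_{R_1^\ast}}$ that is stable there in the weak sense, uniformly for all large $R$.

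Because $0 < q < 1 < q^{-1} < p$, Lemma \ref{3.14} then shows that $v_R \in C^2(A)$ is a stable local weak supersolution of \eqref{eq3.25} on $A$. Fix $R_4 > R_1$ with $R_1^\ast < R_1 < R_4 < R_2^\ast$, pick a cutoff $\psi \in C_c^2(A)$ with $0 \le \psi \le 1$ and $\psi \equiv 1$ on $B_{R_4} \setminus \ol{B_{R_1}}$, and take any $k$ satisfying \eqref{eq3.35}. Applying Lemma \ref{3.15} to $v_R$ on $A$ yields
\[
\int_A |x|^{-b/q} |\Delta v_R|^{(q+1)/q}\psi^{4k} + \int_A |x|^a v_R^{p+1} \psi^{4k} \le C,
\]
with $C > 0$ independent of $R$, since $\psi$ is fixed. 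The decisive observation is that $(u_R, v_R)$ satisfies $-\Delta v_R = |x|^b u_R^q$ on $A$, so
\[
|x|^{-b/q} |\Delta v_R|^{(q+1)/q} = |x|^b u_R^{q+1},
\]
and the first summand above equals $\int_A |x|^b u_R^{q+1} \psi^{4k}$.

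Since $|x|^a$ and $|x|^b$ are bounded below by positive constants on $\ol{B_{R_4} \setminus B_{R_1}}$, we conclude that
\[
\int_{B_{R_4} \setminus B_{R_1}} u_R^{q+1} + v_R^{p+1} \le C, \quad \f R \gg 1,
\]
which is precisely \eqref{eq1.6}. Theorem \ref{1.6} then forces $u \equiv v \equiv 0$, contradicting the positivity of $(u, v)$. The main technical points are the scale-transfer of weak-sense stability in the second paragraph and the recognition that the weighted bound on $|\Delta v_R|^{(q+1)/q}$ supplied by Lemma \ref{3.15} is exactly the bound on $|x|^b u_R^{q+1}$ required to invoke Theorem \ref{1.6}.
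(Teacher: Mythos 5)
Your proposal is correct and follows essentially the same route as the paper: rescale to transfer weak-sense stability to a fixed annulus, apply Lemma \ref{3.14} and Lemma \ref{3.15} with a fixed cutoff to obtain the uniform bound on $\int_{B_{R_4}\setminus B_{R_1}} u_R^{q+1}+v_R^{p+1}$ (your identification $|x|^{-b/q}|\Delta v_R|^{(q+1)/q}=|x|^b u_R^{q+1}$ is exactly the implicit step in the paper), and then invoke Theorem \ref{1.6}. The paper leaves the scale-invariance of the stability barriers as "easy to check"; your verification via the relation $(p-1)\tilde{\beta}+(q-1)\tilde{\alpha}=a+b+4$ is a correct filling-in of that detail.
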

\begin{proof}
	Suppose $(u,v)$ is a positive solution of \eqref{HLE} and $R_{5}>R_{4}>R_{1}>R_{0}>0$ such that $(u,v)$ is stable in $B_{R_{5}R} \!\setminus\! \ol{B_{R_{0}R}}$ in the weak sense for sufficiently large $R>0$. Then it is easy to check that $(u_{R},v_{R})$ is a positive solution of \eqref{HLE} that is stable in $B_{R_{5}} \!\setminus\! \ol{B_{R_{0}}}$ in the weak sense for sufficiently large $R>0$. Fix $\psi \in C_{c}^{2}(B_{R_{5}} \!\setminus\! \ol{B_{R_{0}}})$ such that $0\le \psi \le 1$ and $\left. \psi\right|_{B_{R_{4}} \setminus B_{R_{1}}} = 1.$ Using Lemma \ref{3.14} and Lemma \ref{3.15}, we conclude that there exists $C>0$ such that
	\[ \int_{B_{R_{4}} \setminus B_{R_{1}}} u_{R}^{q+1} + v_{R}^{p+1} \le C, \quad \f R \gg 1.\]
	Then one can use Theorem \ref{1.6} to derive a contradiction.
\end{proof}

\subsubsection{Proof of Theorem \ref{1.4}\,(i): The case \eqref{eq1.3} is satisfied}

	In this case, to drive the energy estimate \eqref{eq1.6}, we will employ a strategy similar to that of Cowan {\cite{zbMATH06203801}}, Hajlaoui--Harrabi--Mtiri {\cite{zbMATH06672765}}, and Dupaigne--Ghergu--Hajlaoui {\cite{dupaignegherguhajlaoui2025}}. Nonetheless, several challenges remain to be addressed, primarily due to the singularity of both the system and its solutions at the origin. Initially, we have the following observation.
	
\begin{lemma}\label{3.12}
	Let $N \ge 3, p,q>0$ and $a,b \in \RR $. Assume $0 \not\in \Omega \sub B_{R}$ is an open set for some $R>0$ and $(u,v)$ is a positive solution of \eqref{HLE}. Then, for every $A>\frac{1}{2}$ and $\theta \in \left( 1,\frac{N}{N-2} \right)$, we have
	\begin{align}\label{eq3.77}
		\begin{aligned}
			\|u^{2A}\eta^2\|_{L^{\theta}(\Omega)} \le C R^{-\left( 1-\frac{1}{\theta} \right)N + 2} \int_{\Omega} \left[ |x|^{a} u^{2A-1} v^p \eta^2 + u^{2A} ( |D\eta|^2 + |\Delta(\eta^2)| ) \right] \!,  \f \eta \in C_c^2(\Omega),
		\end{aligned}
	\end{align}
	where $C=C(N,A)>0$.
\end{lemma}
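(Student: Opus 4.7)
The plan is to combine a scale-aware Sobolev-plus-Hölder estimate on $B_R$ with the standard test function $u^{2A-1}\eta^2$ applied to the first equation of \eqref{HLE}. The $R$-dependent prefactor will come entirely from the Hölder step, while the integrand on the right-hand side is produced by integration by parts.

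First, I would set $w = u^A \eta$. Since $\eta \in C_c^2(\Omega)$ and $\Omega \subset B_R \setminus \{0\}$, the function $w$ extends by zero to an element of $H_0^1(B_R)$. The Sobolev embedding gives $\|w\|_{L^{2^*}(\RR^N)} \le C\|Dw\|_{L^2(\RR^N)}$ with $2^* = 2N/(N-2)$. Because $\theta < N/(N-2)$, we have $2\theta < 2^*$, so Hölder's inequality on $B_R$ yields
\[
\|w\|_{L^{2\theta}(B_R)} \le |B_R|^{\frac{1}{2\theta} - \frac{1}{2^*}} \|w\|_{L^{2^*}(B_R)} \le C\, R^{\frac{N}{2\theta} - \frac{N-2}{2}} \|Dw\|_{L^2(\Omega)}.
\]
Squaring gives the prefactor $R^{-(1-1/\theta)N + 2}$ and the bound
\[
\|u^{2A}\eta^2\|_{L^{\theta}(\Omega)} = \|w\|_{L^{2\theta}(B_R)}^2 \le C\, R^{-(1-1/\theta)N+2}\!\int_{\Omega}\!|Dw|^2.
\]

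Next I would expand $|Dw|^2 = |A u^{A-1}\eta\, Du + u^A D\eta|^2 \le 2A^2 u^{2A-2}|Du|^2\eta^2 + 2 u^{2A}|D\eta|^2$, so only $\int u^{2A-2}|Du|^2\eta^2$ needs to be controlled. This is where the equation enters: multiply $-\Delta u = |x|^a v^p$ by $u^{2A-1}\eta^2$ (a legitimate $C_c^2$ test function since $\eta$ is compactly supported in $\Omega$ away from the origin, keeping the weight $|x|^a$ bounded on the support) and integrate by parts twice. The first integration by parts produces
\[
(2A-1)\!\int_\Omega\! u^{2A-2}|Du|^2\eta^2 = \int_\Omega\! |x|^a u^{2A-1}v^p \eta^2 - \frac{1}{2A}\!\int_\Omega\! \nabla(u^{2A}) \cdot \nabla(\eta^2),
\]
and integrating by parts once more in the last term turns it into $\frac{1}{2A}\int_\Omega u^{2A}\Delta(\eta^2)$. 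Since $A > 1/2$, we may divide by $2A-1$ and bound the result by
\[
\int_\Omega u^{2A-2}|Du|^2\eta^2 \le \frac{1}{2A-1}\!\int_\Omega\! |x|^a u^{2A-1}v^p \eta^2 + \frac{1}{2A(2A-1)}\!\int_\Omega\! u^{2A}|\Delta(\eta^2)|.
\]
Substituting this back into the Sobolev estimate and absorbing all absolute constants into $C=C(N,A)$ produces \eqref{eq3.77}.

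There is no real obstacle here; the argument is a bookkeeping exercise. The only points that need mild care are (a) justifying that $w = u^A\eta$ lies in $H_0^1(B_R)$ (clear because $\eta$ has compact support in $B_R\setminus\{0\}$ and $u\in C^2$ is bounded on that support) and (b) correctly tracking the $R$-exponent through the Hölder step, which I verified above gives precisely $-(1-1/\theta)N + 2$. The fact that the constant depends only on $N$ and $A$ (not on $p,q,a,b,R$) is built into the above: the $R$-dependence has been factored out, the $A$-dependence sits in $\frac{1}{2A-1}$, and the weight $|x|^a v^p$ stays inside the integral without being estimated.
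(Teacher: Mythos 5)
Your proof is correct, and it takes a slightly different route from the paper's in its first half. The second half --- multiplying $-\Delta u = |x|^a v^p$ by $u^{2A-1}\eta^2$ and integrating by parts twice to control $\int_\Omega u^{2A-2}|Du|^2\eta^2$ by $\int_\Omega |x|^a u^{2A-1}v^p\eta^2 + \int_\Omega u^{2A}|\Delta(\eta^2)|$ --- is exactly the paper's computation \eqref{eq3.15}. Where you differ is the reduction to that gradient term: the paper applies its Lemma \ref{2.1}\,(i) (a Green-function based $L^1\to L^\theta$ estimate, proved in the appendix) to $w=u^{2A}\eta^2$, and therefore has to bound $\|\Delta(u^{2A}\eta^2)\|_{L^1(\Omega)}$, which also produces a mixed term $u^{2A-1}|Du|\,|\eta D\eta|$ disposed of by AM--GM; you instead apply the standard Sobolev inequality for $H_0^1(B_R)$ together with H\"older (using $2\theta<2^*$) to $w=u^A\eta$, so only $\|D(u^A\eta)\|_{L^2(\Omega)}^2$ needs to be estimated and the expansion of $|Dw|^2$ is simpler. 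Your version is more elementary in that it bypasses the appendix lemma entirely (and would even reach the endpoint $\theta=N/(N-2)$), at no loss of content: the $R$-exponent $-\left(1-\frac{1}{\theta}\right)N+2$ comes out of the H\"older step exactly as you computed, and the residual $\theta$-dependence through the factor $|B_1|^{\frac{1}{2\theta}-\frac{1}{2^*}}$ is uniformly bounded for $\theta\in\left(1,\frac{N}{N-2}\right)$, so the constant is indeed of the form $C(N,A)$.
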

\begin{remark}
	This lemma indicates that to obtain the energy estimate \eqref{eq1.6}, it is sufficient to estimate $\int_{\Omega} |x|^{a} u^{2A-1} v^p \eta^2$ for sufficiently large $A$. Indeed, since the term $\int_{\Omega} u^{2A} (|D\eta|^2 + |\Delta(\eta^2)|)$ can be controlled, to a certain degree, by $\| u^{2A} \eta^{2} \|_{L^{\theta}(\Omega)}$ through the application of interpolation inequalities.
\end{remark}
\begin{proof}
	By utilizing Lemma \ref{2.1}\,(i), we have
	\begin{align}\label{eq3.13}
		\begin{aligned}
			\|u^{2A}\eta^2\|_{L^{\theta}(\Omega)} \le C R^{-\left( 1-\frac{1}{\theta} \right)N + 2} \|\Delta (u^{2A}\eta^2)\|_{L^1(\Omega)}  .
		\end{aligned}
	\end{align}
	Hence it remains to estimate $\|\Delta (u^{2A}\eta^2)\|_{L^1(\Omega)}$. First, it is obvious that
	\begin{align}
		\begin{aligned}
			\|\Delta (u^{2A}\eta^2)\|_{L^1(\Omega)} &\le C  \int_{\Omega} \left[ |x|^{a} u^{ {2A} -1} v^p \eta^2  +  u^{ {2A} -2} |Du|^2 \eta^2  +  u^{2A} |\Delta (\eta^2)| + u^{2A-1} |Du||\eta D\eta|  \right] \\
			&\le C  \int_{\Omega} \left[  |x|^{a} u^{ {2A} -1} v^p \eta^2 +   u^{ {2A} -2} |Du|^2 \eta^2 + u^{2A} ( |D\eta|^2 + |\Delta(\eta^2)| ) \right],
		\end{aligned}
	\end{align}
	due to the AM--GM inequality. Upon multiplying both sides of $-\Delta u = |x|^a v^p$ by $u^{2A-1}\eta^2$ and applying the divergence theorem, we see that
	\begin{align*}
		\int_{\Omega} |x|^a u^{2A-1} v^p \eta^2
		&= -\int_{\Omega} u^{2A-1}\eta^2 \Delta u \\
		&= \int_{\Omega} Du D(u^{2A-1}\eta^2)  \\
		&=  (2A-1) \int_{\Omega}  u^{2A-2} |Du|^2 \eta^2  + \frac{1}{2A}\int_{\Omega} D(u^{2A})D(\eta^2) \\
		&= (2A-1) \int_{\Omega}  u^{2A-2} |Du|^2 \eta^2  - \frac{1}{2A} \int_{\Omega} u^{2A} \Delta(\eta^2).
	\end{align*}
	For this reason, we obtain
	\begin{equation}\label{eq3.15}
		\int_{\Omega}  u^{2A-2} |Du|^2 \eta^2  \le C \int_{\Omega} |x|^{a} u^{2A-1} v^p \eta^2 + C \int_{\Omega} u^{2A} |\Delta(\eta^2)| .
	\end{equation}
	Now by combining equations \eqref{eq3.13}--\eqref{eq3.15}, the desired inequality is obtained.
\end{proof}

	Our next objective is to estimate $\int_{\Omega} |x|^{a} u^{2A-1} v^p \eta^2$. Specifically, we shall prove that for some $A \ge \frac{q+1}{2}$, there exists $C=C(N,p,q,a,b,A)>0$ such that
	\begin{align}\label{eq3.78}
		\begin{aligned}
			\int_{\Omega} |x|^{a} u^{2A-1} v^p \eta^2 \le C\int_{\Omega} u^{2A} \left[  |x|^{-2} \eta^2 + |x|^{-1}|D(\eta^2)| +|D\eta|^2 + |\Delta(\eta^2)|  \right] \!, \ \f \eta \in C_c^2(\Omega),
		\end{aligned}
	\end{align}
	Upon establishing this inequality and combining it with \eqref{eq2.2}, the estimate \eqref{eq1.6} follows, as demonstrated in the subsequent lemma.

\begin{lemma}\label{3.11}
	Under the hypotheses of Lemma \ref{3.12}, assume additionally that $A > \frac{q}{2}$ and \eqref{eq3.78} holds. \\
	(i) Then for every $\psi \in C_{c}^{2}(\Omega)$ satisfying $0 \le \psi \le 1$ and
	\begin{align}
		\begin{aligned}
			m \ge \max\left\{ 1, \frac{3\theta A- q-A}{(\theta-1) q} \right\},
		\end{aligned}
	\end{align}
	we have
	\begin{align}\label{eq3.81}
		\begin{aligned}
			\int_{\Omega} u^{2\theta A} \psi^{2\theta m} \le C R^{\delta} \left[ \int_{\Omega} u^q \left( |x|^{-2}\psi^{2} +|x|^{-1}|D\psi| + |D\psi|^{2}+ \psi|\Delta\psi| \right)^{\frac{2\theta A-q}{2(\theta -1)A}} \right]^{\frac{2\theta A}{q}} ,
		\end{aligned}
	\end{align}
	where $C=C(N,p,q,a,b,A,m)>0$ and
	\[ \delta = \frac{N-2}{\theta -1}\left( \frac{N}{N-2} - \theta  \right)\left(  \frac{2\theta A}{q}-1 \right)>0 . \]
	(ii) If $\, \Omega= B_{R_{5}} \!\setminus\! \ol{B_{R_{0}}} \,$ for some $R_{5}>R_{4}>R_{1}>R_{0}>0$, $A \ge \frac{q+1}{2}$ and $(u,v)$ satisfies \eqref{eq2.2}, then
	\[ \int_{B_{R_{4}} \setminus B_{R_{1}}} u^{q+1} + v^{p+1} \le C, \]
	where $C=C(N,p,q,a,b,R_{0},R_{5})>0$.
\end{lemma}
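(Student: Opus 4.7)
The plan is to combine Lemma \ref{3.12} and the hypothesis \eqref{eq3.78} into a single Sobolev-type bound, then apply a tailored Hölder splitting whose conjugate exponents are chosen to produce exactly the factor $u^q \Phi^{p_2}$ on the right-hand side; for part (ii) I would then specialize the parameters and finish with a standard elliptic regularity step for the second equation.

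For part (i), I would substitute $\eta = \psi^m$ into Lemma \ref{3.12} and into \eqref{eq3.78}. Using $|D(\psi^m)|^2 = m^2\psi^{2m-2}|D\psi|^2$, $|D(\psi^{2m})| = 2m\psi^{2m-1}|D\psi|$, and $|\Delta(\psi^{2m})| \le C(\psi^{2m-2}|D\psi|^2 + \psi^{2m-1}|\Delta\psi|)$, and absorbing higher powers of $\psi$ into $\psi^{2m-2}$ via $0 \le \psi \le 1$, one obtains
\[
\|u^{2A}\psi^{2m}\|_{L^\theta(\Omega)} \le CR^{-(1-1/\theta)N+2}\int_{\Omega} u^{2A}\psi^{2m-2}\Phi,
\]
where $\Phi = |x|^{-2}\psi^2 + |x|^{-1}|D\psi| + |D\psi|^2 + \psi|\Delta\psi|$, and the exponent $-(1-1/\theta)N + 2 = [N-(N-2)\theta]/\theta$ is positive because $\theta < N/(N-2)$. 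Next I apply Hölder with the conjugate pair $p_1 = (2\theta A - q)/(2A - q)$ and $p_2 = (2\theta A - q)/(2(\theta-1)A)$, splitting $u^{2A}\psi^{2m-2}\Phi$ so that the $p_1$-th power of the first factor is $u^{2\theta A}\psi^{2\theta m}$ and the $p_2$-th power of the second contains $u^{c_2 p_2}\psi^{d_2 p_2}\Phi^{p_2}$; the $u$-balance $c_1 + c_2 = 2A$ forces $c_2 p_2 = q$, reproducing exactly $u^q\Phi^{p_2}$. The hypothesis $m \ge (3\theta A - q - A)/((\theta-1)q)$ supplies the slack needed to ensure $d_2 p_2 \ge 0$, allowing the residual $\psi^{d_2 p_2}$ to be dropped via $\psi \le 1$. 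Raising the Sobolev bound to the $\theta$-th power and absorbing $(\int u^{2\theta A}\psi^{2\theta m})^{\theta/p_1}$ to the left (legitimate as $\theta/p_1 < 1$), the arithmetic $1/\theta - 1/p_1 = q(\theta-1)/[\theta(2\theta A - q)]$ and $1/p_2 = 2(\theta-1)A/(2\theta A - q)$ yield the desired estimate, with the final $R$-exponent simplifying to the claimed $\delta$.

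For part (ii), I would apply (i) with $A = (q+1)/2$, a fixed $\theta \in (1, N/(N-2))$, a suitable $m$, and a cutoff $\psi \in C_c^2(B_{R_5}\setminus\ol{B_{R_0}})$ with $\psi \equiv 1$ on $B_{R_4}\setminus B_{R_1}$. Since the support of $\psi$ is a fixed annulus bounded away from $0$ and $\infty$, $\Phi^{p_2}$ is uniformly bounded there, so the right-hand side is controlled once $\int u^q$ over $\Omega$ is, which is precisely the role of the a priori estimate \eqref{eq2.2}. This yields $\int_{B_{R_4}\setminus B_{R_1}} u^{\theta(q+1)} \le C$, hence $\int u^{q+1} \le C$ by Hölder. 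The estimate on $\int v^{p+1}$ then follows by applying interior Calderón--Zygmund bounds (Lemma \ref{2.1} and Lemma \ref{2.2}) to $-\Delta v = |x|^b u^q$ on a slightly enlarged annulus, followed by the appropriate Sobolev embedding; the subcritical condition on $(p,q,a,b)$ guarantees the exponents line up. The main obstacle is the careful bookkeeping in the Hölder step of (i) — verifying that $c_2 p_2 = q$ and $d_2 p_2 \ge 0$ simultaneously under the stated threshold on $m$ — and in (ii) aligning the nested annuli so the regularity bootstrap for $v$ terminates cleanly at $L^{p+1}$.
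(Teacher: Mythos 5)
Your part (i) is correct and is essentially the paper's own argument: plugging $\eta=\psi^{m}$ into Lemma \ref{3.12} together with \eqref{eq3.78}, and then splitting $u^{2A}\psi^{2m-2}\Phi$ by H\"older with the conjugate pair $p_1=\frac{2\theta A-q}{2A-q}$, $p_2=\frac{2\theta A-q}{2(\theta-1)A}$, is exactly the paper's split written with weights $1-\lambda=1/p_1$, $\lambda=1/p_2$; your checks that the stated threshold on $m$ makes the residual $\psi$-power nonnegative, that $\theta/p_1<1$ legitimizes the absorption, and that the $R$-exponent collapses to $\delta$ are all accurate.

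Part (ii), however, has a genuine gap, and it stems from misidentifying \eqref{eq2.2}. In this paper \eqref{eq2.2} is the pointwise comparison $|x|^{a}v^{p+1}\le C|x|^{b}u^{q+1}$ from Lemma \ref{2.3}; it gives no control whatsoever on $\int_{\Omega}u^{q}$, so the sentence ``the right-hand side is controlled once $\int u^{q}$ over $\Omega$ is, which is precisely the role of \eqref{eq2.2}'' does not justify the key uniform bound. The bound $\int_{B_{R_5}\setminus B_{R_0}}u^{q}\le C(N,p,q,a,b,R_0,R_5)$, which is what makes the constant in (ii) independent of the solution, comes from the energy estimate Lemma \ref{2.2}\,(i), namely $\int_{B_R}|x|^{b}u^{q}\le CR^{N-2-\tilde{\beta}}$ for global nonnegative solutions of \eqref{HLE} (using $pq>1$, $a,b>-2$), combined with the fact that $|x|^{b}$ is comparable to a constant on the fixed annulus. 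As written, your proposal never establishes this bound, so \eqref{eq3.81} yields nothing quantitative.

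Your route to $\int v^{p+1}$ is also off target. The hypothesis \eqref{eq2.2} is assumed in (ii) precisely so that, on the fixed annulus, $v^{p+1}\le Cu^{q+1}$ pointwise; integrating this and using $2\theta A\ge\theta(q+1)>q+1$ with H\"older finishes the proof in one line. The Calder\'on--Zygmund/Sobolev detour you propose instead needs (a) an $L^{1}$ bound on $v$ over the enlarged annulus, which Lemma \ref{2.1} requires and which you do not supply, and (b) the embedding of $W^{2,2\theta A/q}$ into $L^{p+1}$, which can fail: subcriticality of $(p,q,a,b)$ is not among the hypotheses of Lemma \ref{3.11}, and even when it holds the exponents need not line up. For instance $N=20$, $p=q=3$, $a=b=30$ is subcritical, yet with $A=\frac{q+1}{2}=2$ and $\theta<\frac{N}{N-2}=\frac{10}{9}$ one has $k=\frac{2\theta A}{q}<\frac{40}{27}$, hence $\frac{1}{k}-\frac{2}{N}>\frac{27}{40}-\frac{4}{40}=\frac{23}{40}>\frac{1}{4}=\frac{1}{p+1}$, so $W^{2,k}$ does not embed into $L^{p+1}$ there. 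Replace this step by the direct use of the assumed comparison \eqref{eq2.2}, and cite Lemma \ref{2.2}\,(i) for the $u^{q}$ bound.
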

\begin{proof}
	(i) Let $\eta= \psi^m,$ where $m \ge 1$ will be determined later. Then by \eqref{eq3.78}, we deduce
	\begin{align}\label{eq3.79}
		\begin{aligned}
			\| u^{2A} \psi^{2m} \|_{L^{\theta}(\Omega)}
			&\le C R^{-\left( 1-\frac{1}{\theta} \right)N + 2} \int_{\Omega} u^{2A} \left[  |x|^{-2} \eta^2 + |x|^{-1}|D(\eta^2)| +|D\eta|^2 + |\Delta(\eta^2)|  \right] \\
			&\le C R^{-\left( 1-\frac{1}{\theta} \right)N + 2} \int_{\Omega} u^{2A} \psi^{2m-2} \left( |x|^{-2}\psi^{2} +|x|^{-1}|D\psi| + |D\psi|^{2}+ \psi|\Delta\psi| \right) \\
			&:= C R^{-\left( 1-\frac{1}{\theta} \right)N + 2} \int_{\Omega} u^{2A} \psi^{2m-2} \Psi ,
		\end{aligned}
	\end{align}
	where $C=C(N,p,q,a,b,A,m)>0.$
	Since $A > \frac{q}{2}>0$ and $\theta>1$, we have $q < 2A < 2\theta A.$ Next, we define \(\lambda \in (0,1)\) such that \(2A = q \lambda + 2\theta A(1-\lambda)\), and select \(m \geq 1\) large enough to satisfy
	\[
	\frac{2m - 2 - 2\theta m(1 - \lambda)}{\lambda} \geq 1,
	\]
	which leads to the condition that \(m\) fulfills \eqref{eq3.13}. Note that both \(\lambda\) and \(m\) are well defined, with \(m\) depending only on \(q\), \(A\) and \(\theta\). Now using H\"{o}lder's inequality, we obtain
	\begin{align}\label{eq3.80}
		\begin{aligned}
			\int_{\Omega} u^{2A} \psi^{2m-2} \Psi
			&=  \int_{\Omega}  \left( u^{q} \Psi^{\frac{1}{\lambda}} \psi^{\frac{2m-2-2\theta m(1-\lambda)}{\lambda}} \right)^{\lambda} \left(u^{2\theta A} \psi^{2\theta m}\right)^{1-\lambda} \\
			&\le  \int_{\Omega}  \left( u^{q} \Psi^{\frac{1}{\lambda}} \right)^{\lambda} \left(u^{2\theta A} \psi^{2\theta m}\right)^{1-\lambda} \\
			&\le  \left( \int_{\Omega} u^q \Psi^{\frac{1}{\lambda}} \right)^{\lambda} \left(\int_{\Omega} u^{2\theta A} \psi^{2\theta m}\right)^{1-\lambda}.
		\end{aligned}
	\end{align}
	Consequently, the conclusion follows easily from \eqref{eq3.79} and \eqref{eq3.80}. \\
	(ii) Fix $\psi \in C_{c}^{2}(B_{R_{5}} \!\setminus\! \ol{B_{R_{0}}})$ such that $0\le \psi \le 1$ and $\left. \psi \right|_{B_{R_{4}} \setminus B_{R_{1}}} = 1.$ It follows from \eqref{eq3.81} and Lemma \ref{2.2}\,(i) that
	\[ \int_{B_{R_{4}} \setminus B_{R_{1}}} u^{2\theta A} \le C \left( \int_{B_{R_{5}} \setminus B_{R_{0}}} u^{q} \right)^{\frac{2\theta A}{q}} \le C. \]
	Now since $2\theta A > q + 1$, applying \eqref{eq2.2} and H\"{o}lder's inequality, we obtain
	\[ \int_{B_{R_{4}} \setminus B_{R_{1}}} v^{p+1} \le C\int_{B_{R_{4}} \setminus B_{R_{1}}} u^{q+1} \le C,\]
	which concludes the proof.
\end{proof}

	To prove \eqref{eq3.78}, we first utilize the stability of the solutions to deduce a key estimate, namely \eqref{eq3.36}, as shown below.
	
\begin{lemma}\label{3.1}
		Let $N \in \NN_+, \Omega \sub \RR^N$ be an open set and $0<w \in W_{\rm{loc}}^{2,1}(\Omega) \cap C(\Omega),$ then
		\[\int_{\Omega} \frac{-\Delta w}{w} \varphi^2 \le \int_{\Omega} |D\varphi|^2, \quad
		\f \varphi \in C_c^2(\Omega).\]
\end{lemma}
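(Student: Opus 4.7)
The plan is to establish a Picone/ground-state type identity, testing the equation satisfied by $w$ against the function $\varphi^2/w$. Since $w>0$ is continuous and $K:=\supp(\varphi)$ is compact with $K\Subset\Omega$, we have $w\ge c>0$ on $K$ for some constant $c>0$; consequently $\varphi^2/w$ is well defined, bounded, and compactly supported in $\Omega$. Formally, the divergence theorem together with the algebraic identity
\[
	Dw \cdot D\!\left(\frac{\varphi^2}{w}\right) = \frac{2\varphi\, Dw \cdot D\varphi}{w} - \frac{\varphi^2 |Dw|^2}{w^2}
\]
and the pointwise Cauchy--Schwarz/AM--GM bound $2AB\le A^2+B^2$ applied with $A=|D\varphi|$ and $B=\varphi|Dw|/w$ gives
\[
	Dw\cdot D\!\left(\frac{\varphi^2}{w}\right) \le |D\varphi|^2,
\]
which, after integration, yields the claim.

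To justify the integration by parts under the weak regularity $w\in W^{2,1}_{\rm loc}(\Omega)\cap C(\Omega)$, I would proceed via mollification. Fix an open set $U$ with $K\Subset U\Subset\Omega$ and set $w_\varepsilon:=\rho_\varepsilon*w$ on $U$, for a standard family of mollifiers $\rho_\varepsilon$. Then $w_\varepsilon\in C^\infty(U)$, $w_\varepsilon\to w$ uniformly on $\overline{U}$ (continuity of $w$), $\Delta w_\varepsilon\to \Delta w$ in $L^1(U)$ (since $\Delta w\in L^1_{\rm loc}(\Omega)$), and $w_\varepsilon\ge c/2>0$ on $\overline{U}$ for all $\varepsilon$ sufficiently small. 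For each such $\varepsilon$, the divergence theorem applies directly and the argument above gives
\[
	\int_\Omega \frac{-\Delta w_\varepsilon}{w_\varepsilon}\,\varphi^2 \le \int_\Omega |D\varphi|^2.
\]

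It remains to pass to the limit $\varepsilon\to 0^+$. Since $w_\varepsilon$ is uniformly bounded away from $0$ on $\overline{U}$ and converges uniformly to $w$ there, we have $\varphi^2/w_\varepsilon \to \varphi^2/w$ uniformly on $\overline{U}$. Combining this with $-\Delta w_\varepsilon\to -\Delta w$ in $L^1(U)$, the left-hand side converges to $\int_\Omega(-\Delta w)\varphi^2/w$, completing the proof. The only (mild) technical point is precisely this mollification-and-limit step, which is unavoidable at the $W^{2,1}_{\rm loc}$ level of regularity; the algebraic heart of the argument is the elementary AM--GM bound.
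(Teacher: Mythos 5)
Your proposal is correct and follows essentially the same route as the paper: test against $\varphi^2/w$, integrate by parts, absorb the cross term via the quadratic inequality (the paper completes the square exactly, which is the same estimate as your AM--GM step), and then handle the $W^{2,1}_{\rm loc}\cap C$ regularity by mollifying near $\supp(\varphi)$, using the uniform positive lower bound on $w$ and $L^1$-convergence of $\Delta w_\varepsilon$ to pass to the limit. No gaps.
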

\begin{proof}
		Assume first $w \in C^{2}(\Omega).$ Then applying the divergence theorem three times, we have
		\begin{align*}
			\int_{\Omega} \frac{-\Delta w}{w} \varphi^2 &= -\int_{\Omega} w \Delta\left(\frac{\varphi^2}{w}\right) \\
			&= \int_{\Omega} \left( \frac{\Delta w}{w} \varphi^2 - 2\frac{|Dw|^2}{w^2} \varphi^2 + 4\frac{Dw}{w}\varphi D\varphi - \Delta(\varphi^{2}) \right) \\
			&=\int_{\Omega} \left( -DwD\left(\frac{\varphi^2}{w}\right) - 2\frac{|Dw|^2}{w^2} \varphi^2 + 4\frac{Dw}{w}\varphi D\varphi \right) \\
			&=\int_{\Omega} \left( -\frac{|Dw|^2}{w^2} \varphi^2 + 2\frac{Dw}{w}\varphi D\varphi \right) \\
			&=\int_{\Omega} \left( -\left| \frac{Dw}{w} \varphi +D\varphi \right|^2 + |D\varphi|^2 \right) \\
			&\le \int_{\Omega} |D\varphi|^2.
		\end{align*}
		Now if $w \in W_{\rm{loc}}^{2,1}(\Omega) \cap C(\Omega),$ then let us denote by $w_{\varepsilon} \in C^{\oo}(\Omega_{\varepsilon})$ the standard mollification of $w,$ where $\varepsilon>0$ and
		\[ \Omega_{\varepsilon} = \{ x \in \Omega: \operatorname{dist\,}(x,\pt{\Omega}) > \varepsilon \} . \]
		Take an open set $ \Omega'$ such that $\supp (\varphi) \Subset \Omega' \Subset \Omega.$ It is well known that $w_{\varepsilon} \ra w$ in $W^{2,1}(\Omega')$ and $w_{\varepsilon} \ra w$ uniformly on $\Omega'.$ Note that $w>0,$ hence there exists $C>0$ such that $w_{\varepsilon} \ge C$ in $\Omega'$ provided $\varepsilon>0$ is sufficiently small. Consequently, we deduce
		\[ \int_{\Omega} \frac{-\Delta w}{w} \varphi^{2} =  \int_{\Omega'} \frac{-\Delta w}{w} \varphi^{2} = \lim\limits_{\varepsilon \ra 0^{+}} \int_{\Omega'} \frac{-\Delta w_{\varepsilon}}{w_{\varepsilon}} \varphi^{2} \le \int_{\Omega'} |D\varphi|^{2} = \int_{\Omega} |D\varphi|^2. \qedhere \]
\end{proof}

\begin{lemma}\label{3.4}
	Let $N \in \NN_+, \Omega \sub \RR^N$ be an open set and $ (f,g) \in \left( C^1(\Omega \times (0,+\oo)) \right)^{2} .$ Assume $(u,v) \in \left( C^{2}(\Omega) \right)^{2}$ is a positive solution of \eqref{eq1.4} which  is stable in the weak sense. Then
	\[ \int_{\Omega} \left[ D_{v} f(x,v) D_{u} g(x,u) \right]^{\frac{1}{2}} \varphi^{2} \le \int_{\Omega} |D\varphi|^{2}, \quad \f \varphi \in C_c^2(\Omega). \]
	In particular, if $f=|x|^{a} v^{p}$ and $g=|x|^{b} u^{q},$ then
	\begin{align}\label{eq3.36}
		\begin{aligned}
			\sqrt{pq} \int_{\Omega} |x|^{\frac{a+b}{2}} u^{\frac{q-1}{2}} v^{\frac{p-1}{2}} {\varphi}^2
			&\le \int_{\Omega} |D\varphi|^2 , \quad \f \varphi \in C_c^2(\Omega).
		\end{aligned}
	\end{align}
\end{lemma}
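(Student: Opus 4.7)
The plan is to decouple the stability system via a pointwise AM--GM trick and then apply the scalar inequality of Lemma \ref{3.1} twice. Since $(u,v)$ is stable in the weak sense, fix $0 < \xi, \zeta \in H^2_{\mathrm{loc}}(\Omega) \cap C(\Omega)$ satisfying
\[
-\Delta \xi \ge D_v f(x,v)\,\zeta, \qquad -\Delta \zeta \ge D_u g(x,u)\,\xi \quad \text{a.e.\ in } \Omega.
\]
Because $\xi, \zeta$ are continuous and strictly positive, I can divide the first inequality by $\xi$ and the second by $\zeta$ to obtain the pointwise bounds $D_v f(x,v)\cdot \zeta/\xi \le (-\Delta\xi)/\xi$ and $D_u g(x,u)\cdot \xi/\zeta \le (-\Delta\zeta)/\zeta$ a.e.\ in $\Omega$.

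The crucial observation is that the product of the two left-hand sides is exactly $D_v f(x,v)\,D_u g(x,u)$, so AM--GM applied pointwise gives
\[
\sqrt{D_v f(x,v)\, D_u g(x,u)} \;=\; \sqrt{\bigl(D_v f\cdot \zeta/\xi\bigr)\bigl(D_u g\cdot \xi/\zeta\bigr)} \;\le\; \tfrac{1}{2}\!\left(\frac{-\Delta \xi}{\xi} + \frac{-\Delta \zeta}{\zeta}\right)
\]
a.e.\ in $\Omega$. Multiplying by $\varphi^2$ with $\varphi \in C_c^2(\Omega)$, integrating, and invoking Lemma \ref{3.1} for each of $w = \xi$ and $w = \zeta$ (which both lie in $W^{2,1}_{\mathrm{loc}}(\Omega) \cap C(\Omega)$ since $H^2_{\mathrm{loc}} \subset W^{2,1}_{\mathrm{loc}}$) yields
\[
\int_\Omega \bigl[D_v f(x,v)\,D_u g(x,u)\bigr]^{1/2} \varphi^2 \;\le\; \tfrac{1}{2}\!\int_\Omega \frac{-\Delta \xi}{\xi} \varphi^2 + \tfrac{1}{2}\!\int_\Omega \frac{-\Delta \zeta}{\zeta} \varphi^2 \;\le\; \int_\Omega |D\varphi|^2,
\]
which is the first claim. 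The specialization is then immediate: for $f=|x|^a v^p$ and $g=|x|^b u^q$ one computes $D_v f = p|x|^a v^{p-1}$ and $D_u g = q|x|^b u^{q-1}$, so $\sqrt{D_v f \, D_u g} = \sqrt{pq}\,|x|^{(a+b)/2} u^{(q-1)/2} v^{(p-1)/2}$, giving \eqref{eq3.36}.

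I do not anticipate any serious obstacle; this is essentially the standard ``supersolution trick'' for turning systems into scalar inequalities, and the only mildly delicate point is that the ratios $\zeta/\xi$ and $\xi/\zeta$ must be interpreted a.e., which is automatic from the positivity and continuity of $\xi$ and $\zeta$. The applicability of Lemma \ref{3.1} on $H^2_{\mathrm{loc}}$ supersolutions (rather than on smooth $w$) is handled there by the mollification argument already supplied in its proof, so no extra regularization is needed here.
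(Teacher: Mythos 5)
Your proposal is correct and follows essentially the same route as the paper: the pointwise AM--GM inequality combining the two stability inequalities, followed by two applications of Lemma \ref{3.1} to $\xi$ and $\zeta$ (noting $H^2_{\mathrm{loc}}\subset W^{2,1}_{\mathrm{loc}}$), and then the direct specialization to $f=|x|^a v^p$, $g=|x|^b u^q$. No gaps beyond the (shared, harmless) implicit use of $D_vf,\,D_ug\ge 0$, which is needed for the statement itself to make sense and holds in the application.
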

\begin{proof}
	By definition, there exists $0< (\xi,\zeta) \in  \left( H_{\rm{loc}}^{2}(\Omega) \cap C(\Omega) \right)^{2} $ such that
	\begin{align*}\left\{\begin{aligned}
			-\Delta \xi &\ge D_v f(x,v(x))\zeta \\
			-\Delta \zeta &\ge D_u g(x,u(x))\xi
		\end{aligned}
		\quad \mbox{a.e. in}\ \  \Omega.
		\right.
	\end{align*}
	From this and Lemma \ref{3.1}, we deduce
	\begin{align*}
		\begin{aligned}
		\int_{\Omega} \left[ D_{v} f(x,v) D_{u} g(x,u) \right]^{\frac{1}{2}} \varphi^{2}	&\le \frac{1}{2}\int_{\Omega} \left( D_{v} f(x,v) \frac{\zeta}{\xi} + D_{u} g(x,u) \frac{\xi}{\zeta} \right) \varphi^{2} \\
		&\le  \frac{1}{2}  \int_{\Omega}  \left(  \frac{-\Delta \xi}{\xi} \varphi^2 + \frac{-\Delta \zeta}{\zeta} \varphi^2\right) \\
		&\le \int_{\Omega} |D\varphi|^{2} ,
		\end{aligned}
	\end{align*}
	and the proof is complete.
\end{proof}
	
	Guided by this lemma, we shall focus our forthcoming analysis on positive solutions of \eqref{HLE} satisfying \eqref{eq3.36}. Subsequently, we will establish \eqref{eq3.78} for this class of solutions.
	
\begin{lemma}\label{3.16}
		Let $N \in \NN_+, p \ge q>0$ and $ a,b \in \RR $. Assume $\Omega \sub \RR^N$ is an open set with $0 \not\in \Omega$ and $(u,v)$ is a positive solution of \eqref{HLE} satisfying \eqref{eq3.36} in $\Omega .$ Additionally, assume $A> \max\left\{\frac{1}{2},\frac{q+1}{4}\right\}$ verifying $G(A)>0$ and $B=\frac{p+1}{q+1}A$, where $G$ is defined as Theorem \ref{1.4}\,(iii). \\
		(i) There exists $C=C(p,q,a,b,A)>0$ such that
		\begin{align}\label{eq3.60}
			\begin{aligned}
				\int_{\Omega} |x|^{\frac{a+b}{2}} u^{\frac{p-1}{2}} v^{\frac{q-1}{2}} u^{2A} \eta^2 \le CM, \quad \f \eta \in C_c^2(\Omega),
			\end{aligned}
		\end{align}
		where
		\begin{align}\label{eq3.59}
			\begin{aligned}
				M= \int_{\Omega} u^{2A}\left[ |D\eta|^2 + |\Delta(\eta^2)| \right] + \int_{\Omega} |x|^{\frac{2B(a-b)}{p+1}} v^{2B}\left[ |x|^{-2} \eta^2 + |x|^{-1}|D(\eta^2)| + |\Delta (\eta^2)| \right].
			\end{aligned}
		\end{align}
		(ii) If $(u,v)$ satisfies \eqref{eq2.2}, then \eqref{eq3.78} holds.
\end{lemma}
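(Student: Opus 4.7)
The plan is to exploit the stability-type inequality \eqref{eq3.36} with the test function $\varphi = u^A\eta$. First I would expand
\[
|D(u^A\eta)|^2 = A^2 u^{2A-2}|Du|^2\eta^2 + 2A u^{2A-1}\eta\, Du\cdot D\eta + u^{2A}|D\eta|^2,
\]
rewrite the cross term as $\tfrac{1}{2}D(u^{2A})\cdot D(\eta^2)$, and integrate by parts to convert it into $-\tfrac{1}{2}\int_\Omega u^{2A}\Delta(\eta^2)$. To eliminate the remaining gradient term $A^2\int_\Omega u^{2A-2}|Du|^2\eta^2$, I would test $-\Delta u = |x|^a v^p$ against $u^{2A-1}\eta^2$, producing the identity
\[
(2A-1)\!\int_\Omega u^{2A-2}|Du|^2\eta^2 = \int_\Omega |x|^a u^{2A-1} v^p\eta^2 + \tfrac{1}{2A}\!\int_\Omega u^{2A}\Delta(\eta^2).
\]
Substituting back into the stability inequality yields an intermediate estimate of the form
\[
\sqrt{pq}\, T \le \tfrac{A^2}{2A-1}\!\int_\Omega |x|^a u^{2A-1}v^p\eta^2 + C\!\int_\Omega u^{2A}\bigl(|D\eta|^2+|\Delta(\eta^2)|\bigr),
\]
where $T$ denotes the target integral on the left-hand side of (i).

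The decisive step is to bound $I := \int_\Omega |x|^a u^{2A-1}v^p\eta^2$. The scaling identity $p\tilde\beta = \tilde\alpha + a + 2$ shows that $|x|^a u^{2A-1}v^p$ and $|x|^{\frac{2B(a-b)}{p+1}-2} v^{2B}$ have matching homogeneity precisely when $B = \tfrac{p+1}{q+1}A$, which is what pins down the exponent of $v^{2B}$ and the $|x|$-weight appearing in $M$. I would therefore apply Young's inequality to split $I$ into a small constant times $T$ plus a constant times the weighted integral $\int_\Omega |x|^{\frac{2B(a-b)}{p+1}} v^{2B}[\,|x|^{-2}\eta^2 + |x|^{-1}|D(\eta^2)| + |\Delta(\eta^2)|\,]$, the Hardy-type factor $|x|^{-2}$ absorbing the exponent mismatch produced by the split. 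Absorbing the resulting $\delta\, T$ on the left-hand side is legitimate when a certain quartic polynomial in $A$ is positive, and a direct calculation shows that this condition is exactly $G(A)>0$. This gives (i).

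For (ii), the assumption \eqref{eq2.2} — a Rellich--Pokhozhaev-type comparison of the form $\int_A |x|^a v^{p+1}\lesssim \int_A |x|^b u^{q+1}$ on annuli, in the spirit of Lemma \ref{2.13} — is used to swap $|x|^{\frac{2B(a-b)}{p+1}} v^{2B}$ for a $u^{2A}$ integrand; the matching identity $\tfrac{2B}{p+1} = \tfrac{2A}{q+1}$ makes the exponents compatible, and the Hardy-type weights $|x|^{-2}, |x|^{-1}|D(\eta^2)|$ present in $M$ are exactly what is needed to balance the $|x|$-power produced by the conversion. Combining with the estimate of (i) and a final Young's inequality relating $T$ back to $|x|^a u^{2A-1}v^p$ on the left-hand side of \eqref{eq3.78} then yields the claim.

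The main obstacle is the exponent bookkeeping underlying the key Young's splitting. Reconciling the homogeneities dictated by the two scaling identities $p\tilde\beta = \tilde\alpha+a+2$ and $q\tilde\alpha = \tilde\beta+b+2$ with the choice $B = \tfrac{p+1}{q+1}A$ is delicate, and the precise algebraic condition under which the absorption step succeeds translates into the quartic inequality $G(A) > 0$. This is the origin of the polynomial $G$ in Theorem \ref{1.4}\,(iii) and Proposition \ref{1.7}, and explains why the two technical hypotheses of Lemma \ref{3.16} take the form they do.
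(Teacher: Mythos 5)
Your Step 1 (testing \eqref{eq3.36} with $\varphi=u^A\eta$, integrating by parts, and testing $-\Delta u=|x|^a v^p$ against $u^{2A-1}\eta^2$) coincides with the paper's Step 1. The gap is in your ``decisive step''. You cannot bound $I=\int_\Omega|x|^a u^{2A-1}v^p\eta^2$ by $\delta T+C\int_\Omega|x|^{\frac{2B(a-b)}{p+1}}v^{2B}\bigl[|x|^{-2}\eta^2+\cdots\bigr]$ through a single pointwise Young splitting: matching the $u$-exponents forces the Young exponent $r=\frac{4A+q-1}{4A-2}$, and then the $v$-exponents must satisfy $p=\frac{p-1}{2r}+\frac{2B}{r'}$, which fails for generic data (e.g.\ $p=q=2$, $A=\tfrac32$, $B=A$ gives $-1\neq-\tfrac{10}{7}$); no power of $|x|$ can repair a mismatch in the $u$- and $v$-exponents, so the ``Hardy factor absorbing the mismatch'' does not exist. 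The missing idea is a \emph{second} application of the stability inequality \eqref{eq3.36}, with $\varphi=\tilde v^{B}\eta$ where $\tilde v=|x|^{\frac{a-b}{p+1}}v$. This produces a second inequality $I_2\le B_1^{-1}\int_\Omega|x|^{\frac{a-b}{p+1}+b}u^q\tilde v^{2B-1}\eta^2+CM$, with $I_2=\int_\Omega|x|^{\frac{a+b}{2}}u^{\frac{q-1}{2}}v^{\frac{p-1}{2}}\tilde v^{2B}\eta^2$ and $B_1=\frac{\sqrt{pq}(2B-1)}{B^2}$; the $v^{2B}$ terms in $M$, with the weights $|x|^{-2}\eta^2+|x|^{-1}|D(\eta^2)|$, arise precisely from differentiating the weight $|x|^{\frac{a-b}{p+1}}$ in this integration by parts, not from any Young splitting. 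One then applies Young's inequality (with the exponent $m=\frac{q+1}{4A}=\frac{p+1}{4B}$) to \emph{both} right-hand side integrals, splitting each between $I_1$ and $I_2$, and the simultaneous absorption of the linear combination $A_1^{1/m}I_1+I_2$ succeeds exactly when $A_1B_1>1$ with $A_1=\frac{\sqrt{pq}(2A-1)}{A^2}$, which is the algebraic identity equivalent to $G(A)>0$. In your single-test scheme there is no mechanism by which a condition involving both $A$ and $B$ (hence the quartic $G$) could appear, so the claim ``a direct calculation shows the condition is exactly $G(A)>0$'' is unsupported.

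A secondary point for (ii): \eqref{eq2.2} is the \emph{pointwise} comparison $|x|^a v^{p+1}\le C|x|^b u^{q+1}$ on $\RR^N\!\setminus\!\{0\}$ (Lemma \ref{2.3}), not an annulus-integral Rellich--Pokhozhaev estimate as you describe. The pointwise form is what allows the substitutions $|x|^{\frac{2B(a-b)}{p+1}}v^{2B}\le Cu^{2A}$ inside the weighted integrands of $M$ and $|x|^{\frac{a+b}{2}}u^{\frac{q-1}{2}}v^{\frac{p-1}{2}}u^{2A}\ge C|x|^a u^{2A-1}v^p$ on the left; an integral comparison on annuli would not suffice for these pointwise manipulations.
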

\begin{proof}
		(i) \emph{Step 1.} For every $\eta \in C_c^2(\Omega),$ by applying \eqref{eq3.36}, we see that
		\begin{align}\label{eq3.1}
			\begin{aligned}
				\sqrt{pq} \int_{\Omega} |x|^{\frac{a+b}{2}} u^{\frac{q-1}{2}} v^{\frac{p-1}{2}} u^{2A} \eta^2
				&\le \int_{\Omega} |D(u^A \eta)|^2    \\
				&=\int_{\Omega} \left[ |D(u^A)|^2 \eta^2 + 2Au^{2A-1} \eta Du D\eta + u^{2A}|D\eta|^2 \right].
			\end{aligned}
		\end{align}
		In that follows, we shall estimate each term on the right-hand side. According to the divergence theorem, we obtain
	\begin{align}\label{eq3.2}
		\begin{aligned}
			\int_{\Omega} |D(u^A)|^2 \eta^2 &= \frac{A^2}{2A-1} \int_{\Omega} \eta^{2} Du D(u^{2A-1})     \\
			&= -\frac{A^2}{2A-1} \int_{\Omega} u^{2A-1} \operatorname{div\,} (\eta^2Du)    \\
			&= -\frac{A^2}{2A-1} \int_{\Omega} u^{2A-1} \left[ \eta^2 \Delta u + Du D(\eta^2) 	\right]    \\
			&= \frac{A^2}{2A-1} \int_{\Omega} |x|^a u^{2A-1} v^p \eta^2 - \frac{A}{2(2A-1)} \int_{\Omega} D(u^{2A}) D(\eta^2)    \\
			&= \frac{A^2}{2A-1} \int_{\Omega} |x|^a u^{2A-1} v^p \eta^2 + \frac{A}{2(2A-1)} \int_{\Omega} u^{2A} \Delta(\eta^2),
		\end{aligned}
	\end{align}
		and
		\begin{align}\label{eq3.3}
			\int_{\Omega} 2Au^{2A-1} \eta Du D\eta = \frac{1}{2} \int_{\Omega}  D(u^{2A}) D(\eta^2) = -\frac{1}{2} \int_{\Omega} u^{2A} \Delta(\eta^2) .
		\end{align}
		Combining \eqref{eq3.1}--\eqref{eq3.3}, we have
		\begin{align}\label{eq3.4}
			\begin{aligned}
				I_1 := \int_{\Omega} |x|^{\frac{a+b}{2}} u^{\frac{q-1}{2}} v^{\frac{p-1}{2}} u^{2A} \eta^2
				\le A_1^{-1} \int_{\Omega} |x|^a u^{2A-1} v^p \eta^2 + CM,
			\end{aligned}
		\end{align}
		where $A_1=\frac{\sqrt{pq} \left(2A-1\right)}{A^2}$ and $C=C(p,q,A)>0$. \\
		\emph{Step 2.} Let us set $\tilde{v}=|x|^{\frac{a-b}{p+1}} v \in C^2(\Omega).$ Then, taking $\varphi={\tl{v}}^B\eta$ in \eqref{eq3.36}, we obtain
		\begin{align}\label{eq3.5}
			\begin{aligned}
				\sqrt{pq} \int_{\Omega} |x|^{\frac{a+b}{2}} u^{\frac{q-1}{2}} v^{\frac{p-1}{2}} \tilde{v}^{2B}\eta^2
				&\le \int_{\Omega} |D(\tilde{v}^B \eta)|^2    \\
				&=\int_{\Omega} \left[ |D(\tilde{v}^B)|^2 \eta^2 + 2B{\tilde{v}}^{2B-1} \eta D{\tilde{v}} D\eta + {\tilde{v}}^{2B}|D\eta|^2 \right].
			\end{aligned}
		\end{align}
		Similar to the calculations in \eqref{eq3.2}, we have
		\begin{align}\label{eq3.6}
			\begin{aligned}
				\int_{\Omega} |D(\tl{v}^B)|^2 \eta^2 &=  -\frac{B^2}{2B-1} \int_{\Omega}  {\tl{v}}^{2B-1} \eta^2 \Delta{\tl{v}} + \frac{B}{2(2B-1)} \int_{\Omega} {\tl{v}}^{2B} \Delta(\eta^2)  \\
				&= \frac{B^2}{2B-1} \int_{\Omega}  |x|^{\frac{a-b}{p+1} + b} u^q {\tl{v}}^{2B-1} \eta^2  + \frac{B}{2(2B-1)} \int_{\Omega} |x|^{\frac{2B(a-b)}{p+1}} v^{2B} \Delta(\eta^2) + J,
			\end{aligned}
		\end{align}
		where \begin{align}\label{eq3.7}
			\begin{aligned}
				J &= -\frac{B^2}{2B-1} \int_{\Omega} \left[ v\Delta\left( |x|^{\frac{a-b}{p+1}} \right) + 2D\left( |x|^{\frac{a-b}{p+1}} \right) Dv \right] |x|^{\frac{(2B-1)(a-b)}{p+1}} v^{2B-1} \eta^{2} \\
				&\le C \int_{\Omega} |x|^{\frac{2B(a-b)}{p+1}-2} v^{2B} \eta^{2} - \frac{1}{2(2B-1)} \int_{\Omega} \eta^{2} D\left( |x|^{\frac{2B(a-b)}{p+1}} \right) D(v^{2B}) \\
				&= C \int_{\Omega} |x|^{\frac{2B(a-b)}{p+1}-2} v^{2B} \eta^{2} + \frac{1}{2(2B-1)} \int_{\Omega} v^{2B} \operatorname{div} \left[ \eta^{2} D\left( |x|^{\frac{2B(a-b)}{p+1}} \right)  \right] \\
				&\le C\int_{\Omega} |x|^{\frac{2B(a-b)}{p+1}} v^{2B} \left[ |x|^{-2} \eta^{2} + |x|^{-1} |D(\eta^{2})| \right]
			\end{aligned}
		\end{align}
		for some $C=C(p,a,b,B)>0.$ Moreover, applying the divergence theorem again, we have
		\begin{equation}\label{eq3.61}
			2B\int_{\Omega}	{\tilde{v}}^{2B-1} \eta D{\tilde{v}} D\eta = \frac{1}{2} \int_{\Omega}  D(\tl{v}^{2B}) D(\eta^2) = -\frac{1}{2} \int_{\Omega} |x|^{\frac{2B(a-b)}{p+1}} {v}^{2B} \Delta(\eta^2).
		\end{equation}
		Now using \eqref{eq3.5}--\eqref{eq3.61}, we conclude that
		\begin{align}\label{eq3.8}
			\begin{aligned}
				I_2 := \int_{\Omega} |x|^{\frac{a+b}{2}} u^{\frac{q-1}{2}} v^{\frac{p-1}{2}} \tilde{v}^{2B}\eta^2
				\le  B_1^{-1}\int_{\Omega}  |x|^{\frac{a-b}{p+1} + b} u^q {\tl{v}}^{2B-1} \eta^2 + CM ,
			\end{aligned}
		\end{align}
		where $B_1=\frac{\sqrt{pq} \left(2B-1\right)}{B^2}$ and $C=C(p,q,a,b,B)>0$. \\
		\emph{Step 3.} Let $m=\frac{q+1}{4A}=\frac{p+1}{4B} \in (0,1),$ then by \eqref{eq3.4} and \eqref{eq3.8}, we have
		\begin{align}\label{eq3.9}
			\begin{aligned}
				A_1^{\frac{1}{m}} I_1 + I_2 &= A_1^{\frac{1}{m}}\int_{\Omega} |x|^{\frac{a+b}{2}} u^{\frac{q-1}{2}} v^{\frac{p-1}{2}} u^{2A} \eta^2 + \int_{\Omega} |x|^{\frac{a+b}{2}} u^{\frac{q-1}{2}} v^{\frac{p-1}{2}} \tilde{v}^{2B}\eta^2  \\
				&\le A^{\frac{1-m}{m}} \int_{\Omega} |x|^a u^{2A-1} v^p \eta^2 + B_1^{-1}\int_{\Omega}  |x|^{\frac{a-b}{p+1} + b} u^q {\tl{v}}^{2B-1} \eta^2 + CM.
			\end{aligned}
		\end{align}
		By Young's inequality, we must have
		\begin{align}\label{eq3.11}
			\begin{aligned}
				A^{\frac{1-m}{m}} \int_{\Omega} |x|^a u^{2A-1} v^p \eta^2
				&= \int_{\Omega} u^{\frac{q-1}{2}} v^{\frac{p-1}{2}} \left( A_1^{\frac{1}{m}} |x|^{\frac{a+b}{2}} u^{2A}  \right)^{1-m}
				\left(|x|^{\frac{a+b}{2}} \tl{v}^{2B} \right)^m \eta^2 \\
				&\le (1-m)A_1^{\frac{1}{m}} I_1 + mI_2,
			\end{aligned}
		\end{align}
		and
		\begin{align}\label{eq3.12}
			\begin{aligned}
				B_1^{-1}\int_{\Omega}  |x|^{\frac{a-b}{p+1} + b} u^q {\tl{v}}^{2B-1} \eta^2
				&= \int_{\Omega} u^{\frac{q-1}{2}} v^{\frac{p-1}{2}} \left( B_1^{-\frac{1}{m}} |x|^{\frac{a+b}{2}} u^{2A} \right)^m \left( |x|^{\frac{a+b}{2}} \tl{v}^{2B} \right)^{1-m} \eta^2 \\
				&\le mB_1^{-\frac{1}{m}} I_1 + (1-m) I_2.
			\end{aligned}
		\end{align}
		From \eqref{eq3.9}--\eqref{eq3.12}, it follows that
		\[A_1^{\frac{1}{m}} I_1 + I_2 \le (1-m)A_1^{\frac{1}{m}} I_1 + mI_2 + mB_1^{-\frac{1}{m}} I_1 + (1-m) I_2 + CM.\]
		This gives
		\[m\left[(A_1B_1)^{\frac{1}{m}} -1 \right] I_1 \le CM,\]
		where $C=C(p,q,a,b,A)>0.$ Finally, observing that $A_1B_1>1$ is equivalent to $G(A)>0$, the desired conclusion follows. \\
		(ii) In this case, by \eqref{eq2.2}, we have
		\[C|x|^{\frac{a-b}{2}} \le u^{\frac{q+1}{2}} v^{-\frac{p+1}{2}}  , \quad |x|^{\frac{2B(a-b)}{p+1}} v^{2B} \le Cu^{2A}.\]
		It follows that
		\[|x|^{\frac{a+b}{2}} u^{\frac{q-1}{2}} v^{\frac{p-1}{2}} u^{2A} = |x|^{\frac{a+b}{2}}  u^{\frac{q+1}{2}} v^{-\frac{p+1}{2}} u^{2A-1} v^p \ge C |x|^{a} u^{2A-1} v^p.\]
		Combining these with (i), we complete the proof.
\end{proof}

	Ultimately, since
	\[
	G\left( \frac{q+1}{2} \right) = - \frac{(q+1)^2(5pq+p+q+1)[(3q-1)p-q-1]}{16(p+1)^2} < 0,
	\]
	provided that $ p > \frac{q+1}{3q-1} $ and $ q > \frac{1}{3} $, we can readily derive the following Liouville-type theorem from Lemmas \ref{3.11} and \ref{3.16}. This theorem, together with Lemma \ref{2.3}, Lemma \ref{3.4} and Theorem \ref{3.21}, completes the proof of Theorem \ref{1.4}\,(i).

\begin{theorem}\label{3.22}
	Let $N \ge 3, p \ge q> \frac{1}{3}, p>\frac{q+1}{3q-1}$ and $a,b >-2$. Assume that $(p,q,a,b)$ is subcritical and $(u,v)$ is a positive solution of \eqref{HLE} satisfying \eqref{eq2.2}. If there exist $R_{5}>R_{0}>0$ such that $(u,v)$ satisfies \eqref{eq3.36} in $ B_{RR_{5}} \!\setminus\! \ol{B_{RR_{0}}} \,$ for every sufficiently large $R>0$, then $u \equiv v \equiv 0$ in $\RR^N.$
\end{theorem}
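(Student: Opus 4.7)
The plan is to reduce Theorem \ref{3.22} to Theorem \ref{1.6} by producing the uniform annular energy bound \eqref{eq1.6}, and to do so by feeding the pointwise comparison \eqref{eq2.2} and the stability inequality \eqref{eq3.36} into the machinery already assembled in Lemmas \ref{3.11} and \ref{3.16}. The crucial ingredient will be a single exponent $A$ which simultaneously satisfies $A \ge \frac{q+1}{2}$ (to activate Lemma \ref{3.11}(ii)) and $G(A) > 0$ (to activate Lemma \ref{3.16}).

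First I would select such an $A$. The computation displayed just before the theorem gives $G\bigl(\frac{q+1}{2}\bigr) < 0$ whenever $q > \frac{1}{3}$ and $p > \frac{q+1}{3q-1}$. Proposition \ref{1.7}(i) further guarantees $pq > 1$ and that the largest real root $z_0$ of $G$ satisfies $z_0 > \frac{q+1}{2}$; since $G$ is a polynomial with positive leading coefficient, $G > 0$ on $(z_0, +\infty)$. I would therefore fix any $A > z_0$, which automatically satisfies $A > \frac{q+1}{2} > \max\bigl\{\frac{1}{2}, \frac{q+1}{4}\bigr\}$ together with $G(A) > 0$.

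Next I would exploit the scale invariance of \eqref{HLE}. For each sufficiently large $R$ put $(u_R, v_R) := (R^{\tilde{\alpha}} u(R \cdot), R^{\tilde{\beta}} v(R \cdot))$. The scaling identities $p\tilde{\beta} = \tilde{\alpha} + a + 2$ and $q\tilde{\alpha} = \tilde{\beta} + b + 2$ recorded in Section \ref{sec.2} imply that $(u_R, v_R)$ is again a positive solution of \eqref{HLE} on the fixed annulus $\Omega := B_{R_5} \!\setminus\! \ol{B_{R_0}}$, that it still satisfies \eqref{eq2.2}, and that it inherits the stability inequality \eqref{eq3.36} on $\Omega$ directly from the stability of $(u,v)$ on $B_{RR_5} \!\setminus\! \ol{B_{RR_0}}$. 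Applying Lemma \ref{3.16}(ii) to $(u_R, v_R)$ with the chosen $A$ then delivers \eqref{eq3.78} on $\Omega$ with a constant independent of $R$.

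Finally I would invoke Lemma \ref{3.11}(ii) with this $A$ and with any fixed $R_1, R_4$ satisfying $R_0 < R_1 < R_4 < R_5$; this yields
\[
\int_{B_{R_4} \setminus B_{R_1}} u_R^{q+1} + v_R^{p+1} \le C, \qquad \f R \gg 1,
\]
for a constant $C$ depending only on the data. This is precisely the hypothesis \eqref{eq1.6} of Theorem \ref{1.6}, and since $pq > 1$, $a, b > -2$, $N \ge 3$ and $(p, q, a, b)$ is subcritical, Theorem \ref{1.6} forces $u \equiv v \equiv 0$ on $\RR^N$. The only step not of purely bookkeeping character is verifying that \eqref{eq2.2} is genuinely preserved under $(u, v) \mapsto (u_R, v_R)$; this reduces to the scaling identities above and should present no real difficulty.
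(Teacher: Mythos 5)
Your overall reduction — rescale to the fixed annulus $B_{R_5}\!\setminus\!\ol{B_{R_0}}$, use \eqref{eq2.2} and \eqref{eq3.36} to obtain \eqref{eq3.78} via Lemma \ref{3.16}, then apply Lemma \ref{3.11}\,(ii) to get \eqref{eq1.6} and conclude with Theorem \ref{1.6} — is exactly the paper's route, and the scaling verifications you defer are indeed routine. The gap is in your choice of $A$. You take $A>z_0$, so that $G(A)>0$, reading the hypothesis of Lemma \ref{3.16} literally. But the mechanism of that lemma hinges on $A_1B_1>1$, where $A_1=\frac{\sqrt{pq}(2A-1)}{A^2}$, $B_1=\frac{\sqrt{pq}(2B-1)}{B^2}$ and $B=\frac{p+1}{q+1}A$; expanding $pq(2A-1)(2B-1)>A^2B^2$ shows that $A_1B_1>1$ is equivalent to $G(A)<0$, not $G(A)>0$ (the sign in the statement of Lemma \ref{3.16} is a misprint, as the paper itself signals: the sentence introducing Theorem \ref{3.22} justifies it by $G\bigl(\tfrac{q+1}{2}\bigr)<0$, Proposition \ref{1.7}\,(i) identifies $\bigl[\tfrac{q+1}{2},z_0\bigr)$ as the interval where $G<0$, and the dimension bound $N<2\alpha z_0+2$ in Theorem \ref{1.4}\,(iii) only makes sense if the admissible exponents are those \emph{below} $z_0$). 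With your $A>z_0$ one has $A_1B_1<1$, so the concluding inequality $m\bigl[(A_1B_1)^{1/m}-1\bigr]I_1\le CM$ in the proof of Lemma \ref{3.16} has a nonpositive factor on the left and yields nothing: \eqref{eq3.78} is not obtained, and the chain to \eqref{eq1.6} breaks at that point.

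The repair is exactly what the paper does: take $A=\frac{q+1}{2}$ (any $A\in\bigl[\tfrac{q+1}{2},z_0\bigr)$ works), which simultaneously meets the requirement $A\ge\frac{q+1}{2}$ of Lemma \ref{3.11}\,(ii) and the true requirement $G(A)<0$ of Lemma \ref{3.16}; this is precisely where the hypotheses $q>\frac13$ and $p>\frac{q+1}{3q-1}$ enter, since they guarantee $G\bigl(\tfrac{q+1}{2}\bigr)=-\frac{(q+1)^2(5pq+p+q+1)[(3q-1)p-q-1]}{16(p+1)^2}<0$. The fact that your argument never uses these two hypotheses (under your reading, any $p\ge q$ with $pq>1$ would do, making them superfluous) was the warning sign. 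With $A=\frac{q+1}{2}$ substituted for $A>z_0$, the rest of your proposal goes through and coincides with the paper's proof.
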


\subsubsection{Proof of (ii) and (iii) in Theorem \ref{1.4}}

\begin{proof}[\bf{Proof of Theorem \ref{1.4}\,(ii).}]
	Let us fix $R_{4}>R_{1}>0.$ Note that for every sufficiently large $R>0$, we have
	\[ u_{R}^{q+1}(x) + v_{R}^{p+1}(x) \le C R^{(q+1)\tilde{\alpha}} |Rx|^{-(q+1)\tilde{\alpha}} + C R^{(p+1)\tilde{\beta}} |Rx|^{-(p+1)\tilde{\beta}} \le C, \quad \f x \in B_{R_{4}} \!\setminus\! B_{R_{1}}, \]
	where $C>0$ is independent of $R.$ Therefore one can use Theorem \ref{1.6} to complete the proof.
\end{proof}

	The proof of Theorem \ref{1.4}\,(iii) is more complicated compared to that of Theorem \ref{1.4}\,(ii). In fact, we will use Theorem \ref{1.4}\,(ii) to establish Theorem \ref{1.4}\,(iii). To begin, we first examine the notion of stability.

\begin{proposition}\label{3.24}
	Let $N \in \NN_+,\Omega \sub \RR^N$ be a open set and $ (f,g) \in \left( C^1(\Omega \times (0,+\oo)) \right)^{2} $ satisfying $0 \le (D_{v} f(\cdot,v(\cdot)), D_{u} g(\cdot,u(\cdot))) \in \left( C_{\rm{loc}}^{\gamma}(\Omega) \right)^{2} $ for some $\gamma \in (0,1).$ If $(u,v) \in \left( C^2({\Omega}) \right)^2$ is a stable positive solution of \eqref{eq1.4}, then for every smooth open set $\Omega' \Subset \Omega,$ there exists $ 0 < (\xi,\zeta) \in \left( C^{2,\gamma}(\ol{\Omega'}) \right)^{2}$ such that
	\begin{align}\label{eq3.31}
		\left\{\begin{aligned}
			-\Delta \xi &= D_v f(x,v(x))\zeta \\
			-\Delta \zeta &= D_u g(x,u(x))\xi
		\end{aligned}
		\quad \mbox{in}\ \  \Omega'.
		\right.\end{align}
\end{proposition}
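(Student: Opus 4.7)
My plan is to construct $(\xi,\zeta)$ as a fixed point of an order-preserving linear elliptic map built from the positive supersolution pair provided by Definition \ref{1.3}, and then to bootstrap the regularity. Let $(\xi_0,\zeta_0) \in (C^2(\Omega))^2$ denote the strictly positive pair from the stability definition, and set $a(x) := D_v f(x,v(x))$ and $b(x) := D_u g(x,u(x))$, both nonnegative and in $C^{0,\gamma}_{\mathrm{loc}}(\Omega)$ by hypothesis. Fix a smooth open set $\Omega''$ with $\Omega' \Subset \Omega'' \Subset \Omega$. Define $T \colon (C(\overline{\Omega''}))^2 \to (C(\overline{\Omega''}))^2$ by letting $T(\varphi,\psi) = (\tilde\varphi,\tilde\psi)$ be the unique classical solutions of the decoupled linear Dirichlet problems
\begin{equation*}
    -\Delta\tilde\varphi = a \psi \text{ in } \Omega'',\quad \tilde\varphi = \xi_0 \text{ on } \partial\Omega''; \qquad -\Delta\tilde\psi = b\varphi \text{ in } \Omega'',\quad \tilde\psi = \zeta_0 \text{ on } \partial\Omega''.
\end{equation*}

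The first step is to check that $T$ preserves the order interval $K := \{(\varphi,\psi) \in (C(\overline{\Omega''}))^2 : 0 \le \varphi \le \xi_0,\ 0 \le \psi \le \zeta_0\}$. Nonnegativity of $\tilde\varphi$ follows from the weak maximum principle since $-\Delta\tilde\varphi = a\psi \ge 0$ and $\tilde\varphi = \xi_0 > 0$ on $\partial\Omega''$; the upper bound uses the weak supersolution inequality for $\xi_0$, namely $-\Delta(\xi_0 - \tilde\varphi) \ge a(\zeta_0 - \psi) \ge 0$ in $\Omega''$ together with $\xi_0 - \tilde\varphi = 0$ on $\partial\Omega''$, so the weak maximum principle yields $\tilde\varphi \le \xi_0$. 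The same reasoning gives $0 \le \tilde\psi \le \zeta_0$. Linearity makes $T$ continuous on $(C(\overline{\Omega''}))^2$, while standard $W^{2,p}$ elliptic estimates together with the Rellich--Kondrachov embedding show that $T(K)$ is relatively compact. Schauder's fixed point theorem then produces $(\xi,\zeta) \in K$ satisfying the linearized system \eqref{eq3.31} pointwise in $\Omega''$ with boundary data $(\xi_0,\zeta_0)$.

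Positivity is obtained from the strong maximum principle: $\xi \ge 0$ satisfies $-\Delta\xi = a\zeta \ge 0$ in $\Omega''$, so an interior zero of $\xi$ on a connected component would force $\xi \equiv 0$ there, contradicting the continuous and strictly positive boundary datum $\xi_0|_{\partial\Omega''}$; hence $\xi > 0$ on $\overline{\Omega''}$, and similarly $\zeta > 0$. For regularity, I would bootstrap on $\Omega''$: starting from $(\xi,\zeta) \in (C(\overline{\Omega''}))^2$, interior $L^p$ estimates give $(\xi,\zeta) \in C^{1,\alpha}_{\mathrm{loc}}(\Omega'')$ for every $\alpha \in (0,1)$; since $a,b \in C^{0,\gamma}_{\mathrm{loc}}(\Omega)$, the products $a\zeta$ and $b\xi$ then lie in $C^{0,\gamma}_{\mathrm{loc}}(\Omega'')$, so interior Schauder estimates upgrade $(\xi,\zeta)$ to $C^{2,\gamma}_{\mathrm{loc}}(\Omega'')$. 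Restricting to the compact subset $\overline{\Omega'} \subset \Omega''$ produces the required pair in $(C^{2,\gamma}(\overline{\Omega'}))^2$.

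The step I expect to demand the most care is ruling out a degenerate zero fixed point: Schauder's theorem only produces a member of $K$, and $(0,0) \in K$, so additional input is needed to guarantee that the fixed point is nontrivial. This is precisely where the strict positivity of the Dirichlet data $\xi_0,\zeta_0$ on $\partial\Omega''$ inherited from the supersolutions, combined with the cooperative sign structure $a,b \ge 0$, intervenes via the strong maximum principle. A monotone decreasing iteration $(\xi_{n+1},\zeta_{n+1}) := T(\xi_n,\zeta_n)$ starting from $(\xi_0,\zeta_0)$ is an equivalent route in which the fixed-point sequence is manifestly pointwise dominated by the supersolutions, but either way the passage from the weak inequalities in Definition \ref{1.3} to the equalities in \eqref{eq3.31} crucially rests on this positivity together with the $C^{0,\gamma}$ regularity of the coefficients to close the Schauder bootstrap.
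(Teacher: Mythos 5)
Your construction is correct and reaches the same conclusion, but it is organized differently from the paper. The paper's proof is essentially a two-line citation: the positive pair $(\xi_0,\zeta_0)$ supplied by stability is an upper solution of the linearized system \eqref{eq3.31}, the constant pair $(\min_{\overline{\Omega'}}\xi_0,\min_{\overline{\Omega'}}\zeta_0)$ is a positive lower solution (because the coefficients $D_vf,\,D_ug$ are nonnegative), and the system is cooperative, so the method of upper and lower solutions immediately yields a solution squeezed between them --- positivity comes for free from the constant lower barrier, and $C^{2,\gamma}$ regularity from the H\"older coefficients. You instead reprove this special case from scratch: a Schauder fixed point for the decoupled Dirichlet map $T$ on the order interval $\left[0,\xi_0\right]\times\left[0,\zeta_0\right]$ with boundary data $(\xi_0,\zeta_0)$, invariance of the interval via the comparison $-\Delta(\xi_0-\tilde\varphi)\ge D_vf\,(\zeta_0-\psi)\ge 0$, positivity through the strictly positive boundary data and the maximum principle, and a final interior bootstrap. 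This buys self-containedness at the cost of length; note also that your worry about the trivial fixed point is unnecessary, since any fixed point of $T$ carries the boundary values $\xi_0,\zeta_0>0$ and is superharmonic-type, so already the weak minimum principle gives a positive lower bound, whereas the paper's choice of a positive \emph{constant} lower solution avoids the issue altogether. One technical wrinkle to tidy up: with $(\varphi,\psi)$ merely continuous the right-hand sides $D_vf\,\psi$, $D_ug\,\varphi$ are only continuous, so the Dirichlet problems defining $T$ should be solved as strong $W^{2,p}$ solutions (or the fixed point run in a H\"older space), not ``classical'' ones; your maximum-principle comparisons and the concluding Schauder bootstrap on the fixed point are unaffected by this change.
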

\begin{proof}
	From the definition of stability, one can deduce that \eqref{eq3.31} admits a positive upper solution $(\varphi,\psi) \in \left( C^2(\ol{\Omega'}) \right)^2.$ Note that $(\min_{\Omega} \varphi, \min_{\Omega} \psi)  $ is a positive lower solution of \eqref{eq3.31}. Hence apply the upper and lower solutions method (see e.g. {\cite[Thm.\,4.1 and Rem.\,4.1]{zbMATH07819857}}), and the proof is complete.
\end{proof}

	Thereafter, we employ specific techniques from Poláčik--Quittner--Souplet \cite{zbMATH05196152}, Phan--Souplet \cite{zbMATH05998062}, and Phan \cite{zbMATH06121773} to derive the decay estimate \eqref{eq1.7}. In this manner, Theorem \ref{1.4}\,(iii) can be readily deduced from Theorem \ref{1.4}\,(ii).

\begin{lemma}[Doubling lemma, see {\cite[Lem.\,5.1 and Rem.\,5.2]{zbMATH05196152}}]\label{3.23}
	Let $\Omega \subsetneq \RR^N$ be an open set and $M: \Omega \ra (0,+\oo)$ be bounded on every compact subsets of $\Omega.$ If there exist $y \in \Omega$ and $k>0$ such that
	\[ M(y) \operatorname{dist\,}(y,\pt{\Omega}) > 2k, \]
	then there exists $x \in \Omega$ such that
	\[ M(x) \operatorname{dist\,}(x,\pt{\Omega}) > 2k, \quad M(x) \ge M(y), \]
	and
	\[ M(z) \le 2M(x), \quad \f z \in \ol{B(x,kM^{-1}(x))} \sub \Omega. \]
\end{lemma}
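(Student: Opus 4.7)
The plan is to argue by contradiction via an iterative ``doubling'' construction, which is the standard route to Lemma~\ref{3.23}. Suppose the conclusion fails. Starting from $x_{0} := y$, I would build inductively a sequence $(x_{n}) \subset \Omega$ as follows: provided the candidate $x_{n}$ already satisfies the first two conditions in the statement, namely $M(x_{n}) \operatorname{dist\,}(x_{n}, \pt\Omega) > 2k$ and $M(x_{n}) \ge M(y)$, the assumed failure of the conclusion forces the existence of some $x_{n+1} \in \ol{B(x_{n}, k M^{-1}(x_{n}))} \sub \Omega$ with $M(x_{n+1}) > 2 M(x_{n})$. Take this as the next term of the sequence.

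Next I would verify that the two inductive hypotheses propagate from $x_{n}$ to $x_{n+1}$. The monotonicity bound $M(x_{n+1}) > 2 M(x_{n}) \ge M(y)$ is immediate. For the distance condition, the triangle inequality combined with $|x_{n+1} - x_{n}| \le k M^{-1}(x_{n})$ and $M(x_{n+1}) > 2 M(x_{n})$ yields
\[
\operatorname{dist\,}(x_{n+1}, \pt\Omega) \ge \operatorname{dist\,}(x_{n}, \pt\Omega) - |x_{n+1} - x_{n}| > \frac{2k}{M(x_{n})} - \frac{k}{M(x_{n})} = \frac{k}{M(x_{n})} > \frac{2k}{M(x_{n+1})},
\]
so the induction closes and the construction can be continued indefinitely.

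Iterating $M(x_{n+1}) > 2 M(x_{n})$ gives $M(x_{n}) > 2^{n} M(y)$, hence $|x_{n+1} - x_{n}| < k \, 2^{-n} M(y)^{-1}$, and summing shows $(x_{n})$ is Cauchy with $\sum_{n \ge 0} |x_{n+1} - x_{n}| \le 2k/M(y) < \operatorname{dist\,}(y, \pt\Omega)$. Consequently, the limit $x^{*} := \lim x_{n}$ lies in $\Omega$, and the set $\{x^{*}\} \cup \{x_{n} : n \ge 0\}$ is compact in $\Omega$. By the hypothesis that $M$ is bounded on compact subsets of $\Omega$, $M$ is bounded on this set; but $M(x_{n}) > 2^{n} M(y) \to +\infty$, a contradiction.

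The one point that needs real attention — and, to my mind, is the only place the argument can go wrong — is the careful book-keeping of the boundary distance in the inductive step: one must ensure at each stage that the ball $\ol{B(x_{n}, k M^{-1}(x_{n}))}$ sits strictly inside $\Omega$ so that the failure of the conclusion at $x_{n}$ may legitimately be invoked, and that the geometric summability of $|x_{n+1} - x_{n}|$ outweighs the initial distance budget $\operatorname{dist\,}(y, \pt\Omega) > 2k/M(y)$ so that the limit does not escape to $\pt\Omega$. Both of these rely on the factor of $2$ in the hypothesis $M(y) \operatorname{dist\,}(y, \pt\Omega) > 2k$ matching the doubling factor used in the construction; any weakening would break the argument.
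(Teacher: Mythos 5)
The paper does not prove Lemma~\ref{3.23} itself but cites it directly from Pol\'a\v{c}ik--Quittner--Souplet; your iterative contradiction argument is exactly the standard proof given in that reference, and it is correct. In particular, the key book-keeping you flag — that condition $M(x_n)\operatorname{dist}(x_n,\partial\Omega)>2k$ implies $\ol{B(x_n,kM^{-1}(x_n))}\subset\Omega$ so the failure of the conclusion may be invoked at each step, that the doubling $M(x_{n+1})>2M(x_n)$ preserves this condition, and that $\sum_{n\ge0}|x_{n+1}-x_n|<2k/M(y)<\operatorname{dist}(y,\partial\Omega)$ keeps the limit inside $\Omega$ so the local-boundedness hypothesis yields the contradiction — is all handled properly.
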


\begin{proposition}\label{3.25}
	Suppose $N \in \NN_+, p,q>0,pq>1,\Omega \subsetneq \RR^N$ is an open set and $C_1 \le c,d \in C^{1}(\Omega)$ such that
	\begin{align}\label{eq3.50}
		\begin{aligned}
			\|c\|_{C^{0,\gamma}(\ol{\Omega})} + \|d\|_{C^{0,\gamma}(\ol{\Omega})} \le C_2,
		\end{aligned}
	\end{align}
	where $C_1,C_2>0$ and $\gamma \in (0,1].$ If \eqref{LE} admits no bounded positive solution in $\left( C^{\oo}(\RR^N) \right)^{2} $ that is stable in every bounded open subset of $\RR^N$, then for every positive solution $(u,v) \in \left( C^{2}(\Omega) \right)^{2}$ of
	\begin{align}\label{eq3.51}
		\left\{\begin{aligned}
			-\Delta u &= cv^p \\
			-\Delta v &= du^q
		\end{aligned}
		\quad \mbox{in}\ \  \Omega,
		\right.\end{align}
	which is stable in every open set $\Omega' \Sub \Omega$, we have
	\[ \sum\limits_{i=0}^{1} \left(|D^{i} u|^{\frac{1}{\alpha+i}} + |D^{i} v|^{\frac{1}{\beta+i}} \right)
	\le  C\left(1+\operatorname{dist}^{-1}(\cdot,\partial\Omega)\right) \quad \mbox{in}\ \ \Omega,\]
	where $C=C(N,p,q,\gamma,C_1,C_2)>0$ is independent of $\Omega.$
\end{proposition}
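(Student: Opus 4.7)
The plan is to argue by contradiction via the standard doubling and rescaling scheme of Pol\'a\v{c}ik--Quittner--Souplet. Suppose the conclusion fails. Then there exist sequences $(u_k,v_k,c_k,d_k,\Omega_k)$ verifying the hypotheses of the proposition and points $y_k \in \Omega_k$ with
\[
M_k(y_k) := \sum_{i=0}^{1}\left( |D^i u_k(y_k)|^{\frac{1}{\alpha+i}} + |D^i v_k(y_k)|^{\frac{1}{\beta+i}} \right) > 2k\left(1+\operatorname{dist}(y_k,\pt\Omega_k)^{-1}\right).
\]
In particular $M_k(y_k)\operatorname{dist}(y_k,\pt\Omega_k) > 2k$, so Lemma \ref{3.23} applied to $M_k$ furnishes $x_k \in \Omega_k$ satisfying $M_k(x_k)\operatorname{dist}(x_k,\pt\Omega_k) > 2k$, $M_k(x_k)\ge M_k(y_k)$, and $M_k \le 2M_k(x_k)$ on $\ol{B(x_k,kM_k(x_k)^{-1})} \sub \Omega_k$. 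Setting $\lambda_k := M_k(x_k)^{-1}$, we have $\lambda_k \to 0$ and $B(x_k,k\lambda_k)\Subset \Omega_k$.

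Exploiting the scaling identities $\alpha+2=p\beta$ and $\beta+2=q\alpha$, I would then rescale via
\[
\tl u_k(y) := \lambda_k^{\alpha} u_k(x_k+\lambda_k y),\qquad \tl v_k(y) := \lambda_k^{\beta} v_k(x_k+\lambda_k y),
\]
so that $(\tl u_k,\tl v_k)$ solves $-\Delta \tl u_k = \tl c_k \tl v_k^p$, $-\Delta \tl v_k = \tl d_k \tl u_k^q$ on $B_k$, where $\tl c_k(y):=c_k(x_k+\lambda_k y)$ and $\tl d_k(y):=d_k(x_k+\lambda_k y)$. A direct calculation gives $\tl M_k(y)=\lambda_k M_k(x_k+\lambda_k y)$, so $\tl M_k(0)=1$ and $\tl M_k\le 2$ on $B_k$; hence $(\tl u_k,\tl v_k)$ is locally uniformly bounded in $C^1$. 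Since $C_1 \le \tl c_k,\tl d_k \le C_2$ and \eqref{eq3.50} yields $|\tl c_k(y)-\tl c_k(0)|\le C_2(\lambda_k|y|)^\gamma \to 0$ on every fixed ball (likewise for $\tl d_k$), we obtain $\tl c_k \to c_\infty$ and $\tl d_k \to d_\infty$ locally uniformly on $\RR^N$ for constants $c_\infty,d_\infty \in [C_1,C_2]$. Interior $L^p$ and Schauder estimates applied iteratively to the system then let us extract a subsequence with $(\tl u_k,\tl v_k) \to (\tl u,\tl v)$ in $C^2_{\rm{loc}}(\RR^N)$; the limit is nonnegative, bounded, satisfies $\tl M(0)=1$, and solves $-\Delta \tl u = c_\infty \tl v^p$, $-\Delta \tl v = d_\infty \tl u^q$ on $\RR^N$. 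Remark \ref{1.5} (applied to the limit system) upgrades $(\tl u,\tl v)$ to a positive solution, bootstrap regularity yields $(\tl u,\tl v) \in \left( C^{\oo}(\RR^N) \right)^2$, and a final linear rescaling absorbing $c_\infty, d_\infty$ (using $pq>1$) produces a bounded positive classical solution of \eqref{LE}.

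It remains to transfer stability to this limit, which I expect to be the main obstacle. By Proposition \ref{3.24}, for each smooth open $\Omega'\Subset B(x_k,k\lambda_k)$ we obtain positive $(\xi_k,\zeta_k) \in \left( C^{2,\gamma}(\ol{\Omega'}) \right)^2$ solving the linearized system $-\Delta \xi_k = pc_k v_k^{p-1}\zeta_k$, $-\Delta \zeta_k = qd_k u_k^{q-1}\xi_k$; exhausting $B(x_k,k\lambda_k)$ by such $\Omega'$ and diagonalizing, we may assume $(\xi_k,\zeta_k)$ is defined on all of $B(x_k,k\lambda_k)$. Rescaling via
\[
\hat\xi_k(y) := \frac{\xi_k(x_k+\lambda_k y)}{N_k},\qquad \hat\zeta_k(y) := \lambda_k^{\beta-\alpha}\,\frac{\zeta_k(x_k+\lambda_k y)}{N_k},
\]
with $N_k>0$ chosen so that $\hat\xi_k(0)+\hat\zeta_k(0)=1$, one checks (using the scaling identities once more) that $(\hat\xi_k,\hat\zeta_k)$ is a positive solution of the linearization of the rescaled system. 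Since $\tl u,\tl v$ are locally bounded away from $0$ on $\RR^N$, the coefficients $p\tl c_k \tl v_k^{p-1}$ and $q\tl d_k \tl u_k^{q-1}$ stay uniformly locally bounded; applying the Harnack inequality to $\hat\xi_k+\hat\zeta_k$, which satisfies $-\Delta(\hat\xi_k+\hat\zeta_k)\le C_K(\hat\xi_k+\hat\zeta_k)$ on any compact $K\sub \RR^N$, furnishes uniform local bounds above and below. Elliptic regularity and a diagonal extraction then produce positive $(\hat\xi,\hat\zeta)$ solving the linearization of \eqref{LE} about $(\tl u,\tl v)$ on $\RR^N$. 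Propagating this stability through the final linear rescaling endows the limit solution of \eqref{LE} with stability in every bounded open subset of $\RR^N$, contradicting the standing hypothesis and completing the proof.
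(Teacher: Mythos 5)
Your proposal is correct and follows essentially the same route as the paper's proof: contradiction via the doubling Lemma \ref{3.23}, the rescaling $(\lambda_k^{\alpha}u_k(x_k+\lambda_k\cdot),\lambda_k^{\beta}v_k(x_k+\lambda_k\cdot))$, freezing of the coefficients $c_k,d_k$ to positive constants, elliptic compactness plus the strong maximum principle to produce a bounded positive entire solution normalized by $\tilde M(0)=1$, and Proposition \ref{3.24} together with a rescaled, normalized linearized pair to transfer stability to the limit — the only real difference being that you normalize the linearized pair at the origin and invoke Harnack, while the paper normalizes by $\max_{B_R}\tilde\xi_k+\max_{B_R}\tilde\zeta_k$ and repeats the Step‑2 compactness, an immaterial variation. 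One small repair: for fixed $k$ you cannot "exhaust and diagonalize" the solutions given by Proposition \ref{3.24} on different subdomains $\Omega'$ (they need not be compatible); instead apply the proposition directly with $\Omega'=B(x_k,k\lambda_k)$, which is a smooth open set with $\ol{B(x_k,k\lambda_k)}\sub\Omega_k$, exactly as the paper does.
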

\begin{proof}
	\emph{Step 1.} Assume the conclusion is false, then for every $k>0,$ there exist an open set $\Omega_{k} \subsetneq \RR^N,$ $ C_{1} \le c_{k} , d_{k} \in C^{1}({\Omega_{k}})$ satisfying \eqref{eq3.50} in $\Omega_{k}$ and a positive solution $(u_{k},v_{k}) \in \left( C^{2}(\Omega_{k}) \right)^{2} $ of \eqref{eq3.51} in $\Omega_{k}$, which is stable in every open set $\Omega_{k}' \Sub \Omega_{k}$, such that
	\[ \sum\limits_{i=0}^{1} \left(|D^{i} u_{k}(y_{k})|^{\frac{1}{\alpha+i}} + |D^{i} v_{k}(y_{k})|^{\frac{1}{\beta+i}} \right)
	>  2k \left( 1+ \operatorname{dist}^{-1}(y_{k},\partial\Omega_{k}) \right) , \]
	for some $y_{k} \in \Omega_{k} .$ Let us define
	\[ M_{k} = \sum\limits_{i=0}^{1} \left(|D^{i} u_{k}|^{\frac{1}{\alpha+i}} + |D^{i} v_{k}|^{\frac{1}{\beta+i}} \right) ,\]
	then we have
	\[ M_{k}(y_{k}) \operatorname{dist\,}(y_{k},\partial\Omega_{k}) > 2k \left( \operatorname{dist\,}(y_{k},\partial\Omega_{k}) + 1 \right) > 2k .\]
	By Lemma \ref{3.23}, there exists $x_{k} \in \Omega_{k}$ such that
	\[ M_{k}(x_{k}) \operatorname{dist\,}(x_{k},\pt{\Omega_{k}}) > 2k, \quad M_{k}(x_{k}) \ge M_{k}(y_{k}), \]
	and
	\[ M_{k}(z) \le 2M_{k}(x_{k}), \quad \f z \in \ol{B(x_{k},kM_{k}^{-1}(x_{k}))} \sub \Omega_{k}. \]
	Since $M_{k}(x_{k}) \ge M_{k}(y_{k}) > 2k,$ we deduce
	\begin{align}\label{eq3.52}
		\begin{aligned}
			\lambda_{k} := M_{k}^{-1} (x_{k}) \le \frac{1}{2k} \ra 0, \quad k \ra +\oo.
		\end{aligned}
	\end{align}
	Note that $x_{k} + \lambda_{k} y \in \ol{B(x_{k},k\lambda_{k})} \sub \Omega_{k} $ provided $y \in \ol{B_{k}} , $ thus we can define
	\[ \tilde{u}_{k}(y) = \lambda_{k}^{\alpha} u_{k} (x_{k} + \lambda_{k} y),\,
	\tilde{v}_{k}(y) = \lambda_{k}^{\beta} v_{k} (x_{k} + \lambda_{k} y),\,
	\tilde{c}_{k}(y) = c_{k} (x_{k} + \lambda_{k} y),\,
	\tilde{d}_{k}(y) = d_{k} (x_{k} + \lambda_{k} y), \]
	where $y \in \ol{B_{k}}.$ Then it is easily seen that $(\tilde{u}_{k}, \tilde{v}_{k}) \in \left( C^{2}(\ol{B_{k}}) \right)^{2}$ is a positive solution of
	\begin{align*}
		\left\{\begin{aligned}
			-\Delta \tilde{u}_k &= \tilde{c}_{k} {\tilde{v}}_{k}^p \\
			-\Delta \tilde{v}_{k} &= \tilde{d}_{k} \tilde{u}_k^q
		\end{aligned}
		\quad \mbox{in}\ \  \ol{B_{k}}.
		\right.
	\end{align*}
	and
	\begin{align}\label{eq3.55}
		\begin{aligned}
			|\tilde{u}_k|^{\frac{1}{\alpha}} + |\tilde{v}_{k}|^{\frac{1}{\beta}} \le 2  \quad \mbox{in}\ \ B_{k},
			\quad
			\sum\limits_{i=0}^{1} \left(|D^{i} \tilde{u}_{k}(0)|^{\frac{1}{\alpha+i}} + |D^{i} \tilde{v}_{k}(0)|^{\frac{1}{\beta+i}} \right) = 1.
		\end{aligned}
	\end{align}
	\emph{Step 2.} Fix $R>0.$ Then \eqref{eq3.52} implies that there exists $k_{0}(R)>0$ such that $\lambda_{k} \le 1$ for every $k>k_{0}(R).$ It follows from this and \eqref{eq3.50} that
	\begin{align}\label{eq3.53}
		\begin{aligned}
			|\tilde{c}_{k} (y_{1}) - \tilde{c}_{k} (y_{2})| + |\tilde{d}_{k} (y_{1}) - \tilde{d}_{k} (y_{2})| \le C_{2} \lambda_{k}^{\gamma} |y_{1}-y_{2}|^{\gamma} \le C_{2} |y_{1}-y_{2}|^{\gamma} , \quad \f y_{1},y_{2} \in \ol{B_{R}}.
		\end{aligned}
	\end{align}
	Therefore, by the Arzela-Ascoli theorem, there exist $\tilde{c},\tilde{d} \in C(\RR^N)$ such that $\tilde{c}_{k} \ra \tilde{c}$ and $\tilde{d}_{k} \ra \tilde{d}$ in $C_{\rm{loc}}(\RR^N)$ after extracting a subsequence. Moreover, using \eqref{eq3.52} and \eqref{eq3.53}, we see that
	\[ |\tilde{c} (y_{1}) - \tilde{c} (y_{2})| + |\tilde{d} (y_{1}) - \tilde{d} (y_{2})| = \lim\limits_{k \ra +\oo} \left( |\tilde{c}_{k} (y_{1}) - \tilde{c}_{k} (y_{2})| + |\tilde{d}_{k} (y_{1}) - \tilde{d}_{k} (y_{2})| \right) \le 0 ,\]
	for every $y_{1},y_{2} \in \RR^N.$
	Therefore $\tilde{c} \equiv \tilde{c}(0) \ge C_{1}>0$ and $\tilde{d} \equiv \tilde{d}(0) \ge C_{1}>0$ in $\RR^N.$
	
	Now let us fix $m>N$. Then using the elliptic $L^p$-estimates and Eberlein-Smulian theorem, we deduce that $\tilde{{u}}_{k} \rightharpoonup \tilde{u}$ and $ \tilde{v}_{k} \rightharpoonup \tilde{v} $ in $W_{\rm{loc}}^{2,m}(\RR^N)$ after extracting a subsequence. Furthermore, by the Sobolev embeddings, we see that $ \tilde{u}_{k} \ra \tilde{u}$ and $\tilde{v}_{k} \ra \tilde{v} $ in $C_{\rm{loc}}^{1}(\RR^N)$. For this reason, it is easy to check that $(\tilde{u},\tilde{v}) \in \left( W_{\rm{loc}}^{2,m}(\RR^N) \right)^{2} $ is a bounded nonnegative strong solution of
	\begin{align}\label{eq3.54}
		\left\{\begin{aligned}
			-\Delta \tilde{u} &= \tilde{c}(0) {\tilde{v}}^p \\
			-\Delta \tilde{v} &= \tilde{d}(0) \tilde{u}^q
		\end{aligned}
		\quad \mbox{in}\ \  \RR^N.
		\right.
	\end{align}
	From the strong maximum principle for strong solutions (see e.g. {\cite[Thm.\,9.6]{gilbargtrudinger1977}}) and \eqref{eq3.55}, it follows that $(\tilde{u},\tilde{v})$ is positive. Then it is immediate that $(\tilde{u},\tilde{v}) \in \left( C^{\oo}(\RR^N) \right)^{2}$ by the elliptic regularity theory.
	\\
	\emph{Step 3.} We claim that $(\tilde{u},\tilde{v})$ is also stable in every bounded open subset of $\RR^N$. Indeed, Proposition \ref{3.24} implies that there exist $0< \xi_{k},\zeta_{k} \in  C^{2}(\ol{B(x_{k},k\lambda_{k})})$ such that
	\begin{align*}
		\left\{\begin{aligned}
			-\Delta \xi_{k} &= p {c}_{k} v_{k}^{p-1} \zeta_{k} \\
			-\Delta \zeta_{k} &= q {d}_{k} u_{k}^{q-1} \xi_{k}
		\end{aligned}
		\quad \mbox{in}\ \  B(x_{k},k\lambda_{k}).
		\right.
	\end{align*}
	Let
	\[ \tilde{\xi}_{k}(y) = \lambda_{k}^{\alpha} \xi_{k}(x_{k}+\lambda_{k}y), \quad  \tilde{\zeta}_{k}(y) = \lambda_{k}^{\beta} \zeta_{k}(x_{k}+\lambda_{k}y), \quad y \in \ol{B_{k}}.\]
	Then it is clear that
	\begin{align}\label{eq3.57}
		\left\{\begin{aligned}
			-\Delta \tilde{\xi}_{k} &= p \tilde{c}_{k} \tilde{v}_{k}^{p-1} \tilde{\zeta}_{k} \\
			-\Delta \tilde{\zeta}_{k} &= q \tilde{d}_{k} \tilde{u}_{k}^{q-1} \tilde{\xi}_{k}
		\end{aligned}
		\quad \mbox{in}\ \  B_{k}.
		\right.
	\end{align}
	For every $k>R>0,$ let us define
	\[ \tilde{\xi}_{k,R} = \frac{\tilde{\xi}_{k}}{\max_{B_{R}} \tilde{\xi}_{k} + \max_{B_{R}} \tilde{\zeta}_{k}}, \quad \tilde{\zeta}_{k,R} = \frac{\tilde{\zeta}_{k}}{\max_{B_{R}} \tilde{\xi}_{k} + \max_{B_{R}} \tilde{\zeta}_{k}}. \]
	It follows that $(\tilde{\xi}_{k,R},\tilde{\zeta}_{k,R}) \in C^{2}(\ol{B_{R}})$ is a positive solution of \eqref{eq3.57} and
	\begin{align}\label{eq3.56}
		\begin{aligned}
			\max_{B_{R}} \tilde{\xi}_{k,R} + \max_{B_{R}} \tilde{\zeta}_{k,R} = 1.
		\end{aligned}
	\end{align}
	Since $\tilde{u}_{k} \ra \tilde{u}$ and $ \tilde{v}_{k} \ra \tilde{v} $ in $C_{\rm{loc}}^{1}(\RR^N)$, and $ (\tilde{u},\tilde{v}) $ is positive in $\RR^N ,$ we see that $(\tilde{{u}}_{k}, \tilde{v}_{k})$ is locally bounded away from $0 .$ Hence similar to the deduction in Step 2, using \eqref{eq3.56} instead of \eqref{eq3.55}, we can deduce that there exists a positive classical solution $(\tilde{\xi}_{R},\tilde{\zeta}_{R}) \in \left( C^{\oo}(B_{R}) \right)^{2} $ of
	\begin{align*}
		\left\{\begin{aligned}
			-\Delta \tilde{\xi}_{R} &= p \tilde{c}(0) {\tilde{v}}^{p-1} \tilde{\zeta}_{R}  \\
			-\Delta \tilde{\zeta}_{R} &= q \tilde{d}(0) \tilde{u}^{q-1} \tilde{\xi}_{R}
		\end{aligned} \quad \mbox{in}\ \  B_{R}.
		\right.
	\end{align*}
	It follows that $(\tilde{u},\tilde{v}) \in \left( C^{\oo}(\RR^N) \right)^{2}$ is a bounded positive solution of \eqref{eq3.54}, which is stable in every bounded open subset of $\RR^N$. This leads to a contradiction.
\end{proof}

	In light of Proposition \ref{3.25} and applying the same approach as in {\cite[Proof of Thm.\,1.2]{zbMATH05998062}} and {\cite[Proof of Thm.\,1.3 and Prop.\,4.1]{zbMATH06121773}}, we obtain the following desired decay estimate. The crucial observation is that for any fixed $x \in \RR^N \!\setminus\! \ol{B_{\sigma {R_{0}}}} $ and $r=\frac{(\sigma-1)|x|}{2\sigma}$, the function
	\[ (U,V) := \left( r^{\tilde{\alpha}} u(x + r\cdot), r^{\tilde{\beta}} v(x + r\cdot) \right) \in \left( C^{2}(B_{1}) \right)^{2} \]
	is a positive solution of
	\begin{align*}
		\left\{\begin{aligned}
			-\Delta U	&= \left| \cdot+ \frac{x}{r} \right|^{a} V^p\\
			-\Delta V	&= \left| \cdot+ \frac{x}{r} \right|^{b} U^q
		\end{aligned}\right. \quad \mbox{in}\ \  B_1,
	\end{align*}
	which is stable in every open set $\Omega \Sub B_{1}$.
	
\begin{proposition}\label{3.5}
	Suppose $N \in \NN_+, p,q>0,pq>1,a,b \in \RR$ and \eqref{LE} admits no bounded positive solution in $\left( C^{\oo}(\RR^N) \right)^{2}$ that is stable in every bounded open subset of $\RR^N$. Assume $R_{0} \ge 0$ and $(u,v) \in \left( C^{2}(\RR^N \!\setminus\! \ol{B_{R_{0}}}) \right)^{2}$ is a positive solution of \eqref{HLE} in $\RR^N \!\setminus\! \ol{B_{R_{0}}}$, which is stable in every open set $\Omega \Subset \RR^N \!\setminus\! \ol{B_{R_{0}}}$. Then for every $\sigma \in (1,+\oo),$ we have
	\[ \sum\limits_{i=0}^{1} \left(|x|^{\tilde{\alpha}+i}|D^i u (x)| + |x|^{\tilde{\beta}+i}|D^i v (x)|\right)  \le C,	\quad \f x \in \RR^N \!\setminus\! \ol{B_{\sigma {R_{0}}}} ,\]
	where $\ol{B_{0}} := \{0\} $ and $C=C(N,p,q,a,b,\sigma)>0$ is independent of ${R_{0}}.$	In particular, we have
	\[ u = O(|x|^{-\tilde{\alpha}}), \quad  v = O(|x|^{-\tilde{\beta}}), \quad |x| \ra +\oo,\]
	and
	\[ |Du| = O(|x|^{-\tilde{\alpha}-1}), \quad  |Dv| = O(|x|^{-\tilde{\beta}-1}), \quad |x| \ra +\oo. \]
\end{proposition}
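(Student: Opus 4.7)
The plan is to apply Proposition \ref{3.25} directly to the rescaled pair $(U, V)$ indicated in the excerpt and then unwind the scaling at $y=0$ to extract the pointwise decay. Fix $\sigma > 1$ and $x \in \RR^N \!\setminus\! \ol{B_{\sigma R_{0}}}$, set $r = \frac{(\sigma-1)|x|}{2\sigma}$, and define $(U,V)$ as in the excerpt. Then $r$ is comparable to $|x|$ with constants depending only on $\sigma$, and $\ol{B(x,r)} \sub \RR^N \!\setminus\! \ol{B_{R_{0}}}$ because $|y| \ge |x| - r = \frac{\sigma+1}{2\sigma}|x| > R_{0}$ for every $y \in \ol{B(x,r)}$ (when $R_{0} = 0$ this reduces to $r < |x|$, using the convention $\ol{B_{0}} = \{0\}$). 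A direct chain-rule computation combined with the scaling identities $p\tilde\beta = \tilde\alpha + a + 2$ and $q\tilde\alpha = \tilde\beta + b + 2$ then shows that $(U,V) \in (C^{2}(\ol{B_{1}}))^{2}$ is a positive solution on $B_{1}$ of the system with coefficients $c(y) = |y+x/r|^{a}$ and $d(y) = |y+x/r|^{b}$.

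Two hypotheses of Proposition \ref{3.25} still need to be verified. First, since $|x/r| = \frac{2\sigma}{\sigma-1}$ depends only on $\sigma$, for every $y \in \ol{B_{1}}$ one has $|y+x/r| \in \bigl[ \frac{\sigma+1}{\sigma-1},\, \frac{3\sigma-1}{\sigma-1} \bigr]$, so $c, d$ are bounded between positive constants and their $C^{0,1}(\ol{B_{1}})$-norms are controlled, all uniformly in $x$ by constants depending only on $a, b, \sigma$. Second, the stability of $(u,v)$ transfers to $(U,V)$ on every $\Omega \Sub B_{1}$: given witnesses $0 < \xi, \zeta \in C^{2}(\ol{x+r\Omega})$ supplied by Definition \ref{1.3}, one sets $\tilde\xi(y) = r^{s}\xi(x+ry)$ and $\tilde\zeta(y) = r^{t}\zeta(x+ry)$ with any $s, t$ satisfying $t - s = 2 + a - (p-1)\tilde\beta$. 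The compatibility with the symmetric relation $s - t = 2 + b - (q-1)\tilde\alpha$ coming from the second stability inequality is automatic, since the Lane--Emden identities give
\[ (q-1)\tilde\alpha + (p-1)\tilde\beta = q\tilde\alpha + p\tilde\beta - \tilde\alpha - \tilde\beta = 4 + a + b. \]

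With both hypotheses in hand, Proposition \ref{3.25} applied at $y = 0$ (where $\operatorname{dist\,}(0,\partial B_{1})=1$) yields
\[ \sum_{i=0}^{1} \left( |D^{i}U(0)|^{\frac{1}{\alpha+i}} + |D^{i}V(0)|^{\frac{1}{\beta+i}} \right) \le C \]
with $C = C(N,p,q,a,b,\sigma)$ independent of $x$ and $R_{0}$. Since $D^{i}U(0) = r^{\tilde\alpha+i}D^{i}u(x)$ and $D^{i}V(0) = r^{\tilde\beta+i}D^{i}v(x)$, raising the four summands to the powers $\alpha + i$ or $\beta + i$ and using $r = \frac{\sigma-1}{2\sigma}|x|$ converts the above into $|x|^{\tilde\alpha+i}|D^{i}u(x)| + |x|^{\tilde\beta+i}|D^{i}v(x)| \le C'$ on $\RR^N \!\setminus\! \ol{B_{\sigma R_{0}}}$, which is the claimed bound; the asymptotics as $|x| \ra +\oo$ then follow immediately. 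The only moderately delicate step is the algebraic verification of the stability transfer; everything else is a routine rescaling argument that bundles the real work into the earlier Proposition \ref{3.25}.
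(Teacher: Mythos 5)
Your proposal is correct and follows essentially the same route as the paper: the paper's proof of Proposition \ref{3.5} is exactly the rescaling $(U,V)=(r^{\tilde\alpha}u(x+r\cdot),r^{\tilde\beta}v(x+r\cdot))$ with $r=\frac{(\sigma-1)|x|}{2\sigma}$, fed into Proposition \ref{3.25} with the coefficients $c(y)=|y+x/r|^{a}$, $d(y)=|y+x/r|^{b}$, and you supply the same scaling, uniform coefficient bounds, and stability transfer (whose exponent bookkeeping via $q\tilde\alpha=\tilde\beta+b+2$, $p\tilde\beta=\tilde\alpha+a+2$ is checked correctly) that the paper leaves to the cited references.
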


	Finally, we establish Proposition \ref{1.7}. Once this proposition is proven, Theorem \ref{1.8} holds, as discussed in Subsection \ref{sec1.1}. Consequently, Theorem \ref{1.4}\,(iii) follows directly from \cite[Thm.\,1.1]{zbMATH07020411} (as stated in the introduction), Theorem \ref{1.8}, Proposition \ref{3.5}, and Theorem \ref{1.4}\,(ii).
	
\begin{proof}[\bf{Proof of Proposition \ref{1.7}}]
	(i) \emph{Step 1.} Firstly, we have $pq > \frac{(q+1)q}{3q-1} \ge 1.$ Furthermore, it is easy to verify that
	\[ G\left( \frac{q+1}{2} \right) = - \frac{(q+1)^2(5pq+p+q+1)[(3q-1)p-q-1]}{16(p+1)^2}<0,\]
	and
	\[G'\left( \frac{q+1}{2} \right) = \frac{-(q+1) \left[(7 q^2+2 q-1)p^2 + 2(q^2-2q-1)p - (q+1)^2\right]}{2(p+1)^2} :=\frac{-(q+1) f_q(p)}{2(p+1)^2}. \]
	Note that $3q-1>0, 7 q^2+2 q-1>0 $ and
	\[(q+1)(7q^2+2q-1) + (3q-1)(q^2-2q-1) = 2q^2(5q+1)>0,\]
	thus $p>\frac{q+1}{3q-1} > -\frac{q^2-2q-1}{7q^2+2q-1}$ and $f_{q}$ is strictly increasing on $\left( \frac{q+1}{3q-1},+\oo \right)$. It follows that
	\begin{align*}
		f_q(p) > f_q\left(\frac{q+1}{3q-1} \right)
		=\frac{4q(q-1)^2(q+1)}{(3q-1)^2}
		\ge 0.
	\end{align*}
	For this reason, we have $G'\left( \frac{q+1}{2} \right)<0.$ Since
	\[ G''(x)= 12x^2-\frac{8pq(q+1)}{p+1},\]
	we know that $G'$ has at most one zero on $\left[ \frac{q+1}{2},+\oo \right).$ But $\lim\limits_{x \ra +\oo} G(x) = \lim\limits_{x \ra +\oo} G'(x) = +\oo,$ it is immediate that $G(A)<0$ provided $A \in \left[ \frac{q+1}{2},z_{0} \right)$, and $G(A) \ge 0$ provided $A \in [z_{0},+\oo)$. \\
	\emph{Step 2.} Now we show $z_{0} > \frac{q+1}{2} + \frac{1}{\alpha}.$ It suffices to show $G\left( \frac{q+1}{2} + \frac{1}{\alpha} \right)<0$ due to the above discussion. Since $ G\left( \frac{q+1}{2} + \frac{1}{\alpha} \right) = \frac{g_q(p)}{16(p+1)^{4}},$ where
	\begin{align*}
		\begin{aligned}
			{g_q(p)} &= q^4 + 2 p^2 q^2 (35 + 28 q - 4 q^2) + 4 p q (-4 + 5 q^2 + 2 q^3) \\
			&\quad -
			4 p^3 q (-5 - 14 q + 8 q^2 + 16 q^3) +
			p^4 (1 + 8 q - 8 q^2 - 64 q^3 - 48 q^4),
		\end{aligned}
	\end{align*}
	we are reduced to prove $g_q(p)<0.$ Note that
	\begin{align*}
		\begin{aligned}
			g_q^{(4)}(p) = 24(1 + 8 q - 8 q^2 - 64 q^3 - 48 q^4) <  g_q^{(4)}\left( \frac{1}{3} \right)
			= -\frac{120}{27}
			<0
		\end{aligned}
	\end{align*}
	due to $q > \frac{1}{3},$ hence $ g_q^{(3)}$ is strictly decreasing on $\left( \frac{q+1}{3q-1}, +\oo \right).$ Moreover, we have
	\begin{align*}
		\begin{aligned}
			g_q^{(3)}(p) &= -24 q (-5 - 14 q + 8 q^2 + 16 q^3) +
			24 p (1 + 8 q - 8 q^2 - 64 q^3 - 48 q^4), \\
			g_q^{(3)}\left( \frac{q+1}{3q-1} \right) &= -\frac{24 (-1 - 4 q - q^2 + 22 q^3 + 120 q^4 + 96 q^5)}{-1 + 3 q} < 0,  \\
			g_q''(p) &= 4 q^2 (35 + 28 q - 4 q^2) - 24 p q (-5 - 14 q + 8 q^2 + 16 q^3) \\
			&\quad +
			12 p^2 (1 + 8 q - 8 q^2 - 64 q^3 - 48 q^4),	\\
			g_q''\left( \frac{q+1}{3q-1} \right) &= -\frac{4 (-3 - 38 q^2 + 92 q^3 + 157 q^4 + 540 q^5 + 468 q^6)}{(1 - 3 q)^2}
			<0,  \\
			g_q'(p) &= 4 p q^2 (35 + 28 q - 4 q^2) + 4 q (-4 + 5 q^2 + 2 q^3) \\
			&\quad -
			12 p^2 q (-5 - 14 q + 8 q^2 + 16 q^3) +
			4 p^3 (1 + 8 q - 8 q^2 - 64 q^3 - 48 q^4),	\\
			g_q'\left( \frac{q+1}{3q-1} \right) &= -\frac{4 (-1 + 9 q^2 - 34 q^3 + 81 q^4 + 84 q^5 + 199 q^6 +
				174 q^7)}{(-1 + 3 q)^3}<0.
		\end{aligned}
	\end{align*}
	In this way, we see that $g_{q}$ is strictly decreasing on $\left( \frac{q+1}{3q-1},+\oo \right),$ and so
	\[ g_{q}(p)< g_q\left( \frac{q+1}{3q-1} \right) = -\frac{(-1 + q)^4 (-1 - 12 q + 42 q^2 + 84 q^3 + 15 q^4)}{(1 - 3 q)^4} \le 0. \]
	(ii) We only prove that $z_{0} > \frac{q+1}{2} + \frac{3}{\alpha}.$ Since $ G\left( \frac{q+1}{2} + \frac{3}{\alpha} \right) = \frac{h_q(p)}{16(p+1)^{4}},$ where
	\begin{align*}
		\begin{aligned}
			h_q(p) &=  (-2 + q)^4 + p^4 (1 + 16 q + 32 q^2 - 64 q^3) +
			4 p (-8 - 56 q + 18 q^2 - 11 q^3 + 4 q^4) \\
			&\quad +
			2 p^2 (12 + 36 q + 259 q^2 + 8 q^3 + 16 q^4) -
			4 p^3 (2 + 11 q - 4 q^2 + 84 q^3 + 16 q^4)  ,
		\end{aligned}
	\end{align*}
	we are reduced to prove $h_q(p)<0.$ Note that
	\begin{align*}
		\begin{aligned}
			h_q^{(4)}(p) = 24(1 + 16 q + 32 q^2 - 64 q^3 )
			\le h_q^{(4)}(1) = -360<0,
		\end{aligned}
	\end{align*}
	due to $q \ge 1,$ hence $ h_q^{(3)}$ is strictly decreasing on $[1,+\oo).$ Furthermore, we have
	\begin{align*}
		\begin{aligned}
			h_q^{(3)}(p) &= 24 p (1 + 16 q + 32 q^2 - 64 q^3) -
			24 (2 + 11 q - 4 q^2 + 84 q^3 + 16 q^4), \\
			h_q^{(3)}(1) &= -24 (1 - 5 q - 36 q^2 + 148 q^3 + 16 q^4) < 0,  \\
			h_q''(p) &= 12 p^2 (1 + 16 q + 32 q^2 - 64 q^3) +
			4 (12 + 36 q + 259 q^2 + 8 q^3 + 16 q^4)  \\
			&\quad -
			24 p (2 + 11 q - 4 q^2 + 84 q^3 + 16 q^4),	\\
			h_q''(1) &= -4 (-3 - 18 q - 379 q^2 + 688 q^3 + 80 q^4)<0,  \\
			h_q'(p) &= 4 p^3 (1 + 16 q + 32 q^2 - 64 q^3) +
			4 (-8 - 56 q + 18 q^2 - 11 q^3 + 4 q^4)  \\
			&\quad +
			4 p (12 + 36 q + 259 q^2 + 8 q^3 + 16 q^4) -
			12 p^2 (2 + 11 q - 4 q^2 + 84 q^3 + 16 q^4),	\\			
			h_q'(1) &= -4 (1 + 37 q - 321 q^2 + 319 q^3 + 28 q^4)<0.
		\end{aligned}
	\end{align*}
	Therefore $h_{q}$ is strictly decreasing on $[1,+\oo)$, which implies
	\[ h_{q}(p) \le h_q(1) = -(-1 + q) (1 - 211 q + 451 q^2 + 15 q^3) \le 0. \]
	Recall that $pq \neq 1,$ thus the conclusion follows easily.
\end{proof}

\subsection{Proof of Theorem \ref{1.9}}

	The proof is partially inspired by \cite{zbMATH00883497} and \cite{zbMATH05563881}. However, we deal with a more general case in this work. To begin with, following an approach similar to the proof of Theorem \ref{1.6}, we establish the following result.

\begin{theorem}\label{3.2}
	Let $N \ge 3$, $p,q>0$, $pq>1$ and $a,b>-2$. Assume $(p,q,a,b)$ is subcritical and $(u,v)$ is a nonnegative solution of \eqref{HLE}. Additionally, suppose there exist $R_4>R_3>R_2>R_1>0$, $s,t \in [0,1)$, $C_0>0$, and a sequence $\{r_i\}$ in $ (0,+\infty)$ with $\lim\limits_{i \to +\infty} r_i = +\infty$, such that for every $i \in \NN_+$, one can find $\tilde{R}_i \in (R_2,R_3)$ satisfying
	\begin{align}\label{eq3.73}
		\begin{aligned}
			\int_{S_{\tilde{R}_i}} u_{r_{i}}^{q+1} &\le C_0 F_{r_{i}}(R_4)^s + C_0, \\
			\int_{S_{\tilde{R}_i}} v_{r_{i}}^{p+1} &\le C_0 F_{r_{i}}(R_4)^t + C_0,
		\end{aligned}
	\end{align}
	\begin{align}\label{eq3.74}
		\begin{aligned}
			\|Du_{r_{i}}\|_{L^{1+\frac{1}{p}}(S_{\tilde{R}_i})} &\le C_0 \|Du_{r_{i}}\|_{L^{1+\frac{1}{p}}(B_{R_3} \setminus B_{R_2})}, \\
			\|Dv_{r_{i}}\|_{L^{1+\frac{1}{q}}(S_{\tilde{R}_i})} &\le C_0 \|Dv_{r_{i}}\|_{L^{1+\frac{1}{q}}(B_{R_3} \setminus B_{R_2})},
		\end{aligned}
	\end{align}
	and
	\begin{align}\label{eq3.75}
		\begin{aligned}
			F_1(R_4r_i) \le C_0 F_1(R_1r_i), \quad \f i \in \mathbb{N}_+,
		\end{aligned}
	\end{align}
	where $F_{r_{i}}$ is defined by
	\[F_{r_{i}}: (0,+\infty) \to \mathbb{R}, \quad r \mapsto \int_{B_r} |x|^b u_{r_{i}}^{q+1} + |x|^a v_{r_{i}}^{p+1}.\]
	Then $u \equiv v \equiv 0$ in $\mathbb{R}^N$.
\end{theorem}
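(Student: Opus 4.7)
The plan is to adapt the proof of Theorem \ref{1.6}: starting from the Rellich--Pokhozhaev identity on a carefully chosen sphere, I will convert the boundary integrals into bounds of the form $C F_{r_i}(R_4)^{\sigma}+C$ with some $\sigma\in[0,1)$, and then use the scaling identity together with the doubling hypothesis \eqref{eq3.75} to bootstrap this into a uniform bound on $F_{r_i}(R_1)$. Subcriticality will then force $F_1(R_1 r_i)\to 0$ along the sequence, which concludes the argument.

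Concretely, applying Lemma \ref{2.13} on $B_{\tilde{R}_i}$ to $(u_{r_i}, v_{r_i})$ together with subcriticality yields, as in \eqref{eq2.7},
\[ F_{r_i}(R_1) \le F_{r_i}(\tilde{R}_i) \le C \int_{S_{\tilde{R}_i}} \left( u_{r_i}^{q+1} + v_{r_i}^{p+1} + |Du_{r_i}|\, v_{r_i} + u_{r_i}\, |Dv_{r_i}| \right). \]
The first two terms are controlled by $C F_{r_i}(R_4)^{\max\{s,t\}}+C$ directly from \eqref{eq3.73}. For the mixed terms I would apply H\"older's inequality with dual exponents $(1+\tfrac{1}{p}, p+1)$ and $(q+1, 1+\tfrac{1}{q})$, bound the derivative traces via \eqref{eq3.74}, and then invoke the interior $L^p$-estimates (Lemma \ref{2.1}\,(ii), Lemma \ref{2.2}\,(ii)) applied to the system \eqref{HLE} to obtain
\[ \|Du_{r_i}\|_{L^{1+1/p}(S_{\tilde{R}_i})} \le C \bigl( F_{r_i}(R_4)^{p/(p+1)} + F_{r_i}(R_4)^{1/(q+1)} + 1 \bigr), \]
and analogously for $Dv_{r_i}$. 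Combining this with the trace bounds $\|u_{r_i}\|_{L^{q+1}(S_{\tilde{R}_i})}\le C(F_{r_i}(R_4)^{s/(q+1)}+1)$ and $\|v_{r_i}\|_{L^{p+1}(S_{\tilde{R}_i})}\le C(F_{r_i}(R_4)^{t/(p+1)}+1)$ coming from \eqref{eq3.73}, and expanding the products yields a finite sum of powers of $F_{r_i}(R_4)$; using $s,t\in[0,1)$ and $pq>1$, a direct check shows each exponent is strictly less than $1$, so denoting the largest one by $\sigma\in[0,1)$ I arrive at
\[ F_{r_i}(R_1) \le C F_{r_i}(R_4)^{\sigma} + C. \]

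The scaling identity $F_r(R)=r^{\tilde{\alpha}+\tilde{\beta}-N+2}F_1(Rr)$ (cf. \eqref{eq3.69}) converts \eqref{eq3.75} into $F_{r_i}(R_4) \le C F_{r_i}(R_1)$, so the previous estimate self-improves to $F_{r_i}(R_1)\le C F_{r_i}(R_1)^{\sigma}+C$. Since $\sigma<1$ this forces a uniform bound $F_{r_i}(R_1)\le C_1$ along the sequence. Subcriticality gives $\tilde{\alpha}+\tilde{\beta}-N+2>0$ via \eqref{eq2.1}, hence $F_1(R_1 r_i)\le C_1 r_i^{-(\tilde{\alpha}+\tilde{\beta}-N+2)}\to 0$; monotonicity of $F_1$ combined with $R_1 r_i\to +\infty$ then forces $F_1\equiv 0$ on $(0,+\infty)$, whence $u\equiv v\equiv 0$ on $\RR^N\!\setminus\!\{0\}$ and, by continuity, on all of $\RR^N$.

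The main obstacle I foresee is the exponent bookkeeping for the mixed boundary terms: one must verify that every cross-product exponent stays strictly below $1$, which hinges on the elementary inequalities $s<p(q+1)/(p+1)$ and $t<q(p+1)/(q+1)$, both of which follow from $s,t<1$ combined with $pq>1$. Once $\sigma<1$ is secured, the doubling hypothesis \eqref{eq3.75} does the essential work of turning a superlinear-looking estimate into a self-improving one, and the rest of the argument is a short scaling computation identical in spirit to the closing lines of the proof of Theorem \ref{1.6}.
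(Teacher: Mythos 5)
Your proposal is correct and follows essentially the same route as the paper's proof: Rellich--Pokhozhaev on a sphere $\tilde{R}_i$, H\"older with dual exponents on the mixed boundary integrals, $L^p$-estimates (Lemma~\ref{2.1}\,(ii) together with Lemma~\ref{2.2}\,(ii)) to bound the derivative traces by powers of $F_{r_i}(R_4)$, a maximal exponent $\sigma\in[0,1)$, and finally the scaling identity and the doubling hypothesis \eqref{eq3.75} to force $F_1(R_1 r_i)\to 0$. Two small remarks. First, the paper bounds $\|u_{r_i}\|_{L^1(B_{R_4}\setminus B_{R_1})}$ by an absolute constant via Lemma~\ref{2.2}\,(ii), so its derivative bound is simply $\|Du_{r_i}\|_{L^{1+1/p}(S_{\tilde{R}_i})}\le C(F_{r_i}(R_4)^{p/(p+1)}+1)$ and the only cross exponents that appear are $(p+t)/(p+1)$ and $(q+s)/(q+1)$, each $<1$ for free from $s,t<1$; your extra term $F_{r_i}(R_4)^{1/(q+1)}$ (coming from estimating the $L^1$ norm by H\"older rather than by the decay lemma) forces the additional cross exponents you mention, which do hold thanks to $pq>1$, but the paper avoids the issue entirely. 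Second, your closing step deduces the uniform bound $F_{r_i}(R_1)\le C_1$ directly from the self-improving inequality $F_{r_i}(R_1)\le C F_{r_i}(R_1)^\sigma + C$; the paper argues the boundedness of $\{F_1(R_1 r_i)\}$ by contradiction on a putative divergent subsequence. These are equivalent, and your version is, if anything, a touch cleaner.
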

\begin{proof}
	\emph{Step 1. Estimate of $F_{1}.$} Firstly, for every $i \in \NN_+$, by \eqref{eq3.74}, Lemma \ref{2.1}\,(ii) and Lemma \ref{2.2}\,(ii), we must have
	\begin{align}\label{eq3.37}
		\begin{aligned}
			\|Du_{r_{i}}\|_{L^{1+\frac{1}{p}}(S_{\tilde{R}_{i}}) } &\le
			C_0\|D u_{r_{i}}\|_{L^{1+\frac{1}{p}}(B_{R_{3}} \setminus B_{R_{2}})} \\
			&\le
			C \left(
			\|\Delta u_{r_{i}}\|_{L^{1+\frac{1}{p}} (B_{R_{4}} \setminus B_{1})}
			+\|u_{r_{i}}\|_{L^1(B_{R_{4}} \setminus B_{R_{1}})}
			\right)  \\
			&\le C \left( \| |x|^a v_{r_{i}}^{p} \|_{L^{1+\frac{1}{p}} (B_{R_{4}} \setminus B_{R_{1}})} + 1 \right) \\
			&\le C \left( F_{r_{i}}(R_{4})^{\frac{p}{p+1}} + 1 \right) ,
		\end{aligned}
	\end{align}
	where $C=C(N,p,q,a,b,R_{1},R_{2},R_{3},R_{4},C_0)>0.$ By \eqref{eq3.73} and \eqref{eq3.37}, we conclude that
	\begin{align}\label{eq3.43}
		\begin{aligned}
			\int_{S_{\tilde{R}_{i}}} |Du_{r_{i}}|v_{r_{i}} \le  \|Du_{r_{i}}\|_{L^{1+\frac{1}{p}}(S_{\tilde{R}_{i}})} \|v_{r_{i}}\|_{L^{p+1}(S_{\tilde{R}_{i}})} \le  C \left( F_{r_{i}}(R_{4})^{\frac{p+t}{p+1}} +1  \right)
		\end{aligned}
	\end{align}
	Likewise, we have
	\[ \|Dv_{r_{i}}\|_{L^{1+\frac{1}{q}}(S_{\tilde{R}_{i}})} \le C \left( F_{R}(R_{4})^{\frac{q}{q+1}} + 1 \right),\]
	and
	\begin{align}\label{eq3.44}
		\begin{aligned}
			\int_{S_{\tilde{R}_{i}}} u_{r_{i}}|Dv_{r_{i}}| \le C \left(  F_{r_{i}}(R_{4})^{\frac{q+s}{q+1}} +1 \right).
		\end{aligned}
	\end{align}
	Now using \eqref{eq2.7}, \eqref{eq3.43} and \eqref{eq3.44}, we see that
	\begin{align*}
		\begin{aligned}
			F_{r_{i}}(R_{1})&= \int_{B_{R_{1}}} |x|^b u_{r_{i}}^{q+1} + |x|^a v_{r_{i}}^{p+1} \\
			&\le \int_{B_{\tilde{R}_{i}}}|x|^b u_{r_{i}}^{q+1} + |x|^a v_{r_{i}}^{p+1} \\
			&\le C \int_{S_{\tilde{R}_{i}}} u_{r_{i}}^{q+1} + v_{r_{i}}^{p+1}  + |Du_{r_{i}}|v_{r_{i}} + u_{r_{i}}|Dv_{r_{i}}| \\
			& \le C \left( F_{r_{i}}(R_{4})^s + F_{r_{i}}(R_{4})^t+ F_{r_{i}}(R_{4})^{\frac{p+t}{p+1}} + F_{r_{i}}(R_{4})^{\frac{q+s}{q+1}} +1 \right) \\
			&\le C \left( F_{r_{i}}(R_{4})^{A}+1 \right) ,
		\end{aligned}
	\end{align*}
	where $A:=\max\left\{ s,t, \frac{p+t}{p+1}, \frac{q+s}{q+1}\right\} \in [0,1).$ From this and \eqref{eq3.69}, it follows that
	\[ F_1(R_{1}r_{i}) r_{i}^{\tilde{\alpha} + \tilde{\beta} -N+2} \le C \left( F_1(R_{4}r_{i})^A r_{i}^{A(\tilde{\alpha} + \tilde{\beta} -N+2)} + 1 \right), \quad \f i \in \NN_+, \]
	which implies
	\begin{align}\label{eq3.41}
		\begin{aligned}
			F_1(R_{1}r_{i})  \le C
			\left( F_1(R_{4}r_{i})^A r_{i}^{-(1-A)(\tilde{\alpha} + \tilde{\beta} -N+2)} + r_{i}^{-(\tilde{\alpha} + \tilde{\beta} -N+2)} \right), \quad \f i \in \NN_+.
		\end{aligned}
	\end{align}
	\emph{Step 2.} From \eqref{eq3.41} and \eqref{eq3.75}, we conclude that
	\begin{align}\label{eq3.45}
		\begin{aligned}
			F_1(R_{1}r_i) \le C \left( F_1(R_{1}r_i)^A r_i^{-(1-A)(\tilde{\alpha} + \tilde{\beta} -N+2)} +r_i^{-(\tilde{\alpha} + \tilde{\beta} -N+2)} \right), \quad \f i \in \NN_+.
		\end{aligned}
	\end{align}
	We thus observe that the sequence $\{F_1(R_{1}r_i)\}$ must be bounded. Otherwise, there exists a subsequence $\{F_1(R_{1}r_{i_j})\}$ such that $ F_1(R_{1}r_{i_j}) \ra +\oo$ as $j \ra +\oo$. From \eqref{eq3.45}, we have
	\begin{align}\label{eq3.46}
		\begin{aligned}
			F_1(R_{1}r_{i_j})^{1-A} \le C \left( r_{i_j}^{-(1-A)(\tilde{\alpha} + \tilde{\beta} -N+2)} + F_1(R_{1}r_{i_j})^{-A} r_{i_j}^{-(\tilde{\alpha} + \tilde{\beta} -N+2)} \right),
		\end{aligned}
	\end{align}
	which implies the right-hand term of \eqref{eq3.46} tends to $0$ as $i \ra +\oo$ due to \eqref{eq2.1}. It is evident that a contradiction arises. Therefore $\{F_1(R_{1}r_i)\} $ is bounded, and so $F_1(R_{1}r_i) \ra 0$ as $i \ra +\oo$ due to \eqref{eq3.45}. This means $u \equiv v \equiv 0$ in $\RR^N$.
\end{proof}

\begin{remark}\label{3.26}
	We observe that if $(u, v)$ is a nonnegative solution of \eqref{HLE} that is either polynomially bounded or satisfies $|x|^{a}u^{q+1} + |x|^{b}v^{p+1} \in L^{1}(\RR^N)$, then there exists a sequence $\{r_i\} $ in $ (0,+\infty)$ with $\lim\limits_{i \to +\infty} r_i = +\infty$, such that \eqref{eq3.75} holds. We shall prove the case where $(u, v)$ is polynomially bounded, as the other case is straightforward. Suppose, for the sake of contradiction, that the assertion is false. Then, for every $M > 1$, there exists $r_{0} > 1$ such that $F_1(R_{4}r) > MF_1(R_{1}r)$ for every $r \ge \frac{r_{0}}{R_{1}}$. Let $r_{i+1} = \frac{R_{4}}{R_{1}}r_{i} := R_{4,1} r_{i} $ for every $i \in \NN$ and fix
	\[ M > R_{4,1}^{a + (p+1)L + N} + R_{4,1}^{b + (q+1)L + N} + 1. \]
	Then for every $i \in \NN_+$, we have
	\begin{align}\label{eq3.14}
		\begin{aligned}
			F_1(r_{i+1}) = F_{1}(R_{4,1} r_{i}) > MF_{1}(r_{i}) > \cdots > M^{i}F_1(r_1) > M^{i+1} F_1(r_0) \ge 0.
		\end{aligned}
	\end{align}
	Since $(u,v)$ is polynomially bounded, there exist $K,L>0$ such that
	\[|u(x)|+|v(x)| \le K|x|^L, \quad \f |x| \ge 1.\]
	In this way, for every $i \in \NN_+$, we deduce that
	\[\begin{aligned}
		M^{i}F_1(r_1) &< F_1(r_{i+1}) \\
		&= \int_{B_{r_{i+1}}} |x|^b u^{q+1} +  |x|^a v^{p+1} \\
		&\le C \int_{B_{r_{i+1}} \!\setminus B_1} \left( |x|^b |x|^{(q+1)L} +  |x|^a|x|^{(p+1)L} \right) \mathrm{d}x +C \\
		&= C \int_{1}^{r_{i+1}} \left(r^{b+(q+1)L+N-1} + r^{a+(p+1)L+N-1}\right) \mathrm{d}r +C \\
		&\le C \left[ r_{1}^{b+(q+1)L+N} \left( R_{4,1}^{b+(q+1)L+N} \right)^i + r_{1}^{a+(p+1)L+N} \left( R_{4,1}^{a+(p+1)L+N} \right)^i \right] +C\\
		&\le C \left(r_{1}^{b+(q+1)L+N} + r_{1}^{a+(p+1)L+N}\right) (M-1)^i +C,
	\end{aligned}\]
	where $C>0$ is independent of $i.$ Now dividing both sides by $M^{i}$ and taking the limit as $i \ra +\oo$, we obtain $F_{1}(r_{1})=0$, which contradict to \eqref{eq3.14}.
\end{remark}

	We will next apply Theorem \ref{3.2} to prove Theorem \ref{1.9}. To facilitate the application of Theorem \ref{3.2}, we introduce the following lemma.

\begin{lemma}\label{3.20}
	Suppose $N\ge 3,p,q>0,a,b \in \RR$ and $(u,v)$ is a nonnegative solution of \eqref{HLE}. Assume there exist $R_{4}>R_{3}>\tilde{R}>R_{2}>0, q_1>0,s_1 \in [0,1)$ and $C_{1}>0$ such that \\
	(i) One of the following holds:
	\begin{itemize}
		\item $q \le q_1-1 ;$
		\item $q>q_1-1$, $p \le \frac{2}{N-3}$ and $q < q_1- \big(1+\frac{1}{p}\big)s_1 +\frac{1}{p} ;$
		\item $q>q_1-1$, $p > \frac{2}{N-3}$ and $\big( q+1-\frac{N-1}{\alpha}\big)s_1 + \frac{N-1}{\alpha}<q_1.$
	\end{itemize}
	(ii) The following inequalities are satisfied:
	\[ \int_{S_{\tilde{R}}}  u^{q_1} \le C_{1} \left( \int_{B_{R_{4}}} |x|^b u^{q+1} + |x|^a v^{p+1} \right)^{s_{1}} + C_{1} , \ \  \|D^2 u(\tilde{R}))\|_{1+\frac{1}{p}} \le C_{1}\|D^2 u\|_{L^{1+\frac{1}{p}}(B_{R_{3}} \setminus B_{R_{2}})} .\]
	Then there exist $s=s(N,p,q,q_1,s_1) \in [0,1)$ and $C_{0}=C_{0}(N,p,q,a,b,R_{2},R_{3},R_{4},C_{1})>0$ such that
	\[\int_{S_{\tilde{R}}}  u^{q+1} \le C_{0} \left( \int_{B_{R_{4}}} |x|^b u^{q+1} + |x|^a v^{p+1} \right)^{s} + C_{0} .\]
\end{lemma}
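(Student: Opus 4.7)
The plan is to upgrade the sphere-integral bound on $u^{q_1}$ assumed in (ii) to a sphere-integral bound on $u^{q+1}$. Let $F := \int_{B_{R_{4}}} |x|^b u^{q+1} + |x|^a v^{p+1}$ denote the controlling quantity. The three alternatives of (i) correspond to three interpolation regimes on the $(N-1)$-dimensional compact manifold $S_{\tilde R}$. In the first alternative, $q \le q_{1}-1$, so $q+1 \le q_{1}$, and no upgrade is needed: H\"older's inequality on $S_{\tilde R}$ directly gives
\[
\int_{S_{\tilde R}} u^{q+1} \le C\left(\int_{S_{\tilde R}} u^{q_1}\right)^{(q+1)/q_1} \le C\left(F^{s_1(q+1)/q_1} + 1\right)\!,
\]
yielding the conclusion with $s = s_1(q+1)/q_1 < 1$.

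In the remaining two alternatives ($q+1>q_{1}$), I must genuinely gain integrability using the equation. Set $k=1+1/p$; then $pk=p+1$ and $k \le q+1$ since $pq>1$. From $-\Delta u = |x|^a v^p$, the estimate $\|\Delta u\|_{L^k(B_{R_{4}}\setminus B_{R_{1}})}^k \le C\!\int v^{p+1} \le CF$, combined with the elliptic $L^k$-estimate of Lemma \ref{2.2}(ii) on nested annuli and a H\"older bound on $\|u\|_{L^k(B_{R_{4}}\setminus B_{R_{1}})}^k$ (possible since $k \le q+1$), gives $\|D^2 u\|_{L^k(B_{R_{3}}\setminus B_{R_{2}})}^k \le C(F+1)$; hypothesis (ii) then lifts this to the sphere as $\|D^2 u(\tilde R)\|_k^k \le C(F+1)$. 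Next apply Sobolev embedding on $S_{\tilde R}$: in the second alternative, $p \le 2/(N-3)$ means $2k \ge N-1$ and $W^{2,k}(S_{\tilde R}) \hookrightarrow L^\infty$; in the third, $p > 2/(N-3)$ gives $2k < N-1$ and $W^{2,k}(S_{\tilde R}) \hookrightarrow L^{k^*}$ with $k^* = (N-1)(p+1)/((N-3)p - 2)$. Finally interpolate $\int_{S_{\tilde R}} u^{q+1}$ between the $L^{q_1}$-bound from (ii) and this new higher-integrability bound: in the second alternative through $\int u^{q+1} \le \|u\|_\infty^{q+1-q_1} \int u^{q_1}$, and in the third through log-convexity of $L^r$-norms between $L^{q_1}$ and $L^{k^*}$.

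The main obstacle is the algebraic verification that the resulting $F$-exponent is strictly less than $1$ under the hypothesized conditions. In the second alternative, the dominant exponent works out to $(q+1-q_1)/k + s_1$, and the hypothesis $q<q_1-(1+1/p)s_1+1/p$ rearranges exactly to $q+1-q_1 < k(1-s_1)$. In the third alternative, the analogous exponent takes the form $s_1\lambda + (1-\lambda)k^*/k$ with $\lambda = q_1(k^*-q-1)/[(q+1)(k^*-q_1)]$; after substituting $k^* = (N-1)k/(N-1-2k)$ and $\alpha = 2(p+1)/(pq-1)$, the inequality ``$<1$'' reduces after a few lines of algebra to the stated condition $(q+1-(N-1)/\alpha)s_1 + (N-1)/\alpha < q_1$. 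The appearance of $(N-1)/\alpha$ is natural here: since $u$ is expected to scale as $|x|^{-\alpha}$, the quantity $(N-1)/\alpha$ is the unique exponent for which $\int_{S_R} u^{(N-1)/\alpha}$ is scale-invariant, and it appears as the threshold in any scaling-respecting sphere interpolation. A secondary, routine issue is the $\|u\|_{L^k(S_{\tilde R})}$ term appearing in the Sobolev inequality: it can be absorbed either by H\"older against $\int_{S_{\tilde R}} u^{q_1}$ when $k \le q_1$ (the case relevant in applications of this lemma), or by invoking a Gagliardo--Nirenberg form that avoids it altogether.
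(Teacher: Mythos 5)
Your proposal follows essentially the same route as the paper's proof: H\"older on $S_{\tilde R}$ when $q\le q_1-1$, and otherwise the interior elliptic estimate plus hypothesis (ii) and the Sobolev inequality on $S_{\tilde R}$ to get an $L^{\lambda}$-bound of order $F^{p/(p+1)}+1$, followed by interpolation against the $L^{q_1}$-bound and the same exponent algebra, with the lower-order terms absorbed harmlessly (the paper uses the decay estimates of Lemma \ref{2.2}\,(ii) where you use H\"older against $F$, which works equally well). Two minor slips should be fixed but do not affect the substance: at the borderline $p=\frac{2}{N-3}$ (for $N\ge 4$) one has $W^{2,1+\frac1p}(S_{\tilde R})\hookrightarrow L^{\lambda}$ only for every finite $\lambda$, not $L^{\infty}$, so one should take $\lambda$ large but finite and use the strictness of the hypothesis (as the paper does); and in the third alternative your displayed exponent $s_1\lambda+(1-\lambda)k^*/k$ mixes the norm-level weight $\lambda$ with integral-level bookkeeping --- the correct exponent is $(q+1)\left[\lambda\frac{s_1}{q_1}+(1-\lambda)\frac1k\right]$, which does reduce to the stated condition, and the implicit requirement $q+1<k^*$ for the interpolation should be noted explicitly (it follows from the hypothesis together with $q+1>q_1$, since it forces $\alpha+\beta+2>N-1$).
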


	This lemma's proof requires two supplementary inequalities. We first recall the Sobolev inequality on manifolds.

\begin{lemma}[Sobolev's inequality, see {\cite[Lem.\,2.4]{zbMATH00883497}}]\label{2.4}
	Suppose $M$ is a compact connected $C^{\oo}$ Riemannian manifold with or without boundary and $w \in W^{j,k}(M),$ where $j \in \NN_+$ and $ k \in [1,+\oo].$ Assume $k_{j}^{*} \in [1,+\oo]$ satisfies
	\[
	\left\{
	\begin{aligned}
		& k_j^*= \dfrac{(N-1)k}{N-1-jk}, & \quad \mbox{if}\,\,\, jk<\dim M, \\
		& k_j^* \in [1,+\oo) , & \quad \mbox{if}\,\,\, jk=\dim M, \\
		& k_j^*= +\oo, & \quad \mbox{if}\,\,\, jk>\dim M.
	\end{aligned}
	\right.
	\]
	Then we have
	\[ \|w\|_{L^{k_{j}^{*}}(M)} \le C\left(\|D_{M}^{\,j} w\|_{L^{k}(M)}  +\|w\|_{L^{1}(M)} \right),\]
	where $C=C(M,j,k,k_{j}^{*})>0 .$
	
\end{lemma}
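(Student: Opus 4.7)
The plan is to reduce the statement to the classical Sobolev inequality on Euclidean domains by localizing on $M$ via coordinate charts and a partition of unity. Since $M$ is compact, one can find a finite atlas $\{(U_{\alpha}, \phi_{\alpha})\}_{\alpha=1}^{m}$ where each $\phi_{\alpha} : U_{\alpha} \ra V_{\alpha}$ is a smooth diffeomorphism onto either an open subset of $\RR^{\dim M}$ (interior chart) or a relatively open subset of the half-space $\RR^{\dim M}_{+}$ (boundary chart), together with a subordinate partition of unity $\{\eta_{\alpha}\} \sub C_{c}^{\oo}(U_{\alpha})$. By the triangle inequality it then suffices to bound $\|\eta_{\alpha} w\|_{L^{k_{j}^{*}}(M)}$ by the claimed right-hand side for each $\alpha$.

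For fixed $\alpha$, set $\widetilde{w} := (\eta_{\alpha} w) \circ \phi_{\alpha}^{-1}$, viewed as a compactly supported function on $V_{\alpha}$. Because $\phi_{\alpha}$ is a diffeomorphism between compact sets and the Riemannian metric is smooth, the Euclidean $L^{p}$ norms of $\widetilde{w}$ are comparable to the intrinsic $L^{p}$ norms of $\eta_{\alpha} w$, and the chain rule combined with the Leibniz rule yields, for every $0 \le i \le j$,
\[
\|D^{i} \widetilde{w}\|_{L^{k}(V_{\alpha})} \le C \sum_{\ell=0}^{i} \|D_{M}^{\ell}(\eta_{\alpha} w)\|_{L^{k}(U_{\alpha})} \le C \sum_{\ell=0}^{i} \|D_{M}^{\ell} w\|_{L^{k}(M)},
\]
where the constants depend on $\alpha$, $\eta_{\alpha}$, and the metric. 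It therefore suffices to establish the Euclidean embedding
\[
\|\widetilde{w}\|_{L^{k_{j}^{*}}(V_{\alpha})} \le C \sum_{i=0}^{j} \|D^{i} \widetilde{w}\|_{L^{k}(V_{\alpha})}
\]
in the three subcases $jk < \dim M$, $jk = \dim M$, and $jk > \dim M$. For $jk < \dim M$ this is the classical $W^{j,k} \hookrightarrow L^{k_{j}^{*}}$ embedding, obtained by iterating the Gagliardo--Nirenberg--Sobolev inequality $j$ times; for $jk = \dim M$ the embedding into every $L^{r}$ with $r < \oo$ follows by applying the previous case with $j' < j$ sufficiently close to $j$; and for $jk > \dim M$ Morrey's inequality furnishes the $L^{\oo}$ embedding. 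For boundary charts these inequalities are recovered via a standard extension operator across the flat portion of $\pt \RR^{\dim M}_{+}$ (reflection for $j = 1$ and higher-order extensions for general $j$).

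It remains to replace the full $W^{j,k}$ norm on the right-hand side by $\|D_{M}^{\,j} w\|_{L^{k}(M)} + \|w\|_{L^{1}(M)}$ as stated. This follows from the Gagliardo--Nirenberg interpolation inequality on the compact manifold $M$: for every $\varepsilon > 0$ and every $0 \le i \le j - 1$,
\[
\|D_{M}^{i} w\|_{L^{k}(M)} \le \varepsilon \|D_{M}^{\,j} w\|_{L^{k}(M)} + C_{\varepsilon} \|w\|_{L^{1}(M)},
\]
which is again derived by the same chart-based argument combined with the classical Euclidean interpolation inequality on bounded domains. Choosing $\varepsilon$ sufficiently small lets the intermediate-derivative terms be absorbed and yields the claimed bound. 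The main technical nuisance is the bookkeeping between covariant derivatives $D_{M}^{i}$ and Euclidean partial derivatives: they differ by lower-order terms involving the Christoffel symbols, but on a compact manifold those symbols are uniformly bounded, so such terms are swallowed at each step by the same interpolation inequality. I do not anticipate any genuine obstacle beyond this routine bookkeeping; the essential inputs are merely the existence of a finite smooth atlas with bounded geometry together with standard Euclidean Sobolev and Gagliardo--Nirenberg estimates.
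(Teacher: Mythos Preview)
The paper does not supply a proof of this lemma at all; it is simply quoted from the literature (the bracketed citation in the statement). Your sketch via a finite atlas, partition of unity, transfer to Euclidean Sobolev/Morrey embeddings, and Gagliardo--Nirenberg interpolation to absorb the intermediate derivatives is the standard route and is essentially correct. One small slip: in the borderline case $jk = \dim M$ you cannot take ``$j' < j$ close to $j$'' since $j \in \NN_+$; the usual trick is instead to take $k' < k$ close to $k$ so that $jk' < \dim M$ and the corresponding Sobolev exponent is the desired $r$. With that correction the argument goes through without difficulty.
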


	By combining the classical $L^{p}$-estimate with Lemma \ref{2.1}\,(i), we obtain the second inequality needed.
	
\begin{lemma}[A variant of the $L^p$-estimate]\label{2.20}
	Let $N \ge 3, \Omega \sub \RR^N$ be an open set and $K \sub \Omega$ be a compact set. If $ k \in (1, +\infty)$ and $w \in$ $W^{2, k}(\Omega)$, then
	\[
	\|D^2 w\|_{L^k(K)} \le
	C\left(\|\Delta w\|_{L^k(\Omega)} + \|w\|_{L^1(\Omega)}\right),
	\]
	where $C=C(N,k,\Omega,K)>0.$
\end{lemma}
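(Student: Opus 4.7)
The strategy is to combine the classical interior $W^{2,k}$-estimate with a finite bootstrap built on Lemma~\ref{2.1}\,(i), so as to replace the usual $\|w\|_{L^k}$-term on the right-hand side of the standard interior estimate by $\|w\|_{L^1(\Omega)}$. Since $K \Subset \Omega$, I would first fix a finite chain of smooth bounded open sets $K \Subset V_J \Subset V_{J-1} \Subset \cdots \Subset V_0 \Subset \Omega$, along with cutoffs $\eta_i \in C_c^\infty(V_{i-1})$ satisfying $\eta_i \equiv 1$ on $V_i$, where $J$ depends only on $N$ and $k$.

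The classical interior $L^p$-estimate applied on $K \Subset V_J$ yields
\[
\|D^2 w\|_{L^k(K)} \le C\bigl(\|\Delta w\|_{L^k(V_J)} + \|w\|_{L^k(V_J)}\bigr) \le C\bigl(\|\Delta w\|_{L^k(\Omega)} + \|w\|_{L^k(V_J)}\bigr),
\]
so the task reduces to controlling $\|w\|_{L^k(V_J)}$ by $\|\Delta w\|_{L^k(\Omega)} + \|w\|_{L^1(\Omega)}$. The base step of the bootstrap is to apply Lemma~\ref{2.1}\,(i) to the compactly supported function $\eta_1 w$: for any fixed $\theta \in (1, \frac{N}{N-2})$,
\[
\|w\|_{L^\theta(V_1)} \le \|\eta_1 w\|_{L^\theta(\RR^N)} \le C\|\Delta(\eta_1 w)\|_{L^1(\RR^N)} \le C\bigl(\|\Delta w\|_{L^1(V_0)} + \|\nabla w\|_{L^1(V_0)} + \|w\|_{L^1(V_0)}\bigr).
\]
The gradient term is handled by the Gagliardo--Nirenberg interpolation $\|\nabla w\|_{L^1(V_0)} \le \varepsilon\|D^2 w\|_{L^k(V_0)}+C_\varepsilon\|w\|_{L^1(V_0)}$ on bounded smooth domains, combined with a standard Widman-type absorption across the nested sets $V_j$ and H\"older's inequality to upgrade $L^1$-norms of $\Delta w$ (and $D^2 w$) to $L^k$-norms on bounded sets.

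For the inductive step, once $\|w\|_{L^{p_i}(V_i)} \le C(\|\Delta w\|_{L^k(\Omega)} + \|w\|_{L^1(\Omega)})$, the classical interior $W^{2,p_i}$-estimate on $V_{i+1}\Subset V_i$ (using H\"older to pass from $L^{p_i}$ to $L^k$ on the bounded set $V_i$) combined with the Sobolev embedding $W^{2,p_i}(V_{i+1}) \hookrightarrow L^{p_{i+1}}(V_{i+1})$, with $p_{i+1} = Np_i/(N-2p_i)$ when $2p_i < N$ and $p_{i+1}$ arbitrarily large otherwise, strictly improves the exponent. Since the sequence $\{p_i\}$ reaches $k$ in finitely many steps (depending only on $N$ and $k$), one obtains $\|w\|_{L^k(V_J)} \le C(\|\Delta w\|_{L^k(\Omega)} + \|w\|_{L^1(\Omega)})$, and the claim follows. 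The main technical obstacle is the bookkeeping of nested domains, cutoffs, and absorption constants, and ensuring that all constants depend only on $(N,k,\Omega,K)$; however, because $k$ is fixed and only finitely many iterations are required, the execution amounts to a careful layering of well-known tools.
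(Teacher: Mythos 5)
Your overall direction (classical interior estimate plus Lemma \ref{2.1}) is the right one, but you have missed that Lemma \ref{2.1}\,(i) already does all the work in one stroke: its first statement imposes no support condition on $w$ and only requires $\frac{1}{k}-\frac{1}{l}<\frac{2}{N}$, so taking $l=k$ gives $\|w\|_{L^{k}(\overline{V})}\le C\left(\|\Delta w\|_{L^{k}(\Omega)}+\|w\|_{L^{1}(\Omega)}\right)$ for any compact $\overline{V}\subset\Omega$. The paper's proof is therefore a two-step combination: choose $K\Subset V\Subset\Omega$, apply the classical interior $L^{k}$-estimate $\|D^{2}w\|_{L^{k}(K)}\le C\left(\|\Delta w\|_{L^{k}(V)}+\|w\|_{L^{k}(V)}\right)$, and then replace $\|w\|_{L^{k}(V)}$ using Lemma \ref{2.1}\,(i) with $l=k$. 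The entire chain of nested sets, the cutoffs, the treatment of $\|\nabla w\|_{L^{1}}$, and the bootstrap through the exponents $p_{i}$ are unnecessary; you only used the weaker, compactly supported $L^{1}\to L^{\theta}$ form of Lemma \ref{2.1}\,(i) and thereby created work the lemma was designed to avoid.

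More seriously, as written your base step is circular and the fix is only gestured at. After the interpolation $\|\nabla w\|_{L^{1}(V_{0})}\le\varepsilon\|D^{2}w\|_{L^{k}(V_{0})}+C_{\varepsilon}\|w\|_{L^{1}(V_{0})}$, the term $\varepsilon\|D^{2}w\|_{L^{k}(V_{0})}$ cannot be absorbed into the left-hand side $\|w\|_{L^{\theta}(V_{1})}$, and the only available tool to bound it, the interior Calder\'on--Zygmund estimate on a larger set, reintroduces $\|w\|_{L^{k}}$ there -- exactly the quantity your bootstrap is supposed to produce. The ``Widman-type absorption'' you invoke can close this loop, but it is a genuine argument in its own right (working on balls, a Gagliardo--Nirenberg inequality of the form $\|w\|_{L^{k}}\le C\|D^{2}w\|_{L^{k}}^{\sigma}\|w\|_{L^{1}}^{1-\sigma}+C\|w\|_{L^{1}}$ with scaling-uniform constants, and the standard iteration lemma over a continuum of nested balls), and it is precisely the crux rather than a routine detail; moreover, once carried out it proves the lemma directly and renders both your base step and the $p_{i}$-bootstrap superfluous. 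So either quote Lemma \ref{2.1}\,(i) with $l=k$ as the paper does, or execute the absorption argument in full; the proposal as it stands does neither.
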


\begin{proof}[\bf{Proof of Lemma \ref{3.20}.}]
	Let us define
	\[F: (0,+\oo) \ra \RR, \quad r \mapsto \int_{B_r} |x|^b u^{q+1} + |x|^a v^{p+1}.\]
	\emph{Case 1.} If $q \le q_1-1,$ then by H\"{o}lder's inequality, we obtain
	\[ \int_{S_{\tilde{R}}} u^{q+1} \le C_{0} \left( \int_{S_{\tilde{R}}} u^{q_1} \right)^{\frac{q+1}{q_{1}}} \le C_{0} F(R_{4})^{\frac{(q+1)s_1}{q_{1}}} + C_{0}.\]
	\emph{Case 2.} If $q > q_1-1,$ then let $\lambda = \big(1+\frac{1}{p}\big)_2^*.$ We first use Lemmas \ref{2.4}, \ref{2.2}\,(ii) and \ref{2.20} to deduce
	\[\begin{aligned}
		\|u(\tilde{R})\|_{\lambda} &\le C_{0} \left(\|D_{S_1}^2 u(\tilde{R})\|_{1+\frac{1}{p}} + \|u(\tilde{R})\|_1 \right) \\
		&\le C_{0} \left(\tilde{R}^2 \|D^2 u(\tilde{R})\|_{1+\frac{1}{p}} +1 \right) \\
		&\le C_{0} \left( \|D^2 u\|_{L^{1+\frac{1}{p}}(B_{R_{3}} \setminus B_{R_{2}})} +1 \right) \\
		&\le C_{0} \left(\|\Delta u\|_{L^{1+\frac{1}{p}}(B_{R_{4}} \setminus B_{R_{1}})} + \|u\|_{L^{1}(B_{R_{4}} \setminus B_{R_{1}})} + 1\right) \\
		&\le C_{0} \left(F(R_{4})^{\frac{p}{p+1}} + 1\right),
	\end{aligned}\]
	where $C_{0}=C_{0}(N,p,q,a,b,R_{2},R_{3},R_{4},C_{1},\lambda)>0.$ \\
	\emph{Subcase 1.} If $p \le \frac{2}{N-3}, $ i.e. $2\big(1+\frac{1}{p}\big) \ge N-1,$ then fixing $\lambda>q+1$ large enough and using the interpolation inequality, we obtain
	\[\|u(\tilde{R})\|_{q+1} \le \|u(\tilde{R})\|_{q_1}^{1-h} \|u(\tilde{R})\|_{\lambda}^{h} \le C_{0}\left(F(R_{4})^{(1-h) \frac{s_1}{q_1} + h \frac{p}{p+1}} +1 \right),\]
	where $C_{0}=C_{0}(N,p,q,a,b,R_{2},R_{3},R_{4},C_{1})>0$ and
	\[h=h(\lambda)=\frac{\frac{1}{q_1} - \frac{1}{q+1} }{ \frac{1}{q_1} - \frac{1}{\lambda}} \in (0,1).\] It remains to show
	\[ \left[(1-h) \frac{s_1}{q_1} + h \frac{p}{p+1}\right] (q+1) <1. \]
	Since $\lambda$ can be arbitrarily large, we only need to show
	\[ \lim\limits_{\lambda \ra \oo} \left[(1-h) \frac{s_1}{q_1} + h \frac{p}{p+1}\right] (q+1) <1.\]
	A straightforward calculation implies that this is equivalent to \[q < q_1-\left(1+\frac{1}{p}\right)s_1 +\frac{1}{p},\]
	which is exactly our condition.\\
	\emph{Subcase 2.} If $p > \frac{2}{N-3}, $ i.e. $2(1+\frac{1}{p}) < N-1,$ then $C_{0}=C_{0}(N,p,q,a,b,R_{2},R_{3},R_{4},C_{1})$ by the definition of $\lambda$. Since
	\[\left(q+1-\frac{N-1}{\alpha}\right) s_1+ \frac{N-1}{\alpha}<q_1,\]
	it follows that \[\frac{\frac{q_1}{q+1} - s_1}{1-s_1} > \frac{N-1}{\alpha + \beta +2}.\]
	But $q+1>q_1,$ hence $\alpha + \beta +2 >N-1.$ Now we must have
	\[\frac{1}{1+\frac{1}{p}} - \frac{1}{q+1} = \frac{pq-1}{(p+1)(q+1)} = \frac{2}{(p+1)\beta} =\frac{2}{\alpha+\beta+2} < \frac{2}{N-1}.\]
	This means $q+1< \big(1+\frac{1}{p}\big)_2^*=\lambda,$ thus the interpolation inequality yields that
	\[\|u(\tilde{R})\|_{q+1} \le \|u(\tilde{R})\|_{q_1}^{1-h} \|u(\tilde{R})\|_{\lambda}^{h} \le C_{0}\left(F(R_{4})^{(1-h) \frac{s_1}{q_1} + h \frac{p}{p+1}} +1 \right),\]
	where \[h=\frac{\frac{1}{q_1} - \frac{1}{q+1} }{ \frac{1}{q_1} - \frac{1}{\lambda}} \in (0,1).\] It remains to show
	\[ \left[(1-h) \frac{s_1}{q_1} + h \frac{p}{p+1}\right] (q+1) <1. \]
	In fact, one can easily check that this is equivalent to
	\[\left(q+1-\frac{N-1}{\alpha}\right) s_1+ \frac{N-1}{\alpha}<q_1,\]
	which precisely matches our condition. The proof is finished.
\end{proof}

	Then we will prove Theorem \ref{1.9} by analyzing the range of $s$. In one case, we shall utilize the following generalization of {\cite[Thm.\,1.1]{zbMATH06121773}}.

\begin{theorem}\label{3.27}
	Let $N \ge 3,p \ge q >0, pq>1$ and $a,b >-2.$ Assume that $(p,q,a,b)$ is subcritical, and that \eqref{eq1.3} holds provided $N\ge 4$. Additionally, suppose $(u,v)$ is a nonnegative solution of \eqref{HLE} that either is polynomially bounded or satisfies $|x|^{b}u^{q+1} + |x|^{a}v^{p+1} \in L^{1}(\RR^N)$. If $\alpha>N-3$, then $u \equiv v \equiv 0$ in $\mathbb{R}^N$.
\end{theorem}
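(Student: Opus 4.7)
The strategy is to verify the hypotheses of Theorem \ref{3.2} along a sequence $r_{i}\ra +\oo$. Condition \eqref{eq3.75} is precisely the output of Remark \ref{3.26}, which accommodates both the polynomial bound and the $L^{1}$-integrability of $|x|^{b}u^{q+1}+|x|^{a}v^{p+1}$. For each $r_{i}$, a single application of Lemma \ref{2.30} on the annulus $B_{R_{3}} \!\setminus\! \ol{B_{R_{2}}}$ selects a slice $\tilde{R}_{i}\in(R_{2},R_{3})$ at which the rescaled sphere integrals of $u_{r_{i}}^{q+1}$, $v_{r_{i}}^{p+1}$, $|Du_{r_{i}}|^{1+1/p}$, $|Dv_{r_{i}}|^{1+1/q}$, $|D^{2}u_{r_{i}}|^{1+1/p}$ and $|D^{2}v_{r_{i}}|^{1+1/q}$ are simultaneously controlled by the corresponding annular integrals. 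This immediately delivers \eqref{eq3.74} and also supplies hypothesis (ii) of Lemma \ref{3.20}.

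The heart of the argument is \eqref{eq3.73}, to be established by two applications of Lemma \ref{3.20}: one to $u$ via $-\Delta u=|x|^{a}v^{p}$, and a dual one to $v$ via $-\Delta v=|x|^{b}u^{q}$. The assumption $\alpha > N-3$ intervenes in subcase 2 of Lemma \ref{3.20} (the regime $p>\tfrac{2}{N-3}$): a short computation shows that $\alpha > N-3$ is precisely strong enough to force the second-order spherical Sobolev exponent $\lambda=\bigl(1+\tfrac{1}{p}\bigr)_{2}^{*}$ to exceed $q+1$, so $q+1$ lands strictly inside the interpolation interval $[q_{1},\lambda]$ and the admissibility inequality $\bigl(q+1-\tfrac{N-1}{\alpha}\bigr)s_{1}+\tfrac{N-1}{\alpha} < q_{1}$ becomes reachable. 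The complementary regime $p\le \tfrac{2}{N-3}$, which covers every $N=3$ case, falls under subcase 1, where $\lambda$ may be taken arbitrarily large and the constraint is even milder. Condition \eqref{eq1.3} for $N\ge 4$, in conjunction with $p\ge q$, is what keeps the analogous argument for $v$ admissible once the weights $a$ and $b$ are swapped.

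The main obstacle is producing an initial sphere estimate $\int_{S_{\tilde{R}_{i}}}u_{r_{i}}^{q_{1}}\le C F_{r_{i}}(R_{4})^{s_{1}}+C$ with $s_{1}$ small enough to satisfy Lemma \ref{3.20}(i). A bald H\"older bound yields $s_{1}=q_{1}/(q+1)$, which (as a direct check shows) reduces Lemma \ref{3.20}(i) to the hypothesis $pq<1$ and is therefore too weak. To obtain a genuinely subcritical exponent I would combine the Rellich--Pokhozhaev identity of Lemma \ref{2.13} (choosing $c_{1}=c_{2}=\tfrac{N-2}{2}$ to annihilate the cross term $\int_{S}(\pt_{\nu}v\,u-\pt_{\nu}u\,v)$) with the spherical Sobolev inequality (Lemma \ref{2.4}) and the interior $L^{k}$-regularity bound (Lemma \ref{2.20}), converting the boundary integrals on the right of Rellich--Pokhozhaev into a power of $F_{r_{i}}$ with exponent strictly below $q_{1}/(q+1)$. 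This refined seed is then fed into Lemma \ref{3.20}, iterated once or twice if needed, until the admissibility condition holds; at every iteration the relation $\alpha > N-3$ is used essentially to keep $q+1$ inside the Sobolev embedding of $W^{2,1+1/p}(S^{N-1})$. Once both halves of \eqref{eq3.73} are secured, Theorem \ref{3.2} closes the argument and yields $u\equiv v\equiv 0$ in $\RR^{N}$.
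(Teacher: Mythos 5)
Your skeleton (Theorem \ref{3.2} fed by Remark \ref{3.26}, Lemma \ref{2.30} for the good slice, Lemma \ref{3.20} for the bootstrap) is the paper's skeleton, but the step you yourself single out as the main obstacle --- the seed estimate $\int_{S_{\tilde R_i}} u_{r_i}^{q_1}\le C F_{r_i}(R_4)^{s_1}+C$ with $s_1$ small --- is exactly where your plan breaks down, and your proposed fix does not work. Lemma \ref{2.13} (Rellich--Pokhozhaev) controls the ball integral $F$ \emph{by} sphere integrals; it cannot be turned around to control a sphere integral of $u^{q_1}$ by a sub-unit power of $F$, and killing the cross term with $c_1=c_2=\frac{N-2}{2}$ is irrelevant to that goal. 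Likewise ``iterating Lemma \ref{3.20} once or twice'' is not available: its output is an estimate for $u^{q+1}$, not a new seed with improved $s_1$. The paper's actual seed is a different mechanism and comes with $s_1=0$: the universal estimates of Lemma \ref{2.2}\,(ii)--(iii) give $\ol{u^\gamma}(R)\le CR^{-\gamma\tilde\alpha}$ and $\int_{B_3\setminus B_2}|D(u_R^{\gamma/2})|^2\le C$ uniformly in $R$; a good slice from Lemma \ref{2.30} plus the spherical Sobolev inequality (Lemma \ref{2.4}) then yield $\|u_R^{\gamma/2}(\tilde R)\|_{2_1^*}\le C$, i.e.\ $\int_{S_{\tilde R}} u_R^{q_1}\le C$ with $q_1=\frac{\gamma(N-1)}{N-3}$ (arbitrarily large exponent when $N=3$). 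With $s_1=0$ and $\gamma$ close to $1$, the admissibility condition of Lemma \ref{3.20} in the regime $p>\frac{2}{N-3}$ reduces precisely to $\frac{N-1}{\alpha}<\frac{N-1}{N-3}$, i.e.\ $\alpha>N-3$; this is how the hypothesis enters, rather than by forcing $q+1<\bigl(1+\frac1p\bigr)_2^*$ directly.

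The second gap is the $v$-half of \eqref{eq3.73}. You propose a ``dual'' application of Lemma \ref{3.20} to $v$ via $-\Delta v=|x|^b u^q$, but the dual admissibility condition involves $\beta$ in place of $\alpha$ (in the worst regime one needs essentially $\frac{N-1}{\beta}<p_1\approx\frac{N-1}{N-3}$, or $p<p_1+\frac1q$ in the other), and since $\beta\le\alpha$ when $p\ge q$, the hypothesis $\alpha>N-3$ gives no control on $\beta$: for $p\gg q$ the dual conditions fail. This is also why your reading of \eqref{eq1.3} is off: \eqref{eq1.3} does not ``keep the swapped argument admissible''; it is the hypothesis of the comparison property (Lemma \ref{2.3}), $|x|^a v^{p+1}\le C|x|^b u^{q+1}$, which the paper uses to transfer the $u$-estimate \eqref{eq3.89} directly to $\int_{S_{\tilde R}} v_R^{p+1}$ for $N\ge4$ (for $N=3$ the seed already bounds the $v$-sphere integral with an arbitrarily large exponent, which is why \eqref{eq1.3} is not assumed there). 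Without the Lemma \ref{2.2}-based seed and without Lemma \ref{2.3}, the proposal does not close.
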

\begin{proof}
	For every $R>0$ and $\gamma \in (0,1)$, by Lemma \ref{2.30} and Lemma \ref{2.2}\,(iii),  one can find $\tilde{R} \in (2,3)$ satisfying
	\begin{align}\label{eq3.85}
		\begin{aligned}
			\|D u_{R}(\tilde{R}))\|_{1+\frac{1}{p}} \le C \|D u_{R}\|_{L^{1+\frac{1}{p}}(B_{3} \setminus B_{2})} , & \quad
			\|D v_{R}(\tilde{R}))\|_{1+\frac{1}{q}} \le C \|D v_{R}\|_{L^{1+\frac{1}{q}}(B_{3} \setminus B_{2})}, \\
			\|D^2 u_{R}(\tilde{R}))\|_{1+\frac{1}{p}} \le C \|D^2 u_{R}\|_{L^{1+\frac{1}{p}}(B_{3} \setminus B_{2})}, & \quad
			\|D^2 v_{R}(\tilde{R}))\|_{1+\frac{1}{q}} \le C \|D^2 v_{R}\|_{L^{1+\frac{1}{q}}(B_{3} \setminus B_{2})},
		\end{aligned}
	\end{align}
	and
	\begin{align}\label{eq3.86}
		\begin{aligned}
			\| Du_{R}^{\frac{\gamma}{2}} (\tilde{R}) \|_{2} \le C \| Du_{R}^{\frac{\gamma}{2}} \|_{L^{2}(B_{3} \setminus B_{2})} \le C, \quad
			\| Dv_{R}^{\frac{\gamma}{2}} (\tilde{R}) \|_{2} \le C \| Dv_{R}^{\frac{\gamma}{2}} \|_{L^{2}(B_{3} \setminus B_{2})} \le C,
		\end{aligned}
	\end{align}
	where $C>0$ is independent of $R$. Then using Lemma \ref{2.4}, \eqref{eq3.86} and Lemma \ref{2.2}\,(ii), we have
	\begin{align}\label{eq3.87}
		\begin{aligned}
			\| u_{R}^{\frac{\gamma}{2}} (\tilde{R}) \|_{2_{1}^{*}} \le C  \left( \| Du_{R}^{\frac{\gamma}{2}} (\tilde{R}) \|_{2} +  \| u_{R}^{\frac{\gamma}{2}} (\tilde{R}) \|_{1}   \right)
			\le C,
		\end{aligned}
	\end{align}
	and
	\begin{align}\label{eq3.88}
		\begin{aligned}
			\| v_{R}^{\frac{\gamma}{2}} (\tilde{R}) \|_{2_{1}^{*}} \le C  \left( \| Dv_{R}^{\frac{\gamma}{2}} (\tilde{R}) \|_{2} +  \| v_{R}^{\frac{\gamma}{2}} (\tilde{R}) \|_{1}   \right)
			\le C.
		\end{aligned}
	\end{align}
	\emph{Case 1.} If \( N = 3 \), then by the definition of \( 2_{1}^{*} \), along with \eqref{eq3.85}, \eqref{eq3.87}, \eqref{eq3.88}, and Remark \ref{3.26}, we can apply Theorem \ref{3.2} to complete the proof. \\
	\emph{Case 2.} If \( N \geq 4 \), we select \( q_{1} = \frac{\gamma(N-1)}{N-3} \), ensuring that the conditions of Lemma \ref{3.20} are satisfied, provided that \( \gamma \) is sufficiently close to 1, as guaranteed by $\alpha>N-3$, \eqref{eq3.85} and \eqref{eq3.87}. Consequently, we can use Lemma \ref{3.20} to deduce that there exist $s \in [0,1)$ and $C>0$ such that
	\begin{align}\label{eq3.89}
		\begin{aligned}
			\int_{S_{\tilde{R}}}  u_{R}^{q+1} \le C \left(\int_{B_{R_{4}}} |x|^b u_{R}^{q+1}+ |x|^a v_{R}^{p+1}\right)^{s} + C,\quad \f R>0.
		\end{aligned}
	\end{align}
	At this point, one can also apply Theorem \ref{3.2} to complete the proof, by virtue of \eqref{eq3.85}, \eqref{eq3.89}, Lemma \ref{2.3}, and Remark \ref{3.26}.
\end{proof}
	
	In the end, we are prepared to prove Theorem \ref{1.9}.

\begin{proof}[\bf{Proof of Theorem \ref{1.9}}]
	In what follows, we fix $R_{3},R_{2}>0$ such that $R_{4}>R_{3}>R_{2}>R_{1}$ and use $C>0$ to denote a general constant that is independent of $R$. \\
	\emph{Case 1.} If $p \ge s,$ observe that \[N-\alpha-2 = N-p\beta \le N-s\beta <1.\]
	Hence one can apply Theorem \ref{3.27} to finish the proof. \\
	\emph{Case 2.} If $s > p$, then by combining Lemma \ref{2.20}, Lemma \ref{2.2}\,(ii) and \eqref{eq1.8}, we have
	\[\begin{aligned}
		\|D^2 u_R\|_{L^{\frac{s}{p}} (B_{R_{3}} \setminus B_{R_{2}})} &\le C\left( \|\Delta u_R\|_{L^{\frac{s}{p}} (B_{R_{4}} \setminus B_{R_{1}})} + \|u_R\|_{L^{1} (B_{R_{4}} \setminus B_{R_{1}})}\right) \\
		&\le C \left( \int_{B_{R_{4}} \setminus B_{R_{1}} } v_R^s \right)^{\frac{p}{s}} + C \\
		&\le C,
	\end{aligned}\]
	where $R>0$ is sufficiently large. By Lemma \ref{2.30}, there exists $\tilde{R} \in (R_{2},R_{3})$ such that
	\[\|Du_R(\tilde{R}))\|_{1+\frac{1}{p}} \le C\|Du_R\|_{L^{1+\frac{1}{p}}(B_{R_{3}} \setminus B_{R_{2}})}, \quad
	\|Dv_R(\tilde{R}))\|_{1+\frac{1}{q} } \le C\|Dv_R\|_{L^{1+\frac{1}{q}}(B_{R_{3}} \setminus B_{R_{2}})},\]
	and
	\begin{align}\label{eq3.39}
		\begin{aligned}
			\|D^2 u_R(\tilde{R}))\|_{1+\frac{1}{p}} \le C\|D^2 u_R\|_{L^{1+\frac{1}{p}}(B_{R_{3}} \setminus B_{R_{2}})},
			\quad
			\|D^2 u_R(\tilde{R})\|_{\frac{s}{p}} \le C\|D^2 u_R\|_{L^{\frac{s}{p}} (B_{R_{3}} \setminus B_{R_{2}})} \le C.
		\end{aligned}
	\end{align}
	Thus, by Lemma \ref{2.4}, Lemma \ref{2.2}\,(ii) and \eqref{eq3.39}, we obtain
	\begin{align}\label{eq3.40}
		\begin{aligned}
			\|u_R(\tilde{R})\|_{(\frac{s}{p})_2^*} &\le C\left(\|D_{S_1}^2 u_R (\tilde{R})\|_{\frac{s}{p}} + \|u_R(\tilde{R})\|_1 \right)
			\le C\left(\tilde{R}^2\|D^2 u_R(\tilde{R})\|_{{\frac{s}{p}} } + 1\right)
			\le C.	
		\end{aligned}
	\end{align}
	It remains to demonstrate that the conditions of Lemma \ref{3.20} are met. In this case, Lemma \ref{3.20}, Lemma \ref{2.3}, Theorem \ref{3.2}, and Remark \ref{3.26} can be applied to deduce the desired conclusion. \\
	\emph{Subcase 1.} If $ N-1 \le \frac{2s}{p},$ then by the definition of $\big(\frac{s}{p}\big)_2^*$ and \eqref{eq3.40}, it is easy to see that there exists $q_1>0$ such that $q \le q_1-1$ and
	\[ \int_{S_{\tilde{R}}}  u_R^{q_1} \le C, \quad \f R \gg 1. \]
	For this reason, the conditions of Lemma \ref{3.20} are satisfied. \\
	\emph{Subcase 2.} If $N-1 > \frac{2s}{p},$ then we can assume $q>\big(\frac{s}{p}\big)_2^* - 1$.
	In this way, we have
	\[p \ge q > \left(\frac{s}{p}\right)_2^* -1=\frac{\frac{(N-1)s}{p}}{N-1-\frac{2s}{p}} -1 > \frac{N-1}{N-1-2}-1 =\frac{2}{N-3}. \] Furthermore, it is easy to check that
	\[ N-s\beta<1 \Longleftrightarrow \frac{N-1}{\alpha} < \left(\frac{s}{p}\right)_2^*, \]
	since $p\beta = \alpha+ 2.$ Consequently, the conditions of Lemma \ref{3.20} are satisfied in this subcase as well.
\end{proof}
\begin{remark}
	From the preceding proof, it is evident that the restriction \eqref{eq1.3} in Theorem \ref{1.9} can be replaced by the requirement that $(u,v)$ satisfies \eqref{eq2.2}. Moreover, the condition that $(u,v)$ is polynomially bounded or satisfies $|x|^{a} u^{q+1} + |x|^{b} v^{p+1} \in L^1(\mathbb{R}^N)$ can be replaced by the following: there exists a sequence $\{r_i\} $ in $ (0, +\infty)$ such that $\lim\limits_{i \to +\infty} r_i = +\infty$ and \eqref{eq3.75} holds.
\end{remark}

\section{The supercritical case: Proof of Theorems \ref{1.10}}\label{sec.4}

	In establishing the Liouville-type theorem for the supercritical case, we employ the following identity in place of inequality \eqref{eq2.7}.

\begin{lemma}[Rellich-Pokhozhaev type identity, see {\cite[Lem.\,2.6]{zbMATH06121773}}]\label{2.12}
	Assume $N \ge 3,
	p,q>0,$  $a,b>-2,$ $ 0 \in \Omega \sub \RR^N$ is an open set
	and $(u, v) \in \left( C^2(\Omega \!\setminus\! \{0\} ) \cap C(\Omega) \right)^2$ is a nonnegative solution of \eqref{HLE} in $\Omega \!\setminus\! \{0\}$. Then for every $R>0$ with $B_R \Subset \Omega$ and $c_1,c_2 \in \mathbb{R}$ with $c_1+c_2=N-2,$ there holds
	\[
	\begin{aligned}
		& \left(\dfrac{N+b}{q+1}-c_1\right) \int_{B_R}|x|^b u^{q+1}
		+\left(\dfrac{N+a}{p+1}-c_2\right) \int_{B_R}|x|^a v^{p+1} \\
		= &\, \dfrac{R^{b+1}}{q+1} \int_{S_R} u^{q+1}
		+\dfrac{R^{a+1}}{p+1} \int_{S_R} v^{p+1}
		+\int_{S_R}\left(c_1\dfrac{\partial v}{\partial \nu}u + c_2\dfrac{\partial u}{\partial \nu} v\right)
		+R\int_{S_R} \left( 2 \dfrac{\partial u}{\partial \nu} \dfrac{\partial v}{\partial \nu} - DuDv \right)
	\end{aligned}
	\]
\end{lemma}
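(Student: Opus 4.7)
My plan is to establish the identity by the classical Pohozaev multiplier technique, applied simultaneously to both equations of the system and localized away from the origin to handle the singularity of the solution there. I would multiply the first equation $-\Delta u = |x|^{a}v^{p}$ by the multiplier $x\cdot\nabla v + c_{2}v$, the second equation $-\Delta v = |x|^{b}u^{q}$ by $x\cdot\nabla u + c_{1}u$, and integrate each over the annular region $B_{R}\setminus\overline{B_{\epsilon}}$ for small $\epsilon>0$, summing the two identities. The constraint $c_{1}+c_{2}=N-2$ will be used precisely to cancel the interior term $\int \nabla u\cdot\nabla v$ that arises after integration by parts.

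The right-hand sides are the routine part. Writing $(x\cdot\nabla v)|x|^{a}v^{p} = \frac{|x|^{a}}{p+1}\,x\cdot\nabla(v^{p+1})$ and applying the divergence theorem together with $\operatorname{div}(|x|^{a}x) = (N+a)|x|^{a}$ produces
\[
-\frac{N+a}{p+1}\int_{B_{R}\setminus B_{\epsilon}}|x|^{a}v^{p+1} + \frac{R^{a+1}}{p+1}\int_{S_{R}} v^{p+1} - \frac{\epsilon^{a+1}}{p+1}\int_{S_{\epsilon}} v^{p+1},
\]
and the contribution from the $c_{2}v$ multiplier combines with this to give the coefficient $\frac{N+a}{p+1}-c_{2}$ on $\int|x|^{a}v^{p+1}$; the $u$-side is symmetric. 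The left-hand sides are handled by the standard identity
\[
\int \nabla u\cdot\nabla(x\cdot\nabla v) + \int \nabla v\cdot\nabla(x\cdot\nabla u) = (2-N)\int \nabla u\cdot\nabla v + R\!\int_{S_{R}}\!\nabla u\cdot\nabla v - \epsilon\!\int_{S_{\epsilon}}\!\nabla u\cdot\nabla v,
\]
derived by expanding $\nabla(x\cdot\nabla v) = \nabla v + (x\cdot\nabla)\nabla v$ and using $\operatorname{div}(x(\nabla u\cdot\nabla v)) = N\nabla u\cdot\nabla v + x\cdot\nabla(\nabla u\cdot\nabla v)$. The boundary contributions on $S_{R}$ reorganize, via the decomposition $\nabla w = \partial_{\nu}w\,\nu + \nabla_{\mathrm{tan}}w$ and the identity $x\cdot\nabla w = R\,\partial_{\nu}w$ on $S_{R}$, into the desired term $R\int_{S_{R}}(2\partial_{\nu}u\,\partial_{\nu}v - DuDv)$, while the $c_{i}$-multipliers produce the mixed term $\int_{S_{R}}(c_{1}u\,\partial_{\nu}v + c_{2}v\,\partial_{\nu}u)$.

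The main obstacle, and the only place the hypothesis $(u,v)\in C(\Omega)$ at the origin is used, is showing that all the $S_{\epsilon}$ boundary integrals tend to $0$ as $\epsilon\to 0^{+}$. Since $u,v$ are bounded in a neighborhood of $0$ and $a,b>-2$, the right-hand sides $|x|^{a}v^{p}$ and $|x|^{b}u^{q}$ lie in $L^{r}_{\mathrm{loc}}$ for some $r>1$; Newtonian potential estimates (or, equivalently, interior $W^{2,r}$ estimates away from $0$ together with pointwise representation) yield $|\nabla u(x)|,|\nabla v(x)|\lesssim 1 + |x|^{\min(a,b)+1}$ for $x$ near the origin. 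Consequently each of the offending terms
\[
\epsilon\!\int_{S_{\epsilon}}\!|\nabla u||\nabla v|,\quad \int_{S_{\epsilon}}\!|u||\nabla v|,\quad \int_{S_{\epsilon}}\!|v||\nabla u|,\quad \epsilon^{a+1}\!\int_{S_{\epsilon}}\!v^{p+1},\quad \epsilon^{b+1}\!\int_{S_{\epsilon}}\!u^{q+1},
\]
scales like $\epsilon^{N-1+\sigma}$ for some $\sigma>-(N-1)$ guaranteed by $a,b>-2$, so all vanish in the limit. Passing $\epsilon\to 0^{+}$ in the assembled identity yields exactly the stated equality, recovering \cite[Lem.\,2.6]{zbMATH06121773}.
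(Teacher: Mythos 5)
Your multiplier computation is the standard Rellich--Pohozaev argument and it does produce the stated identity; note that the paper itself gives no proof of this lemma but quotes it from Phan's cited work, whose proof follows exactly your scheme (multipliers $x\cdot Dv+c_{2}v$ and $x\cdot Du+c_{1}u$ on the annulus $B_{R}\setminus\overline{B_{\epsilon}}$, cancellation of $\int Du\cdot Dv$ through $c_{1}+c_{2}=N-2$ \emph{before} letting $\epsilon\to0$, and vanishing inner boundary terms), so in substance you are reproducing the source's argument. The one step whose justification is off as written is the gradient bound $|Du|,|Dv|\lesssim 1+|x|^{\min(a,b)+1}$ near the origin: scaled interior estimates on the balls $B(x,|x|/2)$ (which is what ``interior $W^{2,r}$ estimates away from $0$'' give you) only yield $|Du(x)|\lesssim \|u\|_{L^{\infty}}|x|^{-1}+|x|^{a+1}=O(|x|^{-1})$; the stronger bound you assert is true, but it requires decomposing $u$ near $0$ as the Newtonian potential of $|x|^{a}v^{p}$ (whose gradient is $O(1+|x|^{a+1})$) plus a harmonic function on the punctured ball that is bounded, hence has a removable singularity -- this potential-theoretic step, using continuity at $0$, is not equivalent to interior elliptic estimates. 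Fortunately the extra precision is unnecessary: with $u,v$ bounded near $0$ the crude bound $|Du|,|Dv|=O(|x|^{-1})$ already makes the terms $\epsilon\int_{S_{\epsilon}}|Du||Dv|$, $\int_{S_{\epsilon}}u|Dv|$, $\int_{S_{\epsilon}}v|Du|$ of order $\epsilon^{N-2}$ and the terms $\epsilon^{a+1}\int_{S_{\epsilon}}v^{p+1}$, $\epsilon^{b+1}\int_{S_{\epsilon}}u^{q+1}$ of order $\epsilon^{N+a}$, $\epsilon^{N+b}$, all of which vanish since $N\ge3$ and $a,b>-2$; the interior integrals converge by dominated convergence because $|x|^{b}u^{q+1},|x|^{a}v^{p+1}\in L^{1}(B_{R})$. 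With that step either downgraded to the crude bound or justified via the decomposition, your proof is complete.
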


	 First of all, we utilize some insights from Farina \cite{zbMATH05163856} and Dupaigne--Ghergu--Hajlaoui \cite{dupaignegherguhajlaoui2025} to prove the following result. To tackle the additional challenge introduced by the weight $|x|$ in system \eqref{HLE}, we present a decay estimate \eqref{eq4.1}, which is guaranteed by Proposition \ref{3.5}.
	
\begin{theorem}\label{4.4}
	Suppose $N \ge 3,p\ge q>0, pq>1$ and $a,b >-2$. Assume that $(u,v)$ is a nonnegative solution of \eqref{HLE} that satisfies \eqref{eq2.2}, $u \in L^{\frac{N}{\tilde{\alpha}}}(\RR^N)$ and
	\begin{align}\label{eq4.1}
		\begin{aligned}
			u=O(|x|^{-\tilde{\alpha}}),\quad |x| \ra +\oo.
		\end{aligned}
	\end{align}
	(i) If $\tilde{\alpha}<N$, then
	\begin{align}\label{eq4.2}
		\begin{aligned}
			u=o(|x|^{-\tilde{\alpha}}),\quad |x| \ra +\oo.
		\end{aligned}
	\end{align}
	(ii) If $\tilde{\alpha}<N$, then for every sufficiently small $\varepsilon>0,$ we have
	\begin{align}\label{eq4.3}
		\begin{aligned}
			u = O(|x|^{-N+2+\varepsilon}),\quad  v=o(|x|^{-\tilde{\beta}}),\quad |Dv|=o(|x|^{-\tilde{\beta}-1}),\quad |x| \ra +\oo.
		\end{aligned}
	\end{align}
	(iii) If $\tilde{\alpha}<N-2,$ then for every sufficiently small $\varepsilon>0,$ we have
	\begin{align}\label{eq4.4}
		\begin{aligned}
			|Du| = O(|x|^{-N+1+\varepsilon}) ,\quad |x| \ra +\oo.
		\end{aligned}
	\end{align}
	(iv) If $(p,q,a,b)$ is supercritical, then $u \equiv v \equiv 0$ in $\RR^N.$
\end{theorem}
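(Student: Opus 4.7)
The plan is to prove (i)--(iv) in that order, each feeding into the next, and to conclude with the Rellich--Pohozhaev identity of Lemma~\ref{2.12}.

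For (i), I would argue by contradiction via a rescaling at infinity. Assume there exist $|x_n|\to+\oo$ and $c>0$ with $|x_n|^{\tilde{\alpha}}u(x_n)\ge c$, and set
\[
u_n(y)=|x_n|^{\tilde{\alpha}}u(|x_n|y),\qquad v_n(y)=|x_n|^{\tilde{\beta}}v(|x_n|y).
\]
By the scaling invariance of \eqref{HLE}, $(u_n,v_n)$ solves the same system on $\RR^N\!\setminus\!\{0\}$; the hypothesis $u=O(|x|^{-\tilde{\alpha}})$ together with \eqref{eq2.2} yields uniform $L^{\infty}$-bounds on every annulus away from the origin, so elliptic regularity extracts a $C^1_{\rm{loc}}$-limit $(u_{\oo},v_{\oo})$ along a subsequence. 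Meanwhile, the $L^{N/\tilde{\alpha}}$-norm is scale-invariant and
\[\int_{B_2\setminus B_{1/2}}u_n^{N/\tilde{\alpha}}=\int_{B_{2|x_n|}\setminus B_{|x_n|/2}}u^{N/\tilde{\alpha}}\longrightarrow 0,\]
which forces $u_{\oo}\equiv 0$ on the annulus and contradicts $u_n(x_n/|x_n|)\ge c$.

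For (ii), the relation $v^{p+1}\le C|x|^{b-a}u^{q+1}$ provided by \eqref{eq2.2} immediately transfers the refinement of (i) to $v=o(|x|^{-\tilde{\beta}})$. To upgrade the decay of $u$, I would use that $u\to 0$ at infinity to write the Newton potential representation $u(x)=c_N\int_{\RR^N}|x-y|^{2-N}|y|^{a}v(y)^{p}\,dy$, and bootstrap its decay by splitting into the three regions $|y|\le|x|/2$, $|x|/2\le|y|\le 2|x|$, and $|y|\ge 2|x|$. The pointwise bound $|y|^{a}v(y)^{p}=o(|y|^{-\tilde{\alpha}-2})$ alone only reproduces the scaling-dictated rate; combining it with the integrability $u\in L^{N/\tilde{\alpha}}(\RR^N)$ (equivalently, the weighted integrability of the nonlinearity coming from \eqref{eq2.2}) and applying H\"older on the near-field region yields $u(x)=O(|x|^{-N+2+\varepsilon})$ for every sufficiently small $\varepsilon>0$, the $\varepsilon$-loss reflecting a borderline log-integrability. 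Interior $C^1$-estimates applied to $v$ rescaled to a unit ball of radius $|x|/2$ centred at $x$ then promote $v=o(|x|^{-\tilde{\beta}})$ to $|Dv|=o(|x|^{-\tilde{\beta}-1})$. Part (iii) is a parallel computation: under the stronger assumption $\tilde{\alpha}<N-2$, the nonlinearity $|x|^{a}v^{p}$ is integrable at infinity, and the same three-region split applied to $\nabla_{x}$ of the Newton potential, using $|\nabla_{x}(|x-y|^{2-N})|\sim|x-y|^{1-N}$, gives $|Du|=O(|x|^{-N+1+\varepsilon})$.

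For (iv), invoke Lemma~\ref{2.12}. Supercriticality, $(N+a)/(p+1)+(N+b)/(q+1)<N-2$, allows a choice of $c_1,c_2\in\RR$ with $c_1+c_2=N-2$ making both coefficients $(N+b)/(q+1)-c_1$ and $(N+a)/(p+1)-c_2$ strictly negative; the left-hand side of the identity is then a negative multiple of $\int_{B_R}(|x|^{b}u^{q+1}+|x|^{a}v^{p+1})\ge 0$. Each of the five boundary integrals on $S_R$ is controlled via the decays from (ii)--(iii): $R^{b+1}\int_{S_R}u^{q+1}=O(R^{N+b-(q+1)(N-2-\varepsilon)})$, whose exponent is negative for small $\varepsilon$ since $(N+b)/(q+1)<N-2$; $R^{a+1}\int_{S_R}v^{p+1}$ is dominated by the previous term through \eqref{eq2.2}; and each of $\int_{S_R}u|Dv|$, $\int_{S_R}v|Du|$, $R\int_{S_R}|Du||Dv|$ carries an exponent of the form $\varepsilon-\tilde{\beta}$, negative for $\varepsilon<\tilde{\beta}$. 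Letting $R\to+\oo$ forces $\int_{\RR^N}(|x|^{b}u^{q+1}+|x|^{a}v^{p+1})=0$, whence $u\equiv v\equiv 0$.

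The most delicate step is proving $u=O(|x|^{-N+2+\varepsilon})$ in (ii) when $\tilde{\alpha}<N-2$: the scaling-dictated Newton-potential estimate alone reproduces only the hypothesis $u=o(|x|^{-\tilde{\alpha}})$ and does not iterate. The improvement to the fundamental-solution rate genuinely exploits \emph{both} the integrability $u\in L^{N/\tilde{\alpha}}(\RR^N)$ and the weighted integrability of the nonlinearity inherited from \eqref{eq2.2} via Rellich--Pohozhaev, with the $\varepsilon$-loss absorbing a borderline log-integrability.
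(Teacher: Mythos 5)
Your parts (i) and (iv) are sound: the rescaling/compactness argument for (i) is a legitimate alternative to the paper's quantitative Harnack-inequality estimate (which exploits the scale invariance of the $L^{N/\tilde{\alpha}}$-norm directly), and in (iv) the choice of $c_1,c_2$ with both coefficients negative, together with the decay rates, matches the paper's use of Lemma \ref{2.12}. The decay $v=o(|x|^{-\tilde{\beta}})$ and the gradient estimate for $Dv$ via rescaled interior estimates also coincide with the paper. The genuine gap is exactly the step you flag as delicate: the upgrade $u=O(|x|^{-N+2+\varepsilon})$ in (ii). Your Newton-potential mechanism does not deliver it. First, the far-field region $\{|y|\ge 2|x|\}$ is the bottleneck, not the near field: with the only available information there, namely $|y|^{a}v^{p}\le C|y|^{\frac{2\tilde{\alpha}}{\alpha}-2}u^{1+\frac{2}{\alpha}}=o(|y|^{-\tilde{\alpha}-2})$ and the (scale-invariant) smallness of $\|u\|_{L^{N/\tilde{\alpha}}}$ on tails, that region contributes $o(|x|^{-\tilde{\alpha}})$ and nothing better; so even a perfect treatment of the near field cannot push the estimate below the scaling rate. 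Second, the near-field H\"older step you invoke fails in general: pairing $|x-y|^{2-N}$ against $u^{1+\frac{2}{\alpha}}\in L^{r}$ with $r\bigl(1+\frac{2}{\alpha}\bigr)=\frac{N}{\tilde{\alpha}}$ requires the kernel in $L^{r'}$ locally, i.e.\ $r>\frac{N}{2}$, which amounts to $\tilde{\alpha}<\frac{2\alpha}{\alpha+2}<2$ and is unavailable. Third, the natural bootstrap is stuck at its starting point: if $u=O(|x|^{-\theta})$ then the three-region estimate returns $u=O(|x|^{-\theta-\frac{2(\theta-\tilde{\alpha})}{\alpha}}+|x|^{2-N})$, and the gain $\frac{2(\theta-\tilde{\alpha})}{\alpha}$ vanishes at $\theta=\tilde{\alpha}$; replacing $O$ by $o$ only makes constants small, it does not move the exponent. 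There is no ``borderline log-integrability'' producing the $\varepsilon$-loss.

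What the paper does instead is a linear comparison argument that makes the full jump in one stroke: from \eqref{eq2.2} and \eqref{eq4.2} one gets $-\Delta u\le\varepsilon^{2}|x|^{-2}u$ outside a large ball, i.e.\ $u$ is a subsolution of the Hardy operator $-\Delta-\varepsilon^{2}|x|^{-2}$, whose radial solutions are $|x|^{-m_{1}}$, $|x|^{-m_{2}}$ with $m_{1,2}=\frac{N-2}{2}\pm\sqrt{\bigl(\frac{N-2}{2}\bigr)^{2}-\varepsilon^{2}}$; comparing $u$ with $R_{\varepsilon}^{m_{1}}\|u\|_{\infty}|x|^{-m_{1}}+C\varepsilon^{\alpha}R^{m_{2}-\tilde{\alpha}}|x|^{-m_{2}}$ on annuli $B_{R}\setminus\ol{B_{R_{\varepsilon}}}$ via the weak maximum principle and letting $R\to+\oo$ (using $m_{2}<\tilde{\alpha}$) yields $u=O(|x|^{-m_{1}})=O(|x|^{-N+2+\varepsilon})$; the $\varepsilon$-loss is precisely $N-2-m_{1}$. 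You would need to replace your H\"older sketch by this (or an equivalent annulus-by-annulus maximum-principle/representation argument with a contraction in the weighted sup-norm); as written, (ii) is unproved, and (iii)--(iv) inherit the gap since they rest on the $O(|x|^{-N+2+\varepsilon})$ decay.
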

\begin{proof}
	In the following proofs of (i)--(iii), without loss of generality, we may assume that $(u,v)$ is positive in $\RR^N$, as per Remark \ref{1.5}. \\
	(i) For every $\varepsilon>0$, we can choose $R_{0}>0$ and $C_{0}>0$ such that
	\begin{align}\label{eq4.5}
		\begin{aligned}
			\|u\|_{L^{\frac{N}{\tilde{\alpha}}}(\RR^N \setminus B_{R_{0}})} \le \varepsilon, \quad
			u(x) \le C_{0} |x|^{-\tilde{\alpha}}, \quad \f x \in \RR^N \!\setminus\! B_{R_{0}}.
		\end{aligned}
	\end{align}
	Fix $|y|> 4R_{0}$. Then it is easy to check that $B\left( y,\frac{|y|}{2} \right) \Sub  \RR^N \!\setminus\! \ol{B_{R_{0}}} $. Recall that
	\[ -\Delta u - lu=0 \quad \mbox{in}\ \  \RR^N \!\setminus\! \ol{B_{R_{0}}},\]
	where
	\begin{align}\label{eq4.6}
		\begin{aligned}
			l = |x|^{a} u^{-1} v^p \le C |x|^{\frac{a+pb}{p+1}} u^{\frac{pq-1}{p+1}}
			\le C |x|^{\frac{a+pb}{p+1} - \frac{(pq-1)\tilde{\alpha}}{p+1}}
			= C |x|^{-2},
			\quad \f x \in \RR^N \!\setminus\! \ol{B_{R_{0}}},
		\end{aligned}
	\end{align}
	for some $C=C(N,p,q,a,b,C_{0})>0,$ due to \eqref{eq2.2} and \eqref{eq4.5}. In this way, for any fixed $\delta \in (0,2),$ by Harnack's inequality (see \cite[p.\,255, Thm.\,B.4.2]{zbMATH05904617}) and \eqref{eq4.5}, we obtain
	\[ \|u\|_{L^{\oo}(B(y,|y|/4))}
	\le C_{1} |y|^{-\tilde{\alpha }} \|u\|_{L^{\frac{N}{\tl{\alpha }}}(B(y,|y|/2))}
	\le C_{1}\varepsilon  |y|^{-\tilde{\alpha }}, \]
	where $C_{1}>0$ only depends on $N,\tilde{\alpha}$ and $ |y|^{\delta}\|l\|_{L^{\frac{N}{2-\delta}}(B(y,|y|/2)}.$ But from \eqref{eq4.6}, one can see that
	\[ |y|^{\delta}\|l\|_{L^{\frac{N}{2-\delta}}(B(y,|y|/2))}
	\le C |y|^{\delta}  \| |x|^{-2} \|_{L^{\frac{N}{2-\delta}}(B(y,|y|/2))}
	\le C |y|^{\delta + \big( N - \frac{2N}{2-\delta}\big) \frac{2-\delta}{N}}
	= C,\]
	where $C>0$ depends only on $N,p,q,a,b,\delta,$ and $C_{0},$ hence so does $C_{1}.$ This completes the proof. \\
	(ii) \emph{Step 1. The estimate for $u.$} Firstly, for every $\varepsilon \in \left( 0,\frac{N-2}{2} \right),$ according to \eqref{eq2.2} and \eqref{eq4.2}, there exist $C_{0}=C_{0}(N,p,q,a,b)>0$ and $R_{\varepsilon}>0$ such that
	\[ |x|^{a} v^{p+1}(x) \le C_{0} |x|^{b} u^{q+1}(x), \quad
	u(x) \le C_{0}^{-\frac{p}{pq-1}} \varepsilon^{{\alpha}} |x|^{-\tilde{\alpha}},
	\quad \f\, |x| \ge R_{\varepsilon}.\]
	For this reason, we must have
	\[ -\Delta u = |x|^a v^p \le C_{0}^{\frac{p}{p+1}} |x|^{\frac{a+pb}{p+1}} u^{\frac{2}{\alpha}} u \le \varepsilon^2 |x|^{\frac{a+pb}{p+1}} |x|^{\frac{- 2\tilde{\alpha}}{\alpha}} u = \varepsilon^{2} |x|^{-2} u.  \]
	Let us define
	\[ m_{1} = \frac{N-2}{2} + \sqrt{\left( \frac{N-2}{2} \right)^{2}- \varepsilon^{2}},\quad m_{2} = \frac{N-2}{2} - \sqrt{\left( \frac{N-2}{2} \right)^{2}- \varepsilon^{2}},\]
	then $|x|^{-m_1},|x|^{-m_2} \in C^{\oo}(\RR^N \!\setminus\! \{0\})$ are both positive solutions of
	\[ -\Delta w - \varepsilon^{2}|x|^{-2} w =0 \quad \mbox{in}\ \  \RR^N \!\setminus\! \{0\}.\]
	For every $R>R_{\varepsilon},$ let
	\[ \tilde{u}(x) = R_{\varepsilon}^{m_1} \|u\|_{L^{\oo}(\RR^N)} |x|^{-m_1} + C_{0}^{-\frac{p}{pq-1}} \varepsilon^{{\alpha}} R^{m_2-\tilde{\alpha}} |x|^{-m_2}, \quad x \in \RR^N \!\setminus\! \{0\}. \]
	Consequently, we obtain
	\begin{align*}
		\left\{\begin{aligned}
			&(-\Delta-\varepsilon^{2} |x|^{-2}) (u-\tilde{u}) \le 0 &&  \mbox{in}\ \ B_{R} \!\setminus\! \ol{B_{R_{\varepsilon}}}, \\
			&u-\tilde{u} \le 0  &&   \mbox{on}\ \ \pt{\left( B_{R} \!\setminus\! \ol{B_{R_{\varepsilon}}} \right)}.
		\end{aligned}\right.
	\end{align*}
	By the weak maximum principle (see e.g. {\cite[p.\,48]{zbMATH06458598}}), we see that $u \le \tilde{u}$ in $B_{R} \!\setminus\! \ol{B_{R_{\varepsilon}}}. $ Taking $\varepsilon>0$ so small such that $m_2-\tilde{\alpha}<0$ and letting $R\ra +\oo,$ we get the estimate for $u.$ \\
	\emph{Step 2. The estimate for $v.$} For every $\varepsilon>0,$ by \eqref{eq2.2} and \eqref{eq4.2}, there exists $C>0$ such that
	\[v^p \le C |x|^{\frac{(b-a)p}{p+1}} u^{\frac{\alpha+2}{\alpha}} \le  C\varepsilon |x|^{\frac{(b-a)p}{p+1} - \frac{(\alpha+2)\tilde{\alpha}}{\alpha}} =  C\varepsilon |x|^{-p\tilde{\beta}}, \quad \f\, |x| \gg 1,\]
	which means $v=o(|x|^{-\tilde{\beta}})$ as $|x| \ra +\oo.$ Then for every sufficiently large $|x_{0}| > 1,$ let us define
	\[ U(y) = \left( \frac{|x_{0}|}{2} \right)^{\tilde{\alpha}} u\left( x_{0}+\frac{|x_{0}|}{2}y \right),\quad V(y) = \left( \frac{|x_{0}|}{2} \right)^{\tilde{\beta}} v\left( x_{0}+\frac{|x_{0}|}{2}y \right), \quad y \in B_{1}.\] Therefore $(U,V) \in \left( C^2(B_1) \right)^2$ is a solution of
	\begin{align}\label{eq3.23}
		\left\{\begin{aligned}
			-\Delta U	&= \left| \cdot + \frac{2x_{0}}{|x_{0}|} \right|^{a} V^p\\
			-\Delta V	&= \left| \cdot + \frac{2x_{0}}{|x_{0}|} \right|^{b} U^q
		\end{aligned}\right. \quad \mbox{in}\ \  B_1.
	\end{align}
	By the gradient estimates (see e.g. {\cite[p.\,41, Thm.\,3.9]{gilbargtrudinger1977}}), we conclude that
	\begin{align*}
		\begin{aligned}
			|DV(0)|	\le C \sup\limits_{B_{1}} V + C\sup\limits_{B_{1}} \left| \cdot + \frac{2x_{0}}{|x_{0}|} \right|^{b} U^q
			\le C\varepsilon |x_{0}|^{\tilde{\beta}-\tilde{\beta}} + C\varepsilon |x_{0}|^{q(\tilde{\alpha}-\tilde{\alpha})}
			\le C\varepsilon.
		\end{aligned}
	\end{align*}
	This gives $|Dv|=o(|x|^{-\tilde{\beta}-1})$ as $ |x| \ra +\oo.$ \\
	(iii) From \eqref{eq2.2}, \eqref{eq4.1} and \eqref{eq4.3}, we see that there exists $C>0$ such that
	\begin{align}\label{eq4.8}
		\begin{aligned}
			v^p \le C|x|^{-a} |x|^{\frac{a+pb}{p+1}} u^{\frac{\alpha+2}{\alpha}} \le C|x|^{-a} u^{-\frac{a+pb}{(p+1)\tilde{\alpha}}} u^{\frac{\alpha+2}{\alpha}}
			= C|x|^{-a} u^{\frac{\tilde{\alpha}+2}{\tilde{\alpha}}}	\le C|x|^{-a+\frac{(-N+2+\varepsilon)(\tilde{\alpha}+2)}{\tilde{\alpha}} } ,
		\end{aligned}
	\end{align}
	for every $|x| \ge 1$. Now using the gradient estimates for \eqref{eq3.23} again, combined with \eqref{eq4.3} and \eqref{eq4.8}, we have
	\begin{align*}
		\begin{aligned}
			|DU(0)|	&\le C \sup\limits_{B_{1}} U + C\sup\limits_{B_{1}} \left| y+ \frac{2x_{0}}{|x_{0}|} \right|^{a} V^p \\
			&\le C|x_{0}|^{-N+2+\varepsilon+\tilde{\alpha}} + C|x_{0}|^{p\tilde{\beta}  -a + \frac{(-N+2+\varepsilon)(\tilde{\alpha}+2)}{\tilde{\alpha}} } \\
			&= C|x_{0}|^{-N+2+\varepsilon+\tilde{\alpha}} + C|x_{0}|^{-N+2+\varepsilon+\tilde{\alpha} + \frac{2(-N+2+\varepsilon+\tilde{\alpha})}{\tilde{\alpha}} } .
		\end{aligned}
	\end{align*}
	Note that $0<\tilde{\alpha}<N-2,$ hence we can choose $\varepsilon>0$ so small such that $-N+2+\varepsilon+\tilde{\alpha}<0,$ and the conclusion follows easily. \\
	(iv) Suppose the assertion is false; then, according to Remark \ref{1.5}, \((u,v)\) must be positive in \(\mathbb{R}^N\). It is evident that
	\[ \frac{N+b}{q+1} < N-2,\quad \tilde{\alpha}>0,\quad \tilde{\beta}>0,\quad \tilde{\alpha}+\tilde{\beta}<N-2,\]
	and so $q>\frac{b+2}{N-2}$ and $\tilde{\alpha}<N-2.$ Then for every sufficiently small $\varepsilon>0$, using \eqref{eq2.2}, \eqref{eq4.3} and \eqref{eq4.4}, we conclude that
	\begin{align*}
		\begin{aligned}
			R^{b+1} \int_{S_R}	u^{q+1} &\le CR^{b+1+N-1+(q+1)(-N+2+\varepsilon)} = CR^{b+2-(N-2-\varepsilon)q + \varepsilon} \ra 0, \\
			R^{a+1} \int_{S_R}	v^{p+1} &= R\int_{S_R}|x|^{a}v^{p+1} \le CR\int_{S_R}	|x|^{b}u^{q+1} = CR^{b+1} \int_{S_R}	u^{q+1} \ra 0, \\
			\int_{S_R} |Du|v &\le CR^{N-1-N+1+\varepsilon-\tilde{\beta}} = CR^{-\tilde{\beta}+\varepsilon} \ra 0, \\
			\int_{S_R} |Dv|u &\le CR^{N-1-\tilde{\beta}-1-N+2+\varepsilon} = CR^{-\tilde{\beta}+\varepsilon} \ra 0, \\
			R\int_{S_R} |Du||Dv| &\le CR^{1+N-1-N+1+\varepsilon-\tilde{\beta}-1} = CR^{-\tilde{\beta}+\varepsilon} \ra 0,
		\end{aligned}
	\end{align*}
	as $R \ra +\oo.$ Then by Lemma \ref{2.12}, for every $c_1,c_{2} \in \RR$ with $c_1+c_2=N-2,$ we have
	\[ \lim\limits_{R \ra +\oo} \left[ \left(\dfrac{N+b}{q+1}-c_1\right) \int_{B_R}|x|^b u^{q+1}
	+\left(\dfrac{N+a}{p+1}-c_2\right) \int_{B_R}|x|^a v^{p+1} \right] = 0.\]
	By successively taking \(c_1 = \frac{N+b}{q+1}\) and \(c_1 = N-2 - \frac{N+a}{p+1}\), and noting that \((p,q,a,b)\) is supercritical, we arrive at contradictions.
\end{proof}

	As a consequence, Theorem \ref{1.10}\,(i) follows readily from Lemma \ref{2.3} and Theorem \ref{4.4}\,(iv). In addition, applying the result from \cite{zbMATH06121773} (as stated in the introduction) and {\cite[Thm.\,2]{zbMATH05563881}}, we establish Theorem \ref{1.10}\,(ii). Furthermore, utilizing \cite[Thm.\,1.1]{zbMATH07020411}, Theorem \ref{1.8}, Proposition \ref{3.5}, and Theorem \ref{1.10}\,(i), we derive Theorem \ref{1.10}\,(iii).

\appendix
\section{Appendix}\label{AA}

	In this section, we present several foundational results utilized in this paper. We commence with an elliptic regularity result. Although this result may be considered standard, it is difficult to locate in the literature; therefore, we provide complete proofs for clarity.

\begin{lemma}\label{2.1}
	Let $N \ge 3, \Omega \sub \RR^N$ be an open set and $K \sub \Omega$ be a compact set. Assume $k,l \in [1,+\oo]$ and $w \in W^{2,k}(\Omega).$\\
	(i) If $\frac{1}{k} - \frac{1}{l} < \frac{2}{N},$ then
	\[	
	\|w\|_{L^{l}(K)} \le C\left( \|\Delta w\|_{L^k(\Omega)} +  \|w\|_{L^1(\Omega)}\right),
	\]
	where $C=C(N,k,l,\Omega,K)>0.$ If in addition that $\Omega \sub B_{r}$ for some $r>0$ and $w \in W_{0}^{2,k}(\Omega),$ then
	\[	\|w\|_{L^{l}(K)} \le C r^{-\left( \frac{1}{k}-\frac{1}{l} \right)N + 2} \|\Delta w\|_{L^k(\Omega)} , \]
	where $C=C(N)>0$. \\
	(ii) If $\frac{1}{k} - \frac{1}{l} < \frac{1}{N},$ then
	\[
	\|D w\|_{L^l(K)} \le
	C\left(  \|\Delta w\|_{L^k(\Omega)}+ \| w \|_{L^1(\Omega)}\right),
	\]
	where $C=C(N,k,l,\Omega,K)>0.$ If in addition that $\Omega \sub B_{r}$ for some $r>0$ and $w \in W_{0}^{2,k}(\Omega),$ then
	\[
	\|D w\|_{L^l(K)} \le
	C r^{-\left( \frac{1}{k}-\frac{1}{l} \right)N + 1} \|\Delta w\|_{L^k(\Omega)} ,
	\]
	where $C=C(N)>0$.
\end{lemma}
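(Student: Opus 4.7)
I would split the proof into two independent pieces. The local (unscaled) estimate in each of (i) and (ii) reduces to the classical chain \emph{Calder\'on--Zygmund $+$ Sobolev embedding $+$ Gagliardo--Nirenberg interpolation}, while the global scaled estimate for $w\in W_0^{2,k}(\Omega)$ with $\Omega\subset B_r$ is proved by representing $w$ as the Newtonian potential of $-\Delta w$ after extending by zero, applying the Hardy--Littlewood--Sobolev (HLS) inequality at unit scale, and then transferring the $r$-dependence via the natural scaling $\tilde w(x):=w(rx)$.

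\textbf{Local estimates.} Fix intermediate sets $K\Subset\Omega''\Subset\Omega'\Subset\Omega$. For $1<k<\infty$, the interior Calder\'on--Zygmund estimate (\cite[Thm.\,9.11]{gilbargtrudinger1977}) gives
\[\|D^2 w\|_{L^k(\Omega'')}\le C\bigl(\|\Delta w\|_{L^k(\Omega')}+\|w\|_{L^k(\Omega')}\bigr).\]
The subcritical hypothesis $\tfrac1k-\tfrac1l<\tfrac2N$ (respectively $<\tfrac1N$) is exactly what makes the Sobolev embedding $W^{2,k}(\Omega'')\hookrightarrow L^l(\Omega'')$ (respectively $W^{2,k}(\Omega'')\hookrightarrow W^{1,l}(\Omega'')$) available, yielding the desired bound but with $\|w\|_{L^k(\Omega')}$ in place of $\|w\|_{L^1(\Omega)}$. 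To downgrade $L^k$ to $L^1$ one applies the Gagliardo--Nirenberg interpolation
\[\|w\|_{L^k(\Omega')}\le \varepsilon\|D^2 w\|_{L^k(\Omega')}+C_\varepsilon\|w\|_{L^1(\Omega')}\]
and absorbs. The endpoint $k=\infty$ is reduced to $k<\infty$ via the local inclusion $W^{2,\infty}\subset W^{2,m}$ for arbitrary finite $m$, whereas $k=1$ (where Calder\'on--Zygmund fails) is handled by representing $\eta w$ — for a suitable cut-off $\eta\in C_c^\infty(\Omega')$ with $\eta\equiv 1$ on $\Omega''$ — through the Newtonian potential after recasting the cross term as a divergence via $D\eta\cdot Dw=\operatorname{div}(wD\eta)-w\Delta\eta$, and invoking HLS for the Riesz kernels of orders $2$ and $1$.

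\textbf{Scaled estimates and the main obstacle.} For $w\in W_0^{2,k}(\Omega)$ with $\Omega\subset B_r$, extending by zero to $\mathbb{R}^N$ yields $f:=-\Delta w\in L^k(\mathbb{R}^N)$ supported in $B_r$. I would set $\tilde w(x):=w(rx)\in W_0^{2,k}(B_1)$ and use the standard rescaling identities
\[\|\tilde w\|_{L^l(B_1)}=r^{-N/l}\|w\|_{L^l(B_r)},\quad \|D\tilde w\|_{L^l(B_1)}=r^{1-N/l}\|Dw\|_{L^l(B_r)},\quad \|\Delta\tilde w\|_{L^k(B_1)}=r^{2-N/k}\|\Delta w\|_{L^k(B_r)}.\]
At the unit scale, the Newtonian representation $\tilde w=G*(-\Delta\tilde w)$, combined with HLS applied to the Riesz kernels $G\sim|\cdot|^{-(N-2)}$ and $DG\sim|\cdot|^{-(N-1)}$ (and one H\"older step on $B_1$ to move from the critical to any strictly subcritical exponent), gives the base estimates $\|\tilde w\|_{L^l(B_1)}\le C\|\Delta\tilde w\|_{L^k(B_1)}$ and $\|D\tilde w\|_{L^l(B_1)}\le C\|\Delta\tilde w\|_{L^k(B_1)}$; rescaling then produces the claimed factors $r^{-(1/k-1/l)N+2}$ and $r^{-(1/k-1/l)N+1}$. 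The main technical obstacle I anticipate is the careful handling of the endpoints $k\in\{1,\infty\}$ in the local part, where a direct potential-theoretic argument must substitute for Calder\'on--Zygmund; a secondary issue is tracking the precise dependence of the constant on $(N,k,l)$ in the scaled estimate, which strictly speaking is $C=C(N,k,l)$ rather than $C=C(N)$ as printed.
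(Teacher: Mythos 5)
Your proposal is essentially correct but follows a genuinely different route from the paper for the interior estimate. The paper does not use Calder\'on--Zygmund theory, Sobolev embedding, or Gagliardo--Nirenberg absorption at all: it covers $K$ by finitely many balls, writes $w$ via the Green representation in a ball $B_r$ with $r$ ranging over $\left[\frac{4}{3}R,\frac{5}{3}R\right]$, uses the radius-independent bounds $|G_r(x,y)|\le C|x-y|^{2-N}$, $|D_yG_r(x,y)|\le C|x-y|^{1-N}$, applies Young's convolution inequality with the truncated kernel (this is where $\frac1k-\frac1l<\frac2N$, resp. $<\frac1N$, enters), and then averages over $r$ to convert the sphere boundary term into the $L^1(\Omega)$ norm. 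This handles all $k,l\in[1,+\infty]$ in one stroke, with no endpoint case distinctions. Your chain (CZ $+$ embedding $+$ interpolation) buys nothing extra here and creates exactly the complications you anticipate: at $k=1$ the Hardy--Littlewood--Sobolev inequality does not apply (it fails at exponent $1$), and in your cutoff representation the commutator term with kernel $|x-y|^{1-N}$ acting on the $L^1$ density $w\,D\eta$ only lands in $L^l$ for $l<\frac{N}{N-1}$ by Young, which does not cover the range $l<\frac{N}{N-2}$ actually needed in the paper (e.g.\ in Lemma \ref{3.12}); you must either observe that this kernel is nonsingular because $\operatorname{supp} D\eta$ is at positive distance from $K$, or simply use Young's inequality with the truncated Riesz kernels as the paper does. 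Likewise, your absorption step has a domain mismatch ($\|D^2w\|_{L^k}$ is produced on the smaller set but absorbed on the larger one) and needs the standard nested-domain iteration; all of this is routine but should be said. For the scaled estimate your argument (zero extension, Newtonian/Green representation without boundary term, kernel bound, rescaling) is essentially identical to the paper's Step 2, and your scaling exponents check out. Finally, your remark about the constant is well taken: in the paper's own proof the constant arises as a multiple of the $L^m$-norm of the truncated kernel with $\frac1m=1+\frac1l-\frac1k$, which blows up as $(k,l)$ approaches the critical relation, so the scaled constant is genuinely $C(N,k,l)$ rather than $C(N)$ as printed (it cannot be uniform in $l$, since at $k=1$ the critical-exponent estimate is only of weak type).
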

\begin{proof}
	(i) \emph{Step 1.} Firstly, we can assume $w \in W^{2,k}(\Omega) \cap C^{\oo}(\Omega)$ by using an approximation argument. Without loss of generality we can also assume $\Omega \neq \RR^N.$ Then let us define
	\[R=\frac{1}{4}\operatorname{dist\,}(K,\partial\Omega) \in (0,+\oo).\]
	Since $K$ is compact, we can find $\{x_i\}_{i=1}^{j} \sub K $ such that $K \sub \cup_{i=1}^{j} B(x_i,R).$ Fix $x_i$ and $r \in \left[\frac{4}{3}R, \frac{5}{3}R\right] \!.$ We can assume, up to a rigid motion, that $x_i=0.$ Now Green's representative theorem implies
	\[w(x)= - \int_{B_r } G_r(x,y)\Delta w(y) \md y  -  \int_{S_r} w(y) D_y G_r(x,y) \cdot \nu(y) \md \sigma(y),\]
	where $x \in B_R  \Subset  B_{4R/3} \sub B_r
	$ and $G_r$ is the Green function of $-\Delta$ in $B_r.$ It is well known that
	\[|G_r(x,y)| \le \frac{C}{|x-y|^{N-2}}, \quad |D_y G_r(x,y)|\le \frac{C}{|x-y|^{N-1}} ,\quad \f x,y \in \ol{B_r},\]
	where $C=C(N)>0$ is independent of $r.$ For this fact, see e.g. {\cite[Thm.\,4.1]{hoff2025}} and note that
	\[G_r(x,y)=r^{2-N}G_{1}\left(\frac{x}{r},\frac{y}{r}\right) ,\quad \f x,y \in \ol{B_{r}}.\]
	Therefore, for $x \in B_R,$ we must have
	\begin{align*}
		|w(x)| &\le C\int_{B_{r} } \frac{|\Delta w(y)|}{|x-y|^{N-2}} \md y  +  C\int_{S_{r} } |w| \md \sigma \\
		&\le C(f*g)(x) +C\int_{S_{r}} w \md \sigma,
	\end{align*}
	where
	\begin{align*}
		f &: \RR^N \ra \RR ,\, z \mapsto \left\{\begin{aligned}
			&\frac{1}{|z|^{N-2}} ,&& z \in B_{\frac{8}{3}R} ,\\
			&0 ,&& \mbox{others},
		\end{aligned}\right.	\\
		g &: \RR^N \ra \RR ,\, z \mapsto \left\{\begin{aligned}
			&|\Delta w(z)| ,&& z \in \Omega,\\
			&0 ,&& \mbox{others},
		\end{aligned}\right.
	\end{align*}
	and $C=C(N,K,\Omega)>0.$ Thus by Young's inequality, we see that
	\begin{align*}
		\|w\|_{L^l(B_R) } &\le C\|f*g\|_{L^l(B_R)} + C\int_{S_{r}} |w| \md \sigma  \\
		&\le C\|f*g\|_{L^l(\RR^N)} + C\int_{S_{r} } |w| \md \sigma \\
		&\le C\|f\|_{L^m(\RR^N)}\|g\|_{L^k(\RR^N)} + C\int_{S_{r} } |w| \md \sigma \\
		&\le C\|f\|_{L^m(\RR^N)}\|\Delta w\|_{L^k(\Omega)} + C\int_{S_{r}} |w| \md \sigma
	\end{align*}
	where $\frac{1}{m} = 1+\frac{1}{l} - \frac{1}{k}$ and $ C=C(N,\Omega,K)>0.$ Since $ \frac{1}{k} - \frac{1}{l} < \frac{2}{N},$ we obtain $f \in L^m(\RR^N)$ and $C\|f\|_{L^m(\RR^N)} = C(N,k,l,\Omega,K)>0.$ Then, integrating over $r \in \left[\frac{4}{3}R, \frac{5}{3}R\right] \!,$ we get
	\[\|w\|_{L^l(B_R) } \le C\left(\|\Delta w\|_{L^k(\Omega)} + \|w\|_{L^1(\Omega)}\right).\]
	Finally, we have
	\[\|w\|_{L^l(K) } \le \sum\limits_{i=1}^{j}\|w\|_{L^l(B(x_i,R)) } \le C\left(\|\Delta w\|_{L^k(\Omega)} + \|w\|_{L^1(\Omega)}\right).\]
	\emph{Step 2.} If in addition that $w \in W_{0}^{2,k}(\Omega),$ then by an approximation argument, we can also assume $w \in C_c^{\oo}(\Omega).$ Now we can use Green's representative theorem in $B_{r}$ and proceed analogously to Step 1.
	\\
	(ii) The proof of this conclusion is similar to that of (i). We just need to note that
	\[|D_x G_r(x,y)| \le \frac{C}{|x-y|^{N-1}}, \quad |D_x D_y G_r(x,y)|\le \frac{C}{|x-y|^{N}} ,\quad \f x,y \in \ol{B_{r}} ,\]
	where $C=C(N)>0$ is independent of $r.$ For this fact, see e.g. {\cite[Thm.\,4.1]{hoff2025}} and note that
	\[G_r(x,y)=r^{2-N}G_{1}\left(\frac{x}{r},\frac{y}{r}\right) ,\quad \f x,y \in \ol{B_{r}}. \qedhere \]
\end{proof}

	We also require the following basic energy estimates for nonnegative solutions of \eqref{HLE}; see {\cite{zbMATH00883497}}, {\cite{zbMATH06265786}}, and {\cite{zbMATH06973892}}. For convenience, we provide a concise and novel proof of this result, which extends to more general systems.

\begin{lemma}\label{2.2}
	Let $N \ge 3,p,q>0,pq > 1$ and $a,b>-2$. Additionally, assume $(u,v)$ is a nonnegative solution of \eqref{HLE}. \\
	(i) For every $R>0,$ we have
	\begin{equation*}
		\int_{B_R} |x|^b u^q \le CR^{N-2-\tilde{\beta}}, \quad \int_{B_R} |x|^a v^p \le CR^{N-2-\tilde{\alpha}},
	\end{equation*}
	where $C=C(N,p,q,a,b)>0.$ \\
	(ii) If $\gamma \in (0,1],$ then
	\[
	\ol{u^\gamma}(R) \leq C R^{-\gamma\tilde{\alpha}},\quad
	\ol{v^\gamma}(R) \leq C R^{-\gamma\tilde{\beta}},
	\]
	where $C=C(N,p,q,a,b)>0.$ \\
	(iii) If $\gamma \in (0,1)$ and $R_{3}>R_{2}>0$, then
	\[ \int_{B_{R_{3}} \setminus B_{R_{2}}}  |D(u^{\frac{\gamma}{2}})|^{2} + |D(v^{\frac{\gamma}{2}})|^{2}  \le \frac{\gamma }{1-\gamma } C , \]
	where $C=C(N,p,q,a,b,R_{2},R_{3})>0.$
\end{lemma}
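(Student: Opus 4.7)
The plan is to prove the three parts in sequence, each building on the previous, using primarily the Mitidieri--Pohozaev test function method together with H\"older's inequality and the coupling structure of the system.

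For (i), I would fix $\psi_0 \in C_c^\infty(B_2)$ with $\psi_0 \equiv 1$ on $B_1$ and $0 \le \psi_0 \le 1$, set $\psi_R(x) = \psi_0(x/R)^{2\ell}$ for some large integer $\ell$, and test the two equations of \eqref{HLE} against $\psi_R$. Integration by parts yields
\[
\int |x|^b u^q \psi_R = -\int v \, \Delta \psi_R, \qquad \int |x|^a v^p \psi_R = -\int u \, \Delta \psi_R.
\]
Since $|\Delta\psi_R| \le CR^{-2}\psi_R^{(\ell-1)/\ell}$ is supported in $B_{2R}\setminus \ol{B_R}$, applying H\"older's inequality with exponents $(p,p')$ and $(q,q')$ couples the two integrals as
\[
\int |x|^b u^q \psi_R \le CR^{\sigma_1}\left(\int|x|^a v^p \psi_R\right)^{1/p}, \quad \int |x|^a v^p \psi_R \le CR^{\sigma_2}\left(\int|x|^b u^q \psi_R\right)^{1/q},
\]
for explicit exponents $\sigma_1, \sigma_2$. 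Substituting one into the other and using $pq > 1$ to invert, one solves for each integral; a direct computation (invoking $p\tilde\beta = \tilde\alpha + a + 2$ and $q\tilde\alpha = \tilde\beta + b + 2$) verifies that the resulting exponents equal $N-2-\tilde\beta$ and $N-2-\tilde\alpha$ respectively. The case $\min\{p,q\} \le 1$, in which one side of the Hölder pairing degenerates, can be handled by a bootstrap using the direction where the exponent is $>1$.

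For (ii), Jensen's inequality applied to the concave function $t \mapsto t^\gamma$, $\gamma \in (0,1]$, gives $\ol{u^\gamma}(R) \le (\ol u(R))^\gamma$, so it suffices to prove $\ol u(R) \le CR^{-\tilde\alpha}$. I would rescale by $u_R(x) = R^{\tilde\alpha}u(Rx)$, $v_R(x) = R^{\tilde\beta}v(Rx)$, which again solves \eqref{HLE}, so the task reduces to $\ol{u_R}(1) \le C$ uniformly in $R$. Part (i) gives uniform $L^q$ and $L^p$ bounds for $u_R, v_R$ on annuli such as $B_2\setminus\ol{B_{1/2}}$; the equation $-\Delta u_R = |x|^a v_R^p$ then controls $\Delta u_R$ in a suitable $L^k$ space, and Lemma \ref{2.1} upgrades this to a pointwise bound on the spherical mean. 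Equivalently, the spherical mean $\phi(R) = \ol u(R)$ satisfies $\phi''(R) + \frac{N-1}{R}\phi'(R) = -R^a \ol{v^p}(R) \le 0$, whose integration together with the $L^1$ decay of $|x|^a v^p$ from (i) also yields $\phi(R) \le CR^{-\tilde\alpha}$.

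For (iii), by the strong maximum principle we may assume $u > 0$ (the case $u \equiv 0$ being trivial). Pick $\psi \in C_c^\infty(B_{R_4}\setminus \ol{B_{R_1}})$ with $\psi \equiv 1$ on $B_{R_3}\setminus\ol{B_{R_2}}$ and test $-\Delta u = |x|^a v^p$ against $u^{\gamma-1}\psi^2$. Integration by parts gives
\[
(1-\gamma)\int u^{\gamma-2}|Du|^2\psi^2 + \int u^{\gamma-1}|x|^a v^p \psi^2 = 2\int u^{\gamma-1}\psi\, Du \cdot D\psi,
\]
and since both left-hand terms are nonnegative, Cauchy--Schwarz on the right-hand side followed by absorption yields $\int u^{\gamma-2}|Du|^2\psi^2 \le \frac{C}{(1-\gamma)^2}\int u^\gamma |D\psi|^2$. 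Since $|D(u^{\gamma/2})|^2 = (\gamma/2)^2 u^{\gamma-2}|Du|^2$ and (ii) provides a uniform $L^1$ bound on $u^\gamma$ over $B_{R_4}\setminus B_{R_1}$, the desired estimate follows in the stated form; the bound for $v$ is analogous. The main obstacle is part (ii): bridging the ball-integrated bound of (i) to a pointwise-in-$R$ estimate on the spherical mean, particularly in the regime $\min\{p,q\} < 1$ where direct H\"older from (i) is insufficient, requires the careful rescaling-plus-regularity argument (or the radial ODE integration) and constitutes the technical heart of the proof.
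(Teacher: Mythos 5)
Your part (i) via rescaled test functions is fine when $p,q>1$ (the exponent bookkeeping does come out to $N-2-\tilde\beta$ and $N-2-\tilde\alpha$), and it is a genuinely different route from the paper, which instead uses the quantitative Hopf/Green-function lower bound (Lemma \ref{2.6}, Remark \ref{2.7}): $\inf_{B_1}u\ge C\int_{B_1}|x|^a v^p$ gives $\int_{B_1}|x|^b u^q\ge C\bigl(\int_{B_1}|x|^a v^p\bigr)^q$ and symmetrically, and $pq>1$ then bounds both integrals for \emph{all} $p,q>0$. This is precisely where your proposal has a real gap: the lemma only assumes $pq>1$, so $\min\{p,q\}\le 1$ is allowed (and is needed later, e.g. in Theorem \ref{3.21} via Theorem \ref{1.6}), and in that regime your H\"older pairing in one direction simply does not exist; the sentence ``can be handled by a bootstrap using the direction where the exponent is $>1$'' is not an argument, and the known ways to close it use exactly a pointwise lower bound for the superharmonic component (weak Harnack or the paper's Lemma \ref{2.6}), i.e.\ the ingredient you have not introduced. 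The same sublinear case breaks your part (ii): your main route feeds Lemma \ref{2.1} with ``uniform $L^q$ and $L^p$ bounds,'' but Lemma \ref{2.1} requires an $L^1$ bound of $u_R$ on the larger annulus as input, and an $L^q$ bound with $q<1$ does not control the $L^1$ norm; your alternative ODE route gives, after integrating $(r^{N-1}\ol u{}')'=-r^{N-1+a}\,\ol{v^p}$ and using (i), only $\ol u(R)\le \ol u(+\infty)+CR^{-\tilde\alpha}$, and the solution-dependent constant $\ol u(+\infty)$ must still be shown to vanish, which for $q<1$ again needs a weak-Harnack/Hopf lower bound (for $q\ge1$ Jensen plus (i) suffices). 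The paper resolves this case differently: it iterates the exponent, showing via $-\Delta(u^s)\ge\frac{4(1-s)}{s}|D(u^{s/2})|^2$, Lemma \ref{2.6} and the Sobolev inequality that a bound $\ol{u^s}(r)\le Cr^{-s\tilde\alpha}$ upgrades to $\ol{u^{Ns/(N-2)}}(r)\le Cr^{-\frac{Ns}{N-2}\tilde\alpha}$, and after finitely many steps the exponent exceeds $1$, after which Jensen gives $\ol u(R)\le CR^{-\tilde\alpha}$. You flag this regime as ``the technical heart,'' but the tools you propose do not close it.

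Two smaller points. First, your part (iii) is correct in spirit (and parallels \eqref{eq2.10}), but as written the absorption yields $\int|D(u^{\gamma/2})|^2\psi^2\le\frac{\gamma^2}{(1-\gamma)^2}C\int u^\gamma|D\psi|^2$, i.e.\ the square of the stated factor $\frac{\gamma}{1-\gamma}$; to get the statement as written, test $-\Delta(u^\gamma)\ge\frac{4(1-\gamma)}{\gamma}|D(u^{\gamma/2})|^2$ against $\psi^2$ and integrate by parts once (or use Lemma \ref{2.6} as the paper does), then invoke (ii) for $\int u^\gamma$ on the annulus. Second, every integration by parts over balls containing the origin (in your (i) and in the ODE identity in (ii)) needs justification, since solutions are only $C^2(\RR^N\setminus\{0\})\cap C(\RR^N)$ with possibly negative weights; the paper handles this through Remark \ref{2.7}, i.e.\ $u,v\in W^{2,N/2}_{\mathrm{loc}}(\RR^N)$ by elliptic regularity, and your write-up should record an analogous removable-singularity step.
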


	Our proof relies on the following result.

\begin{lemma}[A quantitative form of the Hopf lemma]\label{2.6}
	Let $N\ge 3 $ and $\Omega \subseteq \RR^N $ be a bounded connected open set of class $C^{2,\gamma}$ for some $\gamma \in (0,1).$ Assume that $w \in  W^{2,1}(\Omega)  $ satisfies
	\[ \left\{\begin{aligned}
		-\Delta w \ge 0& \quad \mbox{a.e. in \,} \Omega,\\
		w \ge 0& \quad \mbox{a.e. on \,} \partial \Omega.
	\end{aligned}\right. \]
	(i) There holds
	\[ w \ge C(\Omega) \!\left[\int_{\Omega} (-\Delta w) d\right]\! d \quad \text{a.e. in}\ \ \Omega,\]
	where $C(\Omega)>0$ and $d=\operatorname{dist\,}(\cdot,\partial\Omega).$\\
	(ii) For any compact set $K \sub \Omega,$ we have
	\[\operatorname{ess }\inf_{K} w \ge C(\Omega)\operatorname{dist\,}(K,\partial\Omega)^2\int_{K} (-\Delta w), \]
	where $C(\Omega)>0.$
\end{lemma}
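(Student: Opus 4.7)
My plan is to reduce both parts to the two-sided pointwise estimates for the Dirichlet Green's function of $-\Delta$ on $\Omega$, denoted $G_\Omega$, together with a standard Green potential representation of $w$. Since $\Omega$ is a bounded $C^{2,\gamma}$ domain with $N\ge 3$, Widman's classical estimate yields
\[
G_\Omega(x,y)\ \ge\ c_0(\Omega)\,\min\!\Bigl(|x-y|^{2-N},\ \tfrac{d(x)\,d(y)}{|x-y|^{N}}\Bigr),\qquad x,y\in\Omega.
\]
Because $|x-y|\le\operatorname{diam}(\Omega)$, this degenerates to the clean global bound $G_\Omega(x,y)\ge c(\Omega)\,d(x)\,d(y)$ for all $x,y\in\Omega$, which is the only ingredient I would need from potential theory.

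First I would reduce to vanishing boundary data. Write $w=w_1+w_2$, where $w_1$ is the bounded harmonic function in $\Omega$ with boundary trace equal to that of $w$ (which lies in $L^1(\partial\Omega)$ via the trace of $W^{2,1}(\Omega)\hookrightarrow W^{1,N/(N-1)}(\Omega)$, and is nonnegative by assumption), and $w_2:=w-w_1\in W^{2,1}(\Omega)$ has vanishing trace and satisfies $-\Delta w_2=-\Delta w$ a.e.\ in $\Omega$. The weak maximum principle gives $w_1\ge 0$ a.e., so $w\ge w_2$ a.e., and it suffices to prove (i) for $w_2$. Next, I would justify the representation
\[
w_2(x)\ =\ \int_\Omega G_\Omega(x,y)\,(-\Delta w)(y)\,\md y \qquad \text{for a.e.\ }x\in\Omega,
\]
by approximating $f:=-\Delta w\in L^1(\Omega)$ with $0\le f_n\in C_c^\infty(\Omega)$ in $L^1$ (so the formula is classical for the solutions $w_{2,n}$ with zero Dirichlet data), and passing to the limit using the upper Widman bound, which ensures $\sup_{y\in\Omega}\|G_\Omega(\cdot,y)\|_{L^1(\Omega)}<\infty$ and gives the required $L^1$-continuity of the Green potential.

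Combining the representation with the lower bound $G_\Omega(x,y)\ge c(\Omega)\,d(x)\,d(y)$ immediately yields
\[
w(x)\ \ge\ w_2(x)\ \ge\ c(\Omega)\,d(x)\int_\Omega d(y)\,(-\Delta w)(y)\,\md y \quad\text{a.e.\ in }\Omega,
\]
which is part (i). Part (ii) is then an immediate corollary: for any compact $K\Subset\Omega$, restricting the outer integral to $y\in K$ and using $d(x),d(y)\ge \operatorname{dist}(K,\partial\Omega)$ for a.e.\ $x\in K$ and $y\in K$ gives the stated quantitative bound with the $\operatorname{dist}(K,\partial\Omega)^2$ factor.

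The main obstacle is the Widman-type lower bound $G_\Omega(x,y)\gtrsim d(x)d(y)$, which, although standard, requires a proper boundary barrier/Hopf argument (using the $C^{2,\gamma}$ regularity of $\partial\Omega$) combined with interior Harnack to produce a \emph{uniform} constant depending only on $\Omega$. A secondary technical nuisance is the low $W^{2,1}$-regularity of $w$, which limits classical trace theorems and forces the mollification-based justification of the representation formula described above; both difficulties are resolved by entirely standard techniques, so the argument is conceptually short but technically careful.
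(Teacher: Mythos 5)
Your proposal is correct and follows essentially the same route as the paper: the key ingredient in both is the Green function lower bound $G_\Omega(x,y)\ge c(\Omega)\,d(x)\,d(y)$ (the paper cites this directly rather than deriving it from Widman's two-sided estimate) combined with the Green representation of $w$, with the boundary contribution discarded because it is nonnegative, and part (ii) deduced from (i) in the identical way. The only cosmetic difference is that you absorb the boundary data into a nonnegative harmonic function $w_1$ via a decomposition $w=w_1+w_2$, whereas the paper keeps the boundary integral in the representation formula and uses the sign condition $-D_yG(x,y)\cdot\nu(y)\ge 0$ on $\partial\Omega$.
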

\begin{proof}
	(i) Let $G: \ol{\Omega} \times \ol{\Omega} \ra [0,+\oo]$ be the Green function of $-\Delta$ in $\Omega,$ then by {\cite[Thm.\,1]{zbMATH03983703}}, we have
	\[G(x,y) \ge C(\Omega)d(x)d(y),\quad \f (x,y) \in \ol{\Omega} \times \ol{\Omega}.\]
	For every $x \in \Omega,$ since $\left. G(x,\cdot) \right|_{\Omega}>0$ and $\left. G(x,\cdot) \right|_{\pt\Omega}=0$, it's clear that $D_y G(x,y) \cdot \nu(y) \le 0$ for every $y \in \partial\Omega.$ Hence Green's representative theorem implies that, for a.e. $x \in \Omega,$ we have
	\begin{align*}
		w(x) &= \int_{\Omega} \left[-\Delta w(y)\right] G(x,y) \md y + \int_{\partial\Omega} w(y)\left[-D_y G(x,y) \cdot \nu(y)\right] \mathrm{d} \sigma(y) \\
		&\ge C(\Omega) \left[\int_{\Omega} (-\Delta w) d\right] d(x).
	\end{align*}
	(ii) This conclusion can be easily derived from (i). Indeed, for a.e. $x\in K,$ we have
	\[w(x)  \ge C(\Omega) d\,(K,\partial\Omega) \int_{K} (-\Delta w) d \ge  C(\Omega) d\,(K,\partial\Omega)^2 \int_{K} (-\Delta w). \qedhere \]
\end{proof}
\begin{remark}\label{2.7}
	Note that if $N\ge 3,p,q>0,a, b>-2$ and $(u,v)$ is a nonnegative solution of \eqref{HLE}, then we must have $u,v \in W_{\rm{loc}}^{2,\frac{N}{2}}(\RR^N) $ due to the elliptic regularity theory. Consequently, one can use Lemma \ref{2.6} for $u$ or $v$ in any open ball or annulus.
\end{remark}

\begin{proof}[\bf{Proof of Lemma \ref{2.2}}]
	(i) By a rescaling argument, it suffices to prove the inequalities when $R=1.$ By Lemma \ref{2.6}\,(ii) and Remark \ref{2.7}, we have
	\[
	\inf_{B_1} u \ge C\int_{B_1} (-\Delta u) \ge C\int_{B_1} |x|^a v^p(x) \md x,
	\]
	where $C=C(N)>0.$ Consequently, we obtain
	\begin{equation}\label{eq4.10}
		\int_{B_1} |x|^b u^q(x) \md x\ge C\int_{B_1} |x|^b \left(\int_{B_1} |y|^a v^p(y) \md y\right)^q  \mathrm{d}x \ge C\left(\int_{B_1} |y|^a v^p(y) \md y\right)^q,
	\end{equation}
	where $C=C(N,q,b)>0.$ Similarly, we have
	\begin{equation}\label{eq4.11}
		\int_{B_1} |y|^a v^p(y) \md y  \ge C\left(\int_{B_1} |x|^b u^q(x) \md x\right)^p,
	\end{equation}
	where $C=C(N,p,a)>0.$ Then combining \eqref{eq4.10} and \eqref{eq4.11}, we complete the proof. \\
	(ii) \emph{Step 1.} It suffices to prove that
	\begin{align}\label{eq2.8}
		\begin{aligned}
			\ol{u}(R) \leq C R^{-\tilde{\alpha}},\quad
			\ol{v}(R) \leq C R^{-\tilde{\beta}},
		\end{aligned}
	\end{align}
	due to Jensen's inequality. A straightforward calculation shows that $ \ol{u^{q_{1}}}$ and $\ol{v^{p_{1}}}$ are decreasing on $[0,+\oo)$ for every $p_{1},q_{1} \in (0,1]$. Hence if $\min\, \{p,q\} \ge 1,$ then \eqref{eq2.8} can be easily deduced from Lemma \ref{2.2}, H\"{o}lder's inequality and the coarea formula. What is left is to show that \eqref{eq2.8} holds for $\min\, \{p,q\}<1.$ Since $pq>1,$ without restriction of generality we can assume that $0<q<1<p.$ In this case, by Lemma \ref{2.2}, H\"{o}lder's inequality and the coarea formula, we have
	\begin{align}\label{eq2.9}
		\begin{aligned}
			\ol{u^{q}}(r) \leq C r^{-q\tilde{\alpha}},\quad
			\ol{v}(r) \leq C r^{-\tilde{\beta}},\quad  \f r>0.
		\end{aligned}
	\end{align}
	\emph{Step 2.} We claim that if there exist $s \in (0,1)$ and $C=C(N,p,q,a,b)>0$ such that
	\[ \ol{u^{s}}(r) \leq C r^{-s\tilde{\alpha}}, \quad \f r >0, \]
	then there exists $C=C(N,p,q,a,b)>0$ such that
	\[ \ol{u^{\frac{Ns}{N-2}}}(r) \leq \left( \frac{s}{1-s} \right)^{\frac{N}{N-2}} C r^{-\frac{Ns}{N-2} \tilde{\alpha}}, \quad \f r >0, \]
	Indeed, note that
	\[-\Delta (u^s) = su^{s-1}(-\Delta u) + \frac{4(1-s)}{s}|D(u^{\frac{s}{2}})|^2 \ge \frac{4(1-s)}{s}|D(u^{\frac{s}{2}})|^{2} \ge 0,\]
	by Lemma \ref{2.6}\,(ii), we must have
	\begin{align}\label{eq2.10}
		\begin{aligned}
			\int_{B_4 \setminus B_{1/2}} |D(u^{\frac{s}{2}})|^2 \le \frac{sC}{1-s} \inf_{B_{4} \setminus B_{1/2}} u^s \le \frac{sC}{1-s} \ol{u^s}(1) \le \frac{sC}{1-s},
		\end{aligned}
	\end{align}
	where $C=C(N,p,q,a,b)>0.$ Fix $\eta \in C_{c}^{\oo}(B_{4} \!\setminus\! B_{1/2})$ such that $\left. \eta \right|_{B_{2} \setminus B_{1}}=1.$
	Since
	\[ \|D(u^{\frac{s}{2}} \eta )\|_{L^2(B_4 \setminus B_{1/2})} \le C\left[ \|u^{\frac{s}{2}}\|_{L^2(B_4 \setminus B_{1/2})} + \|D(u^{\frac{s}{2}})\|_{L^2(B_4 \setminus B_{1/2})} \right] \le  \left(\frac{s}{1-s}\right)^{\frac{1}{2}} C, \]
	then one can use Sobolev's inequality to deduce that
	\[ \|u^{\frac{s}{2}} \|_{L^{\frac{2N}{N-2}}(B_2 \setminus B_1)}
	\le \|u^{\frac{s}{2}} \eta \|_{L^{\frac{2N}{N-2}}(B_4 \setminus B_{1/2})}
	\le C\|D(u^{\frac{s}{2}}\eta) \|_{L^2(B_2 \setminus B_1)}
	\le \left(\frac{s}{1-s}\right)^{\frac{1}{2}} C.\]
	For this reason, there exists $r_{0} \in (1,2)$ such that
	\[ \ol{u^{s}}(r_{0}) \leq \left( \frac{s}{1-s} \right)^{\frac{N}{N-2}} C . \]
	Therefore, we can use the monotonicity of $\ol{u^s}$ and a scaling argument to complete the proof of the claim. \\
	\emph{Step 3.} Taking $m \in \NN_+$ such that $\big(\frac{N}{N-2}\big)^{m-1}q \le 1$ and $q_m := \big(\frac{N}{N-2}\big)^{m}q > 1$, then using \eqref{eq2.9} and the above claim to iterate, we obtain
	\[ \ol{u^{q_{m}}}(r) \le \left(\frac{q}{1-q}\right)^{\frac{Nm}{N-2}} C r^{-q_{m}\tilde{\alpha}} \le C(N,p,q,a,b) r^{-q_{m}\tilde{\alpha}} ,\quad \f r >0,\]
	which, combined with Jensen's inequality, complete the proof. \\
	(iii) This follows directly from (ii) and \eqref{eq2.10}.
\end{proof}

	We conclude this section with a comparison property established by Q. Phan {\cite[Lem.\,2.7]{zbMATH06121773}}. As noted in that work, the proof builds on the ideas presented in \cite[Thm.\,3.1]{zbMATH01700716} and \cite[Lem.\,2.7]{zbMATH05563881}. While the original result assumed $N \ge 3$, we demonstrate that this comparison property actually holds for $N \ge 2$. In contrast to the original approach, the following proof, inspired by the rescaled test function method, does not require any energy estimate or further property of solutions.

\begin{lemma}[Comparison property]\label{2.3}
	Suppose $N \ge 2,$
	$p \ge q>0,$ $pq>1,a,b>-2$
	and $(u, v)$ is a positive solution of \eqref{HLE}. If \eqref{eq1.3} is satisfied, then
	\begin{align}\label{eq2.2}
		\begin{aligned}
			|x|^a v^{p+1} \le  C(N,p,q,a,b) |x|^b u^{q+1}, \quad \f x \in \RR^N \!\setminus\! \{0\}.
		\end{aligned}
	\end{align}
	for some $C(N,p,q,a,b)>0.$ Moreover, we have $C(N,p,q,a,b)= \frac{p+1}{q+1}.$
\end{lemma}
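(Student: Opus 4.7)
My plan is to establish the pointwise inequality by studying the single scalar function
\[
W(x) := (p+1)|x|^{b} u(x)^{q+1} - (q+1)|x|^{a} v(x)^{p+1}, \qquad x \in \RR^{N} \setminus \{0\},
\]
and to show $W \ge 0$ everywhere. The first step is a direct calculation of $-\Delta W$: expanding $\Delta(|x|^{b} u^{q+1})$ and $\Delta(|x|^{a} v^{p+1})$ via the product rule, substituting $-\Delta u = |x|^{a} v^{p}$ and $-\Delta v = |x|^{b} u^{q}$, the bilinear source terms $(p+1)(q+1)|x|^{a+b}u^{q}v^{p}$ cancel exactly. After completing squares in the mixed gradient terms, I would obtain the identity
\begin{align*}
-\Delta W = \;& (p+1)(q+1) \Big[\, p|x|^{a} v^{p-1} \big|\nabla v + \tfrac{a}{p}\tfrac{vx}{|x|^{2}}\big|^{2} - q|x|^{b} u^{q-1} \big|\nabla u + \tfrac{b}{q}\tfrac{ux}{|x|^{2}}\big|^{2} \Big] \\
&+ \tfrac{(p+1)\,b\,[\,b-q(N-2)\,]}{q}|x|^{b-2} u^{q+1} + \tfrac{(q+1)\,a\,[\,p(N-2)-a\,]}{p}|x|^{a-2} v^{p+1}.
\end{align*}
The hypothesis \eqref{eq1.3}, together with $p \ge q$, is exactly what is needed to control the signs of the two "linear" terms on the right.

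Next, I would invoke the scale invariance of \eqref{HLE}. Since $|x|^{b} u^{q+1}$ and $|x|^{a} v^{p+1}$ both rescale as $R^{-(N-2)+\tilde{\alpha}+\tilde{\beta}+2}$ under $(u,v) \mapsto (R^{\tilde{\alpha}} u(R \cdot), R^{\tilde{\beta}} v(R \cdot))$, the sign of $W(x)$ is scaling invariant. Arguing by contradiction, if $W(x_{0}) < 0$ at some $x_{0} \neq 0$, we may normalize so that $|x_{0}| = 1$ and $W(x_{0}) \le -\delta < 0$. I would then pick a family of cut-off functions $\eta_{R} \in C_{c}^{2}(B_{R} \setminus \ol{B_{1/R}})$ with $\eta_{R} \equiv 1$ on $B_{R/2} \setminus \ol{B_{2/R}}$ and standard derivative bounds, multiply the identity above by $\eta_{R}^{2}$, and integrate by parts. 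The resulting integral identity, after Cauchy--Schwarz, would pair the bad gradient term $-q|\nabla u + \tfrac{b}{q}\tfrac{ux}{|x|^{2}}|^{2}$ against the good one $p|\nabla v + \tfrac{a}{p}\tfrac{vx}{|x|^{2}}|^{2}$, trading them for weighted integrals of $|x|^{b}u^{q+1}$ and $|x|^{a}v^{p+1}$ on the annular support of $\eta_{R}$. These are then bounded via Lemma \ref{2.2}\,(i), which, crucially, is established independently of any comparison property. Sending $R \to \infty$ would then contradict the assumed strict negativity $W(x_{0}) < -\delta$, since the quantitative Hopf bound (Lemma \ref{2.6}) combined with Lemma \ref{2.2} forces the spatial average of $W$ over shrinking neighborhoods of $x_{0}$ to be nonnegative.

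The main obstacle is the manifestly nonpositive contribution $-q(p+1)(q+1)|x|^{b} u^{q-1}|\nabla u + \tfrac{b}{q}\tfrac{ux}{|x|^{2}}|^{2}$ appearing in $-\Delta W$, which precludes any direct application of a maximum principle to $W$ itself. The core of the argument is therefore to pass to the integral/test-function form of the identity, where an AM--GM split with weights tuned to the Euler scalings $\tilde{\alpha}, \tilde{\beta}$ (so that the powers $|x|^{a}$ and $|x|^{b}$ balance against $u^{q+1}, v^{p+1}$) absorbs the bad term into the good one. The constraint $p \ge q$ together with $0 \le a - b \le (N-2)(p-q)$ ensures that after this absorption, the residual weighted terms have nonnegative coefficients and combine with the right sign. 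The precise constant $\tfrac{p+1}{q+1}$ in the statement arises as the exact saturation level of this balance, reflecting the homogeneity degrees of $u^{q+1}$ versus $v^{p+1}$ in the scaling group of \eqref{HLE}.
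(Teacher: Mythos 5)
Your algebraic identity for $-\Delta W$ with $W=(p+1)|x|^{b}u^{q+1}-(q+1)|x|^{a}v^{p+1}$ is correct (the bilinear terms do cancel and the completed-square coefficients are as you wrote), but the two steps that are supposed to turn it into a proof do not hold up. First, the ``absorption'' step is not available: the negative square $-q(p+1)(q+1)|x|^{b}u^{q-1}\bigl|\nabla u+\tfrac{b}{q}\tfrac{ux}{|x|^{2}}\bigr|^{2}$ and the positive square in $\nabla v$ involve gradients of \emph{different} unknowns, so no Cauchy--Schwarz or AM--GM inequality pairs one against the other; and if you instead try to control the bad term by integrating by parts against the equation for $u$, you regenerate $\int |x|^{a+b}u^{q}v^{p}\eta_{R}^{2}$, which is precisely the quantity you cannot control without already knowing the comparison \eqref{eq2.2} (Lemma \ref{2.2}\,(i) only bounds $\int_{B_R}|x|^{b}u^{q}$ and $\int_{B_R}|x|^{a}v^{p}$, with bounds $CR^{N-2-\tilde\beta}$, $CR^{N-2-\tilde\alpha}$ that in general \emph{grow} in $R$, not the $(q+1)$- and $(p+1)$-powers you need, and not quantities tending to $0$). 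Second, the concluding step is not a valid argument: Lemma \ref{2.6} applies to functions with $-\Delta w\ge 0$, and $W$ is exactly \emph{not} known to be superharmonic --- that is the obstacle you yourself identify --- while the average of the continuous function $W$ over shrinking neighborhoods of $x_{0}$ converges to $W(x_{0})<0$, so no integral estimate on large annuli as $R\to\infty$ can ``force'' that average to be nonnegative. A further, more minor, defect is that any reliance on Lemma \ref{2.2} (stated for $N\ge 3$) could not yield the case $N=2$ asserted in the lemma.

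The paper's proof follows a genuinely different and more robust route, which you may wish to compare with: instead of the difference of the two weighted powers, it studies $w=v-\sigma^{-1/(p+1)}|x|^{-h}u^{\sigma}$ with $\sigma=\frac{q+1}{p+1}\le 1$ and $h=\frac{a-b}{p+1}$. The concavity exponent $\sigma\le 1$ produces the term $\sigma(1-\sigma)\frac{|Du|^{2}}{u^{2}}\ge 0$ with the \emph{favorable} sign, and hypothesis \eqref{eq1.3} makes the remaining radial terms nonnegative, so that $\Delta w\ge C\bigl(|x|^{a-h}u^{\sigma-1}w^{p}+|x|^{a-ph}u^{p\sigma-1}w\bigr)$; one then runs a rescaled test-function/maximum-point argument on $w\eta_{R}$ (using the punctured-ball maximum principle to locate the maximum when $a=b$) to force $\sup w\le 0$, which gives \eqref{eq2.2} with the sharp constant $\frac{p+1}{q+1}$. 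Crucially, that argument uses no energy estimates at all, which is why it covers $N\ge 2$. If you want to salvage your approach, you would need to supply an actual mechanism replacing the claimed absorption --- in effect, you would be led back to a change of variables of the type $v$ versus $|x|^{-h}u^{\sigma}$, i.e., to the paper's argument.
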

\begin{proof}
	\emph{Step 1.} Let us first define \[ \sigma = \frac{q+1}{p+1} \in (0,1],\quad
	l=\sigma^{-\frac{1}{p+1}},\quad
	h=\frac{a-b}{p+1},\quad  w = v - l |x|^{-h} u^{\sigma}.\] The task is now to prove that $w \le 0$ in $\RR^N \!\setminus\! \{0\}. $ First, an easy computation shows that
	\begin{align}\label{eq2.11}
		\begin{aligned}
			\Delta w = |x|^{a-h}u^{\sigma-1} \left[  \left(\frac{v}{l}\right)^p  -  \left(|x|^{-h} u^{\sigma} \right)^p  \right]
			+ l|x|^{-h}u^{\sigma} M,
		\end{aligned}
	\end{align}
	for every $x \in \RR^N \!\setminus\! \{0\}$, where
	\[M= \sigma(1-\sigma) \frac{|Du|^2}{u^2}  +  2\sigma h \frac{x}{|x|^2} \cdot \frac{Du}{u}  +  \frac{(N-2-h)h}{|x|^2}.\]
	If $\sigma=1,$ then $h=0,$ thus $M=0$; If $\sigma \in (0,1),$ then it is obvious that $h \ge 0 $ and $N-2-\frac{h}{1-\sigma} \ge 0$ due to \eqref{eq1.3}, hence
	\[M=\sigma(1-\sigma) \left|\frac{Du}{u} + \frac{h}{1-\sigma}\cdot \frac{x}{|x|^2} \right|^2  +  h\left(N-2-\frac{h}{1-\sigma}\right)\frac{1}{|x|^2} \ge 0.\]
	Therefore we have $M \ge 0$ in either case. Now since
	\[ \frac{x^{p}-1}{(x+1)^{p-1}(x-1)} \le C,\quad \f x \in (0,1), \]
	and
	\[ \frac{1}{C} \le \frac{x^{p}-1}{(x+1)^{p-1}(x-1)} ,\quad \f x > 1,\]
	for some $C=C(p)>0$ large enough, we deduce that
	\[ s^p - t^p \ge C (s+t)^{p-1} (s-t),\quad \f s , t \ge 0.\]
	Inserting this inequality into \eqref{eq2.11} and note $M\ge 0,$ we see that
	\begin{align}\label{eq2.5}
		\begin{aligned}
			\Delta w &\ge C |x|^{a-h} u^{\sigma-1} \left( \frac{v}{l} + |x|^{-h}u^{\sigma} \right)^{p-1} \left( \frac{v}{l} - |x|^{-h}u^{\sigma} \right)  \\
			&=C |x|^{a-h} u^{\sigma-1} (w + 2l|x|^{-h} u^{\sigma})^{p-1} w \\
			&\ge C \left( |x|^{a-h} u^{\sigma-1} w^{p} + |x|^{a-ph} u^{p\sigma-1} w \right),
	\end{aligned}\end{align}
	for every $x \in \RR^N \!\setminus\! \{0\},$ where $C=C(p,q)>0$. \\
	\emph{Step 2.} Let $\eta \in C_c^{\oo}(B_1)$ such that $0 \le \eta \le 1$ and $\left.\eta\right|_{B_{1/2}}=1.$ Then for every $R \in (0,+\oo),$ let us define \[m=\frac{2\sigma}{p\sigma-1}>0, \quad
	\eta_{R} = \eta^m\left(\frac{\cdot}{R}\right), \quad
	w_R=w\eta_R.\]
	Next, we claim that for every $R>0,$ there exists $x_R \in {B_R} \!\setminus\! \{0\}$ such that $w_R(x_R) = \sup_{B_R \setminus \{0\}} w_R .$
	In fact, if $a>b,$ it suffices to note that $w_R \in C(\ol{B_R} \!\setminus\! \{0\}), \left. w_R \right|_{S_R}=0$ and $\lim\limits_{x \ra 0} w_R(x) = -\oo ;$ If $a=b$ and the claim is false, since $w \in C(\ol{B_R})$ in this case, then $w(0)>0$ and $w(0)>w(x)$ for every $x \in \RR^N \!\setminus\! \{0\}.$ It follows that there exists $r>0$ such that $\left.w\right|_{B_r}>0,$ which forces $\left. \Delta w \right|_{B_r \setminus \{0\}}>0$ by \eqref{eq2.5}. This contradicts the maximum principle in punctured balls (see, for example, {\cite[Thm.\,1]{zbMATH03118705}}). We complete the proof of the claim.	 \\
	\emph{Step 3.} Suppose the assertion of the lemma is false, then $\sup_{\RR^N \setminus \{0\}} w>0.$ Thus $w_R(x_R)>0$ and $\eta_{R}(x_R)>0$ for every sufficiently large $R>0.$ Now we have
	\[w(x_R) D\eta_R (x_R) + \eta_R(x_R) Dw(x_R) = Dw_R(x_R)=0, \]
	and
	\[w(x_R) \Delta\eta_R(x_R) +2 Dw(x_R) D\eta_R(x_R) + \eta_R(x_R) \Delta w(x_R) = \Delta w_R(x_R) \le 0.\]
	From this and note that
	\[|D\eta_R| \le CmR^{-1}\eta_{R}^{1-\frac{1}{m}}, \quad
	|\Delta \eta_R| \le CmR^{-2}\left[\eta_{R}^{1-\frac{1}{m}} + (m-1) \eta_{R}^{1-\frac{2}{m}}\right], \]
	where $C=C(N)>0,$ we can easily deduce that
	\begin{align}\label{eq2.6}
		\begin{aligned}
			\Delta w(x_R) &\le \left( \left|w(x_R)\Delta \eta_R(x_R)\right| + 2\left| Dw(x_R) D\eta_R(x_R)\right| \right) \eta_{R}^{-1}(x_R) \\
			& \le \left( \left|\Delta \eta_R (x_R)\right| \eta_R^{-1}(x_R)
			+ 2\left|D \eta_R (x_R)\right|^2 \eta_R^{-2}(x_R)\right) w(x_R) \\
			&\le CR^{-2} \eta_{R}^{-\frac{2}{m}}(x_R) w(x_R),
	\end{aligned}\end{align}
	where $C=C(N,p,q)>0$ and $R>0$ is sufficiently large. \\
	\emph{Step 4.} From \eqref{eq2.5} and \eqref{eq2.6}, we get
	\[|x_R|^{a-h} u^{\sigma-1}(x_R) w^p(x_R) + |x_R|^{a-ph} u^{p\sigma-1}(x_R) w(x_R) \le CR^{-2} \eta_{R}^{-\frac{2}{m}}(x_R) w(x_R) .\]
	It follows that
	\[w_{R}^{p-1}(x_R) \le CR^{-2} |x_R|^{h-a} \eta_{R}^{p-1-\frac{2}{m}}(x_R) u^{1-\sigma}(x_R), \]
	and
	\[u(x_R) \le CR^{-\frac{2}{p\sigma-1}} |x_R|^{\frac{ph-a}{p\sigma-1}} \eta_R^{-\frac{2}{(p\sigma-1)m}}(x_R). \]
	Consequently, we have
	\begin{align*}
		w_{R}^{p-1}(x_R) &\le CR^{-2-\frac{2(1-\sigma)}{p\sigma-1}}|x_R|^{h-a+\frac{(ph-a)(1-\sigma)}{p\sigma-1}}\eta_{R}^{p-1-\frac{2}{m}-\frac{2(1-\sigma)}{(p\sigma-1)m}} \\
		&= CR^{-2-\frac{2(1-\sigma)}{p\sigma-1}}|x_R|^{h-a+\frac{(ph-a)(1-\sigma)}{p\sigma-1}} \\
		&\le CR^{-2-\frac{2(1-\sigma)}{p\sigma-1} + h-a+\frac{(ph-a)(1-\sigma)}{p\sigma-1} },
	\end{align*}
	where $C=C(N,p,q)>0$ and $R>0$ is sufficiently large.
	To drive a contradiction, we only need to show that
	\[-2-\frac{2(1-\sigma)}{p\sigma-1} + h-a+\frac{(ph-a)(1-\sigma)}{p\sigma-1} <0. \]
	This is equivalent to
	\[\frac{\left[(a+2)q+b+2\right](p-1)}{pq-1}>0,\]
	which is obviously true.
\end{proof}

\newpage

\printbibliography[heading=bibintoc, title=\ebibname]

@InCollection{zbMATH06458598,
 Author = {Berestycki, H. and Nirenberg, L. and Varadhan, S. R. S.},
 Title = {The principal eigenvalue and maximum principle for second-order elliptic operators in general domains},
 BookTitle = {Collected papers. Volume II: PDE, SDE, diffusions, random media. Edited by Rajendra Bhatia, Abhay Bhatt and K. R. Parthasarathy},
 ISBN = {978-3-642-33545-7; 978-3-642-33231-9; 978-93-80250-39-7},
 Pages = {388--433},
 Year = {2012},
 Publisher = {Berlin: Springer; New Dehli: Hindustan Book Agency},
 Language = {English},
 Keywords = {35P15,35J25,35B50},
 zbMATH = {6458598},
 Zbl = {1316.35217}
}

@Book{zbMATH01179494,
 Author = {Aubin, Thierry},
 Title = {Some nonlinear problems in {Riemannian} geometry},
 FSeries = {Springer Monographs in Mathematics},
 Series = {Springer Monogr. Math.},
 ISSN = {1439-7382},
 ISBN = {3-540-60752-8},
 Year = {1998},
 Publisher = {Berlin: Springer},
 Language = {English},
 Keywords = {53-02,53C21,58J60,53C25,53C55,58E20,35G30},
 zbMATH = {1179494},
 Zbl = {0896.53003}
}

@Article{zbMATH00936724,
 Author = {Hulshof, Josephus and van der Vorst, Robertus C. A. M.},
 Title = {Asymptotic behaviour of ground states},
 FJournal = {Proceedings of the American Mathematical Society},
 Journal = {Proc. Am. Math. Soc.},
 ISSN = {0002-9939},
 Volume = {124},
 Number = {8},
 Pages = {2423--2431},
 Year = {1996},
 Language = {English},
 DOI = {10.1090/S0002-9939-96-03669-6},
 Keywords = {35J60,35B40,35J50,34C37},
 zbMATH = {936724},
 Zbl = {0860.35029}
}

@Article{zbMATH05163856,
 Author = {Farina, Alberto},
 Title = {On the classification of solutions of the {Lane}-{Emden} equation on unbounded domains of {{\(\mathbb R^N\)}}},
 FJournal = {Journal de Math{\'e}matiques Pures et Appliqu{\'e}es. Neuvi{\`e}me S{\'e}rie},
 Journal = {J. Math. Pures Appl. (9)},
 ISSN = {0021-7824},
 Volume = {87},
 Number = {5},
 Pages = {537--561},
 Year = {2007},
 Language = {English},
 DOI = {10.1016/j.matpur.2007.03.001},
 Keywords = {35J60,35B35,35B33,35B45},
 zbMATH = {5163856},
 Zbl = {1143.35041}
}

@Article{zbMATH04155283,
 Author = {Lions, P.-L.},
 Title = {The concentration-compactness principle in the calculus of variations. {The} limit case. {I}},
 FJournal = {Revista Matem{\'a}tica Iberoamericana},
 Journal = {Rev. Mat. Iberoam.},
 ISSN = {0213-2230},
 Volume = {1},
 Number = {1},
 Pages = {145--201},
 Year = {1985},
 Language = {English},
 DOI = {10.4171/RMI/6},
 Keywords = {49J10,49J27,46E35},
 zbMATH = {4155283},
 Zbl = {0704.49005}
}

@Article{zbMATH06203801,
 Author = {Cowan, Craig},
 Title = {Liouville theorems for stable {Lane}-{Emden} systems and biharmonic problems},
 FJournal = {Nonlinearity},
 Journal = {Nonlinearity},
 ISSN = {0951-7715},
 Volume = {26},
 Number = {8},
 Pages = {2357--2371},
 Year = {2013},
 Language = {English},
 DOI = {10.1088/0951-7715/26/8/2357},
 Keywords = {35J47,35J61,35B53,35B09},
 zbMATH = {6203801},
 Zbl = {1277.35159}
}

@Article{zbMATH02190057,
 Author = {Montenegro, Marcelo},
 Title = {Minimal solutions for a class of elliptic systems},
 FJournal = {Bulletin of the London Mathematical Society},
 Journal = {Bull. Lond. Math. Soc.},
 ISSN = {0024-6093},
 Volume = {37},
 Number = {3},
 Pages = {405--416},
 Year = {2005},
 Language = {English},
 DOI = {10.1112/S0024609305004248},
 Keywords = {35J55,35B05,35D05,35J60},
 zbMATH = {2190057},
 Zbl = {1146.35350}
}

@Article{zbMATH06265786,
 Author = {Fazly, Mostafa and Ghoussoub, Nassif},
 Title = {On the {H{\'e}non}-{Lane}-{Emden} conjecture},
 FJournal = {Discrete and Continuous Dynamical Systems},
 Journal = {Discrete Contin. Dyn. Syst.},
 ISSN = {1078-0947},
 Volume = {34},
 Number = {6},
 Pages = {2513--2533},
 Year = {2014},
 Language = {English},
 DOI = {10.3934/dcds.2014.34.2513},
 Keywords = {35J47,35B33,35B45,35B08,35B53},
 zbMATH = {6265786},
 Zbl = {1285.35024}
}

@Article{zbMATH05998062,
 Author = {Phan Quoc Hung and Souplet, Philippe},
 Title = {Liouville-type theorems and bounds of solutions of {Hardy}-{H{\'e}non} equations},
 FJournal = {Journal of Differential Equations},
 Journal = {J. Differ. Equations},
 ISSN = {0022-0396},
 Volume = {252},
 Number = {3},
 Pages = {2544--2562},
 Year = {2012},
 Language = {English},
 DOI = {10.1016/j.jde.2011.09.022},
 Keywords = {35J60,35B53,35B33,35B45,35B40,35J25},
 zbMATH = {5998062},
 Zbl = {1233.35093}
}

@Book{zbMATH07819857,
 Author = {Wang, Mingxin and Pang, Peter Y. H.},
 Title = {Nonlinear second order elliptic equations},
 ISBN = {978-981-99-8691-0; 978-981-99-8694-1; 978-981-99-8692-7},
 Year = {2024},
 Publisher = {Cham: Springer},
 Language = {English},
 DOI = {10.1007/978-981-99-8692-7},
 Keywords = {35-01,35Jxx},
 zbMATH = {7819857}
}

@article{hoff2025,
 author = {Hoff, David},
 title = {Cancellation properties and pointwise bounds for the {Green}'s functions for the {Laplace} operator},
 fjournal = {Journal of Differential Equations},
 journal = {J. Differ. Equations},
 issn = {0022-0396},
 volume = {427},
 pages = {601--640},
 year = {2025},
 language = {English},
 doi = {10.1016/j.jde.2025.01.083},
 keywords = {35J05,35J25},
 zbMATH = {8017131},
 Zbl = {1562.35187}
}

@Article{zbMATH03983703,
 Author = {Zhao, Zhongxin},
 Title = {Green function for {Schr{\"o}dinger} operator and conditioned {Feynman}-{Kac} gauge},
 FJournal = {Journal of Mathematical Analysis and Applications},
 Journal = {J. Math. Anal. Appl.},
 ISSN = {0022-247X},
 Volume = {116},
 Pages = {309--334},
 Year = {1986},
 Language = {English},
 DOI = {10.1016/S0022-247X(86)80001-4},
 Keywords = {35J10,60J65,35J25,35B05},
 zbMATH = {3983703},
 Zbl = {0608.35012}
}

@Article{zbMATH00883497,
 Author = {Serrin, James and Zou, Henghui},
 Title = {Non-existence of positive solutions of {Lane}-{Emden} systems},
 FJournal = {Differential and Integral Equations},
 Journal = {Differ. Integral Equ.},
 ISSN = {0893-4983},
 Volume = {9},
 Number = {4},
 Pages = {635--653},
 Year = {1996},
 Language = {English},
 Keywords = {35J60,35J45},
 zbMATH = {883497},
 Zbl = {0868.35032}
}

@Article{zbMATH05563881,
 Author = {Souplet, Philippe},
 Title = {The proof of the {Lane}-{Emden} conjecture in four space dimensions},
 FJournal = {Advances in Mathematics},
 Journal = {Adv. Math.},
 ISSN = {0001-8708},
 Volume = {221},
 Number = {5},
 Pages = {1409--1427},
 Year = {2009},
 Language = {English},
 DOI = {10.1016/j.aim.2009.02.014},
 Keywords = {35J45,35J60,35B33,35B45,35J50},
 zbMATH = {5563881},
 Zbl = {1171.35035}
}

@book{gilbargtrudinger1977,
 author = {Gilbarg, David and Trudinger, Neil S.},
 title = {Elliptic partial differential equations of second order},
 edition = {Reprint of the 1998 ed.},
 fseries = {Classics in Mathematics},
 series = {Class. Math.},
 issn = {1431-0821},
 isbn = {3-540-41160-7},
 year = {2001},
 publisher = {Berlin: Springer},
 language = {English},
 keywords = {35-02,35J65,35B45,35J25,35B50,35B05,47H10},
 zbMATH = {1554166},
 Zbl = {1042.35002}
}

@Book{zbMATH05904617,
 Author = {Dupaigne, Louis},
 Title = {Stable solutions of elliptic partial differential equations},
 FSeries = {Chapman \& Hall/CRC Monographs and Surveys in Pure and Applied Mathematics},
 Series = {Chapman Hall/CRC Monogr. Surv. Pure Appl. Math.},
 Volume = {143},
 ISBN = {978-1-4200-6654-8},
 Year = {2011},
 Publisher = {Boca Raton, FL: CRC Press},
 Language = {English},
 Keywords = {35-02,35B35,35A16,35Jxx,35B50,35B53,35B65},
 zbMATH = {5904617},
 Zbl = {1228.35004}
}

@Article{zbMATH03118705,
 Author = {Gilbarg, D. and Serrin, James},
 Title = {On isolated singularities of solutions of second order elliptic differential equations},
 FJournal = {Journal d'Analyse Math{\'e}matique},
 Journal = {J. Anal. Math.},
 ISSN = {0021-7670},
 Volume = {4},
 Pages = {309--340},
 Year = {1956},
 Language = {English},
 DOI = {10.1007/BF02787726},
 zbMATH = {3118705},
 Zbl = {0071.09701}
}

@Article{zbMATH06973892,
 Author = {Li, Kui and Zhang, Zhitao},
 Title = {Proof of the {H{\'e}non}-{Lane}-{Emden} conjecture in {{\(\mathbb{R}^3\)}}},
 FJournal = {Journal of Differential Equations},
 Journal = {J. Differ. Equations},
 ISSN = {0022-0396},
 Volume = {266},
 Number = {1},
 Pages = {202--226},
 Year = {2019},
 Language = {English},
 DOI = {10.1016/j.jde.2018.07.036},
 Keywords = {35J61,35B33,35B53},
 zbMATH = {6973892},
 Zbl = {1410.35040}
}

@Article{zbMATH07599863,
 Author = {Li, Hang},
 Title = {A {Liouville} theorem for the {H{\'e}non}-{Lane}-{Emden} system in four and five dimensions},
 FJournal = {Advanced Nonlinear Studies},
 Journal = {Adv. Nonlinear Stud.},
 ISSN = {1536-1365},
 Volume = {22},
 Pages = {517--533},
 Year = {2022},
 Language = {English},
 DOI = {10.1515/ans-2022-0027},
 Keywords = {35J47,35J61,35B53},
 zbMATH = {7599863},
 Zbl = {1500.35139}
}

@Article{zbMATH07522895,
 Author = {Cheng, Xiyou and Li, Kui and Zhang, Zhitao},
 Title = {Liouville-type theorems for generalized {H{\'e}non}-{Lane}-{Emden} {Schr{\"o}dinger} systems in {{\(\mathbb{R}^2\)}} and {{\(\mathbb{R}^3\)}}},
 FJournal = {Topological Methods in Nonlinear Analysis},
 Journal = {Topol. Methods Nonlinear Anal.},
 ISSN = {1230-3429},
 Volume = {59},
 Number = {1},
 Pages = {331--357},
 Year = {2022},
 Language = {English},
 DOI = {10.12775/TMNA.2021.028},
 Keywords = {35J47,35J61,35B33,35B53},
 zbMATH = {7522895},
 Zbl = {1490.35132}
}

@Article{zbMATH07024017,
 Author = {Cheng, Ze and Huang, Genggeng},
 Title = {A {Liouville} theorem for the subcritical {Lane}-{Emden} system},
 FJournal = {Discrete and Continuous Dynamical Systems},
 Journal = {Discrete Contin. Dyn. Syst.},
 ISSN = {1078-0947},
 Volume = {39},
 Number = {3},
 Pages = {1359--1377},
 Year = {2019},
 Language = {English},
 DOI = {10.3934/dcds.2019058},
 Keywords = {35B09,35B53,35J05,35J61},
 zbMATH = {7024017},
 Zbl = {1407.35014}
}

@Article{zbMATH00711288,
 Author = {de Figueiredo, D. G. and Felmer, P. L.},
 Title = {A {Liouville}-type theorem for elliptic systems},
 FJournal = {Annali della Scuola Normale Superiore di Pisa. Classe di Scienze. Serie IV},
 Journal = {Ann. Sc. Norm. Super. Pisa, Cl. Sci., IV. Ser.},
 ISSN = {0391-173X},
 Volume = {21},
 Number = {3},
 Pages = {387--397},
 Year = {1994},
 Language = {English},
 Keywords = {35J45,35J60},
 zbMATH = {711288},
 Zbl = {0820.35042}
}

@Article{zbMATH05907486,
 Author = {Bidaut-Veron, Marie Fran{\c{c}}oise and Giacomini, Hector},
 Title = {A new dynamical approach of {Emden}-{Fowler} equations and systems},
 FJournal = {Advances in Differential Equations},
 Journal = {Adv. Differ. Equ.},
 ISSN = {1079-9389},
 Volume = {15},
 Number = {11-12},
 Pages = {1033--1082},
 Year = {2010},
 Language = {English},
 Keywords = {34B15,34C20,34C37,35J20,35J70,35J47,58E30},
 zbMATH = {5907486},
 Zbl = {1230.34021}
}

@Article{zbMATH06672765,
 Author = {Mtiri, Foued and Harrabi, Abdellaziz and Hajlaoui, Hatem},
 Title = {Liouville theorems for stable solutions of the weighted {Lane}-{Emden} system},
 FJournal = {Discrete and Continuous Dynamical Systems},
 Journal = {Discrete Contin. Dyn. Syst.},
 ISSN = {1078-0947},
 Volume = {37},
 Number = {1},
 Pages = {265--279},
 Year = {2017},
 Language = {English},
 DOI = {10.3934/dcds.2017011},
 Keywords = {35B53,35B65,35P30,35B51,35J47,35J61},
 zbMATH = {6672765},
 Zbl = {1368.35051}
}

@article{dupaignegherguhajlaoui2025,
 author = {Dupaigne, Louis and Ghergu, Marius and Hajlaoui, Hatem},
 title = {Asymptotically homogeneous solutions of the supercritical {Lane}-{Emden} system},
 fjournal = {Calculus of Variations and Partial Differential Equations},
 journal = {Calc. Var. Partial Differ. Equ.},
 issn = {0944-2669},
 volume = {64},
 number = {4},
 pages = {26},
 note = {Id/No 128},
 year = {2025},
 language = {English},
 doi = {10.1007/s00526-025-02943-5},
 keywords = {35J47,35B33,35B35,35B53},
 zbMATH = {8032671}
}

@Article{zbMATH07020411,
 Author = {Mtiri, Foued and Ye, Dong},
 Title = {Liouville theorems for stable at infinity solutions of {Lane}-{Emden} system},
 FJournal = {Nonlinearity},
 Journal = {Nonlinearity},
 ISSN = {0951-7715},
 Volume = {32},
 Number = {3},
 Pages = {910--926},
 Year = {2019},
 Language = {English},
 DOI = {10.1088/1361-6544/aaf078},
 Keywords = {35J47,35J62,35A01,35B35},
 zbMATH = {7020411},
 Zbl = {1406.35116}
}

@Article{zbMATH06121773,
 Author = {Phan Quoc Hung},
 Title = {Liouville-type theorems and bounds of solutions for {Hardy}-{H{\'e}non} elliptic systems},
 FJournal = {Advances in Differential Equations},
 Journal = {Adv. Differ. Equ.},
 ISSN = {1079-9389},
 Volume = {17},
 Number = {7-8},
 Pages = {605--634},
 Year = {2012},
 Language = {English},
 Keywords = {35J60,35B53,35B33,35B45,35B40,35J47},
 zbMATH = {6121773},
 Zbl = {1262.35108}
}

@Article{zbMATH01700716,
 Author = {Bidaut-Veron, Marie-Francoise},
 Title = {Local behaviour of the solutions of a class of nonlinear elliptic systems},
 FJournal = {Advances in Differential Equations},
 Journal = {Adv. Differ. Equ.},
 ISSN = {1079-9389},
 Volume = {5},
 Number = {1-3},
 Pages = {147--192},
 Year = {2000},
 Language = {English},
 Keywords = {35J45,35B05,35B40,35J60},
 zbMATH = {1700716},
 Zbl = {1005.35037}
}

@Article{zbMATH05992595,
 Author = {Armstrong, Scott N. and Sirakov, Boyan},
 Title = {Nonexistence of positive supersolutions of elliptic equations via the maximum principle},
 FJournal = {Communications in Partial Differential Equations},
 Journal = {Commun. Partial Differ. Equations},
 ISSN = {0360-5302},
 Volume = {36},
 Number = {10-12},
 Pages = {2011--2047},
 Year = {2011},
 Language = {English},
 DOI = {10.1080/03605302.2010.534523},
 Keywords = {35B53,35J60,35J92,35J47,35B50,35R45},
 zbMATH = {5992595},
 Zbl = {1230.35030}
}

@Article{zbMATH05196152,
 Author = {Pol{\'a}{\v{c}}ik, Peter and Quittner, Pavol and Souplet, Philippe},
 Title = {Singularity and decay estimates in superlinear problems via {Liouville}-type theorems. {I}: {Elliptic} equations and systems},
 FJournal = {Duke Mathematical Journal},
 Journal = {Duke Math. J.},
 ISSN = {0012-7094},
 Volume = {139},
 Number = {3},
 Pages = {555--579},
 Year = {2007},
 Language = {English},
 DOI = {10.1215/S0012-7094-07-13935-8},
 Keywords = {35J60,35B45,35J45,35J70,35B40,35J65,35B33},
 zbMATH = {5196152},
 Zbl = {1146.35038}
}
\vspace{1em}
\begin{flushleft} \small
	Long-Han Huang: Department of Mathematical Sciences, Tsinghua University. No.\,1 Tsinghua Garden, Beijing 100084, People's Republic of China. \\
	E-mail address: \href{mailto:huanglh22@mails.tsinghua.edu.cn}{\texttt{huanglh22@mails.tsinghua.edu.cn}}
\end{flushleft}

\begin{flushleft} \small
	Wenming Zou: Department of Mathematical Sciences, Tsinghua University. No.\,1 Tsinghua Garden, Beijing 100084, People's Republic of China. \\
	E-mail address: \href{mailto:zou-wm@mail.tsinghua.edu.cn}{\texttt{zou-wm@mail.tsinghua.edu.cn}}
\end{flushleft}


\end{document}